\theoremstyle{plain}
\newtheorem*{theorem*}{Theorem}
\newtheorem*{remark*}{Remark}
\newtheorem*{example*}{Example}
\newtheorem{lemma}{Lemma}[subsection]
\newtheorem{proposition}[lemma]{Proposition}
\newtheorem{corollary}[lemma]{Corollary}
\newtheorem{theorem}[lemma]{Theorem}
\newtheorem*{conjecture*}{Conjecture}
\newtheorem{prop}[lemma]{Proposition}
\theoremstyle{definition}
\newtheorem{definition}[lemma]{Definition}
\newtheorem{example}[lemma]{Example}
\theoremstyle{remark}
\newtheorem{remark}[lemma]{Remark}
\newcommand{\VW}{\bigdoublevee}
\newcommand{\VWd}{{\bigdoublevee}_d}
 \newcommand{\op}{\operatorname}
\newcommand{\Hom}{\operatorname{Hom}}
\newcommand{\Groth}{\operatorname{G}}
\newcommand{\Ind}{\operatorname{Ind}}
\newcommand{\id}{\operatorname{Id}}
\newcommand{\Ker}{\operatorname{Ker}}
\newcommand{\Ext}{\operatorname{Ext}}
\renewcommand{\gg}{\mathfrak{g}}
\newcommand{\hh}{\mathfrak{h}}
\newcommand{\bb}{\mathfrak{b}}
\renewcommand{\dim}{\mathrm{dim}}
 \def\fgl{\mathfrak{gl}} 
 \def\fp{\mathfrak{p}}
 \def\<{\langle}
  \def\>{\rangle}
  \DeclareMathOperator{\str}{str}
\DeclareMathOperator{\tr}{tr}
\newcommand{\End}{\mathrm{End}}
\def\quotient#1#2{%
    \raise1ex\hbox{$#1$}\Big/\lower1ex\hbox{$#2$}%
}
\newcommand{\InnaB}[1]{{{ #1} }}
\newcommand{\InnaC}[1]{{{#1}}}
\begin{document}

\title{Translation functors and decomposition numbers for the periplectic Lie superalgebra $\mathfrak{p}(n)$}

\author{M. Balagovic 
\and Z. Daugherty} 
\thanks{Research of Z.D. partially supported by National Science Foundation Grant DMS-1162010.} 
\author{ I. Entova-Aizenbud 
\and I. Halacheva
}
\thanks{Research of I. E-A partially supported by the MPI, Bonn, by UC Berkeley, and by ERC under grant No. 669655 (PI: David Kazhdan).}
\author {J. Hennig 
\and M. S. Im 
\and G. Letzter 
\and E. Norton 
\and V. Serganova 
\and C. Stroppel}
\thanks{Research of C. S. partially supported by the Hausdorff Center of Mathematics, Bonn.}
\address{M. B.: School of Mathematics, Statistics and Physics, Newcastle University, Newcastle upon Tyne NE1 7RU UK}
\email{martina.balagovic@newcastle.ac.uk} 
\address{Z. D.: Department of Mathematics, City College of New York, New York, NY 10031 USA}
\email{zdaugherty@gmail.com}
\address{I. E.: Department of Mathematics, Tel Aviv University, Tel Aviv, Israel}
\email{inna.entova@gmail.com}
\address{I. H.: Department of Mathematics and Statistics, Lancaster University, Lancaster, LA1 4YF UK}
\email{i.halacheva@lancaster.ac.uk}
\address{J. H.: Department of Math. and Statistical Sciences, University of Alberta, Edmonton, Alberta T6G 2G1 Canada}
\email{jhennig1@ualberta.ca}
\address{M. S. I.: Department of Mathematical Sciences, United States Military Academy, West Point, NY 10996 USA}
\email{meeseongim@gmail.com}
\address{G. L.: Department of Defense, Ft. George G. Meade, MD 20755 USA}
\email{gletzter@verizon.net}
\address{E. N.: Department of Mathematics, TU Kaiserslautern\\
67663 Kaiserslautern, Germany}
\email{norton@mathematik.uni-kl.de}
\thanks{Research of E. N. was supported by the MPI, Bonn.}
\address{V. S.: Department of Mathematics, University of California at Berkeley, Berkeley, CA 94720 USA}
\email{serganov@math.berkeley.edu}
\address{C. S.: Mathematisches Institut, Universitaet Bonn, Endenicher Allee 60,  53115 Bonn, Germany}
\email{stroppel@math.uni-bonn.de}
\date{}

\begin{abstract} We study the category $\mathcal{F}_n$ of finite-dimensional integrable representations of the periplectic Lie superalgebra $\mathfrak{p}(n)$.
We define an action of the Temperley--Lieb algebra with infinitely many generators and defining parameter $0$ on the category $\mathcal{F}_n$
by certain translation functors. We also introduce combinatorial tools, called weight diagrams and arrow diagrams for $\mathfrak{p}(n)$ resembling those for $\mathfrak{gl}(m|n)$. We discover two natural highest weight structures. Using the Temperley--Lieb algebra action and the combinatorics of weight and arrow diagrams,
we then calculate the multiplicities of irreducibles in standard and costandard modules 
and classify the blocks of $\mathcal{F}_n$. We also prove the surprising fact that indecomposable projective modules in this category are multiplicity-free.  
\end{abstract}

\keywords{Kazhdan-Lusztig polynomials, Temperley-Lieb algebra, periplectic Lie superalgebra, translation functors}
\maketitle
\setcounter{tocdepth}{2}
\tableofcontents
\section{Introduction}\label{sec:intro}

The simple Lie superalgebras $\mathfrak g$ over $\mathbb C$ were classified by Kac in 1977, \cite{K77}. These are divided into three groups: 
{\it basic} Lie superalgebras (which means the classical and exceptional series), the {\it strange} ones (often also called periplectic and queer) 
and the ones of {\it Cartan type}. The basic and strange Lie superalgebras are the ones whose even part $\mathfrak g_0$ is reductive and hence there is some hope 
to apply some classical methods in these cases. 
However, already the most natural question of computing characters of 
finite-dimensional irreducible representations of a simple Lie superalgebra turned out to be quite difficult due to the fact that not every finite-dimensional
representation is completely reducible. Using geometric methods (see \cite{Vsel}, \cite{PS}, \cite{GS}), and methods of 
categorification (see \cite{B03}, \cite{B04}, \cite{BS}, \cite{CLW}, \cite{ES1}), this problem was solved for all simple Lie superalgebras except the periplectic Lie 
superalgebra 
$\mathfrak{p}(n)$ defined in Section~\ref{sec:notation} below.\footnote{Strictly speaking  $\mathfrak{p}(n)$ is not simple, but it has the simple ideal of 
codimension $1$ consisting of matrices with zero supertrace which we could consider instead.}

In this paper we study the combinatorics and decomposition numbers of the category $\mathcal{F}_n$ of finite-dimensional representations of the algebraic supergroup
$P(n)$ with Lie superalgebra $\mathfrak p(n)$. 
By \cite{C} this category is a highest weight category (in the sense of \cite{CPS})
with standard objects given by so-called Kac modules. We describe this structure here in more detail stressing thereby the unusual behaviour specific to the periplectic case.

\begin{enumerate}
\item A peculiar fact is that the usual duality functor for this category (in contrast with
the classical category $\mathcal O$ or the category of finite-dimensional modules over basic Lie superalgebras) does not preserves simple objects.
\item 
Although projective modules are also injective and tilting, the combinatorics of standard modules is different from the combinatorics of costandard modules. In
particular, the standard and costandard modules with the same highest weight have different dimensions, (Lemma~\ref{lem:Kacdim}).  We call them therefore {\it thick} respectively {\it thin Kac modules}. A remarkable observation (Lemma~\ref{lem:ext}) is the existence of two highest weight structure with the thick and thin Kac modules interchanged. 
\item Another particular feature of this category is
that, although we have typical weights (for which indecomposable projectives are thick Kac modules and irreducibles are thin Kac modules) the indecomposable projective modules are never simple; see Theorems~\ref{thrm:main-mult2} and  \ref{cor:KL_coeff}. This fact  is related to the existence of a non-trivial Jacobson radical
in $\mathcal{U}(\mathfrak p(n))$, see \cite{Ser}.

\end{enumerate}
Key ingredients in studying the categories of finite-dimensional modules over basic and queer Lie superalgebras
were the existence of a large center in the universal enveloping algebra $\mathcal{U}(\mathfrak g)$, and so-called translation functors given by tensoring
with the natural representation (respectively its dual) followed by the projection onto a block.
In the case of $\mathfrak{p}(n)$, the center of $\mathcal{U}(\mathfrak {p}(n))$ is however trivial, \cite{G}. 
This results in the fact that there are only very few blocks in the category and therefore, one has to adjust the definition of translation functors accordingly to get finer information. 
The key step is the observation that, although  $\mathcal{U}(\mathfrak {p}(n))$ does not have a quadratic Casimir element, one can use the canonical embedding
$\mathfrak{p}(n)\subset\mathfrak{gl}(n|n)$ and the fact that $\mathfrak{gl}(n|n)$ is the direct sum of the adjoint and the coadjoint
representations of $\mathfrak{p}(n)$, see \eqref{dual},  to construct a $\mathfrak{p}(n)$-invariant element $\Omega\in\mathfrak {p}(n)\otimes\mathfrak{gl}(n|n)$.
This element $\Omega$ acts on $M\otimes V$ for any $\mathfrak{p}(n)$-module $M$ and $V=\mathbb{C}^{n|n}$, the vector representation of $\mathfrak{gl}(n|n)$.
A translation functor is then given by tensoring with $V$ followed by projection onto a generalized eigenspace of $\Omega$.\\

The main goal of our paper is to provide a detailed analysis of the combinatorics of the category $\mathcal{F}_n$ and the underlying highest weight structure. On the way we introduce weight diagrams in the spirit of \cite{BS} as a useful combinatorial tool which allows to easily compute the multiplicities of 
standard modules in indecomposable projective modules  and of simple modules in (co)standard modules. The surprising fact is that not only 
are these multiplicities at most $1$ (Theorems~\ref{thrm:main-mult2} and \ref{cor:KL_coeff}), but even the indecomposable projective modules are multiplicity-free (Corollary~\ref{cor:dim_hom_indec}), i.e. the dimension of the homomorphism space between
two indecomposable projective modules  is at most $1$. 

A standard fact of $\mathcal{F}_n$ is that projective modules are at the same time injective, and so in particular they have both a filtration by thick and by thin Kac modules.  Although the category is preserved under taking the dual of a representation, this duality is {\it not} fixing the simple objects, but permutes them in an interesting way.  In Section~\ref{ssec:duality_simple} we determine the highest weight of the dual to a simple module which at the same time gives us finer information about the structure of modules. 

Along the way we show (Theorem~\ref{ssec:categorical_action}) that the translation functors 
induce an action of the Temperley--Lieb algebra $\op{TL}_\infty=\op{TL}_\infty(q+q^{-1})$ attached to the infinite symmetric group $S_{\infty}$ on the category $\mathcal{F}_n$, where $q=\pm i$ is a primitive fourth root of unity. 
As far as we know this is the first instance of a categorical Hecke algebra action at roots of unity using abelian categories. This is in contrast with the approach of \cite{CK}, where homotopy categories of complexes are used, and in contrast to  \cite{EQ} where the Schur-Weyl dual quantum group for $\mathfrak{sl}_2$ at $q=i$ is categorified using the concept of p-dg categories.  As an application of this categorical Temperley-Lieb algebra action we deduce in Theorem~\ref{thrm:blocks} a description of the blocks in $\mathcal{F}_n$ with the action of the translation functors  (Corollary~\ref{thrm:blocks}). \\

This paper is the first part of a WINART group project which took part in March 2016. In the second part, \cite{second}, we introduce the {\it affine VW supercategory} $s\VW$ and the  {\it affine VW superalgebras} (or odd affine Nazarov-Wenzl algebras) $s\VWd$ which describe the natural transformations between the translation functors in the spirit of \cite{BS}, \cite{ES}, but using now the element $\Omega$ from above. 

In fact, up to crucial signs, the relations in this algebra $s\VWd$ are exactly the ones from \cite[Section~4]{Nazarov}, \cite[Definition 2.1]{ES}. We prove it has a basis completely analogous to \cite[Section~2.2]{ES}. Its polynomial subalgebra (generated by elements as in \cite[(2.4), Lemma~11.5]{ES}) is defined using the action of the element $\Omega$ from above. The algebra $s\VWd$ can also be seen as a degenerate affine version of the odd or marked Brauer algebra studied in \cite{M}, \cite{KT}. \\

The affine VW-superalgebras were considered independently in \cite{CP}, where a basis theorem is formulated as well. We should also mention some overlap with \cite{Col}, which independently introduced the 
Casimir elements and Jucys--Murphy elements and classified blocks using totally different methods. In both papers, the authors use the term {\it affine periplectic Brauer algebra} for what we call {\it affine VW-superalgebra}. We prefer the second terminology, since (by making the parallel to the type $A$ situation) we are in fact dealing here with an affine VW-algebra (in the sense of  \cite{ES}) in the supersetting.  We see the construction here in parallel with Drinfeld's degenerate version of an affine Hecke algebra, \cite{Dri} in type $A$. The affine VW-algebras from \cite{ES} were originally introduced in \cite{Nazarov} and further studied in \cite{AMR} and also briefly as degenerate affine BMW-algebras in \cite{DRV}. Using partly our results, aspects of the representation theory in the non-affine case are studied in \cite{CE}.

\subsection{Acknowledgments}
We thank J. Comes and M. Gorelik for sharing their ideas with us and M. Ehrig, D. Tubbenhauer and in the referee for extremely useful remarks on earlier versions of this paper. We also thank the Banff Center and the organizers of the WINART workshop for the hospitality, and the Hausdorff Center of Mathematics in Bonn for support. 

\subsection{Structure of the paper}
In Section~\ref{sec:BGG}, we introduce thin and thick Kac modules.  We consider the two different resulting highest weight structures on $\mathcal{F}_n$ given by either taking thick Kac modules as standard objects and thin Kac modules as costandard objects or vice versa with the corresponding partial order on weights defined in Section~\ref{ssec:order_weights}. In each case we prove a BGG-type reciprocity (Section~\ref{ssec:BGG}), and define the reduced Grothendieck group for $\mathcal{F}_n$ (Section~\ref{ssec:reduced_GR_group}). We also give necessary conditions for extensions between simple modules in Section~\ref{ssec:extensions}.

In Section~\ref{sec:transl_funct}, we define translation functors on $\mathcal{F}_n$, using the endomorphism $\Omega$ of the endofunctor $_- \otimes V$ of $\mathcal{F}_n$ which is our replacement for the missing Casimir element. In particular we project onto generalized eigenspaces for $\Omega$ instead of (as in familiar situations) blocks given by central characters. This is crucial here, since it will turn out that there are only a few blocks altogether.

This endomorphism $\Omega$ is defined in Section~\ref{ssec:Omega}. We compute the actions of translation functors on Kac modules in Section~\ref{ssec:acKac}, and show that they categorically lift the Temperley-Lieb relations (Section~\ref{ssec:categorical_action}) by giving explicit natural transformations realizing the desired relations of functors. 

In Section~\ref{sec:weight_diagrams}, we introduce the notion of weight diagrams for dominant weights and  explain the combinatorics of the actions of translation functors on (co)standard objects in terms of these diagrams (Section~\ref{ssec:translation_fun_diag}) as well as the combinatorics of the duality (Proposition~\ref{prop:duality}).

Section~\ref{sec:KL} is devoted to the computation of the decomposition numbers. This requires the definition of arrow diagrams, given in Section~\ref{ssec:arc_diagrams}. 

In Section~\ref{sec:transl_functors_proj} we describe the action of translation functors on indecomposable projective modules. We show that the result is indecomposable or zero. 

We  prove that indecomposable projective modules are multiplicity-free in Section~\ref{sssec:proj_mult_free} and deduce results  concerning translations of simple modules.  Section~\ref{ssec:socles_Kac} contains a description of the socles of the standard modules and the cosocles of the costandard modules in terms of arrow diagrams.

Finally, Section~\ref{sec:blocks} gives a classification of blocks in the category $\mathcal{F}_n$, and a description of the actions of translation functors on these blocks.

\section{The periplectic Lie supergroup and its finite dimensional representations}\label{sec:notation}
Throughout this paper, we will work over the base field $\mathbb C$. By a {\it vector superspace} we mean a  $\mathbb{Z}/2\mathbb{Z}$-graded vector space $V=V_{\bar 0}\oplus V_{\bar 1}$. The {\it parity} of a homogeneous vector $v \in V$ will be denoted by $p(v) \in \mathbb{Z}/2\mathbb{Z}=\{\bar 0, \bar 1\}$. If the notation $p(v)$ appears in formulas we always assume that $v$ is homogeneous. Throughout let $n>0$ be a fixed positive integer.

\subsection{The periplectic Lie superalgebra}
Let $V$ be an $(n|n)$-dimensional vector superspace equipped with a non-degenerate odd symmetric form
\begin{eqnarray}
\label{beta}
\beta:V\otimes V\to\mathbb C,\quad \beta(v,w)=\beta(w,v), \quad\text{and}\quad \beta(v,w)=0\,\,\text{if}\,\,p(v)=p(w).
\end{eqnarray}

Then $\op{End}_{\mathbb C}(V)$ inherits the structure of a vector superspace from $V$. By $\gg$ we denote the Lie superalgebra $\mathfrak{p}(n)$ of all $X\in\operatorname{End}_{\mathbb C}(V)$ preserving $\beta$, i.e. satisfying  $$\beta(Xv,w)+(-1)^{p({X})p(v)}\beta(v,Xw)=0.$$

\begin{remark}\label{rmk:basis}
If we choose dual bases $v_1, v_2, \ldots, v_n$ in $V_{\bar{0}}$ and $v_{1'}, v_{2'},\ldots, v_{n'}$ in $V_{\bar{1}}$, then the matrix of $X\in \mathfrak{p}(n)$ has the form $\left(\begin{smallmatrix}A&B\\C&-A^t\end{smallmatrix}\right)$
where $A,B,C$ are $n\times n$ matrices such that $B^t=B,\, C^t=-C$. In fact, we can write an explicit homogeneous basis of $\mathfrak{g}$ using our chosen basis.  For this let $E_{i j}, E_{i' j}, E_{i j'}, E_{i' j'}$ ($1 \leq i, j \leq n$) be the corresponding unit matrices in $ \mathfrak{gl}(n|n)$. Using the elements
\begin{equation}
\label{ABC}
  A_{i j}^{\pm} := E_{i j} \pm E_{j' i'} , \quad \quad
  B_{i j}^{\pm} := E_{i j'} \pm E_{j i'},\quad\quad
  C_{i j}^{\pm} := E_{i' j} \pm E_{j'i}, 
  \end{equation}
we obtain a homogeneous basis of $\mathfrak{g}$ as 
\begin{equation}
\label{ABCbasis1}
\{A_{ij}^{-}\}_{1 \leq i, j \leq n} \cup \{B_{ij}^+\}_{1 \leq i < j \leq n} \cup \{B_{ii}^+\}_{1 \leq i \leq n} \cup \{C_{ij}^-\}_{1 \leq i < j \leq n}.
\end{equation} 
\end{remark}

 As a vector space, $\fgl(n|n) = \fp(n) \oplus \fp(n)^\perp,$  where the complement is taken to be the dual with respect to the {\it supertrace form} on $\mathfrak{gl}(n|n)$ defined as
\begin{eqnarray}
\label{dual}
\<x,y\> = \str(xy), & \text{ where } & \str\begin{pmatrix} A&B\\C&D \end{pmatrix} = \tr(A) - \tr(D).
\end{eqnarray} 
The basis dual to \eqref{ABCbasis1} is given in Remark~\ref{dualbasis} below. For a representation $W$ of $\mathfrak{g}$ we denote by $W^*$ the dual representation with $x.f(w)=-(-1)^{p(f)p(w)}f(xw)$ for $x\in\mathfrak{g}$, $w\in W$, $f\in W^*$.  In particular, there is an  isomorphism of $\gg$-modules
\begin{eqnarray}
\label{eta}
\eta:\; V^*& \cong& V \otimes \Pi \mathbb{C}
\end{eqnarray}
induced by the form $\beta$ from \eqref{beta}. Here $\Pi \mathbb{C}$ denotes the $1$-dimensional (trivial) representation in degree $\overline{1}$.

\begin{lemma}
\label{lem:decoSE} Consider the  decomposition $V\otimes V=S^2 V\oplus\Lambda^2V$, where $S^2V$ is the symmetric and $\Lambda^2V$ is the exterior power of $V$. 
Then $S^2V$ and $\Lambda^2V$ are indecomposable $\mathfrak g$-modules and $c=\sum_{i}(v_i\otimes v_{i'}-v_{i'}\otimes v_{i})\in\Lambda^2 V$ spans the unique trivial 
submodule in $V\otimes V$. Moreover, the algebra $\operatorname{End}_{\gg}(V\otimes V)$ is 3-dimensional with basis the identity and 
\begin{eqnarray}
\label{sande}
s: v\otimes w\mapsto (-1)^{p(v)p(w)} w\otimes v&\text{and}&e={\beta^*}\circ\beta: 1\mapsto c.
\end{eqnarray}
\end{lemma}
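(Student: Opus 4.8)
The plan is to proceed in three steps: first establish the trivial submodule $c \in \Lambda^2 V$, then use the odd form $\beta$ to identify $\Lambda^2 V$ and $S^2 V$ with quotients/submodules related to $\mathfrak{g}$ and its dual inside $\mathfrak{gl}(n|n)$, and finally count $\operatorname{End}_\gg(V \otimes V)$ by combining a general-principle lower bound of $3$ with an upper bound coming from the indecomposability statements.

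First I would verify that $c=\sum_{i}(v_i\otimes v_{i'}-v_{i'}\otimes v_{i})$ spans a trivial submodule. Since $\beta$ is odd and symmetric, the element $c$ is (up to scalar and parity shift) the image of $1 \in \mathbb{C}$ under the coevaluation $\beta^* \colon \Pi\mathbb{C} \to V \otimes V$ dual to $\beta \colon V \otimes V \to \Pi\mathbb{C}$; both $\beta$ and $\beta^*$ are $\gg$-module maps by the very definition of $\gg$ as the stabilizer of $\beta$, so $\mathbb{C}c$ is a trivial submodule. A direct check that $\mathbb{C}c$ is the \emph{unique} trivial submodule: any $\gg$-invariant vector in $V\otimes V \cong V \otimes V^* \otimes \Pi\mathbb{C} \cong \operatorname{End}_{\mathbb C}(V)\otimes \Pi\mathbb C$ corresponds to a $\gg$-invariant endomorphism of $V$, i.e. an element of $\operatorname{End}_\gg(V)$; since $V$ is a simple $\gg$-module (the natural representation has no nonzero proper submodule, as one checks directly on the basis, or cite that it is a standard/costandard module with simple socle), $\operatorname{End}_\gg(V)=\mathbb C\,\id$, giving a one-dimensional space of invariants. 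One then checks $c$ lies in $\Lambda^2 V$ rather than $S^2 V$ using the sign $(-1)^{p(v_i)p(v_{i'})} = -1$ forced by $v_i$ even and $v_{i'}$ odd in the definition of $s$.

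Next, for indecomposability of $S^2 V$ and $\Lambda^2 V$: using $\eta\colon V^* \cong V \otimes \Pi\mathbb C$ from \eqref{eta} we get $V \otimes V \cong V \otimes V^* \otimes \Pi\mathbb C \cong \operatorname{End}_{\mathbb C}(V) \otimes \Pi\mathbb C \cong \mathfrak{gl}(n|n) \otimes \Pi\mathbb C$ as $\gg$-modules. Under the decomposition $\mathfrak{gl}(n|n) = \mathfrak{p}(n) \oplus \mathfrak{p}(n)^\perp$ from \eqref{dual}, the adjoint action of $\gg$ on $\mathfrak{p}(n)$ has the center of $\mathcal U(\gg)$ acting trivially and, more to the point, $\mathfrak{p}(n)$ is an indecomposable $\gg$-module (it has a simple ``top'' and the supertrace-zero ideal is a submodule; the extension is non-split because there is no $\gg$-invariant complement — equivalently $\mathfrak{p}(n)$ is not semisimple, the failure of complete reducibility being exactly the phenomenon flagged in the introduction). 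Likewise $\mathfrak{p}(n)^\perp \cong \mathfrak{p}(n)^*$ is indecomposable. Matching parities, one of $\{S^2 V, \Lambda^2 V\}$ (the one containing $c$, namely $\Lambda^2 V$) corresponds to $\mathfrak{p}(n)^\perp \otimes \Pi\mathbb C$ or $\mathfrak{p}(n)\otimes\Pi\mathbb C$ and the other to the remaining summand; in either assignment both are indecomposable. The cleanest route is: each of $S^2V$, $\Lambda^2V$ has a one-dimensional space of $\gg$-invariant endomorphisms (again by reducing to $\operatorname{End}_\gg$ of the corresponding summand of $\mathfrak{gl}(n|n)$, which is one-dimensional since that summand has simple head equal to simple socle), hence each has local endomorphism ring and is indecomposable.

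Finally, for $\dim \operatorname{End}_\gg(V\otimes V) = 3$: the three displayed endomorphisms $\id$, $s$, and $e = \beta^* \circ \beta$ are $\gg$-module maps ($s$ because $\gg$ acts diagonally and $s$ is the super-flip; $e$ because $\beta, \beta^*$ are $\gg$-maps). They are linearly independent: $s$ has eigenvalues $+1$ on $S^2 V$ and $-1$ on $\Lambda^2 V$ so $s \notin \mathbb C\,\id$, while $e$ has image $\mathbb C c$ which is one-dimensional, so $e$ is not a linear combination of $\id$ and $s$ (both of which are invertible or have large image). This gives $\dim \operatorname{End}_\gg(V\otimes V)\ge 3$. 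For the upper bound: $V \otimes V = S^2 V \oplus \Lambda^2 V$ with both summands indecomposable with one-dimensional endomorphism rings, so $\operatorname{End}_\gg(V\otimes V)$ has dimension $1 + 1 + \dim\operatorname{Hom}_\gg(S^2V,\Lambda^2V) + \dim\operatorname{Hom}_\gg(\Lambda^2V, S^2V)$. The map $e$ accounts for a one-dimensional $\operatorname{Hom}$ in one direction (it factors $\Lambda^2 V \twoheadrightarrow \mathbb C c \hookrightarrow \Lambda^2 V$ — actually $e$ lands in $\Lambda^2V$ and kills $S^2V$, so restricted appropriately it is an endo of $\Lambda^2V$; the genuine cross-$\operatorname{Hom}$'s must be analyzed via socle/head). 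I expect \textbf{the main obstacle} to be this last point: showing there are no ``extra'' homomorphisms between $S^2 V$ and $\Lambda^2 V$ beyond what $\id, s, e$ already produce — i.e. pinning down that $\dim\operatorname{Hom}_\gg(S^2V,\Lambda^2V) + \dim\operatorname{Hom}_\gg(\Lambda^2V,S^2V) = 1$. This should follow from a careful comparison of the socle and cosocle (head) of each module — a nonzero $\gg$-map $S^2V \to \Lambda^2V$ would force a common composition factor appearing in the head of $S^2V$ and the socle of $\Lambda^2V$, and one checks using the $\mathfrak{gl}(n|n) = \mathfrak p(n)\oplus\mathfrak p(n)^\perp$ picture (where heads and socles are explicitly the trivial and a single other simple, in ``opposite'' positions for the module and its dual) that exactly one of the two Hom-directions is nonzero and is one-dimensional, namely the one realized by $e$.
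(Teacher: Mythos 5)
Your overall strategy — identify $S^2V$ and $\Lambda^2 V$ with $\Pi\gg$ and $\Pi\gg'$ inside $\mathfrak{gl}(n|n)$, establish indecomposability there, and then count $\operatorname{End}_\gg(V\otimes V)$ by a socle/head analysis of the cross-Homs — is the same route the paper takes, but there are several genuine gaps and one concrete error.

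First, the indecomposability of $\gg$ as a $\gg$-module is asserted rather than proved: ``the extension is non-split because there is no invariant complement --- equivalently $\gg$ is not semisimple'' is circular, since the existence of a non-semisimple module somewhere in $\mathcal{F}_n$ does not tell you anything about this particular extension. The paper instead invokes the specific structural input from Kac (\cite{K77}) that for $n\geq 3$ the ideal $[\gg,\gg]$ is \emph{simple} of codimension $1$; this at once pins down the composition series of $\gg$ and lets one conclude indecomposability (a splitting would force a central line in $\gg$). Without that input your socle/head bookkeeping for the Hom count has nothing to stand on. Relatedly, you do not handle $n=1,2$, where the Kac simplicity fails — for $n=2$ the adjoint representation has Loewy length $3$, so the picture ``simple socle, trivial head'' is simply wrong — and the paper has to do a separate direct computation there.

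Second, you have the direction of $e$ backwards. Since $\beta$ is supersymmetric it kills $\Lambda^2 V$ (the $-1$ eigenspace of $s$), while $c=\beta^*(1)$ lies in $\Lambda^2 V$; so $e=\beta^*\circ\beta$ vanishes on $\Lambda^2 V$, is nonzero on $S^2V$, and factors $S^2V\twoheadrightarrow \mathbb{C}\hookrightarrow \Lambda^2 V$. It is therefore exactly the cross-morphism accounting for $\operatorname{Hom}_\gg(S^2V,\Lambda^2V)$, not ``an endo of $\Lambda^2 V$'' as you wrote. Once this is corrected, the only thing left to rule out is a nonzero map $\Lambda^2 V\to S^2V$, which would force the head of $\Lambda^2V\cong(S^2V)^*$ (namely the dual of the simple ideal) to be isomorphic to the socle of $S^2 V$ (the simple ideal itself); you correctly flag this as the main obstacle but do not close it. So the proposal has the right shape but is missing the decisive citations and case analysis that the paper relies on, and has the $e$-direction mix-up.
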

\begin{remark} Note that  $e=e\circ s=-s\circ e$ and $\operatorname{End}_{\gg}(V\otimes V)$ is by the above decomposition isomorphic to the algebra of lower triangular $2\times 2$-matrices via $s\mapsto \left(\begin{smallmatrix}1&0\\0&-1\end{smallmatrix}\right)$ and $e\mapsto\left(\begin{smallmatrix}0&0\\1&0\end{smallmatrix}\right)$.
\end{remark}
\begin{proof} This is proved for instance in \cite[Section 6.1]{M}.
\end{proof}

%
%

Note that $\mathfrak g_{\bar 0}$ is isomorphic to $\mathfrak{gl}(n)$ and $\gg_{\bar 1}$ decomposes as $$\gg_{\bar 1} = \mathfrak g_{-1} \oplus \mathfrak g_1$$ where
$\mathfrak g_1=\{\left(\begin{smallmatrix}0&B\\0&0\end{smallmatrix}\right)\}$ and $\mathfrak g_{-1}=\{\left(\begin{smallmatrix}0&0\\C&0\end{smallmatrix}\right)\}$. Thus, 
$\mathfrak g$ has a $\mathbb Z$-grading $\mathfrak g=\mathfrak g_{-1}\oplus \mathfrak g_0\oplus \mathfrak g_1$. It is given by the adjoint action of the element
 \begin{eqnarray}
 \label{theh}
 h&:=&\frac{1}{2}\operatorname{diag}(1,1,\ldots,1,-1,-1,\ldots,-1)\in\gg_0=\gg_{\overline{0}}.
 \end{eqnarray}

\subsection{The  category \texorpdfstring{$\mathcal{F}_n$}{F} of finite-dimensional integrable representations}
By $\mathcal{F}_n$ we denote the abelian category of finite-dimensional representations of the corresponding supergroup $G$, i.e., finite-dimensional
$\gg$-modules integrable over $G_0\cong GL(n)$. We will denote by $\Pi$ the {\it parity switching functor} $- \otimes {\mathbb C}^{(0|1)}$ 
(that is tensoring with the odd trivial representation  $\Pi \mathbb{C}$ on the right).

By definition, the morphisms in $\mathcal{F}_n$ are {\it even} $G_0$-morphisms (otherwise $\mathcal{F}_n$ would not be abelian),
i.e., $\operatorname{Hom}_{\mathcal{F}_n}(X,Y)$ is a vector space and not a vector superspace.
It will be convenient however to consider the vector superspace
$$\operatorname{Hom}_{\gg}(X,Y)=\operatorname{Hom}_{\mathcal{F}_n}(X,Y)\oplus \operatorname{Hom}_{\mathcal{F}_n}(X,\Pi Y)$$
and set 
$$\dim\operatorname{Hom}_{\gg}(X,Y)=\dim\operatorname{Hom}_{\mathcal{F}_n}(X,Y)+ \dim\operatorname{Hom}_{\mathcal{F}_n}(X,\Pi Y).$$
In this way we also define the Jordan--H\"{o}lder multiplicities, $[X: L]$ will denote the total number of simple subquotients isomorphic to $L$ or $\Pi L$.

We fix  the standard Cartan subalgebra $\hh$ of diagonal matrices in $\gg_0$ with its standard dual basis $\{\varepsilon_1,\dots,\varepsilon_n\}$ and denote by $\Delta=\Delta(\gg_{-1})\cup\Delta(\gg_{0})\cup\Delta(\gg_{1})$ the set of roots divided according to the $\mathbb{Z}$-grading. Then weights of modules in $\mathcal{F}_n$ are of the form 
\begin{eqnarray}
\label{defdom}
\lambda&=&(\lambda_1,\dots,\lambda_n)=\sum_{i=1}^n\lambda_i\varepsilon_i,\quad \lambda_i\in\mathbb Z.
\end{eqnarray}
A weight $\lambda$ is (integral) {\it dominant} if and only if $\lambda_1\geq\lambda_2\geq\dots\geq\lambda_n$. 
We denote by $V(\lambda)$ the simple $\gg_0$-module with highest weight $\lambda$ with respect to the fixed Borel $\bb_0$ of $\gg_0$. 

Denote the set of dominant weights by $\Lambda_n$. The simple objects of $\mathcal{F}_n$, up to isomorphism and parity switch, will be parametrized by the set $\Lambda_n$, with the simple module $L(\lambda)$ having the highest weight $\lambda$ with respect to the Borel subalgebra $\bb_0 \oplus \gg_{-1}$.
\small
We use the following abbreviations
\begin{equation}
\label{omegarho}
\quad |\lambda| = \sum_i \lambda_i,\quad\omega = \sum_{i=1}^n\varepsilon_i, \quad \rho=\sum_{i=1}^n (n-i)\varepsilon_i,\quad \bar\lambda:=\lambda+\rho,\quad c_\lambda=\{\bar\lambda_1,\dots,\bar\lambda_n\}\subset \mathbb Z,
\end{equation}
\normalsize
$$\gamma=\sum_{\alpha\in\Delta^+(\gg_{-1})}\alpha=\sum_{i=1}^n(1-n)\varepsilon_i= (1-n)\omega,\quad
\tilde\gamma=\sum_{\alpha\in\Delta^+(\gg_{1})}\alpha=\sum_{i=1}^n(n+1)\varepsilon_i= (n+1)\omega.$$

Note that $\lambda$ is dominant if and only if $\bar\lambda$ is a strictly decreasing sequence of integers. 

\section{Kac modules and BGG reciprocity}\label{sec:BGG}

We introduce now the thin and thick Kac modules as induced modules, where we use the usual induction and coinduction functors for a Lie super subalgebra  $\mathfrak{a}\subset\mathfrak{g}$ forming the following pairs of adjoint functors with the restriction functor $\op{Res}^\mathfrak{g}_\mathfrak{a}$
\begin{eqnarray}
\label{Indcoind}
(\operatorname{Ind}^{\mathfrak{g}}_\mathfrak{a},\op{Res}^\mathfrak{g}_\mathfrak{a})
&\text{and}&
(\op{Res}^\mathfrak{g}_\mathfrak{a}, \operatorname{Coind}^{\mathfrak{g}}_\mathfrak{a})
\end{eqnarray}
and the functors $H^0(\mathfrak{a},_-)$ and  $H_0(\mathfrak{a},_-)$ on $\mathcal{F}_n$,  of taking invariants and coinvariants,  see e.g. \cite{Fuks} for more details.

\subsection{Thin and Thick Kac modules}\label{ssec:Kac_modules}
Let $\lambda$ be a dominant weight. We define the {\it thin Kac module} corresponding to $\lambda$ as
$$\nabla(\lambda)=\Pi^{n(n-1)/2}\operatorname{Ind}^{\gg}_{\gg_0\oplus\gg_1}V(\lambda-\gamma)\simeq \operatorname{Coind}^{\gg}_{\gg_0\oplus\gg_1}V(\lambda).$$
Note that $\nabla(\lambda)$ is a free and cofree $\mathcal{U}(\gg_{-1})$-module and we have
$$H_0(\gg_{-1},\nabla(\lambda))=\Pi^{n(n-1)/2} V(\lambda-\gamma),\quad H^0(\gg_{-1},\nabla(\lambda))=V(\lambda).$$

Similarly, the {\it thick Kac module} corresponding to $\lambda$ is defined as
$$\Delta(\lambda)=\operatorname{Ind}^{\gg}_{\gg_0\oplus\gg_{-1}}V(\lambda)\simeq \Pi^{n(n+1)/2}\operatorname{Coind}^{\gg}_{\gg_0\oplus\gg_{-1}}V(\lambda+\tilde{\gamma}).$$
Furthermore, $\Delta(\lambda)$ is a free and cofree $\mathcal{U}(\gg_{1})$-module and we have
$$H_0(\gg_{1},\Delta(\lambda))=V(\lambda),\quad H^0(\gg_{1},\Delta(\lambda))= \Pi^{n(n+1)/2} V(\lambda+\tilde\gamma).$$
 
The thick Kac module $\Delta(\lambda)$ is a highest weight module with highest weight $\lambda$ with respect to Borel subalgebra $\bb_0\oplus\gg_{-1}$, hence 
by a standard argument, it
has unique simple quotient $L(\lambda)$, see for example \cite{K77}. Note that by Frobenius reciprocity $L(\lambda)$ coincides with the socle of
$\nabla(\lambda)$.

\begin{lemma}\label{lem:Kacdim} The dimensions of the thin and the thick Kac modules are given by
$$\dim\nabla(\lambda)=2^{n(n-1)/2}\dim V(\lambda),\quad \text{  and  } \quad \dim\Delta(\lambda) =2^{n(n+1)/2}\dim V(\lambda).$$ 
\end{lemma}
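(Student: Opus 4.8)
The plan is to compute the dimensions directly from the definitions of the thin and thick Kac modules as (co)induced modules, using the fact that induction from a parabolic with respect to a nilpotent radical produces a free module over the universal enveloping algebra of that radical. Concretely, $\nabla(\lambda) = \Pi^{n(n-1)/2}\operatorname{Ind}^{\gg}_{\gg_0\oplus\gg_1} V(\lambda-\gamma)$ is, as a $\mathcal{U}(\gg_{-1})$-module, free of rank $\dim V(\lambda-\gamma)$; since $\dim V(\mu)$ depends only on $\mu$ through the $\gg_0 = \gfl(n)$-structure and tensoring by a one-dimensional character does not change the dimension, we have $\dim V(\lambda-\gamma) = \dim V(\lambda)$. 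Hence $\dim \nabla(\lambda) = \dim \mathcal{U}(\gg_{-1}) \cdot \dim V(\lambda)$, and applying the parity shift $\Pi^{n(n-1)/2}$ changes nothing about dimensions. Similarly $\dim\Delta(\lambda) = \dim\mathcal{U}(\gg_1)\cdot\dim V(\lambda)$.

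The only remaining point is to identify $\dim\mathcal{U}(\gg_{\pm 1})$. Since $\gg_{-1}$ and $\gg_1$ are purely odd abelian Lie superalgebras, the PBW theorem for Lie superalgebras gives $\mathcal{U}(\gg_{\pm 1}) \cong \Lambda^\bullet(\gg_{\pm 1})$, the exterior algebra on the underlying vector space, so $\dim\mathcal{U}(\gg_{\pm 1}) = 2^{\dim \gg_{\pm 1}}$. From the matrix description in Remark~\ref{rmk:basis}, $\gg_1 = \{\left(\begin{smallmatrix}0&B\\0&0\end{smallmatrix}\right) : B^t = B\}$ consists of symmetric $n\times n$ matrices, so $\dim\gg_1 = n(n+1)/2$; and $\gg_{-1} = \{\left(\begin{smallmatrix}0&0\\C&0\end{smallmatrix}\right) : C^t = -C\}$ consists of antisymmetric $n\times n$ matrices, so $\dim\gg_{-1} = n(n-1)/2$. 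Substituting gives $\dim\nabla(\lambda) = 2^{n(n-1)/2}\dim V(\lambda)$ and $\dim\Delta(\lambda) = 2^{n(n+1)/2}\dim V(\lambda)$, as claimed.

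There is no real obstacle here; the statement is essentially a bookkeeping consequence of PBW plus the already-recorded fact that the Kac modules are free over $\mathcal{U}(\gg_{\mp 1})$. The one thing worth stating carefully is why $\dim V(\lambda - \gamma) = \dim V(\lambda)$: this is because $\gamma = (1-n)\omega$ with $\omega = \sum_i \varepsilon_i$ corresponds to a one-dimensional determinant-type twist of $\gfl(n)$-modules, which preserves dimension (equivalently, the Weyl dimension formula is invariant under translation by a multiple of $\omega$ since all $\lambda_i$ shift equally and the differences $\bar\lambda_i - \bar\lambda_j$ are unchanged). With that remark in place the proof is a two-line computation.
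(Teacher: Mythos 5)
Your proof is correct and follows essentially the same approach as the paper's, which directly records the $\gg_0$-module isomorphisms $\nabla(\lambda)\simeq\Lambda(\gg_{-1}^*)\otimes V(\lambda)$ and $\Delta(\lambda)\simeq\Lambda(\gg_1)\otimes V(\lambda)$ and then counts dimensions. The one small difference is that the paper implicitly uses the coinduction description $\nabla(\lambda)\simeq\operatorname{Coind}^{\gg}_{\gg_0\oplus\gg_1}V(\lambda)$, so $V(\lambda)$ appears directly and the remark $\dim V(\lambda-\gamma)=\dim V(\lambda)$ is not needed; you work from the induction form and correctly supply that extra observation.
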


\begin{proof} We have the following isomorphisms of $\gg_0$-modules:
$$\nabla(\lambda)\simeq \Lambda(\gg_{-1}^*)\otimes V(\lambda),\quad\text{and}\quad \Delta(\lambda)\simeq \Lambda(\gg_{1})\otimes V(\lambda).$$
Since $\dim \gg_{-1}=n(n-1)/2$ and $\dim\gg_1=n(n+1)/2$, the statement follows. 
\end{proof}

\begin{example}
With respect to the Borel subalgebra $\bb_0 \oplus \gg_{-1}$, the highest weight of the natural $\gg$-module $V$ is (odd) $-\varepsilon_n$. Thus, we have
$V\simeq \Pi L(-\varepsilon_n)$. 
\end{example}

\subsection{BGG reciprocity}\label{ssec:BGG}
The category $\mathcal{F}_n$ has enough projective and injective objects, since
$\operatorname{Ind}^\gg_{\gg_0}V(\lambda)$ is both projective and injective (cf. \cite{BKN}). We will denote the full subcategory of projectives in $\mathcal{F}_n$ by $\mathcal P_n$. For any dominant weight $\lambda$ we denote by $P(\lambda)$ the projective cover and by $I(\lambda)$ the injective hull of $L(\lambda)$. 
Furthermore, every projective has a standard filtration (that means with subquotients isomorphic to thick Kac modules) and a costandard filtration (that means with subquotients isomorphic to thin Kac  modules)
since $\operatorname{Ind}^\gg_{\gg_0}V(\lambda)$ admits such filtrations and then also any direct summand, because of the following Ext-vanishing result. Moreover this result implies that the multiplicity $(P : \Delta(\lambda))$ or $(P : \nabla(\lambda))$ of how often $\Delta(\lambda)$ or $\nabla(\lambda)$, respectively, appears as a subquotient for $P$ is independent of the choice of the filtration.  The following lemmas are standard general results, see e.g. \cite[Section~4]{C} for our special case.

\begin{lemma}\label{lem:ext} For any dominant $\lambda$ and $\mu$ we have
$$\dim\Hom_\gg(\Delta(\lambda),\nabla(\mu))=\delta_{\lambda,\mu},\quad \text{  and  } \quad \Ext^1_{\mathcal{F}_n}(\Delta(\lambda),\nabla(\mu))=0,$$ and similarly,
$$\dim\Hom_\gg(\nabla(\lambda),\Delta(\mu))=\delta_{\lambda-2\omega,\mu},\quad \text{  and  } \quad \Ext^1_{\mathcal{F}_n}(\nabla(\lambda),\Delta(\mu))=0.$$
\end{lemma}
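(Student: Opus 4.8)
The plan is to reduce everything to classical facts about $\mathfrak{gl}(n)$-modules via the explicit realizations of the Kac modules as induced/coinduced modules, combined with the adjunctions in \eqref{Indcoind} and the $H^0/H_0$-computations recorded in Section~\ref{ssec:Kac_modules}. First I would treat the pairing $\Hom_\gg(\Delta(\lambda),\nabla(\mu))$. Using $\Delta(\lambda)=\operatorname{Ind}^{\gg}_{\gg_0\oplus\gg_{-1}}V(\lambda)$ together with the Frobenius reciprocity adjunction $(\operatorname{Ind}^{\gg}_{\gg_0\oplus\gg_{-1}},\op{Res}^{\gg}_{\gg_0\oplus\gg_{-1}})$, one gets
$$\Hom_\gg(\Delta(\lambda),\nabla(\mu))\cong \Hom_{\gg_0\oplus\gg_{-1}}(V(\lambda),\nabla(\mu)).$$
Since $\gg_{-1}$ acts trivially on the lowest piece and $V(\lambda)$ is a $\gg_{-1}$-trivial module, such a map must land in the $\gg_{-1}$-invariants $H^0(\gg_{-1},\nabla(\mu))=V(\mu)$ (as $\gg_0$-modules), so the space is $\Hom_{\gg_0}(V(\lambda),V(\mu))=\delta_{\lambda,\mu}$ by Schur's lemma for $\gg_0\cong\mathfrak{gl}(n)$. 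Symmetrically, using the coinduced description $\nabla(\mu)\simeq\operatorname{Coind}^{\gg}_{\gg_0\oplus\gg_1}V(\mu)$ and the adjunction $(\op{Res}^{\gg}_{\gg_0\oplus\gg_1},\operatorname{Coind}^{\gg}_{\gg_0\oplus\gg_1})$ gives the same count from the other side; comparing confirms the $\Hom$ is $1$-dimensional exactly when $\lambda=\mu$ and $0$ otherwise, with the isomorphism $\Delta(\lambda)\twoheadrightarrow L(\lambda)\hookrightarrow\nabla(\lambda)$ as the generator.

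For the second $\Hom$, $\Hom_\gg(\nabla(\lambda),\Delta(\mu))$, I would run the same machine but now tracking the parity shifts and $\gamma,\tilde\gamma$. From $\nabla(\lambda)=\Pi^{n(n-1)/2}\operatorname{Ind}^{\gg}_{\gg_0\oplus\gg_1}V(\lambda-\gamma)$ and Frobenius reciprocity,
$$\Hom_\gg(\nabla(\lambda),\Delta(\mu))\cong \Hom_{\gg_0\oplus\gg_1}\bigl(\Pi^{n(n-1)/2}V(\lambda-\gamma),\Delta(\mu)\bigr),$$
and a $\gg_1$-equivariant map out of a $\gg_1$-trivial module factors through the top $\gg_1$-invariants $H^0(\gg_1,\Delta(\mu))=\Pi^{n(n+1)/2}V(\mu+\tilde\gamma)$. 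So the space is $\Hom_{\gg_0}(\Pi^{n(n-1)/2}V(\lambda-\gamma),\Pi^{n(n+1)/2}V(\mu+\tilde\gamma))$; the parity shifts differ by $\Pi^{n}$, which is even iff $n$ is even, but more importantly the $\gg_0$-weights must agree, giving $\lambda-\gamma=\mu+\tilde\gamma$, i.e. $\lambda-\mu=\gamma+\tilde\gamma=(1-n)\omega+(n+1)\omega=2\omega$, i.e. $\mu=\lambda-2\omega$. I should double-check that the parity shifts indeed cancel (so that this is a nonzero morphism in $\mathcal F_n$ rather than in $\Pi\mathcal F_n$); the total shift is $\Pi^{n(n-1)/2+n(n+1)/2}=\Pi^{n^2}=\Pi^{n}$, and since the convention $\dim\Hom_\gg=\dim\Hom_{\mathcal F_n}(-, -)\oplus\dim\Hom_{\mathcal F_n}(-,\Pi-)$ is being used, the bookkeeping of $\Pi$ is absorbed into $\Hom_\gg$ anyway.

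For the $\Ext^1$-vanishing I would use the standard homological argument for highest weight categories: $\Delta(\lambda)$ has a projective resolution whose terms, after applying the relevant $(\mathfrak a,-)$-(co)homology, compute $\Ext^1$; concretely, $\operatorname{Ind}^{\gg}_{\gg_0\oplus\gg_{-1}}(-)$ is exact and sends projective $\gg_0$-modules to projective objects of $\mathcal F_n$, and $\op{Res}^{\gg}_{\gg_0\oplus\gg_{-1}}$ is exact, so $\Ext^1_{\mathcal F_n}(\Delta(\lambda),\nabla(\mu))\cong \Ext^1_{\gg_0\oplus\gg_{-1}}(V(\lambda),\nabla(\mu))$; since $\gg_{-1}$ acts trivially on $V(\lambda)$ and $V(\lambda)$ is projective as a $\gg_0\cong\mathfrak{gl}(n)$-module (all objects in the semisimple category of $GL(n)$-modules are projective), this $\Ext^1$ vanishes. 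The same argument with $\gg_1$ in place of $\gg_{-1}$ handles $\Ext^1_{\mathcal F_n}(\nabla(\lambda),\Delta(\mu))$. The main obstacle I anticipate is being careful that ``induction from $\gg_0\oplus\gg_{-1}$ of a projective $\gg_0$-module is projective in $\mathcal F_n$'': this needs the analogue of the tensor identity $\operatorname{Ind}^{\gg}_{\gg_0}V(\lambda)\simeq\operatorname{Ind}^{\gg}_{\gg_0\oplus\gg_{-1}}(\operatorname{Ind}^{\gg_0\oplus\gg_{-1}}_{\gg_0}V(\lambda))$ plus the fact (already cited in Section~\ref{ssec:BGG}) that $\operatorname{Ind}^{\gg}_{\gg_0}V(\lambda)$ is projective, so everything is consistent — but getting the chain of adjunctions and the parity/$\Pi$-twists exactly right is the delicate, if routine, part.
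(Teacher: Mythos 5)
Your computation of the two $\Hom$-spaces matches the paper's proof exactly: in both cases one applies Frobenius reciprocity \eqref{Indcoind} to move from $\gg$ to $\gg_0\oplus\gg_{\mp1}$, then observes that a morphism out of the $\gg_{\mp1}$-trivial module $V(\lambda)$ (resp.\ $V(\lambda-\gamma)$) must factor through $H^0(\gg_{\mp1},-)$ of the target, which is computed in Section~\ref{ssec:Kac_modules}, and finally invokes semisimplicity of $\op{Rep}^{fd}(\gg_0)$. That part is fine.

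The $\Ext^1$ argument has a genuine gap. You correctly reduce, via the exact adjoint pair $(\operatorname{Ind}^{\gg}_{\gg_0\oplus\gg_{-1}},\op{Res})$, to $\Ext^1_{\gg_0\oplus\gg_{-1}}(V(\lambda),\nabla(\mu))$, but then assert this vanishes because ``$V(\lambda)$ is projective as a $\gg_0$-module.'' That does not follow: $V(\lambda)$ with trivial $\gg_{-1}$-action is \emph{not} projective as a $\gg_0\oplus\gg_{-1}$-module (the odd abelian ideal $\gg_{-1}$ is not semisimple; e.g.\ $\Ext^1_{\gg_0\oplus\gg_{-1}}(\mathbb{C},\mathbb{C})\cong (\gg_{-1}^*)^{\gg_0}$ need not vanish), so projectivity over $\gg_0$ buys you nothing here. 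What saves the day — and what the paper uses — is that $\nabla(\mu)$ restricted to $\gg_0\oplus\gg_{-1}$ is \emph{cofree} over $\mathcal{U}(\gg_{-1})$ (Section~\ref{ssec:Kac_modules}), i.e.\ coinduced from $\gg_0$; applying the \emph{second} adjunction $(\op{Res},\op{Coind})$ then collapses $\Ext^1_{\gg_0\oplus\gg_{-1}}(V(\lambda),\nabla(\mu))$ to $\Ext^1_{\gg_0}(V(\lambda),V(\mu))=0$. In other words, you need an acyclicity statement on the \emph{target} side (injectivity of $\nabla(\mu)$ over $\gg_0\oplus\gg_{-1}$), not a projectivity statement on the source side. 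Finally, your closing speculation that $\operatorname{Ind}^{\gg}_{\gg_0\oplus\gg_{-1}}V(\lambda)=\Delta(\lambda)$ ought to be projective in $\mathcal{F}_n$ is actually false for atypical $\lambda$ (Lemma~\ref{lem:typical}); the tensor-identity observation does show $\operatorname{Ind}^\gg_{\gg_0}V(\lambda)$ is projective, but $\Delta(\lambda)$ is only a quotient of it, and projectivity is exactly what fails in the atypical case. So that route cannot replace the Res/Coind adjunction.
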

\begin{remark}
\label{par}
 Keeping track of the parity, the map $\Delta(\lambda) \to \nabla(\lambda)$ is even, while the map $\nabla(\lambda) \to \Delta(\lambda -2\omega)$ has parity $(-1)^n$.
\end{remark}

\begin{corollary}[BGG-reciprocity]\label{cor:BGG} For any  dominant weights $\lambda, \mu$  the following holds:
$$(P(\lambda):\Delta(\mu))=[\nabla(\mu):L(\lambda)], \;\;\;\; (P(\lambda):\nabla(\mu+2\omega))=[\Delta(\mu):L(\lambda)].$$
\end{corollary}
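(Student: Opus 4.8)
The plan is to deduce BGG reciprocity from the two highest weight structures on $\mathcal{F}_n$ together with the Ext-vanishing and Hom-computations of Lemma~\ref{lem:ext}, via the standard argument that the multiplicity of a standard object in a projective equals the multiplicity of the corresponding simple in the costandard object. Concretely, since every projective $P(\lambda)$ has a standard (thick Kac) filtration whose multiplicities $(P(\lambda):\Delta(\mu))$ are well-defined by Lemma~\ref{lem:ext}, I would compute these multiplicities by pairing $P(\lambda)$ against the costandard modules $\nabla(\mu)$ in an appropriate Hom/Ext sense.

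First I would record the key input: for a module $P$ with a $\Delta$-filtration, the first part of Lemma~\ref{lem:ext} (the $\delta_{\lambda,\mu}$ in $\dim\Hom_\gg(\Delta(\lambda),\nabla(\mu))$ together with $\Ext^1_{\mathcal{F}_n}(\Delta(\lambda),\nabla(\mu))=0$) gives, by induction on the length of the filtration using the long exact sequence in $\Hom(-,\nabla(\mu))$, the identity
\[
\dim\Hom_\gg(P,\nabla(\mu)) = (P:\Delta(\mu)).
\]
Applying this to $P=P(\lambda)$ yields $(P(\lambda):\Delta(\mu)) = \dim\Hom_\gg(P(\lambda),\nabla(\mu))$. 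On the other hand, since $P(\lambda)$ is the projective cover of $L(\lambda)$ in the highest weight structure with standard objects $\Delta$, projectivity of $P(\lambda)$ makes $\Hom_\gg(P(\lambda),-)$ exact, so $\dim\Hom_\gg(P(\lambda),\nabla(\mu))$ counts the total (parity-graded) multiplicity $[\nabla(\mu):L(\lambda)]$ of $L(\lambda)$ as a composition factor of $\nabla(\mu)$. This establishes the first equality $(P(\lambda):\Delta(\mu))=[\nabla(\mu):L(\lambda)]$.

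For the second equality I would use the \emph{other} highest weight structure from Lemma~\ref{lem:ext} (and the introduction's remark about the second highest weight structure), in which the roles of $\Delta$ and $\nabla$ are swapped up to the shift by $2\omega$. Here the relevant input is the second half of Lemma~\ref{lem:ext}: $\dim\Hom_\gg(\nabla(\lambda),\Delta(\mu))=\delta_{\lambda-2\omega,\mu}$ and $\Ext^1_{\mathcal{F}_n}(\nabla(\lambda),\Delta(\mu))=0$. The same induction-on-filtration-length argument, now applied to the costandard ($\nabla$) filtration of $P(\lambda)$ and the functor $\Hom_\gg(-,\Delta(\mu))$, gives
\[
\dim\Hom_\gg(P(\lambda),\Delta(\mu)) = (P(\lambda):\nabla(\mu+2\omega)),
\]
where the shift $\mu\mapsto\mu+2\omega$ comes precisely from the $\delta_{\lambda-2\omega,\mu}$ appearing in Lemma~\ref{lem:ext}. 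Again using exactness of $\Hom_\gg(P(\lambda),-)$, the left side equals $[\Delta(\mu):L(\lambda)]$, giving $(P(\lambda):\nabla(\mu+2\omega))=[\Delta(\mu):L(\lambda)]$.

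The only genuinely delicate point is bookkeeping of parity and of the shift by $2\omega$: one must be careful that $P(\lambda)$ is the projective cover of $L(\lambda)$ in both highest weight structures (which holds because projectives, standard, and costandard filtrations are the same collection of modules in both structures, only the partial order and the labelling $\Delta\leftrightarrow\nabla$ change), and that the $\Pi$-twists in the definitions of $\nabla(\lambda)$ and $\Delta(\lambda)$ are absorbed correctly by the convention $[X:L]=\dim\Hom_\gg$ counting both $L$ and $\Pi L$. Once these conventions are fixed, both statements are immediate from Lemma~\ref{lem:ext} by the standard reciprocity argument; no further computation is needed.
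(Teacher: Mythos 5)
Your proposal is correct and takes essentially the same route as the paper: both proofs compute $\dim\Hom_\gg(P(\lambda),\nabla(\mu))$ (resp.\ $\dim\Hom_\gg(P(\lambda),\Delta(\mu))$) two ways, once via the $\Delta$-filtration (resp.\ $\nabla$-filtration) together with the Hom/Ext computations of Lemma~\ref{lem:ext}, and once via projectivity of $P(\lambda)$. The only cosmetic difference is that you frame the second equality in terms of the ``other'' highest weight structure and fret over $P(\lambda)$ being the projective cover in both structures; the paper avoids this entirely since projectivity of $P(\lambda)$ is a categorical property of $\mathcal{F}_n$ and all that is actually needed is the existence of the $\nabla$-filtration plus the second half of Lemma~\ref{lem:ext}.
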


The following crucial observation will be used throughout the paper.
\begin{proposition}\label{lem:injective_hull_simple}
 Let $\lambda$ be dominant. Then $\Pi^n P(\lambda - 2 \omega)$ is the injective hull of $L(\lambda)$.
\end{proposition}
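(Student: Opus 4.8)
The plan is to show that $\Pi^n P(\lambda - 2\omega)$ is injective and that it has simple socle isomorphic to $L(\lambda)$; these two facts characterize the injective hull.

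First I would establish injectivity. By the discussion in Section~\ref{ssec:BGG}, every projective object in $\mathcal{F}_n$ is a direct summand of some $\operatorname{Ind}^\gg_{\gg_0} V(\mu)$, and such induced modules are simultaneously projective and injective. Hence every projective is injective, so $P(\lambda - 2\omega)$ is injective, and applying the exact, self-inverse parity functor $\Pi^n$ preserves injectivity. Thus $\Pi^n P(\lambda - 2\omega)$ is an injective object of $\mathcal{F}_n$.

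Next I would identify its socle. Since $P(\lambda-2\omega)$ has a standard filtration with subquotients thick Kac modules $\Delta(\nu)$, and each $\Delta(\nu)$ injects (after a parity shift) into the corresponding thin Kac module $\nabla$ — more precisely, by Lemma~\ref{lem:ext} together with Remark~\ref{par}, $\Hom_\gg(\Delta(\nu), \nabla(\nu)) \neq 0$ with the map even, and the socle of $\nabla(\nu)$ is $L(\nu)$ as recorded after the definition of the thick Kac module — I can read off the socle of an injective module with a costandard filtration. Concretely, for an injective $I$ with a thin-Kac (costandard) filtration one has, by the Ext-vanishing of Lemma~\ref{lem:ext}, that $[\operatorname{soc} I : L(\mu)]$ is computed by $\dim\Hom_\gg(\Delta(\mu), I) = \sum_\nu (I : \nabla(\nu))\,\dim\Hom_\gg(\Delta(\mu),\nabla(\nu)) = (I : \nabla(\mu))$ up to parity. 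So it suffices to compute the costandard multiplicities of $\Pi^n P(\lambda - 2\omega)$. By BGG reciprocity (Corollary~\ref{cor:BGG}), $(P(\lambda - 2\omega) : \nabla(\mu)) = [\Delta(\mu - 2\omega) : L(\lambda - 2\omega)]$, and I want this to be nonzero exactly for $\mu = \lambda$, with value $1$. The equality $(P(\lambda - 2\omega):\nabla(\lambda)) = [\Delta(\lambda - 2\omega):L(\lambda - 2\omega)] = 1$ holds since $L(\lambda - 2\omega)$ is the unique simple quotient of $\Delta(\lambda - 2\omega)$ and appears with multiplicity one as the head. Accounting for the parity shift $\Pi^n$ and the parity $(-1)^n$ appearing in Remark~\ref{par} in the map $\nabla(\nu) \to \Delta(\nu - 2\omega)$, the socle of $\Pi^n P(\lambda - 2\omega)$ is (up to parity) $L(\lambda)$, and the sign bookkeeping is precisely what forces the shift $\Pi^n$ rather than a naked $P(\lambda - 2\omega)$.

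The main obstacle I anticipate is precisely the parity bookkeeping: one must show not merely that $\Hom_\gg(\Delta(\lambda), \Pi^n P(\lambda - 2\omega))$ is one-dimensional but that it is concentrated in the \emph{even} part, i.e. $\Hom_{\mathcal{F}_n}(\Delta(\lambda), \Pi^n P(\lambda - 2\omega)) \cong \mathbb{C}$, so that the socle contains $L(\lambda)$ and not $\Pi L(\lambda)$. This requires tracking the parity $(-1)^n$ from Remark~\ref{par} through the chain $\Delta(\lambda) \to \nabla(\lambda) \to \Delta(\lambda - 2\omega) \hookrightarrow P(\lambda - 2\omega)$, where the first map is even, the second has parity $(-1)^n$, and the inclusion of a thick Kac module into its projective cover is even; composing with $\Pi^n$ cancels the sign. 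A clean way to organize this is to first prove the parity-free statement that $P(\lambda - 2\omega)$ (or its appropriate parity shift) is the injective hull of $L(\lambda)$ or $\Pi L(\lambda)$, and then pin down the parity by a single explicit nonzero homomorphism $\Delta(\lambda) \to \Pi^n P(\lambda - 2\omega)$ built from the maps above. One should also double-check there is no other simple in the socle, which follows because $(P(\lambda - 2\omega) : \nabla(\mu)) = [\Delta(\mu - 2\omega):L(\lambda - 2\omega)]$ vanishes unless $L(\lambda - 2\omega)$ is a composition factor of $\Delta(\mu - 2\omega)$, and combined with the Ext-orthogonality this confines the socle to a single copy of $L(\lambda)$.
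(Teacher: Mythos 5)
There is a genuine gap in your argument, concentrated in the claim that
\[
[\operatorname{soc} I : L(\mu)] \;=\; \dim\Hom_\gg(\Delta(\mu), I) \;=\; (I : \nabla(\mu)),
\]
and the downstream expectation that $(P(\lambda-2\omega) : \nabla(\mu))$ equals $\delta_{\mu,\lambda}$. The first equality is false: from the short exact sequence $0 \to K \to \Delta(\mu) \to L(\mu) \to 0$ and injectivity of $I$ one only gets $\dim\Hom_\gg(\Delta(\mu),I) = \dim\Hom_\gg(L(\mu),I) + \dim\Hom_\gg(K,I)$, and the second term need not vanish. Concretely, for $n=1$ and $\lambda=2$ one has $P(0)$ with a $\nabla$-filtration $\nabla(0), \nabla(2)$ (both being simple), so $(P(0):\nabla(0)) = (P(0):\nabla(2)) = 1$, yet the socle is only $L(2)$. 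Thus the costandard multiplicities of $P(\lambda-2\omega)$ are generally far from $\delta_{\mu,\lambda}$, and your method of ``reading off the socle'' from them does not work as stated.

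The paper's proof circumvents this by never attempting to compute the socle multiplicity directly. Instead, it starts from the actual injective hull $I(\lambda)$, which is an indecomposable projective (by Lemma~\ref{lem:tilt}), establishes $(I(\lambda):\nabla(\mu)) = [\Delta(\mu):L(\lambda)]$ by the same adjunction argument as Corollary~\ref{cor:BGG}, and then compares the \emph{entire list} of costandard multiplicities of $I(\lambda)$ with that of $P(\lambda-2\omega)$; by Corollary~\ref{cor:BGG} and the $2\omega$-twist equivalence, $(P(\lambda-2\omega):\nabla(\mu)) = [\Delta(\mu-2\omega):L(\lambda-2\omega)] = [\Delta(\mu):L(\lambda)]$, which coincides with that of $I(\lambda)$ for every $\mu$. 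Since an indecomposable projective in a highest weight category is determined by its $\nabla$-multiplicities, $I(\lambda) \cong P(\lambda-2\omega)$ up to parity, and Remark~\ref{par} then fixes the shift to $\Pi^n$. You would need to replace the socle-multiplicity step by this matching-of-multiplicities argument to close the gap; the parity discussion at the end of your proposal is essentially the same as the paper's use of Remark~\ref{par}.
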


\begin{proof}
 Let $I(\lambda)$ be the injective hull of $L(\lambda)$. It satisfies:
 $(I(\lambda):\nabla(\mu)) = [\Delta(\mu):L(\lambda)].$
 This is proved analogously to the BGG reciprocity above.
 Yet $I(\lambda)$ is an indecomposable projective module, and the multiplicities in its filtration by thin Kac modules coincide with that of $P(\lambda - 2\omega)$. Thus $I(\lambda) \cong P(\lambda - 2\omega)$. Hence it is isomorphic to $P(\lambda - 2 \omega)$ or its parity shift. The proposition follows then from Remark~\ref{par}.
\end{proof}

\subsection{Ordering on weights}\label{ssec:order_weights}
We define a partial order on the dominant weights $\Lambda_n$ as follows: we say that $\mu \geq \lambda$ if $\mu_i \leq \lambda_i$ for each $i$. 

\begin{lemma}
\label{lem:syl2}
 If $(P(\lambda): \Delta(\mu)) \neq 0$ (equivalently, $[\nabla(\mu): L(\lambda)] \neq 0$) then $\mu \geq \lambda$.
 
 Similarly, if $(P(\lambda): \nabla(\mu+2\omega)) \neq 0$ (equivalently, $[\Delta(\mu): L(\lambda)] \neq 0$) then $\mu \geq \lambda$.
\end{lemma}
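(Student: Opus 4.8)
The plan is to prove both statements simultaneously by exploiting the $\mathbb{Z}$-grading $\gg = \gg_{-1}\oplus\gg_0\oplus\gg_1$ and analyzing which $\gg_0$-weights can occur in the thin and thick Kac modules. The two equivalences in parentheses are already furnished by BGG reciprocity (Corollary~\ref{cor:BGG}), so it suffices to work with the thick/thin Kac modules directly. First I would recall from the proof of Lemma~\ref{lem:Kacdim} the isomorphisms of $\gg_0$-modules $\nabla(\mu)\simeq\Lambda(\gg_{-1}^*)\otimes V(\mu)$ and $\Delta(\mu)\simeq\Lambda(\gg_1)\otimes V(\mu)$. Since $\gg_1$ consists of matrices $\left(\begin{smallmatrix}0&B\\0&0\end{smallmatrix}\right)$ with $B^t=B$, its $\hh$-weights are $\varepsilon_i+\varepsilon_j$ for $1\le i\le j\le n$, all of which are sums of positive roots of $\gg_0$ (in fact nonnegative combinations with respect to the dominance order); dually $\gg_{-1}^*$ has weights $\varepsilon_i+\varepsilon_j$ as well (this can be read off from the dual basis, or from the isomorphism $\gg_{-1}^*\cong\gg_1$ up to parity coming from $\eqref{eta}$ and Lemma~\ref{lem:decoSE}). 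Hence every $\hh$-weight $\nu$ appearing in $\nabla(\mu)$ or in $\Delta(\mu)$ satisfies $\nu = \mu - \sum_i a_i\varepsilon_i + (\text{weight of }\Lambda)$ — more precisely, it lies in $\mu + \mathbb{Z}_{\ge 0}\{\varepsilon_i+\varepsilon_j\} - \mathbb{Z}_{\ge 0}\Delta^+(\gg_0)$, so in particular its total sum of coordinates, and more relevantly its partial sums $\nu_1+\cdots+\nu_k$, are controlled.

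Next I would turn this into the partial order statement. If $[\nabla(\mu):L(\lambda)]\ne 0$, then the highest weight $\lambda$ of $L(\lambda)$ (with respect to $\bb_0\oplus\gg_{-1}$, which restricts to $\bb_0$-highest on the $\gg_0$-level) must occur as an $\hh$-weight of $\nabla(\mu)$. By the previous paragraph, $\lambda$ is obtained from $\mu$ by adding a nonnegative combination of the $\varepsilon_i+\varepsilon_j$ and then subtracting a nonnegative combination of simple roots $\varepsilon_i-\varepsilon_{i+1}$ of $\gg_0$. Both operations weakly increase each partial sum $\lambda_1+\cdots+\lambda_k$ relative to $\mu_1+\cdots+\mu_k$: adding $\varepsilon_i+\varepsilon_j$ clearly does, and subtracting $\varepsilon_i-\varepsilon_{i+1}$ decreases only the $i$-th partial sum by $1$... — here one must be slightly careful, so the cleaner route is: since $\lambda$ occurs in $V(\mu)\otimes\Lambda(\gg_1)$ (for the $\Delta$ case) resp. $V(\mu)\otimes\Lambda(\gg_{-1}^*)$, and a weight $\nu$ of $V(\mu)$ satisfies $\mu-\nu\in\mathbb{Z}_{\ge0}\Delta^+(\gg_0)$, while the $\Lambda$-part contributes a weight in $\mathbb{Z}_{\ge0}\{\varepsilon_i+\varepsilon_j\}$, we get $\lambda - \mu \in \mathbb{Z}_{\ge0}\{\varepsilon_i+\varepsilon_j : i\le j\} - \mathbb{Z}_{\ge0}\Delta^+(\gg_0)$; I would then check directly that every element of this cone has all partial sums of the first $k$ coordinates $\ge 0$ for all $k$, which is exactly the statement $\lambda_1+\cdots+\lambda_k \ge \mu_1+\cdots+\mu_k$. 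Wait — that gives $\mu\le\lambda$ in the ordering where $\mu\ge\lambda$ means $\mu_i\le\lambda_i$; I need to match conventions. Since the order here is defined by $\mu\ge\lambda\iff \mu_i\le\lambda_i$ for all $i$, and $\lambda-\mu$ lies in the stated cone, the dominant representative works out: dominance of $\lambda$ and $\mu$ plus membership of $\lambda-\mu$ in that cone forces $\mu_i\le\lambda_i$ for all $i$. The thin-Kac case is identical with $\gg_{-1}^*$ in place of $\gg_1$, and the weights of $\gg_{-1}^*$ are again of the form $\varepsilon_i+\varepsilon_j$.

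The main obstacle I anticipate is the last combinatorial step: passing from "$\lambda - \mu$ lies in the cone generated by $\{\varepsilon_i+\varepsilon_j\}$ and $-\Delta^+(\gg_0)$" together with "$\lambda$ and $\mu$ both dominant" to the coordinatewise inequality $\mu_i\le\lambda_i$. This is not completely automatic, because the cone is not the dominance cone. The trick will be to use dominance: write $\lambda - \mu = \sum_{i\le j}c_{ij}(\varepsilon_i+\varepsilon_j) - \sum_i d_i\alpha_i$ with $c_{ij},d_i\ge 0$ and $\alpha_i=\varepsilon_i-\varepsilon_{i+1}$. The partial sums $s_k:=(\lambda-\mu)_1+\cdots+(\lambda-\mu)_k$ equal $\sum_{i\le j}c_{ij}\cdot|\{i,j\}\cap[1,k]| - d_k \ge -d_k$ at worst, but actually since $|\{i,j\}\cap[1,k]|\ge 0$ we only directly get $s_k\ge -d_k$, which is not enough. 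Instead I would argue weight-by-weight: it is cleaner to say that any $\hh$-weight $\nu$ of $\nabla(\mu)$ or $\Delta(\mu)$ has the property that its dominant conjugate $\nu^+$ satisfies $\nu^+ \le \mu$ in the dominance order shifted appropriately — but the decisive simplification is that $L(\lambda)$ is a \emph{highest}-weight constituent, so $\lambda$ is the unique dominant weight that is \emph{maximal} among weights of the submodule it generates; combined with the fact that all weights of $\nabla(\mu)$ sit below $\mu + \tilde\gamma$ and above $\mu - \gamma$ in the dominance order on the $\gg_0$ side and lie in finitely many $\gg_0$-types, a clean inductive or direct check on partial sums closes the argument. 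I would present this as the technical heart of the proof, keeping the grading/weight bookkeeping explicit but brief.
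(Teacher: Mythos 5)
Your plan starts out along the same lines as the paper: pass to the $\gg_0$-decompositions $\nabla(\mu)\simeq\Lambda(\gg_{-1}^*)\otimes V(\mu)$, $\Delta(\mu)\simeq\Lambda(\gg_1)\otimes V(\mu)$, and observe that the $\hh$-weights of the exterior-algebra factors are nonnegative sums of the $\varepsilon_i+\varepsilon_j$. But the last step has a genuine gap, which you notice yourself and then do not close. From "$\lambda$ occurs as an $\hh$-weight of $\nabla(\mu)$" you only get $\lambda-\mu\in\mathbb{Z}_{\geq 0}\{\varepsilon_i+\varepsilon_j\}-\mathbb{Z}_{\geq 0}\Delta^+(\gg_0)$, and this cone is strictly larger than the nonnegative orthant: for example $\varepsilon_2-\varepsilon_1$ belongs to it and has a negative first coordinate. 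Your partial-sum computation correctly shows that the needed coordinatewise inequality does not follow, and the subsequent appeal to $\lambda$ being a highest-weight constituent together with dominance bounds is too vague to repair this --- nothing there rules out the compensating subtraction of positive roots.

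The ingredient you are missing is the sharper, but standard, fact for $\gg_0=\mathfrak{gl}(n)$: if $V(\lambda)$ occurs as a $\gg_0$-composition factor of $M\otimes V(\mu)$ with $M$ finite-dimensional, then $\lambda-\mu$ is itself a weight of $M$, not merely a weight of $M$ minus a sum of positive roots. (For instance, the multiplicity $[M\otimes V(\mu):V(\lambda)]$ can be computed as the dimension of a certain subspace of the weight space $M_{\lambda-\mu}$, and so vanishes whenever $M_{\lambda-\mu}=0$.) This is exactly what the paper's one-line argument uses with $M=\Lambda(\Lambda^2 V_{\overline{0}})$ (respectively $M=\Lambda(S^2 V_{\overline{0}})$): every weight of such an $M$ is a sum of a subset of the $\varepsilon_i+\varepsilon_j$, hence has all coordinates $\geq 0$, so $\lambda-\mu=\sum_i a_i\varepsilon_i$ with all $a_i\geq 0$, which is precisely $\mu\geq\lambda$ in the paper's ordering. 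Replace your weight-level containment by this constituent-level statement and your proof becomes the paper's.
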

This lemma means that the above order gives a highest-weight structure on the category $\mathcal{F}_n$, with $\Delta(\lambda)$ as standard objects. 
\begin{proof}
 Recall from Corollary~\ref{cor:BGG} that $(P(\lambda): \Delta(\mu)) \neq 0$ implies $[\nabla(\mu): L(\lambda)] \neq 0$. 
 Thus, $$[\op{Res}^{\gg}_{\gg_0} \nabla(\mu): V(\lambda)] = [\Lambda(\Lambda^2(V_{\overline{0}})) \otimes V(\mu): V(\lambda)] \neq 0.$$ The $\gg_0$-weights  in $\Lambda(\Lambda^2(V_{\overline{0}}))$ are sums of $\{\varepsilon_i +\varepsilon_j \mid i \neq j\}$. So $\lambda = \mu + \sum_i a_i \varepsilon_i$, $a_i \geq 0$, as required. Similarly, by Corollary~\ref{cor:BGG} $(P(\lambda): \nabla(\mu+2\omega)) \neq 0$ implies $[\Delta(\mu): L(\lambda)] \neq 0$, and hence $$[\op{Res}^{\gg}_{\gg_0} \Delta(\mu): V(\lambda)] = [\Lambda(S^2(V_{\overline{0}})) \otimes V(\mu): V(\lambda)] \neq 0.$$ The $\gg_0$-weights in $\Lambda(S^2(V_{\overline{0}}))$ are sums of $\{\varepsilon_i +\varepsilon_j \mid i , j\}$. So $\lambda = \mu + \sum_i a_i \varepsilon_i$, $a_i \geq 0$, as required. 
\end{proof}

\begin{remark}\label{rmk:2_highest_weight_struct}
 The category $\mathcal{F}_n$ is a {\it  highest weight category} in the sense of Cline, Parshall and Scott (see \cite{CPS}). It has two natural highest weight category structures. The first structure, which we will mostly use, with the partial order on $\Lambda_n$ given above, has thick Kac modules $\Delta(\lambda)$ as standard modules, and thin Kac modules $\nabla(\lambda)$ as costandard modules. The second  structure has $\Delta(\lambda)$ as costandard modules, and $\nabla(\lambda)$ as standard modules. Note that the standard module corresponding to $L(\tau)$ would not be $\nabla(\tau)$, but rather $\nabla(\lambda)$, where $\lambda$ is obtained from $\tau$ using the procedure described in Proposition~\ref{prop:socle_thick_kac}. Note however,  that there is no ``duality'' contravariant endofunctor on $\mathcal{F}_n$ interchanging the thick and the thin Kac modules.
\end{remark}

\subsection{Typical weights}\label{ssec:typical_weights}
A dominant weight $\lambda=(\lambda_1,\dots,\lambda_n)$ is {\it typical} if $\lambda_1>\ldots>\lambda_n$.
The following result can be found in \cite{K78}.

\begin{lemma}\label{lem:typical} We have: $\Delta(\lambda)\simeq P(\lambda)$ and $\nabla(\lambda)\simeq L(\lambda)$ if and only if
$\lambda$ is typical.
\end{lemma}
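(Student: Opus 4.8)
The plan is to prove both implications of Lemma~\ref{lem:typical} by combining dimension counts with the structural results already established, in particular the BGG reciprocity of Corollary~\ref{cor:BGG}, the order on weights from Lemma~\ref{lem:syl2}, and the injective/projective identification of Proposition~\ref{lem:injective_hull_simple}.

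\emph{The ``if'' direction.} Suppose $\lambda$ is typical, so $\bar\lambda_1 > \dots > \bar\lambda_n$ are consecutive... no, merely strictly decreasing and $\lambda_1 > \dots > \lambda_n$ as well. First I would show $\nabla(\lambda) \cong L(\lambda)$. Since $L(\lambda)$ is the socle of $\nabla(\lambda)$, it suffices to show $\nabla(\lambda)$ is simple, equivalently that $[\nabla(\lambda):L(\mu)] = 0$ for all $\mu \neq \lambda$ (counting parity shifts). By Lemma~\ref{lem:syl2}, any such $\mu$ satisfies $\mu \le \lambda$, i.e. $\mu_i \ge \lambda_i$ for all $i$, and the argument in the proof of Lemma~\ref{lem:syl2} shows $\lambda = \mu + \sum a_i \varepsilon_i$ with $a_i \ge 0$, so $\bar\mu = \bar\lambda - \sum a_i\varepsilon_i$. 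The key point is a central-character / linkage obstruction: a subquotient $L(\mu)$ of $\nabla(\lambda)$ must have a $\gg_0$-weight appearing in $\Lambda(\Lambda^2 V_{\bar 0})\otimes V(\lambda)$, and for typical $\lambda$ one checks (this is the content of the reference \cite{K78}) that the only dominant weight $\mu$ with $\mu \le \lambda$ whose associated simple can occur is $\mu = \lambda$ itself; concretely, the multiset $c_\mu$ must be obtainable from $c_\lambda$ by the allowed moves, and typicality forces no nontrivial move. Granting $\nabla(\lambda)$ simple, dualize: $\Delta(\lambda)$ has simple quotient $L(\lambda)$ and $\Delta(\lambda)^* \cong \Pi^? \nabla(\lambda')$ for a suitable $\lambda'$, so $\Delta(\lambda)$ is also simple — but wait, $\Delta(\lambda)$ and $\nabla(\lambda)$ have different dimensions, so $\Delta(\lambda)$ is \emph{not} simple. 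Instead, to get $\Delta(\lambda) \cong P(\lambda)$, I use BGG reciprocity: $(P(\lambda):\Delta(\mu)) = [\nabla(\mu):L(\lambda)]$, which for $\nabla(\mu)$ simple (all $\mu$ typical comparable... ) — more carefully, $(P(\lambda):\Delta(\mu)) = [\nabla(\mu):L(\lambda)]$ and since $\nabla(\mu) = L(\mu)$ whenever $\mu$ is typical, and one needs $[\nabla(\mu):L(\lambda)]\neq 0 \Rightarrow \mu = \lambda$; this follows once we know every $\mu$ with $L(\lambda)$ a subquotient of $\nabla(\mu)$ and $\lambda$ typical must equal $\lambda$ — again a linkage statement. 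Then $P(\lambda)$ has a $\Delta$-filtration with only $\Delta(\lambda)$ appearing, once, so $P(\lambda) \cong \Delta(\lambda)$.

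\emph{The ``only if'' direction.} Suppose $\nabla(\lambda) \cong L(\lambda)$. Then $\dim L(\lambda) = 2^{n(n-1)/2}\dim V(\lambda)$ by Lemma~\ref{lem:Kacdim}. If $\lambda$ were atypical, pick $i$ with $\lambda_i = \lambda_{i+1}$, equivalently $\bar\lambda_i = \bar\lambda_{i+1}+1$; I would exhibit a nonzero singular vector in $\nabla(\lambda)$ of weight strictly below $\lambda$, contradicting simplicity. The standard construction: inside the free $\mathcal U(\gg_{-1})$-module $\nabla(\lambda)$ the weight space for $\lambda - (\varepsilon_i + \dots)$ coming from a suitable root vector annihilates the $\gg_0\oplus\gg_1$-highest weight vector only under the typicality numerical condition; atypicality produces a genuine primitive vector, generating a proper submodule. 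Equivalently, for $\Delta(\lambda)\cong P(\lambda)$: if $\lambda$ is atypical then by Corollary~\ref{cor:BGG} together with the linkage data there is $\mu \neq \lambda$ with $(P(\lambda):\Delta(\mu))\neq 0$ (the atypical block is nontrivial), so $\dim P(\lambda) > \dim \Delta(\lambda)$, giving $\Delta(\lambda)\not\cong P(\lambda)$. One can also cite the general fact that $P(\lambda)$ is never simple (mentioned in the introduction, via the nontrivial Jacobson radical of $\mathcal U(\gg)$) to see that $\Delta(\lambda) = P(\lambda)$ forces at least $\nabla(\lambda)\neq L(\lambda)$ unless... — but the cleanest route is the singular-vector computation.

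\emph{Main obstacle.} The genuinely delicate point is the linkage/central-character input: showing that for typical $\lambda$, no simple $L(\mu)$ with $\mu \neq \lambda$ appears in $\nabla(\lambda)$ (and dually in $\Delta(\lambda)$ only $L(\lambda)$ is a composition factor that is ``new'' — though $\Delta(\lambda)$ has length $> 1$, so one must be careful: $\Delta(\lambda)$ for typical $\lambda$ is uniserial-ish with all composition factors being $L(\lambda)$ or parity shifts, up to the $\gg_{-1}$-cohomology). Since the center of $\mathcal U(\gg)$ is trivial, there is no Harish-Chandra central character to invoke directly; instead one must use the refined invariant $\Omega$ / the translation-functor combinatorics, or fall back on the explicit computation in \cite{K78}. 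Given the excerpt explicitly attributes this lemma to \cite{K78}, I would present the proof by citing that reference for the linkage principle for $\mathfrak p(n)$ and then filling in the dimension-count bookkeeping with Lemmas~\ref{lem:Kacdim},~\ref{lem:syl2} and Corollary~\ref{cor:BGG} as above, with the converse handled by the explicit singular vector in the atypical case.
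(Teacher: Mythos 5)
The paper does not actually prove Lemma~\ref{lem:typical}; it is stated with the single remark ``The following result can be found in \cite{K78}.''\ So there is no internal proof to compare against, and your proposal, which ultimately defers the decisive linkage input to \cite{K78}, is the same strategy as the paper's, just with the bookkeeping spelled out.

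On the bookkeeping itself, one point needs sharpening. You first aim to show that $\nabla(\lambda)$ is simple for typical $\lambda$, i.e.\ $[\nabla(\lambda):L(\mu)]=\delta_{\lambda\mu}$, and then invoke BGG reciprocity (Corollary~\ref{cor:BGG}) to get $P(\lambda)\cong\Delta(\lambda)$. But BGG reciprocity reads $(P(\lambda):\Delta(\mu))=[\nabla(\mu):L(\lambda)]$, so what you actually need for $P(\lambda)\cong\Delta(\lambda)$ is the \emph{column} vanishing $[\nabla(\mu):L(\lambda)]=0$ for all $\mu\neq\lambda$ (with $\lambda$ typical, $\mu$ arbitrary, in particular possibly atypical), not the \emph{row} vanishing that gives simplicity of $\nabla(\lambda)$. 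Neither is a formal consequence of the other, and neither follows from Lemma~\ref{lem:syl2} alone, which only pins down the partial order. You do correctly note midway that ``$[\nabla(\mu):L(\lambda)]\neq 0\Rightarrow\mu=\lambda$'' is the needed statement and call it ``again a linkage statement,'' so the logical structure is there, but the phrasing ``since $\nabla(\mu)=L(\mu)$ whenever $\mu$ is typical'' does not settle it, because $\mu$ need not be typical. Both the row and column vanishing must be supplied by \cite{K78} (or by a direct singular-vector/odd-reflection argument). Your ``only if'' direction via an explicit singular vector when $\lambda_i=\lambda_{i+1}$ is the standard route and is fine in principle, though not carried out. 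In short: since the paper gives no argument at all, your sketch is not missing anything the paper supplies; just be aware the two multiplicity statements are independent inputs, both needed, both ultimately from \cite{K78}.
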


\subsection{Tilting modules}\label{ssec:tilting_modules}
An object $X\in\mathcal{F}_n$ is {\it tilting} if it has both a filtration by thin and by thick Kac modules. Note that this is equivalent to
$X$ being free over $\mathcal{U}(\gg_1)$ and $\mathcal{U}(\gg_{-1})$. Also, if $X$ is tilting then $X^*$ is tilting.  By Lemma~\ref{lem:ext} a direct summand of a tilting object is again tilting. 

\begin{lemma}\label{lem:tilt} In the category $\mathcal{F}_n$, an object $X$ is tilting if and only if $X$ is projective.
\end{lemma}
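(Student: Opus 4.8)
The plan is to prove both implications by exploiting the abundance of projective-injective objects and the characterization of tilting objects as modules free over $\mathcal{U}(\gg_1)$ and $\mathcal{U}(\gg_{-1})$.

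First I would prove that every projective object is tilting. Since $\mathcal{F}_n$ has enough projectives and every indecomposable projective is a direct summand of some $\operatorname{Ind}^\gg_{\gg_0}V(\lambda)$, it suffices to show $\operatorname{Ind}^\gg_{\gg_0}V(\lambda)$ is tilting and invoke that a direct summand of a tilting object is tilting (as recorded in Section~\ref{ssec:tilting_modules}, using Lemma~\ref{lem:ext}). But this was already noted in Section~\ref{ssec:BGG}: $\operatorname{Ind}^\gg_{\gg_0}V(\lambda)$ has both a standard (thick Kac) and a costandard (thin Kac) filtration --- concretely, as a $\gg_{-1}$-module and as a $\gg_1$-module it is free, because inducing from $\gg_0$ factors through inducing from $\gg_0\oplus\gg_{-1}$ (resp.\ $\gg_0\oplus\gg_1$) by the PBW theorem. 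Hence $\operatorname{Ind}^\gg_{\gg_0}V(\lambda)$ is tilting, and therefore so is every projective.

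Next I would prove the converse: every tilting object is projective. Because $\operatorname{Ind}^\gg_{\gg_0}V(\lambda)$ is simultaneously projective and injective, the indecomposable projectives coincide with the indecomposable injectives (this is the self-injectivity of $\mathcal{F}_n$ already used in the excerpt, cf.\ Proposition~\ref{lem:injective_hull_simple}). Now let $X$ be tilting. Using the costandard filtration of $X$ and the Ext-vanishing $\Ext^1_{\mathcal{F}_n}(\Delta(\lambda),\nabla(\mu))=0$ from Lemma~\ref{lem:ext}, together with the fact that each projective $P$ has a standard filtration, one shows $\Ext^1_{\mathcal{F}_n}(P,X)=0$ for every projective $P$: indeed $\Ext^1$ between an object with a $\Delta$-filtration and an object with a $\nabla$-filtration vanishes by dévissage along the two filtrations. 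Dually, since $X$ also has a standard (thick Kac) filtration and every injective $I$ has a costandard filtration, $\Ext^1_{\mathcal{F}_n}(X,I)=0$ for every injective $I$. Combining this with the standard homological characterization: an object with a standard filtration lying in the $\Ext^1$-vanishing locus against all injectives (equivalently all projectives, by self-injectivity) must be a direct sum of indecomposable projectives --- formally, build a projective cover $P\twoheadrightarrow X$; its kernel $K$ inherits a standard filtration (because $\Ext^1(X,\nabla)=0$ makes the filtration split off correctly), and $\Ext^1(X,K)=0$ since $K$ has a standard filtration and hence embeds in a product of thin Kac modules, so the surjection $P\twoheadrightarrow X$ splits. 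Therefore $X$ is a direct summand of a projective, hence projective.

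The main obstacle I expect is the precise dévissage argument showing $\Ext^1$ vanishes between objects with a standard and objects with a costandard filtration, and then parlaying the two-sided $\Ext^1$-vanishing into the splitting of a projective cover --- in particular ensuring that the kernel of the projective cover again has a standard filtration so that the relevant $\Ext^1$ vanishes. This is the step where one must be careful to use \emph{both} highest weight structures from Remark~\ref{rmk:2_highest_weight_struct} (or equivalently both the $\gg_{-1}$- and $\gg_1$-freeness in the definition of tilting) symmetrically; everything else is a formal consequence of Lemma~\ref{lem:ext}, BGG reciprocity, and the self-injectivity of $\mathcal{F}_n$.
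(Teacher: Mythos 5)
Your argument for the forward direction (projective $\Rightarrow$ tilting) is fine and matches the paper's ``as we explained above'': $\operatorname{Ind}^\gg_{\gg_0}V(\lambda)$ is free over $\mathcal{U}(\gg_{\pm 1})$ by PBW, hence tilting, and tilting objects are closed under direct summands by Lemma~\ref{lem:ext}.

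Your argument for the converse has a genuine gap, however. You set up $0\to K\to P\to X\to 0$ with $P$ the projective cover of the tilting module $X$, argue correctly that $K$ inherits a $\Delta$-filtration, and then claim ``$\Ext^1(X,K)=0$ since $K$ has a standard filtration and hence embeds in a product of thin Kac modules.'' This does not follow. The $\Ext^1$-vanishing available from Lemma~\ref{lem:ext} is $\Ext^1(\mathcal{F}(\Delta),\mathcal{F}(\nabla))=0$; to deduce $\Ext^1(X,K)=0$ from it you would need $K$ to have a \emph{costandard} ($\nabla$-) filtration, not a standard one. But you only know $K\in\mathcal{F}(\Delta)$, and $X$ also lies in $\mathcal{F}(\Delta)$ --- and $\Ext^1$ between two objects with $\Delta$-filtrations is in general nonzero (indeed $\Ext^1(\Delta(\lambda),\Delta(\mu))$ is typically nonzero). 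Nor does ``$K$ embeds in a sum of $\nabla(\mu)$'s'' rescue this: from $0\to K\to\bigoplus\nabla(\mu_j)\to C\to 0$ one only gets $\Ext^1(X,K)$ as a quotient of $\Hom(X,C)$, not that it vanishes. The dual version (injective hull $X\hookrightarrow I$ with cokernel $C\in\mathcal{F}(\nabla)$, try $\Ext^1(C,X)=0$) hits the same mismatch with filtrations swapped. Also note the preliminary claims $\Ext^1(P,X)=0$ and $\Ext^1(X,I)=0$ are vacuous --- $P$ is projective and $I$ is injective --- so they add no information.

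The paper's proof of the converse avoids homological algebra entirely and uses the tensor structure: it first observes $\Delta(\lambda)\otimes\nabla(\mu)\cong\operatorname{Ind}^\gg_{\gg_0}(V(\lambda)\otimes V(\mu))$ is projective, so for tilting $X$ (having both a $\Delta$- and a $\nabla$-filtration) the product $X\otimes X$ is projective, hence so is $X\otimes X\otimes X^*\cong X\otimes X^*\otimes X$; then $X$ is a direct summand of $X\otimes X^*\otimes X$ via the unit--counit composition, and is therefore projective. If you want to keep a filtration-theoretic argument, the correct route is through the classification of indecomposable tilting objects: the indecomposable projective $P(\lambda)$ has $(P(\lambda):\Delta(\lambda))=1$ and $(P(\lambda):\Delta(\mu))\neq0$ only for $\mu\geq\lambda$ (Lemma~\ref{lem:syl2}), which is exactly the characterization of the indecomposable tilting $T(\lambda)$, so $P(\lambda)\cong T(\lambda)$ for all $\lambda$ and every tilting is projective. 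But that requires importing the existence/uniqueness theory of $T(\lambda)$, which the paper's tensor argument sidesteps.
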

\begin{proof} If $X$ is projective, then it is tilting as we explained above. 
To prove the opposite, assume $X$ is tilting. First note that $\Delta(\lambda)\otimes \nabla(\mu)\simeq \operatorname{Ind}^\gg_{\gg_0}(V(\lambda)\otimes V(\mu))$ is projective for any dominant weight $\lambda,\mu$. 
Since $X$ has filtrations by thin respectively by thick Kac modules, we conclude that $X\otimes X$ is projective and therefore $X\otimes X^*\otimes X\cong X\otimes X\otimes X^*$ is projective. 
Since the the counit map followed by the unit map defines a morphism $X\rightarrow X\otimes X^*\otimes    X \rightarrow X$ equal to the identity map, $X$ is isomorphic to a direct summand of
$X\otimes X^*\otimes X$. Thus, $X$ is projective as well. 
\end{proof}

\subsection{Duality for Kac modules}\label{ssec:duality_Kac_modules}
\begin{lemma}\label{lem:dual_Kac}
 We have $\Delta(\lambda)^* = \Delta(-w_0\lambda - \tilde{\gamma})$ and $\nabla(\lambda)^* = \nabla(-w_0\lambda + \gamma)$, where $w_0$ is the longest element in the Weyl group of $\gg_0$.
\end{lemma}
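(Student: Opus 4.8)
The plan is to compute the duals directly from the definitions of the thick and thin Kac modules as (co)induced modules, using the characterization of $\Delta(\mu)$ and $\nabla(\mu)$ as highest/lowest weight modules relative to the Borel $\bb_0 \oplus \gg_{-1}$ together with the freeness statements over $\mathcal{U}(\gg_{\pm 1})$. First I would record the $\gg_0$-module decompositions already used in the proof of Lemma~\ref{lem:Kacdim}: $\Delta(\mu) \simeq \Lambda(\gg_1) \otimes V(\mu)$ and $\nabla(\mu) \simeq \Lambda(\gg_{-1}^*) \otimes V(\mu)$ as $\gg_0$-modules. Dualizing, and using that $V(\mu)^* \simeq V(-w_0\mu)$ as $\gg_0$-modules and $\Lambda(\gg_{\pm1})^* \simeq \Lambda(\gg_{\pm1}^*)$, one sees that $\Delta(\lambda)^*$ and $\nabla(\lambda)^*$ have the right $\gg_0$-characters to be a thick, respectively thin, Kac module; it remains to identify \emph{which} one, i.e.\ to pin down the highest weight.

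The cleanest route is via the adjointness/freeness structure rather than brute-force weight bookkeeping. Since $\Delta(\lambda)$ is free over $\mathcal{U}(\gg_{-1})$ with $H_0(\gg_{-1},\Delta(\lambda)) = V(\lambda)$, dualizing interchanges homology and cohomology, so $\Delta(\lambda)^*$ is cofree over $\mathcal{U}(\gg_{-1})$ with $H^0(\gg_{-1}, \Delta(\lambda)^*) = V(\lambda)^* = V(-w_0\lambda)$. Comparing with the defining property $H^0(\gg_{-1},\nabla(\mu)) = V(\mu)$ of the thin Kac module — one must check that being cofree over $\mathcal{U}(\gg_{-1})$ with prescribed $\gg_{-1}$-invariants characterizes $\nabla$ — one would want $\Delta(\lambda)^* \simeq \nabla(-w_0\lambda)$; but that is not what is claimed, so in fact the relevant statement is that $\Delta(\lambda)$ is \emph{free} over $\mathcal{U}(\gg_1)$ (it is tilting, hence free over both $\mathcal{U}(\gg_1)$ and $\mathcal{U}(\gg_{-1})$ by Lemma~\ref{lem:tilt} and the remark in Section~\ref{ssec:tilting_modules}). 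Dualizing the freeness over $\mathcal{U}(\gg_1)$: $\Delta(\lambda)^*$ is free over $\mathcal{U}(\gg_1)$, hence is again a thick Kac module, and its $\gg_1$-coinvariants are $H_0(\gg_1, \Delta(\lambda)^*) = H^0(\gg_1,\Delta(\lambda))^* = (\Pi^{n(n+1)/2} V(\lambda + \tilde\gamma))^* = \Pi^{n(n+1)/2} V(-w_0(\lambda+\tilde\gamma)) = \Pi^{n(n+1)/2}V(-w_0\lambda - \tilde\gamma)$, using $w_0\tilde\gamma = \tilde\gamma$ since $\tilde\gamma$ is $W_0$-invariant. Matching with $H_0(\gg_1,\Delta(\mu)) = V(\mu)$ (up to the parity shift, which is harmless) forces $\Delta(\lambda)^* \simeq \Delta(-w_0\lambda - \tilde\gamma)$. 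The argument for $\nabla$ is entirely parallel: use that $\nabla(\lambda)$ is free over $\mathcal{U}(\gg_{-1})$ with $H_0(\gg_{-1},\nabla(\lambda)) = \Pi^{n(n-1)/2} V(\lambda - \gamma)$; dualize to get $H^0(\gg_{-1}, \nabla(\lambda)^*)$, and compare with $H^0(\gg_{-1}, \nabla(\mu)) = V(\mu)$, using $w_0\gamma = \gamma$, to read off $\nabla(\lambda)^* \simeq \nabla(-w_0\lambda + \gamma)$.

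The main obstacle is bookkeeping: making sure the characterization "free over $\mathcal{U}(\gg_{\pm1})$ with a prescribed top (or cofree with a prescribed socle)" genuinely pins down $\Delta$ or $\nabla$ uniquely up to isomorphism and parity, and keeping the parity shifts $\Pi^{n(n\pm1)/2}$ consistent through the dualization (recall $(\Pi M)^* \simeq \Pi M^*$). One subtlety to handle carefully is the interaction of the dual functor with induction versus coinduction: taking $\mathbb C$-linear duals swaps $\operatorname{Ind}^{\gg}_{\gg_0\oplus\gg_{\pm1}}$ with $\operatorname{Coind}^{\gg}_{\gg_0\oplus\gg_{\pm1}}$ (this is why both an $\operatorname{Ind}$ and a $\operatorname{Coind}$ description of each Kac module were recorded just above), so that $\Delta(\lambda)^* = (\operatorname{Ind}^{\gg}_{\gg_0\oplus\gg_{-1}} V(\lambda))^* \simeq \operatorname{Coind}^{\gg}_{\gg_0\oplus\gg_{-1}} V(\lambda)^* = \operatorname{Coind}^{\gg}_{\gg_0\oplus\gg_{-1}} V(-w_0\lambda)$, and then the second ($\operatorname{Coind}$) presentation of $\Delta$, namely $\Delta(\mu)\simeq \Pi^{n(n+1)/2}\operatorname{Coind}^{\gg}_{\gg_0\oplus\gg_{-1}}V(\mu+\tilde\gamma)$, immediately yields $\Delta(\lambda)^* \simeq \Pi^{n(n+1)/2}\Delta(-w_0\lambda-\tilde\gamma)$ — and the parity shift can then be absorbed or noted as the statement tolerates (the lemma is stated up to $\Pi$ in the paper's conventions, since $[X:L]$ counts $L$ and $\Pi L$ together). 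The same one-line manipulation with $\nabla(\mu) = \Pi^{n(n-1)/2}\operatorname{Ind}^{\gg}_{\gg_0\oplus\gg_1} V(\mu-\gamma) \simeq \operatorname{Coind}^{\gg}_{\gg_0\oplus\gg_1} V(\mu)$ gives $\nabla(\lambda)^* = (\operatorname{Coind}^{\gg}_{\gg_0\oplus\gg_1} V(\lambda))^* \simeq \operatorname{Ind}^{\gg}_{\gg_0\oplus\gg_1} V(-w_0\lambda) \simeq \Pi^{n(n-1)/2}\nabla(-w_0\lambda+\gamma)$, completing the proof.
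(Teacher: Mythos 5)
Your final paragraph is exactly the paper's argument: dualize the $\operatorname{Ind}$ presentation of $\Delta(\lambda)$ (respectively the $\operatorname{Coind}$ presentation of $\nabla(\lambda)$), use that duality swaps $\operatorname{Ind}$ and $\operatorname{Coind}$ together with $V(\lambda)^*\cong V(-w_0\lambda)$, and then read off the highest weight from the other ($\operatorname{Coind}$, resp.\ $\operatorname{Ind}$) presentation; the parity shifts $\Pi^{n(n\pm1)/2}$ that appear are suppressed in the statement, consistent with the paper's conventions. So the proof is correct and takes the same route as the paper.

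That said, your middle paragraph contains a genuine error that is worth flagging even though it is not load-bearing. You assert first that ``$\Delta(\lambda)$ is free over $\mathcal{U}(\gg_{-1})$ with $H_0(\gg_{-1},\Delta(\lambda)) = V(\lambda)$,'' and then, to correct course, that ``$\Delta(\lambda)$ is tilting, hence free over both $\mathcal{U}(\gg_1)$ and $\mathcal{U}(\gg_{-1})$.'' Both are false in general. By PBW, $\Delta(\lambda)=\operatorname{Ind}^{\gg}_{\gg_0\oplus\gg_{-1}}V(\lambda)\simeq\mathcal{U}(\gg_1)\otimes V(\lambda)$, so it is free over $\mathcal{U}(\gg_1)$, and the paper's Section~\ref{ssec:Kac_modules} records $H_0(\gg_1,\Delta(\lambda))=V(\lambda)$, not $H_0(\gg_{-1},\cdot)$. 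And $\Delta(\lambda)$ is tilting if and only if it is projective (Lemma~\ref{lem:tilt}), which by Lemma~\ref{lem:typical} happens exactly when $\lambda$ is typical; were $\Delta(\lambda)$ always free over $\mathcal{U}(\gg_{-1})$ as well, it would always be tilting, hence always projective, which is false. The ``phantom conclusion'' $\Delta(\lambda)^*\simeq\nabla(-w_0\lambda)$ you noticed is thus an artefact of this misidentification rather than a hint to switch to $\gg_1$ for a subtle reason: with the correct freeness over $\mathcal{U}(\gg_1)$, the homology/cohomology computation $H_0(\gg_1,\Delta(\lambda)^*)\cong H^0(\gg_1,\Delta(\lambda))^*\cong\Pi^{n(n+1)/2}V(-w_0\lambda-\tilde\gamma)$ does agree with your final $\operatorname{Ind}$/$\operatorname{Coind}$ computation, but it additionally requires the claim that a $\gg$-module free over $\mathcal{U}(\gg_1)$ with prescribed $\gg_1$-coinvariants must be a thick Kac module, which is precisely what the adjunction argument avoids having to justify.
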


 In other words, $\Delta(\lambda)^* = \Delta(\mu-2\omega)$, and $\nabla(\lambda)^* = \nabla(\mu)$ where $\mu + \rho = - w_0 (\lambda+ \rho)$.

\begin{proof}
 By definition, for every $\gg$-module $M$, 
 \begin{eqnarray*}
\Hom_{\gg}(M, \Delta(\lambda)^*) &\cong &\Hom_{\gg}(M \otimes \Delta(\lambda), \mathbb{C}) \cong \Hom_{\gg_0 \oplus \gg_{-1}}(\op{Res} M \otimes V(\lambda), \mathbb{C}) \\
&\cong&\Hom_{\gg_0 \oplus \gg_{-1}}(\op{Res} M,  V(\lambda)^*) \cong \Hom_{\gg}( M, \op{Coind}^{\gg}_{\gg_0 \oplus \gg_{-1}} V(\lambda)^*).
 \end{eqnarray*}
Thus, $$\Delta(\lambda)^* \cong \op{Coind}^{\gg}_{\gg_0 \oplus \gg_{-1}} V(\lambda)^* \cong \op{Coind}^{\gg}_{\gg_0 \oplus \gg_{-1}} V(-w_0 \lambda) \cong  \Delta(-w_0 \lambda - \tilde \gamma).$$
Similarly, 
\begin{eqnarray*}
\Hom_{\gg}( \nabla(\lambda)^*, M) &\cong \Hom_{\gg}(\mathbb{C}, M \otimes \nabla(\lambda)) \cong \Hom_{\gg_0 \oplus \gg_{1}}(\mathbb{C}, \op{Res} M \otimes V(\lambda))  \\
&\cong\Hom_{\gg_0 \oplus \gg_{1}}( V(\lambda)^*, \op{Res} M ) \cong \Hom_{\gg}(\op{Ind}^{\gg}_{\gg_0 \oplus \gg_{1}} V(\lambda)^*,  M).
 \end{eqnarray*}
Thus $\nabla(\lambda)^* \cong \op{Ind}^{\gg}_{\gg_0 \oplus \gg_{1}} V(\lambda)^* \cong \op{Ind}^{\gg}_{\gg_0 \oplus \gg_{1}} V(-w_0 \lambda) \cong \nabla(-w_0 \lambda + \gamma).$
Hence the claim follows.
\end{proof}

\subsection{Extensions of simples}\label{ssec:extensions}
\begin{proposition}\label{prop:extension} Let $\lambda,\mu$ be dominant weights and  $h$ as in \eqref{theh}. Let
$$0\to L(\mu)\to X\to L(\lambda)\to 0$$
be a non-trivial extension. Then we have the following two possibilities
\begin{enumerate}[i.)]
\item either $\lambda(h)<\mu(h)$ and $X$ is a quotient of $\Delta(\lambda)$ or of its parity switch; or 
\item  $\mu(h)<\lambda(h)$ and $X$ is a submodule of $\nabla(\mu)$ or of its parity shift.
\end{enumerate}
\end{proposition}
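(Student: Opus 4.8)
The plan is to use the $\mathbb{Z}$-grading $\gg = \gg_{-1} \oplus \gg_0 \oplus \gg_1$ given by the element $h$ from \eqref{theh}, together with the structural description of thick and thin Kac modules, to pin down where $X$ must come from. First I would observe that, since the extension is non-trivial, $L(\mu)$ and $L(\lambda)$ are not isomorphic (up to parity), so in particular $\lambda \neq \mu$ and we may as well assume $\lambda(h) \neq \mu(h)$ — indeed, I would first argue that $\lambda(h) = \mu(h)$ is impossible for a non-split extension, because both simples would then lie in the same eigenspace of $h$ and one could try to compare with the $\gg_0$-structure; alternatively this falls out of the two cases below being exhaustive. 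So the dichotomy is genuinely $\lambda(h) < \mu(h)$ versus $\mu(h) < \lambda(h)$.

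For case (i), suppose $\lambda(h) < \mu(h)$. The key point is that $X$ is generated by a highest-weight vector of weight $\lambda$: the preimage in $X$ of a highest-weight vector of $L(\lambda)$ spans (together with $L(\mu)$) a submodule, and because $\mu(h) > \lambda(h)$, the action of $\gg_1$ (which raises the $h$-eigenvalue) on this preimage vector lands in the part of $L(\mu)$ that... here I would instead argue directly: consider $H^0(\gg_1, X)$. Since $\gg_1$ acts on $L(\lambda)$ with highest-weight line surviving to $H^0(\gg_1, L(\lambda))$, and the long exact sequence for $H^\bullet(\gg_1, -)$ relates $H^0(\gg_1,X)$ to $H^0(\gg_1, L(\mu))$ and $H^0(\gg_1, L(\lambda))$; the weight $\lambda$ appears in $H^0(\gg_1, X)$. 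By Frobenius reciprocity / the universal property of $\Delta(\lambda) = \operatorname{Ind}^\gg_{\gg_0 \oplus \gg_{-1}} V(\lambda)$, a $\gg_0$-highest weight vector of weight $\lambda$ killed by... wait, $\Delta(\lambda)$ is induced from $\gg_0 \oplus \gg_{-1}$, so its universal property is: $\operatorname{Hom}_\gg(\Delta(\lambda), X) = \operatorname{Hom}_{\gg_0 \oplus \gg_{-1}}(V(\lambda), \operatorname{Res} X)$, i.e. a vector in $X$ of $\gg_0$-highest weight $\lambda$ annihilated by $\gg_{-1}$. The hypothesis $\lambda(h) < \mu(h)$ forces $\gg_{-1}$ (which lowers $h$-eigenvalue) to annihilate such a lift, since $L(\mu)$ sits in a strictly higher $h$-eigenvalue region than the $\lambda$-line of $L(\lambda)$ — more precisely the $h$-eigenvalues of $L(\lambda)$ are $\leq \lambda(h)$ and those of $L(\mu)$ are $\leq \mu(h)$, but we need the lift itself to be $\gg_{-1}$-killed, which I would get by a careful weight-space argument using that the $\lambda$ weight space of $X$ is one-dimensional and the image of $\gg_{-1}$ acting on it lies in $L(\mu)$ in $h$-eigenvalue $< \lambda(h)$, yet all of $L(\mu)$ has $h$-eigenvalues $\leq \mu(h)$... this needs $\mu(h) < \lambda(h)$, contradiction — so it's actually case (ii)'s hypothesis that makes $\gg_{-1}$ vanish. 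Let me flip: in case (i), $\lambda(h) < \mu(h)$, I instead dualize. The dual sequence $0 \to L(\lambda)^* \to X^* \to L(\mu)^* \to 0$ has $L(\mu)^*$ as a quotient; combined with the duality for Kac modules (Lemma~\ref{lem:dual_Kac}) and the typical-weight / highest-weight theory, I map $\Delta(\mu)^*$-type arguments across. The cleanest route: in case (ii), $\mu(h) < \lambda(h)$, the socle computation shows $X \hookrightarrow \nabla(\mu)$ because $L(\mu) = \operatorname{soc}\nabla(\mu)$ and $\nabla(\mu) = \operatorname{Coind}^\gg_{\gg_0 \oplus \gg_1} V(\mu)$ has universal property $\operatorname{Hom}_\gg(X, \nabla(\mu)) = \operatorname{Hom}_{\gg_0 \oplus \gg_1}(\operatorname{Res} X, V(\mu))$; the projection $X \to $ (the $\mu$-weight space) is $\gg_1$-equivariant onto $V(\mu)$ precisely when $\gg_1$ kills the relevant vectors, which holds because $\gg_1$ raises $h$-eigenvalues and $\lambda(h) > \mu(h)$ bounds where the $L(\lambda)$-part can go. Case (i) then follows by applying case (ii) to the dual extension and translating via Lemma~\ref{lem:dual_Kac} and Remark~\ref{par}.

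Concretely, the steps I would carry out, in order: (1) reduce to $\lambda(h) \neq \mu(h)$, ruling out equality by a direct argument that equal $h$-eigenvalue plus non-split contradicts complete reducibility of the $h$-eigenspace decomposition combined with $\gg_0$-semisimplicity; (2) in the case $\mu(h) < \lambda(h)$, show that the composite map from $X$ to its $\mu$-isotypic $\gg_0$-piece is a surjection of $(\gg_0 \oplus \gg_1)$-modules onto $V(\mu)$ — here one uses that $\gg_1$ shifts $h$-weight up by $1$ and all $h$-weights of $X$ beyond $\mu(h)$ lie in the $L(\lambda)$-part, forcing $\gg_1$ to act by zero after projection; (3) invoke the coinduction adjunction to get $X \to \nabla(\mu)$, and check it is injective because its restriction to $L(\mu) = \operatorname{soc} X$ (using non-splitness, $L(\mu)$ is the socle of $X$) is the inclusion $L(\mu) \hookrightarrow \nabla(\mu)$, so the kernel meets the socle trivially hence is zero; (4) obtain case (i) by dualizing: apply (2)–(3) to $0 \to L(\lambda)^* \to X^* \to L(\mu)^* \to 0$, note $(-w_0\lambda)(h) = -\lambda(h)$ so the $h$-inequality flips correctly, conclude $X^* \hookrightarrow \nabla(L(\lambda)^*\text{'s weight})$, dualize back using Lemma~\ref{lem:dual_Kac} to get $X$ as a quotient of a thick Kac module $\Delta(\lambda)$, adjusting by $\Pi$ and $-2\omega$ shifts as dictated by Remark~\ref{par}.

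The main obstacle I anticipate is step (2): making rigorous the claim that $\gg_1$ (resp. $\gg_{-1}$) acts by zero on the relevant highest/lowest weight vector of $X$. This requires knowing that the extreme $h$-eigenspace of $X$ on the $L(\lambda)$ side is exactly the $h$-eigenspace of $L(\lambda)$ and contains no contribution from $L(\mu)$ — which is where the strict inequality $\mu(h) < \lambda(h)$ is used — and then that the $\gg_1$-image of that eigenspace, living in a strictly higher $h$-eigenvalue, must be zero because it would have to lie in $L(\mu)$ (the only subquotient that could contribute in that range) yet $L(\mu)$'s $h$-eigenvalues are all $\leq \mu(h) < \lambda(h)$. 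Pinning down these $h$-weight bounds for simple modules — that the $h$-eigenvalues of $L(\nu)$ are bounded above by $\nu(h)$, with equality exactly on the $\gg_0$-highest part — is standard from the highest-weight theory with respect to $\bb_0 \oplus \gg_{-1}$, but I would state it carefully as the linchpin of the argument.
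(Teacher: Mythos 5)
Your strategy is a mirror image of the paper's: you handle the case $\mu(h)<\lambda(h)$ directly via the coinduction adjunction for $\nabla(\mu)$ and obtain the other case by dualizing, whereas the paper handles $\lambda(h)<\mu(h)$ directly via the induction adjunction for $\Delta(\lambda)$ and dualizes for the other case. Both routes are in principle fine, and your preliminary step of disposing of $\lambda(h)=\mu(h)$ (minimal $h$-eigenspace is $\gg_{-1}$-invariant, splits as $V(\lambda)\oplus V(\mu)$ by $\gg_0$-semisimplicity, $\mathcal{U}(\gg)V(\lambda)$ then gives a complement) is essentially what the paper does, even if you only sketch it.

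However, the statement you identify as the ``linchpin'' of the whole argument is wrong in a way that matters. You assert that the $h$-eigenvalues of $L(\nu)$ are bounded \emph{above} by $\nu(h)$. In fact they are bounded \emph{below} by $\nu(h)$: with the Borel $\bb_0\oplus\gg_{-1}$, the highest weight vector is annihilated by $\gg_{-1}$, and $L(\nu)$ is a quotient of $\Delta(\nu)=\mathcal{U}(\gg_1)\otimes V(\nu)$ where $\gg_1$ \emph{raises} the $h$-eigenvalue; so $\nu(h)$ is the \emph{minimal} $h$-eigenvalue, realized exactly on the image of $V(\nu)$. This is precisely what the paper uses (``$t=\lambda(h)=\mu(h)$ is the lowest eigenvalue of $h$''). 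Once the inequality is flipped, the confusion running through the middle of your proposal (``this needs $\mu(h)<\lambda(h)$, contradiction --- so it's actually case (ii)'s hypothesis\dots'') resolves itself: in the case $\mu(h)<\lambda(h)$ the $\mu(h)$-eigenspace of $X$ is the \emph{lowest} one, hence equals $V(\mu)$, hence the projection $X\twoheadrightarrow V(\mu)$ is trivially $\gg_1$-equivariant because the $(\mu(h)-1)$-eigenspace is zero; that gives the map $X\to\nabla(\mu)$, and simplicity of $\soc X=L(\mu)$ plus non-vanishing on the socle gives injectivity. As written, with the upper bound, the $\mu(h)$-eigenspace is not the extreme one, the $(\mu(h)-1)$-eigenspace need not vanish, and the claimed $\gg_1$-equivariance of the projection has no justification. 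So the gap is concrete: replace ``$\leq\nu(h)$'' by ``$\geq\nu(h)$'' throughout (and re-derive the consequences accordingly); then your mirror-image argument goes through.
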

\begin{proof} 
First, we note that if $\lambda(h)=\mu(h)$, then $H^0(\gg_{-1},X)\cong V(\lambda)\oplus V(\mu)$. 
Indeed, $t:=\lambda(h)=\mu(h)$ is the lowest eigenvalue of $h$. Therefore the $h$-eigenspace with eigenvalue $t$ is 
a $\gg_0$-submodule annihilated by $\gg_{-1}$.

Then $\mathcal{U}(\gg)V(\lambda)=\mathcal{U}(\gg_1)V(\lambda)$ is a proper submodule of
$X$. Hence the extension must be trivial.

Assume now that $\lambda(h)<\mu(h)$, then $V(\lambda)\subset H^0(\gg_{-1},X)$. Since the extension is non-split, we have $X=\mathcal{U}(\gg)V(\lambda)$.
Thus, by adjointness \eqref{Indcoind}, $X$ is a quotient of $\Delta(\lambda)$.

Finally, let us assume that $\mu(h)<\lambda(h)$. Then consider the dual extension
$$0\to L(\lambda)^*\to X^*\to L(\mu)^*\to 0,$$
and use  $H^0(\gg_{1},L(\mu)^*)=V(-w_0(\mu))=V(\mu)^*$ and  $H_0(\gg_{1},L(\lambda)^*)=V(-w_0(\lambda))=V(\lambda)^*.$ Since we have $-w_0(\lambda)(h)<-w_0(\mu)(h)$, we obtain that $V(\mu)^*$ is a submodule in $H^0(\gg_{1},X)$.
Hence by Frobenius reciprocity, we obtain that $X^*$ is a quotient of the induced module $N:=\mathcal{U}(\gg)\otimes_{\gg_0\oplus\gg_1}V(\mu)^*$.
Dualizing again, we get that $X$ is a submodule of $N^*$. On the other hand,
$N^*$ is isomorphic to $\nabla(\mu)$. Hence the proposition follows.
\end{proof}

\subsection{Reduced Grothendieck group}\label{ssec:reduced_GR_group}
Let $\Groth_n$ denote the {\it reduced Grothendieck group} of
$\mathcal{F}_n$. By this we mean the usual Grothendieck group quotient by the relation $[\Pi X]=[X]$.
Denote by $\Groth_n(\nabla)$ and $\Groth_n(\Delta)$ the subgroups of $\Groth_n$ generated by the thin and thick Kac modules respectively and 
consider the pairing
$\langle -,-\rangle:\Groth_n(\Delta)\times \Groth_n(\nabla) \to \mathbb Z$ given by
\begin{eqnarray}
\label{bil}
\langle [M],[N]\rangle:=\dim\Hom_{\gg}(M,N).
\end{eqnarray}
Then $\{[\Delta(\lambda)]\}$ and $\{[\nabla(\lambda)]\}$ are dual bases.

Consider the full subcategories of modules with filtrations by thin and thick Kac modules, respectively. 
A module $X$ lies in both subcategories if and only if $X$ is tilting and therefore projective by Lemma~\ref{lem:tilt}. These subcategories give us two groups $\Groth_n (\Delta)$ and $\Groth_n (\nabla)$, both mapping into $\Groth_n$. 
If we denote by $\Groth^\oplus(\mathcal P_n)$, the reduced split Grothendieck group of the full (additive) subcategory of projective modules in $\mathcal{F}_n$, then the obvious inclusion maps fit into a commutative square 
 
$$\begin{CD}
\Groth^\oplus(\mathcal P_n)@>>>\Groth_n(\Delta)\\
@VVV@VVV\\
\Groth_n(\nabla)@>>>\Groth_n.
\end{CD}$$
The restriction of $\langle -,- \rangle$ to $\Groth^\oplus(\mathcal P_n)\times \Groth^\oplus(\mathcal P_n)$ satisfies the relation
$$\langle [P],[Q]\rangle=\langle [Q\otimes T],[P]\rangle,$$
where $T$ is the one-dimensional $\gg$-module with highest weight $2\omega$, \cite[Lemma 9.4]{Serqr}. This follows from Proposition~\ref{lem:injective_hull_simple}, and we obtain for any dominant weights $\lambda,\mu$ the following.
\begin{corollary} It holds $\dim\Hom_\gg(P(\lambda),P(\mu))=\dim\Hom_\gg(P(\mu+2\omega),P(\lambda)).$ 
 \begin{proof}
Since $\dim \Hom_\gg(P(\lambda),P(\mu))=\dim\Hom_\gg(P(\lambda),I(\mu+\omega))=[P(\lambda):L(\mu+2\omega)]
= \dim\Hom_\gg(P(\mu+2\omega),P(\lambda))$, the claim follows.
\end{proof}
\end{corollary}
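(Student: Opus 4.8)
The statement to prove is the Corollary: for any dominant weights $\lambda, \mu$,
$$\dim\Hom_\gg(P(\lambda),P(\mu)) = \dim\Hom_\gg(P(\mu+2\omega),P(\lambda)).$$
The proof is already essentially a one-line chain of equalities in the excerpt, so my plan is to justify each link carefully. The overall strategy is: express $\dim\Hom_\gg(P(\lambda),P(\mu))$ as a multiplicity of a simple module in $P(\lambda)$ via the injectivity of $P(\mu)$, then re-express that multiplicity using $\Hom$ out of the projective cover, and finally invoke the exactness of taking $\Hom_\gg(P(\lambda),-)$ to read off the multiplicity.

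\emph{Step 1 (identify $P(\mu)$ as an injective hull).} By Proposition~\ref{lem:injective_hull_simple}, $\Pi^n P(\mu) $ is the injective hull of $L(\mu + 2\omega)$; equivalently, writing $I(\nu)$ for the injective hull of $L(\nu)$, we have $P(\mu) \cong \Pi^n I(\mu + 2\omega)$. Since the functor $\Hom_\gg$ we use already sums over the two parity shifts, the parity twist $\Pi^n$ is irrelevant and we get
$$\dim\Hom_\gg(P(\lambda), P(\mu)) = \dim\Hom_\gg(P(\lambda), I(\mu+2\omega)).$$

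\emph{Step 2 (compute $\Hom$ into an injective hull).} For any injective hull $I(\nu)$ and any module $X$ of finite length, the dimension of $\Hom_\gg(X, I(\nu))$ equals the Jordan--Hölder multiplicity $[X : L(\nu)]$, because $I(\nu)$ is injective (so $\Hom_\gg(-, I(\nu))$ is exact) and $\Hom_\gg(L(\rho), I(\nu)) = \delta_{\rho,\nu}\,\CC$ by the defining property of the injective hull together with Schur's lemma for the self-dual-free situation here; one must note that $\dim\Hom_\gg$ already incorporates the parity-shifted copies, so $\Hom_\gg(L(\nu),I(\nu))$ is one-dimensional. Applying this with $X = P(\lambda)$ and $\nu = \mu + 2\omega$ gives
$$\dim\Hom_\gg(P(\lambda), I(\mu+2\omega)) = [P(\lambda) : L(\mu+2\omega)].$$

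\emph{Step 3 (compute the multiplicity via the projective cover).} Dually to Step 2, for a projective cover $P(\nu)$ of $L(\nu)$ and any finite-length module $X$, exactness of $\Hom_\gg(P(\nu),-)$ plus $\dim\Hom_\gg(P(\nu), L(\rho)) = \delta_{\nu,\rho}$ yields $\dim\Hom_\gg(P(\nu), X) = [X : L(\nu)]$. With $\nu = \mu+2\omega$ and $X = P(\lambda)$ this gives
$$[P(\lambda) : L(\mu+2\omega)] = \dim\Hom_\gg(P(\mu+2\omega), P(\lambda)).$$
Chaining Steps 1--3 produces the claimed identity.

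\emph{Main obstacle.} The only subtlety — and the place where care is genuinely needed — is the bookkeeping of the parity-switching functor $\Pi$ and the convention that $\dim\Hom_\gg(X,Y) := \dim\Hom_{\mathcal{F}_n}(X,Y) + \dim\Hom_{\mathcal{F}_n}(X,\Pi Y)$ and $[X:L]$ counts subquotients isomorphic to $L$ \emph{or} $\Pi L$. One must check that $\Hom_\gg(L(\nu), I(\nu))$ and $\Hom_\gg(P(\nu), L(\nu))$ are each one-dimensional in this $\Pi$-insensitive sense — i.e. that $L(\nu) \not\cong \Pi L(\nu)$ cannot cause a spurious factor of $2$ — which follows because $L(\nu)$ and $\Pi L(\nu)$ are non-isomorphic simple objects of $\mathcal{F}_n$ and the socle of $I(\nu)$ (resp.\ head of $P(\nu)$) is exactly $L(\nu)$, not $L(\nu)\oplus\Pi L(\nu)$. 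Everything else is the standard highest-weight-category argument, and the factor $2\omega$ is carried through mechanically from Proposition~\ref{lem:injective_hull_simple}.
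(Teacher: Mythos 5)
Your proof is correct and follows the same chain of equalities as the paper's one-line argument, just with each link spelled out; in particular you correctly use $I(\mu+2\omega)$, whereas the paper's displayed proof contains a typo writing $I(\mu+\omega)$ (it nonetheless passes immediately to $[P(\lambda):L(\mu+2\omega)]$, consistent with Proposition~\ref{lem:injective_hull_simple}).
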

\section{Translation functors and the fake Casimir element}\label{sec:transl_funct}

\subsection{Endomorphism of the functor \texorpdfstring{$_-\otimes V$}{(times V)}}\label{ssec:Omega}
Consider the following endofunctor of $\mathcal{F}_n$,
\begin{eqnarray}
\Theta'&=&_-\otimes V:\quad\mathcal{F}_n\longrightarrow\mathcal{F}_n.
\end{eqnarray}
We would like to investigate the direct summands of this functor. In order to do so, we introduce a crucial  natural endotransformation $\Omega$ of $\Theta'$.

Recall the even non-degenerate invariant 
supersymmetric form \eqref{dual} on  $\mathfrak{gl}(n|n)$, and consider the involutive anti-automorphism
$\sigma:\mathfrak{gl}(n|n) \to \mathfrak{gl}(n|n)$ defined as
$$\left(\begin{smallmatrix}A&B\\C&D\end{smallmatrix}\right)^\sigma:=\left(\begin{smallmatrix}-D^t&B^t\\-C^t&-A^t\end{smallmatrix}\right).$$
Then $\gg=\fp(n)\subset\mathfrak{gl}(n|n)$ is precisely given by all elements fixed by $\sigma$ and 
$$\gg':=\{x\in\mathfrak{gl}(n|n)\,|\,x^\sigma=-x\}=\fp(n)^\perp.$$

Observe that $\gg$ and $\gg'$ are maximal isotropic subspaces with respect to the form $\<_-,_-\>$ from \eqref{dual} and hence this form defines a non-degenerate
$\gg$-invariant pairing $\gg\otimes\gg'\to\mathbb C$. 
\begin{definition}
We pick now a $\mathbb{Z}$-homogeneous basis $\{X_i\}$ in $\gg$ and the basis $\{X^i\}$ in $\gg'$ such that
$\left\<X^i,X_j\right\>=\delta_{ij}$ and define the {\it fake Casimir element}
\begin{eqnarray}
\label{Omega}
\Omega:=2\sum_i X_i\otimes X^i\in\gg\otimes\gg'\subset \gg\otimes\mathfrak{gl}(n|n).
\end{eqnarray}
\end{definition}
\begin{remark}
\label{dualbasis}
 Consider the basis of $\gg$ from Remark~\ref{rmk:basis}. Then the dual basis is $$\left\{\tfrac{1}{2}A_{ji}^{+}\right\}_{1 \leq i, j \leq n} \cup \;\left\{ -\tfrac{1}{2} C_{ji}^+\right\}_{1 \leq i < j \leq n}\cup\; \left\{-\tfrac{1}{4} C_{ii}^+\right\}_{1 \leq i \leq n}\cup\; \left\{\tfrac{1}{2} B_{ji}^-\right\}_{1 \leq i < j \leq n}.$$ 
\end{remark}

\begin{definition}
Now, given a $\gg$-module $M$, let $\Omega_M: M \otimes V \to M \otimes V$ be the linear map defined as
 $$\Omega_M(m \otimes v) = 2\sum_i (-1)^{p(X_i) p(m)} X_im \otimes X^iv$$
 for homogeneous $m \in M$, $v \in V$.
\end{definition}

\begin{lemma}\label{lem:commute}
 The morphisms $\Omega_M$ define an endomorphism of the functor $\Theta'$.
\end{lemma}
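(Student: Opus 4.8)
The statement to prove is Lemma~\ref{lem:commute}: that the collection of maps $\{\Omega_M\}$ is a natural transformation $\Theta' \to \Theta'$. Naturality (i.e. commutation with morphisms $M \to M'$ of $\gg$-modules) is essentially immediate since $\Omega_M$ is built functorially out of the module action; the real content is that each $\Omega_M$ is itself a $\gg$-module endomorphism of $M \otimes V$. So the plan is: first dispatch naturality in one line, then prove $\gg$-equivariance of $\Omega_M$.

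For the $\gg$-equivariance, the plan is to exploit the fact that $\Omega$ was manufactured from a $\gg$-invariant element. Concretely, the element $\sum_i X_i \otimes X^i \in \gg \otimes \gg'$ is the image of the canonical element under the $\gg$-invariant pairing $\gg \otimes \gg' \to \mathbb{C}$ induced by the supertrace form (established in Section~\ref{ssec:Omega}: $\gg$ and $\gg'$ are maximal isotropic, the pairing is non-degenerate and $\gg$-invariant). Invariance of this canonical element under the diagonal adjoint action of $\gg$ on $\gg \otimes \gg'$ is the standard fact about dual bases with respect to an invariant form. Then I would translate this invariance statement into the assertion that $\Omega_M$ commutes with the $\gg$-action on $M \otimes V$: for $y \in \gg$, acting on $M \otimes V$ by $y(m \otimes v) = ym \otimes v + (-1)^{p(y)p(m)} m \otimes yv$, one computes $y \cdot \Omega_M(m\otimes v) - \Omega_M(y \cdot (m \otimes v))$ and finds it equals (up to the universal sign bookkeeping) the action of $\sum_i \big([y,X_i]\otimes X^i + X_i \otimes [y,X^i]\big) = \mathrm{ad}(y)(\sum_i X_i \otimes X^i)$, which vanishes by invariance. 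Here it is important that $\gg'$ is an $\Ad(\gg)$-submodule of $\mathfrak{gl}(n|n)$ — true because $\gg' = \gg^\perp$ and $\gg$ is a subalgebra, so $[\gg,\gg^\perp]\subseteq \gg^\perp$ — which is what makes the bracket $[y, X^i]$ expandable again in the basis $\{X^i\}$, and makes the action of $\Omega$ land in $M \otimes V$ rather than in some larger space.

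The main obstacle, and the step I'd be most careful about, is the sign bookkeeping in the super-setting: the definition of $\Omega_M$ already carries a sign $(-1)^{p(X_i)p(m)}$, the $\gg$-action on a tensor product carries Koszul signs, and the factor of $2$ and the scaling of the dual basis (Remark~\ref{dualbasis}) all have to be tracked consistently. The clean way to avoid a sign disaster is to phrase everything in terms of the action of $\mathcal{U}(\gg \otimes \gg')$-type operators, or more precisely to note that $\Omega_M$ is exactly the operator by which the invariant element $2\sum_i X_i \otimes X^i$ acts on $M \otimes V$ under the map $\gg \otimes \gg' \to \End(M) \otimes \End(V) \to \End(M \otimes V)$ (with the intermediate map being the super-tensor-product of representations, which is why the sign appears), and that such "action by an invariant tensor" operators are automatically module maps — this is a general principle and reduces the whole lemma to checking that $2\sum_i X_i \otimes X^i$ is $\gg$-invariant, which I'd verify by the dual-basis argument above. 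I would present the computation at the level of a general $y \in \gg_{\bar 0}$ (where signs are trivial) and then remark that the odd case is identical with the Koszul sign rule, or simply cite that invariant tensors act by module homomorphisms.
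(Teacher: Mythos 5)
Your proposal is correct and follows essentially the same route as the paper: both establish that $y\mapsto [y\otimes 1 + 1\otimes y, \sum_i X_i\otimes X^i]$ vanishes by expanding $[y,X_i]$ and $[y,X^i]$ in the respective bases and invoking invariance and non-degeneracy of the supertrace form (the paper executes this directly with the Koszul signs carried through rather than appealing to the general principle, but the underlying argument is identical). The only small inaccuracy is your parenthetical claim that $\Ad(\gg)$-stability of $\gg'$ is what makes $\Omega$ land in $M\otimes V$ — that is automatic since $X^i\in\mathfrak{gl}(n|n)$ acts on $V$ regardless; the stability of $\gg'$ is needed only so that the bracket $[y,X^i]$ re-expands in the $\{X^j\}$ basis during the invariance check.
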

\begin{proof}
 For any homogeneous $y \in \gg$, we have
$$[y \otimes 1 + 1 \otimes y,  X_i \otimes X^i] = [ y, X_i] \otimes X^i + (-1)^{p(X_i)p(y)} X_i \otimes [y, X^i].$$
By expanding $[y, X_i]$, $ [y, X^i]$ in the bases $\{X_j\}$ respectively  $\{X^j\}$ of $\mathfrak{gl}(n|n)$, we obtain:
$$ [y, X_i] = \sum_j \left\<X^j,[y, X_i] \right\> X_j, \;\;  [y, X^i] = \sum_j \left\< [y, X^i],X_j \right\> X^j.$$ 

Thus  the non-degeneracy and invariance of the trace form \eqref{dual} implies
\begin{eqnarray*}
&& [y \otimes 1 + 1 \otimes y, \sum_i  X_i \otimes X^i]\\
&=&  \sum_i  [ y, X_i] \otimes X^i + (-1)^{p(X_i)p(y)} X_i \otimes [y, X^i]\\
 & =&  \sum_{i, j} \left\<X^j,[y, X_i]\right\>  X_j \otimes X^i +\sum_{i, j} (-1)^{p(X_i)p(y)} \left\< [y, X^i],  X_j \right\> X_i \otimes  X^j \\
 & =&  \sum_{i, j} \left\<X^j,[y, X_i]\right\> X_j \otimes X^i -\sum_{i, j} \left\< [X^i,y],  X_j \right\> X_i \otimes  X^j\\
 &=&  \sum_{i, j} \left\<X^j,[y, X_i]\right\>  X_j \otimes X^i -\sum_{i, j} \left\< X^i,  [y,X_j] \right\> X_i \otimes  X^j \\
  &= &\sum_{i, j} \left\<X^j,[y, X_i]\right\> X_j \otimes X^i -\sum_{i, j} \left\< X^j,  [y,X_i] \right\> X_j \otimes  X^i \quad=\quad 0.
\end{eqnarray*}
This implies that the map $\Omega_M$ commutes with the action of $\gg$ on $M \otimes V$ for any $\gg$-module $M$, as required.
\end{proof}

\begin{definition}
Let $0\leq p<q\leq d$ be integers and $M$ a $\gg$-module. Define the linear maps $\Omega_{p\,q,M}:M\otimes V^{\otimes d}\to M\otimes V^{\otimes d}$ by 
$$\Omega_{p\,q}:=2\sum_i 1\otimes\dots\otimes X_i\otimes 1\otimes\dots\otimes X^i\otimes 1\otimes \dots\otimes 1,$$
where $X_i$ is applied to the $p$-th tensor factor and $X^i$ is applied to the $q$-th tensor factor (numbered from $0$ to $d$). They define an endomorphism of the endofunctor $_-\otimes V^{\otimes d}$ on the category of vector superspaces and we can consider, for $1\leq p\leq d$, the endomorphisms 
\begin{eqnarray}
\label{yse}
y_p &:= &\sum_{k=0}^{p-1} \Omega_{k\, p}, \quad\text{in particular}\quad {y_1}_V=s+e:V\otimes V\rightarrow V\otimes V,
\end{eqnarray}
where $s(x\otimes y)=(-1)^{p(x)p(y)}y\otimes x$ is the super swap and $e$ projects onto the unique trivial module by applying first $\beta$ and then the inclusion given in Lemma~\ref{lem:decoSE}.
\end{definition}
Using the decomposition $V\otimes V=S^2 V\oplus\Lambda^2V$ we have $e(\Lambda^2 V)=0$ and $e(S^2V)\subset\Lambda^2V$.
As a consequence we have $e\circ s=-s\circ e=e$. 
\begin{proposition}\label{prop:jm} 
The operators $y_1, y_2, \dots, y_{d}$ are pairwise commuting endomorphisms of the functor $_-\otimes V^{\otimes d}:\mathcal{F}_n\longrightarrow \mathcal{F}_n$. 
\end{proposition}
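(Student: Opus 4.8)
\textbf{Proof plan for Proposition~\ref{prop:jm}.}

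The plan is to prove two things: that each $y_p$ is an endomorphism of the functor ${}_-\otimes V^{\otimes d}$, and that the $y_p$ pairwise commute. The first part follows from Lemma~\ref{lem:commute} together with an invariance argument. Indeed, exactly as in the proof of Lemma~\ref{lem:commute}, each individual operator $\Omega_{k\,p}$ (with $k<p$, where the index $0$ refers to the module factor $M$ and indices $1,\dots,d$ to the $V$-factors) commutes with the diagonal $\gg$-action on $M\otimes V^{\otimes d}$: the computation there is local to the two tensor positions involved and uses only the non-degeneracy and $\gg$-invariance of the form \eqref{dual} on $\gg\otimes\gg'$, so it applies verbatim with the $p$-th and $k$-th factors playing the roles of the factors $M$ and $V$. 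Summing over $k=0,\dots,p-1$ shows $y_p$ is a $\gg$-module endomorphism of $M\otimes V^{\otimes d}$, natural in $M$, hence an endomorphism of the functor.

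For the commutation $[y_p,y_q]=0$, I would expand $y_p y_q - y_q y_p$ into a sum of commutators $[\Omega_{k\,p},\Omega_{l\,q}]$ over $k<p$ and $l<q$ and analyze the possible configurations of the index set $\{k,p\}\cap\{l,q\}$. The plan is:
\begin{itemize}
\item If $\{k,p\}\cap\{l,q\}=\emptyset$, then $\Omega_{k\,p}$ and $\Omega_{l\,q}$ act on disjoint tensor factors, so they commute trivially.
\item If the two pairs share exactly one index, the commutator is a ``triple'' term supported on the three distinct positions involved; these are the terms that must cancel in the sum. By the invariance of the form (same manipulation as in Lemma~\ref{lem:commute}, now applied to the Lie bracket identity for $\gg\oplus\gg'\subset\gg\mathfrak{l}(n|n)$) one rewrites $[\Omega_{k\,p},\Omega_{l\,q}]$ in terms of a single ``structure-constant'' triple operator $\Omega_{abc}$, and the key point is that each such triple operator appears twice with opposite signs when one collects contributions over all choices of $k,l$: e.g. the terms coming from $p=l$ (so $\Omega_{k\,p}$ and $\Omega_{p\,q}$) pair against the terms coming from the other overlap pattern.
\item The remaining case is $\{k,p\}=\{l,q\}$, which forces $p=q$ and need not be considered since we take $p\neq q$.
\end{itemize}

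I expect the main obstacle to be the bookkeeping in the overlapping case: one must identify, for a fixed triple of positions $\{a,b,c\}$, all the commutator summands $[\Omega_{k\,p},\Omega_{l\,q}]$ (coming from different pairs $(k,l)$) that land on those three positions, rewrite each using the $\gg$-invariance of the pairing $\gg\otimes\gg'\to\mathbb C$ (exactly the computation carried out in Lemma~\ref{lem:commute}, which expresses a commutator $[\Omega_{?\,?},X_i\otimes X^i]$-type expression in terms of structure constants), and then check that the signs work out so that everything cancels in the full sum $y_py_q-y_qy_p$. This is the standard Jucys--Murphy/Casimir argument (as in the affine Nazarov--Wenzl setting of \cite{ES}, \cite{AMR}), and the only genuine verification needed here is that the odd signs $(-1)^{p(X_i)p(\cdot)}$ appearing throughout — and the super-symmetry of the form — do not spoil the cancellation; a careful sign-chase, parallel to Lemma~\ref{lem:commute}, confirms this. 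Alternatively, and perhaps more cleanly, one can deduce commutativity from the fact that all the $\Omega_{p\,q}$ lie in the image of the (super)commutant of the $\gg$-action, which is governed by a Brauer-type algebra in which the relevant elements are manifestly commuting; but the direct computation is self-contained and I would present that.
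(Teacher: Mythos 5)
Your plan is correct and would succeed, but it takes a genuinely more computational route than the paper. For the commutation, you propose expanding $[y_p,y_q]$ into the double sum of commutators $[\Omega_{k\,p},\Omega_{l\,q}]$ and then analyzing the overlap patterns of $\{k,p\}$ and $\{l,q\}$, with a sign-chase to verify the cancellation of the triple terms. The paper instead makes the single observation that
$$y_q \;=\; \sum_i \Delta^{q-1}(X_i)\otimes X^i\otimes 1\otimes\cdots\otimes 1,$$
i.e.\ $y_q$ is itself a fake-Casimir operator $\Omega_{M'}$ for $M'=M\otimes V^{\otimes(q-1)}$. Since $y_p$ for $p<q$ acts only on the first $q$ slots and is a $\gg$-module endomorphism there (again by Lemma~\ref{lem:commute}), it commutes with the diagonal coproduct $\Delta^{q-1}(X_i)$, and it obviously commutes with $X^i$ on position $q$; hence $[y_p,y_q]=0$ in one stroke. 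Unwinding your overlapping-pair analysis, you will find that for each fixed $k<p$ the cancellation of $[\Omega_{kp},\Omega_{kq}]$ against $[\Omega_{kp},\Omega_{pq}]$ is precisely the statement that $\Omega_{kp}$ commutes with the fake Casimir between (slots $k,p$ with diagonal $\gg$-action) and slot $q$ — i.e.\ you would be re-running the Lemma~\ref{lem:commute} computation once per $k$. So the two arguments are equivalent in content, but the paper's version factors out the invariance argument and avoids all structure-constant bookkeeping; if you do take the explicit route, be aware that the pairing is not a literal "two copies of the same triple operator with opposite sign," but rather that $\Omega_{kq}+\Omega_{pq}$ assembles into a diagonal-action Casimir, and the invariance of the pairing $\gg\otimes\gg'\to\mathbb{C}$ (not a naive sign flip) is what makes the commutator vanish.
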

\begin{proof}
Let $\Delta: \mathcal{U}(\gg)\to \mathcal{U}(\gg)\otimes\mathcal{U}(\gg)$ denote the comultiplication, and $\Delta^q:\mathcal{U}(\mathfrak{g})\mapsto \mathcal{U}(\mathfrak{g})^{\otimes d}$ the iterated comultiplication.   Note that, with our dual bases $\{X_i\}$, $\{X^i\}$,
$$y_q=\sum_i\Delta^{q-1}(X_i)\otimes X^i\otimes 1\otimes\dots\otimes 1.$$
We have shown in Lemma~\ref{lem:commute} that for any $x\in\gg$ and
any $\gg$-module $M$ the operators $\Delta(x)$ and $\Omega$ commute in $M\otimes V$. Then it follows easily that $y_p $ is an endomorphism of $\gg$-modules for $1\leq 1\leq d$. Moreover, $\Delta^{q-1}(X_i)$ commutes with $\Omega_{a\,b}$ for $a,b\leq q$.
Now for $1\leq p\leq d$ with $p<q$ we get
$$[y_p,y_q]=\sum_{k=1}^{p}[\Omega_{k,p+1},\Delta^{q-1}(X_i)]\otimes X^i\otimes 1\otimes\dots\otimes 1=0.$$
The statement is proved.
\end{proof}

Note that $\Theta'$ is an exact functor; its left and right adjoint are isomorphic to $\Pi \Theta'$ where
$\Pi$ is the parity switch functor, since $V\cong \Pi V^*$ via \eqref{eta}.
\begin{definition}
For any $k\in\mathbb C$ we define a functor $\Theta'_k:\mathcal{F}_n\to\mathcal{F}_n$ as the functor $\Theta'=_-\otimes V$ followed by the projection onto the generalized $k$-eigenspace for $\Omega$, i.e., 
\begin{eqnarray}
\label{thetak}
\Theta'_k(M):=\bigcup_{m>0}\Ker (\Omega -k\operatorname{Id})^m_{|_{M\otimes V}}
\end{eqnarray}
and set $\Theta_k:=\Pi^k\Theta'_k$ in case $k\in\mathbb{Z}$. 
\end{definition}
\begin{lemma}\label{lem:exact}
 The functors $\Theta'_k$, $k\in\mathbb{C}$, and $\Theta_k$, $k\in\mathbb{Z}$ are exact.
\end{lemma}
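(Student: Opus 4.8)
The statement to prove is that the functors $\Theta'_k$ (for $k \in \mathbb{C}$) and $\Theta_k$ (for $k \in \mathbb{Z}$) on $\mathcal{F}_n$ are exact. Since $\Theta_k = \Pi^k \Theta'_k$ and $\Pi$ is an (exact) parity-switch autoequivalence, it suffices to treat $\Theta'_k$. The plan is to reduce exactness of $\Theta'_k$ to two facts already available: (i) $\Theta' = {}_-\otimes V$ is exact, and (ii) $\Theta'_k$ is a direct summand of $\Theta'$ as a functor, being cut out by the generalized eigenspace projector for the natural endomorphism $\Omega$.

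\emph{First step.} Observe that $\Theta' = {}_-\otimes V$ is exact: tensoring over $\mathbb{C}$ (the action of $\gg$ on $M\otimes V$ being the diagonal one) preserves short exact sequences, since $V$ is finite-dimensional and we are over a field. So for a short exact sequence $0\to M'\to M\to M''\to 0$ in $\mathcal{F}_n$ we get a short exact sequence $0\to M'\otimes V\to M\otimes V\to M''\otimes V\to 0$ of $\gg$-modules, and all three terms are again finite-dimensional and $G_0$-integrable, hence lie in $\mathcal{F}_n$.

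\emph{Second step.} By Lemma~\ref{lem:commute}, $\Omega$ is a natural endomorphism of $\Theta'$, so for each object the maps $\Omega_{M'}, \Omega_M, \Omega_{M''}$ commute with the maps of the short exact sequence above; that is, $M'\otimes V \to M\otimes V \to M''\otimes V$ is a short exact sequence of modules over the polynomial ring $\mathbb{C}[\Omega]$ (equivalently, of $\mathbb{C}[t]$-modules where $t$ acts by $\Omega$). Now for a fixed $k\in\mathbb{C}$, the generalized $k$-eigenspace functor $N \mapsto \bigcup_{m>0}\Ker(\Omega - k\,\id)^m_{|N}$ is, on finite-dimensional modules, exactly the localization/idempotent-truncation at the maximal ideal $(t-k)$ — equivalently, the image of the idempotent $\pi_k \in \mathbb{C}[t]/(\text{minimal polynomial})$ supplied by the Chinese Remainder Theorem (the primary decomposition $N = \bigoplus_{c}N_{(c)}$ into generalized eigenspaces). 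Taking generalized $k$-eigenspaces is an exact functor on finite-dimensional $\mathbb{C}[t]$-modules: it is an additive summand of the identity functor, picked out by a functorial idempotent, hence preserves kernels and cokernels; alternatively, one checks directly that $N\mapsto N_{(k)}$ sends a short exact sequence to a short exact sequence because generalized eigenspace decomposition is compatible with submodules and quotients (the decomposition of $N$ restricts to that of $N'$ and descends to that of $N''$, and dimensions add up). Applying this fibrewise to the short exact sequence from the first step yields a short exact sequence $0\to \Theta'_k(M')\to \Theta'_k(M)\to \Theta'_k(M'')\to 0$, and each term lies in $\mathcal{F}_n$ as it is a $\gg$-submodule of a module in $\mathcal{F}_n$. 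Hence $\Theta'_k$ is exact, and therefore so is $\Theta_k = \Pi^k\Theta'_k$.

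\emph{Main obstacle.} There is essentially no obstacle: the content is entirely the observation that generalized-eigenspace projection for a commuting operator is exact on finite-dimensional modules. The only point requiring a word of care is the interplay of this projection with the $\gg$-action — but this is precisely what Lemma~\ref{lem:commute} guarantees, since $\Omega$ being a $\gg$-module endomorphism means each generalized eigenspace $\Theta'_k(M)$ is automatically a $\gg$-submodule of $M\otimes V$, so the eigenspace decomposition is a decomposition in $\mathcal{F}_n$ and the argument stays inside the category throughout.
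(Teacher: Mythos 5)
Your proof is correct and takes essentially the same route as the paper: the paper's proof is the one-line observation that exactness follows directly from the definition of $\Theta'_k$ in \eqref{thetak} as a generalized $\Omega$-eigenspace projection together with exactness of $_-\otimes V$. You have simply unpacked this in detail — in particular spelling out, via Lemma~\ref{lem:commute}, why the generalized-eigenspace projection is a natural summand of the identity on the image of $\Theta'$ — but no new idea is introduced.
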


\begin{proof}
This follows directly from \eqref{thetak} and the fact that $_-\otimes V $ is an exact functor.
\end{proof}

In fact all occurring eigenvalues for $\Omega$ are integral, that means we obtain the following. 

\begin{proposition}\label{prop:theta_integer}
If $k\notin\mathbb{Z}$, then $\Theta'_k=0$. Therefore, $\Theta'=\bigoplus_{k\in\mathbb{Z}} \Theta'_k$.
\end{proposition}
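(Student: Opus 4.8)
\textbf{Proof strategy for Proposition~\ref{prop:theta_integer}.} The plan is to show that the eigenvalues of $\Omega$ on $M \otimes V$ are always integers, for any $M \in \mathcal{F}_n$. Since every module in $\mathcal{F}_n$ has a finite filtration with simple (equivalently highest weight) subquotients, and since $\Omega_M$ is natural, it suffices by exactness of $_-\otimes V$ (Lemma~\ref{lem:exact}) to check that the generalized eigenvalues of $\Omega$ on $\Delta(\lambda)\otimes V$ are integers for every dominant $\lambda$ — indeed, it is enough to treat a generating class of modules, e.g. the thick Kac modules, since the eigenvalues of $\Omega$ on a module are determined by those on the subquotients of any filtration.

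\medskip

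First I would rewrite $\Omega$ in a more manageable form. Using the canonical embedding $\gg = \fp(n) \subset \mathfrak{gl}(n|n)$ and the splitting $\mathfrak{gl}(n|n) = \gg \oplus \gg'$ as $\gg$-modules (from \eqref{dual}), the element $\Omega = 2\sum_i X_i \otimes X^i$ should be comparable to (half of) the difference of two ``Casimir-type'' expressions built inside $\mathfrak{gl}(n|n)$. Concretely, if $\{x_a\}$ is a basis of $\mathfrak{gl}(n|n)$ with dual basis $\{x^a\}$ with respect to the supertrace form, one has the $\mathfrak{gl}(n|n)$-invariant element $\sum_a x_a \otimes x^a$, and splitting the index set $a$ into the part lying in $\gg$ and the part lying in $\gg'$ expresses $\sum_a x_a \otimes x^a$ as $\Omega/2$ plus the complementary sum over $\gg'$. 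The key point is that on $M \otimes V$, where $V = \mathbb{C}^{n|n}$ is the $\mathfrak{gl}(n|n)$-vector representation, the full $\mathfrak{gl}(n|n)$-Casimir $C = \sum_a x_a x^a$ acts by a known scalar on $V$ (it is essentially the first Casimir, acting by a constant since $V$ is a fundamental $\mathfrak{gl}(n|n)$-module), and the ``action on the second factor'' piece $\sum_a 1 \otimes x^a x_a$ also acts by a scalar on $V$. Therefore $\Omega$ on $M\otimes V$ can be rewritten, up to such scalars, as $C_{M\otimes V} - C_M \otimes 1$ where $C$ is a degree-one symmetric invariant for $\mathfrak{gl}(n|n)$ (restricted to $\gg$-action), i.e.\ $\Omega$ measures the difference of the ``$\mathfrak{gl}(n|n)$-weight-squared-type'' invariant before and after tensoring with $V$.

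\medskip

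Next I would use the fact that $M \otimes V$, filtered by its $\gg_0$-structure or via a highest weight filtration, has subquotients whose highest weights differ from those of $M$ by weights of $V$, which are $\pm\varepsilon_j$ (as $\gg_0$-weights), together with the parity/$h$-grading shift. The invariant $C$ I construct above evaluates on a highest weight module $\Delta(\mu)$-type subquotient by a polynomial expression in $\bar\mu = \mu + \rho$ (using the notation \eqref{omegarho}) plus the contribution of $\tilde\gamma$ or $\gamma$ coming from the periplectic shift; crucially, because $\mathfrak{gl}(n|n)$ has an honest invariant bilinear form with integral structure constants on the integral weight lattice, and because all weights $\mu$, $\rho$, $\gamma$, $\tilde\gamma$, $\omega$ appearing are integral, the value of $C$ on each subquotient is a \emph{rational number with bounded denominator}, and the \emph{difference} $C_{M\otimes V\text{-subquotient}} - C_{M\text{-subquotient}}$ — which is exactly an eigenvalue of $\Omega$ — is an \emph{integer}, because adding $\pm\varepsilon_j$ to an integral weight changes a quadratic-type form on the integral lattice by an integer (the cross-terms are integral pairings and the diagonal term $\langle \varepsilon_j,\varepsilon_j\rangle$ is an integer). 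Once integrality of all generalized eigenvalues is established, $\Theta'_k = 0$ for $k \notin \mathbb{Z}$ is immediate from the definition \eqref{thetak}, and the direct sum decomposition $\Theta' = \bigoplus_{k\in\mathbb{Z}} \Theta'_k$ follows since $\Omega_M$ is a finite-rank operator with all generalized eigenvalues in $\mathbb{Z}$, so $M\otimes V$ is the (finite) direct sum of its generalized eigenspaces.

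\medskip

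\textbf{Main obstacle.} The delicate point is pinning down the precise scalar correction so that $\Omega$ genuinely differs from $C_{M\otimes V} - C_M\otimes 1$ by a constant independent of $M$ — this requires careful bookkeeping of the supertrace-form normalization, the factor of $2$ in \eqref{Omega}, and the parity signs in the definition of $\Omega_M$, exactly the kind of computation already done in Lemma~\ref{lem:commute} and Remark~\ref{dualbasis}. A cleaner alternative, which I would pursue if the Casimir comparison gets unwieldy, is a direct argument: compute $y_1 = s + e$ on $V\otimes V$ explicitly (its eigenvalues are $1, -1, 0$ — integers), then use Proposition~\ref{prop:jm} and the relation $\Omega_{p\,q}$-combinatorics together with the fact that on $\Delta(\lambda)\otimes V$ one can filter so that $\Omega$ acts, on each subquotient $\Delta(\mu)$ with $\mu$ differing from $\lambda$ by a weight of $V$, as an explicit integer computed from $\langle \bar\lambda, \bar\lambda\rangle - \langle \bar\mu,\bar\mu\rangle$ plus a fixed integer shift — avoiding any $\mathfrak{gl}(n|n)$-Casimir machinery and reducing everything to integrality of a difference of quadratic forms on the integral weight lattice.
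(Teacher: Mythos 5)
Your overall skeleton is correct and matches the paper's proof: filter $\Delta(\lambda)\otimes V$ into Kac subquotients, show $\Omega$ acts with an integer generalized eigenvalue on each, and then use exactness of $\Theta'_k$ to pass from thick Kac modules to simples to all of $\mathcal{F}_n$. The paper's own proof does precisely this, simply citing Lemma~\ref{lem:big} (where the eigenvalues $\bar\lambda_j$ and $\bar\lambda_j+2$ are computed explicitly) and applying exactness twice.

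However, your primary route to the integrality has a structural gap, not merely a bookkeeping one. You propose to rewrite $\Omega_M$ as $C_{M\otimes V} - C_M\otimes 1$ with $C$ the $\mathfrak{gl}(n|n)$-Casimir, by completing $\Omega/2 = \sum_i X_i\otimes X^i$ to the full $\mathfrak{gl}(n|n)$ dual-basis element $\sum_a x_a\otimes x^a$. But $M$ is only a $\fp(n)$-module, and the complementary half of that sum, $\sum_i X^i\otimes X_i$ with first factor ranging over $\gg'=\fp(n)^\perp$, has no action on $M$; likewise the quadratic Casimir $\tilde C\in\mathcal U(\fgl(n|n))$ does not act on $M$ or on $M\otimes V$. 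The expressions $C_M$ and $C_{M\otimes V}$ you want to subtract are not defined — this is exactly the obstruction that makes $\fp(n)$ hard (no quadratic Casimir) and that $\Omega\in\gg\otimes\fgl(n|n)$ is designed to circumvent. The paper's workaround (Lemma~\ref{lem:usprop}) is different in substance: it splits $\Omega=\Omega_{-1}+\Omega_0+\Omega_1$ along the $\mathbb Z$-grading, shows in part~\eqref{itm:usprop1} that $\Omega_{\pm1}$ annihilate $H^0(\gg_{\mp1},-)\otimes V_{\overline 1}$ respectively $H^0(\gg_{\pm1},-)\otimes V_{\overline0}$ on each filtration subquotient, and only then identifies $\Omega_0$ with a difference of $\gg_0=\fgl(n)$-Casimirs (part~\eqref{itm:usprop2}); the $\fgl(n|n)$-Casimir $\tilde C$ enters only through its action on $V$ (parts~\eqref{itm:usprop3}, \eqref{itm:usprop4}). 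Your ``cleaner alternative'' at the end is in fact this argument, but it asserts as a given that on each subquotient $\Omega$ reduces to an integer computed from a quadratic-form difference — that reduction is precisely the content of Lemma~\ref{lem:usprop}\eqref{itm:usprop1} and needs to be proved, not assumed. Without it, the off-diagonal pieces $\Omega_{\pm1}$ are not controlled and the claimed eigenvalue formula is unjustified.
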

This will be proved in Section~\ref{proof:theta_integer}.

\subsection{Some useful properties of \texorpdfstring{$\Omega$}{Omega}}\label{ssec:Omega_prop}
It will be convenient to write $\Omega =\Omega_0+\Omega_1+\Omega_{-1}$ where
$$\Omega_0=2\sum_{\{i\mid X_i\in\gg_0\}} X_i\otimes X^i,\quad \Omega_1=2\sum_{\{i\mid X_i\in\gg_1\}} X_i\otimes X^i,\quad \Omega_{-1}=2\sum_{\{i\mid X_i\in\gg_{-1}\}} X_i\otimes X^i.$$
We denote by $C\in \mathcal{U}(\gg_0)$ and $\tilde{C}\in\mathcal{U}(\mathfrak{gl}(n|n))$ the respective Casimir elements.

\begin{lemma}\label{lem:usprop} 
\begin{enumerate}[1.)]
\item\label{itm:usprop1} If $M$ is a $\gg$-module, then 
\begin{eqnarray*}
\Omega_{\pm 1}(H^0(\gg_{-1},M)\otimes V_{\overline{1}})&=\;0\;=&\Omega_{\pm 1}(H^0(\gg_{1},M)\otimes V_{\overline{0}}).
\end{eqnarray*}
\item\label{itm:usprop2} For every $\gg_0$-module $M$, $m\in M$ and $v\in V$ homogeneous,  we have
$$\Omega_0(m\otimes v)=\tfrac{1}{2}(-1)^{p(v)}\left(C(m\otimes v)-C(m)\otimes v-m\otimes C(v)\right).$$
\item\label{itm:usprop3} Consider the element $2\sum_i X_i X^i$ in the universal enveloping algebra $\mathcal{U}(\mathfrak{gl}(n|n))$. Then
$$2\sum_i X_iX^i=\tilde C-\mathbf{1}_n,$$ where $\mathbf{1}_n\in\mathfrak{gl}(n|n)$ is the identity matrix. 
\item\label{itm:usprop4} The element $2\sum_i X_iX^i$ acts on the natural module $V$ as $-\id$.
\end{enumerate}
\end{lemma}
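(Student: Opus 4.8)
The plan is to prove the four claims more or less independently, reserving \eqref{itm:usprop3} for last since it needs the most bookkeeping.

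\emph{First claim \eqref{itm:usprop1}.} I would argue directly from the bases of Remarks~\ref{rmk:basis} and \ref{dualbasis}. The form $\<-,-\>$ is homogeneous for the $\ZZ$-grading of $\mathfrak{gl}(n|n)$, so $\gg_0$ pairs with $\gg'\cap\mathfrak{gl}(n|n)_0$ and $\gg_{\pm1}$ pairs with $\gg'\cap\mathfrak{gl}(n|n)_{\mp1}$. Hence in $\Omega_1=2\sum_{X_i\in\gg_1}X_i\otimes X^i$ every dual vector $X^i$ lies in $\mathfrak{gl}(n|n)_{-1}$, i.e.\ has the shape $\left(\begin{smallmatrix}0&0\\ C&0\end{smallmatrix}\right)$, and such matrices annihilate $V_{\bar1}$; symmetrically the $X^i$ appearing in $\Omega_{-1}$ lie in $\mathfrak{gl}(n|n)_1$ and annihilate $V_{\bar0}$. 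So $\Omega_1$ kills all of $M\otimes V_{\bar1}$ and $\Omega_{-1}$ kills all of $M\otimes V_{\bar0}$, for any $\gg$-module $M$. On the other hand $\Omega_{-1}$ applies elements of $\gg_{-1}$ to the left tensor factor and so kills $H^0(\gg_{-1},M)\otimes V$, and likewise $\Omega_1$ kills $H^0(\gg_1,M)\otimes V$. Matching these four facts up gives both displayed vanishings.

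\emph{Second claim \eqref{itm:usprop2}.} I would reduce this to the classical splitting of the quadratic Casimir over a tensor product. Let $C$ be the Casimir of $\gg_0\cong\mathfrak{gl}(n)$ normalised via the trace form on the natural module (this is the normalisation for which the formula holds), and write $F_{ij}$ for the action of $A^-_{ij}=E_{ij}-E_{j'i'}$, whose $\<-,-\>$-dual is $\tfrac12 A^+_{ji}$. A short check shows $\Omega_0=\sum_{i,j}F_{ij}\otimes F_{ji}$ on $M\otimes V_{\bar0}$ and $\Omega_0=-\sum_{i,j}F_{ij}\otimes F_{ji}$ on $M\otimes V_{\bar1}$; the extra sign records the fact that $\gg_0$ acts on $V_{\bar1}$ via $A\mapsto -A^t$, i.e.\ $V_{\bar1}$ is the twisted dual of $V_{\bar0}$. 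Expanding $\Delta(C)$ for $C=\sum_{i,j}F_{ij}F_{ji}$ gives $\sum_{i,j}F_{ij}\otimes F_{ji}=\tfrac12\bigl(\Delta(C)-C\otimes1-1\otimes C\bigr)$, so in both cases the prefactor $\pm1$ is precisely $(-1)^{p(v)}$, which is the assertion.

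\emph{Claims \eqref{itm:usprop4} and \eqref{itm:usprop3}.} For \eqref{itm:usprop4} I would compute $2\sum_iX_iX^i$ on $V$ directly from the bases: the four families of dual vectors contribute $\sum_{i,j}A^-_{ij}A^+_{ji}=n\bigl(\sum_iE_{ii}-\sum_jE_{j'j'}\bigr)$, $-\sum_{i<j}B^+_{ij}C^+_{ji}=-(n-1)\sum_kE_{kk}$, $-\tfrac12\sum_iB^+_{ii}C^+_{ii}=-2\sum_iE_{ii}$ and $\sum_{i<j}C^-_{ij}B^-_{ji}=(n-1)\sum_kE_{k'k'}$, so $2\sum_iX_iX^i$ acts on $V_{\bar0}$ by $n-(n-1)-2=-1$ and on $V_{\bar1}$ by $-n+(n-1)=-1$; hence it is $-\operatorname{Id}$ on $V$. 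For \eqref{itm:usprop3} the most hands-on route is the analogous computation inside $\mathcal{U}(\mathfrak{gl}(n|n))$: expand $2\sum_iX_iX^i$, reorder into PBW order, and compare with $\tilde C=\sum_{a,b}(-1)^{|b|}E_{ab}E_{ba}$, the discrepancy $-\mathbf{1}_n$ being exactly what the reorderings produce. I would prefer the shorter argument below. Since $\{X_i\}\cup\{X^i\}$ is a homogeneous basis of $\mathfrak{gl}(n|n)=\gg\oplus\gg'$ whose $\<-,-\>$-dual is $\{X^i\}\cup\{X_i\}$ up to the parity signs forced by supersymmetry, the leading (degree $2$) symbols of $2\sum_iX_iX^i$ and of $\tilde C$ coincide; hence $D:=\tilde C-2\sum_iX_iX^i$ lies in $\mathfrak{gl}(n|n)\subset\mathcal{U}(\mathfrak{gl}(n|n))$. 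By Lemma~\ref{lem:commute}, applied through the $\gg$-equivariant multiplication map $\mathfrak{gl}(n|n)^{\otimes2}\to\mathcal{U}(\mathfrak{gl}(n|n))$, the element $D$ is $\gg$-invariant; but the $\gg$-invariants of $\mathfrak{gl}(n|n)=\gg\oplus\gg^*$ are one-dimensional, spanned by the central element $\mathbf{1}_n\in\gg'$ (which corresponds to the supertrace functional under $\gg'\cong\gg^*$, while $Z(\gg)=0$ — immediate for $n=1$, standard for $n\geq2$). So $D=c\,\mathbf{1}_n$, and evaluating on $V$ — on which $\tilde C$ acts by the scalar $\sum_b(-1)^{|b|}=0$, $\mathbf{1}_n$ by $1$, and $2\sum_iX_iX^i$ by $-1$ by \eqref{itm:usprop4} — forces $c=1$, i.e.\ $2\sum_iX_iX^i=\tilde C-\mathbf{1}_n$. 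The step I expect to be the real obstacle is exactly \eqref{itm:usprop3}: keeping the super signs in the various dualities straight and cleanly isolating the degree-$\leq1$ correction term, which is why I would anchor it on the symbol comparison together with the evaluation on $V$ from \eqref{itm:usprop4} rather than on a long, sign-sensitive PBW reordering.
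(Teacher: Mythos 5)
Your proof is correct, and parts 1) and 2) are essentially what the paper does (part 2) is the same computation, merely packaged via the explicit $F_{ij}\otimes F_{ji}$ expression rather than via the $\hat X$ involution the paper introduces). For parts 3) and 4) you reverse the paper's logical order and change the way the constant is pinned down, which is a genuine variation: the paper first proves 3) by reordering $2\sum_i X_iX^i=\tilde C+\sum_i[X_i,X^i]$, observing that $\sum_i[X_i,X^i]$ is a $\gg$-invariant element of $\gg'$ and must therefore be a multiple $t\,\mathbf{1}_n$ of the identity, and then fixes $t=-1$ by pairing against the grading element $h$ via the supertrace form; 4) then drops out because $\tilde C$ acts by zero on $V$. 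You instead prove 4) first by an honest matrix computation of the action of $2\sum_i X_iX^i$ on $V$, and prove 3) by a more abstract route: a symbol comparison in the associated graded of $\mathcal{U}(\mathfrak{gl}(n|n))$ to see that $D=\tilde C-2\sum_iX_iX^i$ has degree $\le1$ (this is exactly the PBW reordering the paper writes out, but at the level of leading symbols), invariance of $D$ via Lemma~\ref{lem:commute} and the multiplication map, the observation that the $\gg$-invariants of $\gg\oplus\gg'$ are $\mathbb C\mathbf{1}_n$ (you additionally note $Z(\gg)=0$, which the paper does not need because it already knows $D\in\gg'$), and then evaluation on $V$ using 4) to fix the scalar. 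The paper's route to the constant (pairing with $h$) is a one-line computation and does not consume 4); your route requires the explicit calculation on $V$ but yields 4) as a free byproduct. Both are sound; the trade-off is purely one of bookkeeping.
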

\begin{proof} Part \ref{itm:usprop1}.) is straightforward: any $X_i \in \gg_{-1}$ acts trivially on the $\gg_{-1}$-invariants $H^0(\gg_{-1},M)$ of $M$, so $\Omega_{-1}(H^0(\gg_{-1},M)\otimes V_{\overline{1}})=0$. On the other hand, for any $X_i \in \gg_{1}$, we have that $X^i$ is of the form $\left(\begin{smallmatrix}0&0\\C&0\end{smallmatrix}\right)$ for some $C: V_{\overline{0}} \to V_{\overline{1}}$, and thus $X^i$ acts trivially on $V_{\overline{1}}$. This implies $\Omega_{\pm 1}(H^0(\gg_{-1},M)\otimes V_{\overline{1}})=0$. Similarly for the second equality.
To prove \ref{itm:usprop2}.) we first define an involution on $\mathfrak{gl}(n|n)_{\overline{0}}$ by   
$X=\left(\begin{smallmatrix}A&0\\0&D\end{smallmatrix}\right)
\mapsto\hat X:=\left(\begin{smallmatrix}A&0\\0&-D\end{smallmatrix}\right). $
Note that for homogeneous $v\in V$ we have
$\hat Xv=(-1)^{p(v)}Xv.$ For simplicity we assume that we have a basis $\{X_j\}_{j\in J}$ for $\gg_0$ induced from \eqref{ABCbasis1}. 
Consider the elements $X^j \in \mathfrak{gl}(n|n), j \in J$ given in Remark~\ref{dualbasis}. Then $\{2\hat X^j\}_{j\in J}$ is a dual basis for $\gg_0$, and $C = 2\sum_j X_j \hat X^j. $\\
Now, $2 X_j m\otimes \hat{X^j} v= 2 (-1)^{p(v)} X_j m \otimes X^j v.$ Thus 
\begin{eqnarray*}
C(m\otimes V)&=& Cm\otimes v+ m\otimes Cv+2\sum_{j\in J} X_j m\otimes 2\hat X^j v\\
&=&Cm\otimes v+m\otimes Cv+4(-1)^{p(v)} \sum_{X_j\in\gg_0} X_j m\otimes X^j v \\
&= &C m\otimes v+m\otimes Cv+2(-1)^{p(v)} \Omega_0(m\otimes v)
\end{eqnarray*}
This proves \ref{itm:usprop2}.).

To prove \ref{itm:usprop3}.) recall that by definition
$\tilde C=\sum_iX_iX^i+(-1)^{p(X_i)}X^iX_i$. Using the relation
$X_iX^i=(-1)^{p(X_i)}X^iX_i+[X_i,X^i]$
we obtain
$$2\sum_l X_i X^i=\tilde C+\sum_i [X_i,X^i].$$
Note that $\sum_i [X_i,X^i]\subset\gg'$ and, moreover, $[x,\sum_i [X_i,X^i]]=0$ for every $x\in\gg$. 
Since $\gg'$ is the coadjoint $\gg$-module, every $\gg$-invariant 
vector in $\gg'$ is
proportional to $\mathbf{1}_n \in \gg' \subset \mathfrak{gl}(n|n)$. Hence $\sum_i[X_i,X^i]=t \mathbf{1}_n$ for some $t \in \mathbb C$. It remains to find $t$. For this we use the invariant supertrace form \eqref{dual} and the grading element $h$ from \eqref{theh}:
\begin{eqnarray*}
\left\<h,\sum_i [X_i,X^i]\right\>&=&\sum_i\left\<[h,X_i],X^i\right\>.
\end{eqnarray*}
For $X_i\in\gg_0$ we have $[h,X_i]=0$ and for $X_i\in\gg_{\pm 1}$ we have $[h,X_i]=\pm X_i$. Thus, we obtain
\begin{eqnarray*}
\left\<h,\sum_i [X_i,X^i]\right\>\;=\;-\dim\gg_1+\dim\gg_{-1}\;=\;-n.
\end{eqnarray*}
Since $\left\<h,\mathbf{1}_n \right\>=n$, we get $t=-1$ and hence proved \ref{itm:usprop3}.).

Finally, to show \ref{itm:usprop4}.), we just recall that by definition $\tilde C$ acts by zero on $V$.
\end{proof}

\subsection{The action of \texorpdfstring{$\Theta_i$}{translation functors} on Kac modules}\label{ssec:acKac}
We would like to decompose $\nabla(\lambda)\otimes V$ and $\Delta(\lambda)\otimes V$ into a direct sum of generalized eigenspaces with respect to
$\Omega$, and determine the occurring eigenvalues. Throughout this section, we set $\Delta(\mu ) =\nabla (\mu) = 0$ whenever $\mu$ is not a dominant weight.

\begin{lemma}\label{lem:filtration} The tensor products $\nabla(\lambda)\otimes V$ and $\Delta(\lambda)\otimes V$ have filtrations 
\begin{eqnarray}\label{eq:filtr}
\{0 \}&=&N_0 \subset N_1 \subset N_2 \subset \ldots \subset N_{2n} = \nabla(\lambda)\otimes V\nonumber,\\
\{0\} &=&M_0 \subset M_1 \subset M_2 \subset\ldots\subset M_{2n} = \Delta(\lambda)\otimes V,
\end{eqnarray}

where 
$$N_i / N_{i-1} \cong \begin{cases}
\nabla(\lambda+\varepsilon_{n-i+1}), &\text{ if } i\leq n, \\
                            \Pi \nabla(\lambda-\varepsilon_{i-n}), &\text{ if } i>n,
\end{cases} \;\;\; \text{ and } M_i / M_{i-1} \cong \begin{cases}
\Pi \Delta(\lambda-\varepsilon_{n-i+1}), &\text{ if } i\leq n, \\
                             \Delta(\lambda+\varepsilon_{i-n}), &\text{ if } i>n.
\end{cases}
$$ 
\end{lemma}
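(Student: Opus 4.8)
The plan is to construct the filtrations by restricting everything to $\gg_0\oplus\gg_1$ (for $\nabla$) respectively $\gg_0\oplus\gg_{-1}$ (for $\Delta$), using that $\nabla(\lambda)$ is coinduced from $V(\lambda)$ and $\Delta(\lambda)$ is induced from $V(\lambda)$, and that the natural module $V$ restricted to these subalgebras carries an obvious two-step filtration coming from its $\mathbb{Z}$-grading $V=V_{\bar 0}\oplus V_{\bar 1}$. Concretely, I would first deal with $\Delta(\lambda)\otimes V$. Since $\Delta(\lambda)=\operatorname{Ind}^{\gg}_{\gg_0\oplus\gg_{-1}}V(\lambda)$ and $\gg_{-1}$ acts locally nilpotently with $V_{\bar 1}$ a $\gg_{-1}$-submodule of $V$, the tensor identity $\operatorname{Ind}^{\gg}_{\mathfrak a}(W)\otimes U\cong \operatorname{Ind}^{\gg}_{\mathfrak a}(W\otimes \operatorname{Res}^{\gg}_{\mathfrak a}U)$ gives $\Delta(\lambda)\otimes V\cong \operatorname{Ind}^{\gg}_{\gg_0\oplus\gg_{-1}}(V(\lambda)\otimes V)$. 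Then the $\gg_0\oplus\gg_{-1}$-module $V(\lambda)\otimes V$ has a two-step filtration $V(\lambda)\otimes V_{\bar 1}\subset V(\lambda)\otimes V$, whose subquotients, as $\gg_0$-modules, are $V(\lambda)\otimes V_{\bar 1}$ and $V(\lambda)\otimes V_{\bar 0}$; but $\gg_{-1}$ acts on the bottom piece and trivially on the top. Decomposing each tensor product of $\gg_0=\mathfrak{gl}(n)$-modules by the Pieri rule — $V(\lambda)\otimes V_{\bar 0}\cong\bigoplus_{j}V(\lambda+\varepsilon_j)$ (the sum over $j$ for which $\lambda+\varepsilon_j$ is dominant) and similarly $V(\lambda)\otimes V_{\bar 1}\cong\bigoplus_j V(\lambda-\varepsilon_j)$, up to the relevant parity shift coming from $V_{\bar 1}$ — and applying $\operatorname{Ind}^{\gg}_{\gg_0\oplus\gg_{-1}}(-)$ turns each summand into a thick Kac module, giving exactly the claimed subquotients $\Pi\Delta(\lambda-\varepsilon_{n-i})$ for $i\le n$ and $\Delta(\lambda+\varepsilon_{i-n})$ for $i>n$, with non-dominant weights contributing zero.

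Second, for $\nabla(\lambda)\otimes V$ I would run the dual argument: $\nabla(\lambda)\cong\operatorname{Coind}^{\gg}_{\gg_0\oplus\gg_1}V(\lambda)$, and $V_{\bar 0}$ is a $\gg_1$-submodule of $V$ with $\gg_1$ acting trivially on the quotient $V_{\bar 1}$. Using the coinduction version of the tensor identity, $\nabla(\lambda)\otimes V\cong\operatorname{Coind}^{\gg}_{\gg_0\oplus\gg_1}(V(\lambda)\otimes V)$, and the $\gg_0\oplus\gg_1$-module $V(\lambda)\otimes V$ filters with subquotients $V(\lambda)\otimes V_{\bar 0}$ (on the bottom, where $\gg_1$ can act) and $V(\lambda)\otimes V_{\bar 1}$ (on top, annihilated by $\gg_1$), and again the Pieri rule plus applying coinduction turns these into thin Kac modules $\nabla(\lambda+\varepsilon_{n-i})$ and $\Pi\nabla(\lambda-\varepsilon_{i-n})$. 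Within each of the two ``blocks'' of the filtration I should order the one-dimensional Pieri summands so that the indices $n-i$ (resp. $i-n$) run through $1,\dots,n$ in the stated order; since these are genuinely direct sums of $\gg_0$-modules there is no obstruction to refining in any chosen order, and non-dominant weights simply give zero subquotients, consistent with the convention $\Delta(\mu)=\nabla(\mu)=0$.

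The main thing to be careful about — and where I expect the real bookkeeping to live — is the parity switches. One has to track the $\Pi^{n(n-1)/2}$ in the definition of $\nabla$, the $\Pi^{n(n+1)/2}$ hidden in the coinduction presentation of $\Delta$, and the fact that $V_{\bar 1}$ sits in odd degree, to see that the net parity shift on the subquotient coming from $V_{\bar 1}$ is exactly one $\Pi$ relative to the one coming from $V_{\bar 0}$, matching the $\Pi$'s in the statement. I would verify this by a small rank-independent computation of parities, or alternatively bypass it by checking the claim for $n=1$ by hand and then arguing that the parity discrepancy is governed by a fixed combinatorial quantity. Apart from parities, the argument is a completely standard ``filtration of a (co)induced module by a filtration of the inducing module'' together with the classical branching $\mathfrak{gl}(n)$-Pieri rule, so no further subtlety is expected. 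Finally I should record that these filtrations are compatible with the $\gg$-module structure on the whole tensor product (not just after restriction), which is automatic since (co)induction is exact and the $\gg_0\oplus\gg_{\mp1}$-submodule in the middle of the filtration is a genuine submodule, so its image under (co)induction is a $\gg$-submodule.
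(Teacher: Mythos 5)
Your argument is precisely the one in the paper: apply the tensor identity to rewrite $\Delta(\lambda)\otimes V\cong\operatorname{Ind}^\gg_{\gg_0\oplus\gg_{-1}}(V(\lambda)\otimes V)$ (resp.\ $\nabla(\lambda)\otimes V\cong\operatorname{Coind}^\gg_{\gg_0\oplus\gg_1}(V(\lambda)\otimes V)$), filter by the $\gg_{\mp 1}$-submodule $V(\lambda)\otimes V_{\bar 1}$ (resp.\ $V(\lambda)\otimes V_{\bar 0}$), decompose the two $\gg_0$-subquotients by Pieri, and push through the exact (co)induction functor. One small remark: your worry about the parity shifts $\Pi^{n(n\mp1)/2}$ is moot, since you use the induction presentation of $\Delta$ and the coinduction presentation of $\nabla$, neither of which carries such a shift --- the only parity contribution is the single $\Pi$ from $V_{\bar 1}$ being odd, exactly as in the paper.
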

\begin{proof} We will deal with the case of $\Delta(\lambda)\otimes V$, the other case is similar. We use 
$$\Delta(\lambda)\otimes V=\operatorname{Ind}^\gg_{\gg_0\oplus\gg_{-1}}(V(\lambda)\otimes V)$$
and the exact sequence of $\gg_0\oplus\gg_{-1}$ modules
$$0\to V(\lambda)\otimes V_{\overline{1}}\to  V(\lambda)\otimes V\to  V(\lambda)\otimes V_{\overline{0}}\to 0.$$
On the other hand, recall the well-known identities for $\gg_0=\mathfrak{gl}(n)$-modules
\begin{eqnarray}\label{eq:translation_kac}
  V(\lambda)\otimes V_{\overline{1}}=\bigoplus_{i=1}^n \Pi V(\lambda-\varepsilon_i)&\text{and}&
  V(\lambda)\otimes V_{\overline{0}}=\bigoplus_{i=1}^n V(\lambda+\varepsilon_i),\quad
\end{eqnarray}
where we replace $V(\lambda\pm\varepsilon_i)$ by $0$ if $\lambda\pm\varepsilon_i$ is not dominant.
Thus, applying the exact induction functor $\Ind$ to \eqref{eq:translation_kac}, we obtain the exact sequence
$$0\to \bigoplus_{i=1}^n \Pi\Delta (\lambda-\varepsilon_i)\to \Delta (\lambda)\otimes V \to \bigoplus_{i=1}^n \Delta(\lambda+\varepsilon_i)\to 0,$$
which implies the statement.
\end{proof}

Next we notice that for each term of the filtration \eqref{eq:filtr} we have
$$H_0(\gg_{-1},\nabla(\lambda+\varepsilon_i))\subset H^0(\gg_{1},\nabla(\lambda+\varepsilon_i))\subset V(\lambda-\gamma)\otimes V_{\overline{0}},$$
$$H^0(\gg_{-1},\nabla(\lambda-\varepsilon_i))\subset V(\lambda)\otimes V_{\overline{1}},$$
$$H_0(\gg_{1},\Delta(\lambda-\varepsilon_i))\subset H^0(\gg_{-1},\Delta(\lambda-\varepsilon_i))\subset V(\lambda)\otimes V_{\overline{1}},$$
$$H^0(\gg_{1},\nabla(\lambda+\varepsilon_i))\subset V(\lambda+\tilde\gamma)\otimes V_{\overline{1}}.$$

In order to calculate the eigenvalue of $\Omega$ on each term of the filtration it suffices to calculate the eigenvalue of $\Omega_0$ on
$$H_0(\gg_{-1},\nabla(\lambda+\varepsilon_i)),\,H^0(\gg_{-1},\nabla(\lambda-\varepsilon_i)),\,
H_0(\gg_{1},\Delta(\lambda-\varepsilon_i)),\, H^0(\gg_{1},\Delta(\lambda+\varepsilon_i)).$$
Now we use Lemma~\ref{lem:usprop} \eqref{itm:usprop1}, \eqref{itm:usprop2}.  
Recall that the eigenvalue of $C$ on $V(\mu)$ is $(\mu+2\rho,\mu)$.

Thus, the eigenvalue of $\Omega$ on $\nabla(\lambda+\varepsilon_i)\cap \left( V(\lambda-\gamma)\otimes V_{\overline{0}}\right)$ is
\begin{equation*}
\tfrac{1}{2}\left((\lambda-\gamma+\varepsilon_i+2\rho,\lambda-\gamma+\varepsilon_i)-(\lambda-\gamma+2\rho,\lambda-\gamma)-(\varepsilon_1+2\rho,\varepsilon_1)\right),
\end{equation*}
which is equal to
\begin{equation*}
{\tfrac{1}{2}} \left(2(\varepsilon_i,\lambda-\gamma)+2(\rho,\varepsilon_i-\varepsilon_1)+(\varepsilon_i,\varepsilon_i)-(\varepsilon_1,\varepsilon_1) \right)=
\lambda_i+n-i=\bar{\lambda}_i.
\end{equation*}

Similarly, the eigenvalue of $\Omega$ on $\nabla(\lambda-\varepsilon_j)\cap  \left(V(\lambda)\otimes V_{\overline{1}}\right)$ is
\begin{equation*}
\tfrac{1}{2}\left((\lambda+2\rho,\lambda)+(-\varepsilon_n+2\rho,-\varepsilon_n)-(\lambda-\varepsilon_j+2\rho,\lambda-\varepsilon_j)\right)=
\bar{\lambda}_j.
\end{equation*}

Altogether we obtain:
\begin{lemma}\label{lem:small} Let $\lambda$ be a dominant weight. Fix $k \in \mathbb Z$. Then there is at most one $1\leq j\leq n$ such that $\bar{\lambda}_j=k$.
\begin{enumerate}[i.)]
\item If $\bar{\lambda}_j\neq k$ for all $1\leq j\leq n$, then $\Theta'_k(\nabla(\lambda))=0$.
\item If $\bar{\lambda}_j=k$, then $\Theta'_k(\nabla(\lambda))$ can be described by the exact sequence
$$0\to \nabla(\lambda+\varepsilon_j)\to \Theta'_k(\nabla(\lambda))\to  \Pi\nabla(\lambda-\varepsilon_j)\to 0.$$
\end{enumerate}
\end{lemma}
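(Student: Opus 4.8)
The statement combines two things: the (already-established, via Lemma~\ref{lem:filtration}) decomposition of $\nabla(\lambda)\otimes V$ into a direct sum of generalized $\Omega$-eigenspaces indexed by the numbers $\bar\lambda_i$, together with the computation of the eigenvalues. So the plan is to separate these two ingredients cleanly. The combinatorial fact that the $\bar\lambda_j$ are pairwise distinct is immediate from the observation recorded just after \eqref{omegarho}: $\lambda$ dominant means $\bar\lambda$ is strictly decreasing, so a given $k\in\mathbb Z$ occurs at most once among $\bar\lambda_1,\dots,\bar\lambda_n$.

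\textbf{Step 1: locate the eigenvalues.} Using the filtration \eqref{eq:filtr} of $\nabla(\lambda)\otimes V$, the associated graded is $\bigoplus_{i}\nabla(\lambda+\varepsilon_i)\oplus\bigoplus_i\Pi\nabla(\lambda-\varepsilon_i)$ (with non-dominant shifts omitted). Since $\Omega_M$ is a $\gg$-endomorphism (Lemma~\ref{lem:commute}) and each $\nabla(\mu)$ is indecomposable with simple head/socle, $\Omega$ acts on each $\nabla(\lambda\pm\varepsilon_i)$-subquotient by a single generalized eigenvalue; to find it I evaluate $\Omega$ on a convenient nonzero vector, namely an element of $H_0(\gg_{-1},\nabla(\lambda+\varepsilon_i))$ or $H^0(\gg_{-1},\nabla(\lambda-\varepsilon_i))$, where by Lemma~\ref{lem:usprop}\eqref{itm:usprop1} the terms $\Omega_{\pm1}$ vanish and only $\Omega_0$ contributes. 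Lemma~\ref{lem:usprop}\eqref{itm:usprop2} then reduces the computation to Casimir eigenvalues on the relevant $\gg_0=\mathfrak{gl}(n)$-modules, and plugging in $(\mu+2\rho,\mu)$ gives (as displayed in the excerpt) the value $\bar\lambda_i$ in both the $V_{\bar0}$ and $V_{\bar1}$ cases. The key point is that the two subquotients $\nabla(\lambda+\varepsilon_j)$ and $\Pi\nabla(\lambda-\varepsilon_j)$ — coming from the $i\le n$ and $i>n$ halves of the filtration with the \emph{same} index $j$ — carry the \emph{same} eigenvalue $\bar\lambda_j$, while all other subquotients carry distinct eigenvalues $\bar\lambda_i$, $i\ne j$.

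\textbf{Step 2: assemble.} Since $_-\otimes V$ is exact and $\Omega$ is a functorial endomorphism, $\nabla(\lambda)\otimes V=\bigoplus_{k\in\mathbb Z}\Theta'_k(\nabla(\lambda))$, and by Step 1 each summand $\Theta'_k(\nabla(\lambda))$ is nonzero precisely when $k=\bar\lambda_j$ for some (unique) $j$. For such $k$, $\Theta'_k(\nabla(\lambda))$ has a two-step filtration with subquotients exactly the two terms of the filtration \eqref{eq:filtr} carrying eigenvalue $\bar\lambda_j$, and one checks from Lemma~\ref{lem:filtration} that the sub is $\nabla(\lambda+\varepsilon_j)$ and the quotient is $\Pi\nabla(\lambda-\varepsilon_j)$ (in the right order because the sub-half of the filtration, $i\le n$, contributes the $\nabla(\lambda+\varepsilon_i)$'s). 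If $\lambda\pm\varepsilon_j$ fails to be dominant the corresponding term is $0$ by convention, and the short exact sequence degenerates accordingly. For part i.), if no $\bar\lambda_j$ equals $k$ then every associated-graded piece avoids eigenvalue $k$, so the generalized $k$-eigenspace is $0$, i.e. $\Theta'_k(\nabla(\lambda))=0$.

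\textbf{Main obstacle.} The only genuinely delicate point is justifying that $\Omega$ acts on each $\nabla(\lambda\pm\varepsilon_i)$-subquotient by a \emph{scalar} generalized eigenvalue — i.e. that within a single subquotient there is no spread of eigenvalues — and, dually, that the extension between the two equal-eigenvalue pieces does not split off into the wrong order. The first follows because $\nabla(\mu)$ has a unique simple quotient/sub so any $\gg$-endomorphism-induced operator on it has a single eigenvalue after passing to the reduced Grothendieck group; more carefully, one argues on the level of the $\gg_0$-structure via Lemma~\ref{lem:usprop} that $\Omega_0$ already acts by the claimed scalar on $H_0(\gg_{-1},-)$ resp. $H^0(\gg_{-1},-)$, which generates resp. cogenerates the module over $\mathcal U(\gg_{-1})$, and $\Omega_{\pm1}$ raise/lower by strictly smaller amounts so cannot change the generalized eigenvalue. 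The ordering in the short exact sequence is then forced by which half of the filtration \eqref{eq:filtr} is the submodule. Everything else is the bookkeeping already spelled out in the excerpt.
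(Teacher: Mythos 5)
Your proposal follows exactly the paper's route: use the filtration from Lemma~\ref{lem:filtration}, reduce the $\Omega$-eigenvalue computation on each subquotient to $\Omega_0$ acting on the $\gg_{-1}$-(co)invariants via Lemma~\ref{lem:usprop}~\eqref{itm:usprop1}, \eqref{itm:usprop2}, and then sort subquotients by eigenvalue using strict monotonicity of $\bar\lambda$. The step you single out as delicate --- that each $\nabla(\lambda\pm\varepsilon_i)$ falls into a single generalized eigenspace and that the filtration order is preserved --- is also left implicit in the paper; your phrase ``after passing to the reduced Grothendieck group'' is not quite the right justification (eigenvalues do not live there), but the surrounding argument (evaluate $\Omega=\Omega_0$ on the distinguished $\gg_0$-piece, note that each $\nabla(\lambda\pm\varepsilon_i)$ occurs with multiplicity one so must land entirely in one summand $\Theta'_k$) is the correct idea and matches what the paper does.
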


For the thick Kac modules the picture is more interesting. Namely, the element $\Omega$ acts on $\Delta(\lambda-\varepsilon_j)\cap  \left(V(\lambda)\otimes V_{\overline{1}}\right)$ as multiplication by the scalar
\begin{eqnarray}
\label{formula1}
{\tfrac{1}{2}}\left((\lambda+2\rho,\lambda)+(-\varepsilon_n+2\rho,-\varepsilon_n)-(\lambda-\varepsilon_j+2\rho,\lambda-\varepsilon_j)\right)\;=\;\bar{\lambda}_j,
\end{eqnarray}
and on  $\Delta(\lambda+\varepsilon_i)\cap  \left(V(\lambda+\tilde\gamma)\otimes V_{\overline{0}}\right)$ by the scalar
\begin{eqnarray}
\label{formula2}
&&{\tfrac{1}{2}}\left((\lambda+\tilde\gamma+\varepsilon_i+2\rho,\lambda+\tilde\gamma+\varepsilon_i)-(\lambda+\tilde\gamma+2\rho,\lambda+\tilde\gamma)-
(\varepsilon_1+2\rho,\varepsilon_1)\right)\nonumber\\
&=&\lambda_i+2+n-i\;=\;\bar{\lambda}_i+2.
\end{eqnarray}
\begin{lemma}
 The subquotients $\Delta(\lambda - \varepsilon_i)$ and $\Delta(\lambda + \varepsilon_j)$ of $\Delta(\lambda) \otimes V$ have the same $\Omega$-eigenvalue if and only if $j=i+1$ and $\lambda_i=\lambda_{i+1}+1$.
\end{lemma}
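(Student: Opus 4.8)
The plan is to simply read off both $\Omega$-eigenvalues from formulas~\eqref{formula1} and~\eqref{formula2} and equate them. By \eqref{formula1} the subquotient $\Delta(\lambda-\varepsilon_i)$ of $\Delta(\lambda)\otimes V$ carries $\Omega$-eigenvalue $\bar\lambda_i = \lambda_i + n - i$, and by \eqref{formula2} the subquotient $\Delta(\lambda+\varepsilon_j)$ carries $\Omega$-eigenvalue $\bar\lambda_j + 2 = \lambda_j + n - j + 2$. So the two eigenvalues agree precisely when
\begin{equation*}
\lambda_i + n - i \;=\; \lambda_j + n - j + 2, \qquad\text{i.e.}\qquad \lambda_i - \lambda_j \;=\; i - j + 2.
\end{equation*}

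First I would observe that we only need to consider those indices $i,j$ for which $\Delta(\lambda-\varepsilon_i)$ and $\Delta(\lambda+\varepsilon_j)$ are actually nonzero subquotients, i.e. for which $\lambda-\varepsilon_i$ and $\lambda+\varepsilon_j$ are dominant; this is automatic in the argument below but worth noting since otherwise the statement is vacuous on the relevant side. Now, since $\lambda$ is dominant we have $\lambda_1 \geq \lambda_2 \geq \dots \geq \lambda_n$, so $\bar\lambda_1 > \bar\lambda_2 > \dots > \bar\lambda_n$ is strictly decreasing. From $\lambda_i - \lambda_j = i - j + 2$: if $i \geq j$ then the left side is $\leq 0$ while $i - j + 2 \geq 2 > 0$, a contradiction; hence $i < j$. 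Writing $j = i + k$ with $k \geq 1$, the equation becomes $\lambda_i - \lambda_{i+k} = 2 - k$. For $k = 1$ this reads $\lambda_i - \lambda_{i+1} = 1$, which is the asserted condition $j = i+1$, $\lambda_i = \lambda_{i+1} + 1$. For $k \geq 2$ the right side is $\leq 0$, forcing $\lambda_i \leq \lambda_{i+k}$; combined with dominance $\lambda_i \geq \lambda_{i+1} \geq \dots \geq \lambda_{i+k}$ this gives $\lambda_i = \lambda_{i+1} = \dots = \lambda_{i+k}$, hence $\lambda_i - \lambda_{i+k} = 0 \neq 2 - k$, a contradiction. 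Thus the only solution is $j = i+1$ and $\lambda_i = \lambda_{i+1} + 1$, and conversely this clearly satisfies the eigenvalue equation.

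This argument is entirely elementary once \eqref{formula1} and \eqref{formula2} are in hand, so there is really no serious obstacle; the only thing to be careful about is the direction of the inequality in the order on dominant weights and the sign conventions in $\bar\lambda = \lambda + \rho$ with $\rho = \sum_i (n-i)\varepsilon_i$, so that $\bar\lambda_i = \lambda_i + n - i$ decreases in $i$. One should also double-check that the $\Omega$-eigenvalue of $\Delta(\lambda+\varepsilon_j)$ as a subquotient of $\Delta(\lambda)\otimes V$ is genuinely computed on $\Delta(\lambda+\varepsilon_j) \cap (V(\lambda+\tilde\gamma)\otimes V_{\bar 0})$ — which is legitimate by Lemma~\ref{lem:usprop}\eqref{itm:usprop1}, since $\Omega_{\pm 1}$ annihilates $H^0(\gg_1,\Delta(\lambda))\otimes V_{\bar 0}$ — exactly as already used in deriving \eqref{formula2}.
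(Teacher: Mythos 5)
Your strategy is the same as the paper's—equate the two eigenvalues read off from \eqref{formula1} and \eqref{formula2}—but you fill in a case analysis that the paper only sketches. Unfortunately there is a slip in that case analysis. After writing $j=i+k$ you reduce to $\lambda_i - \lambda_{i+k} = 2-k$ and, for $k\geq 2$, conclude from $\lambda_i \leq \lambda_{i+k}$ and dominance that $\lambda_i - \lambda_{i+k} = 0$, claiming this contradicts $0 = 2-k$. But for $k=2$ we have $2-k=0$, so there is \emph{no} contradiction: $\lambda_i=\lambda_{i+1}=\lambda_{i+2}$ genuinely satisfies the eigenvalue equation $\bar\lambda_i=\bar\lambda_{i+2}+2$. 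What actually saves the statement in this case is precisely the dominance requirement you dismissed at the outset as ``automatic'': when $\lambda_i=\lambda_{i+1}$, the weight $\lambda-\varepsilon_i$ is not dominant, so $\Delta(\lambda-\varepsilon_i)=0$ by the convention set at the start of Section~\ref{ssec:acKac}; likewise $\lambda+\varepsilon_{i+2}$ is not dominant when $\lambda_{i+1}=\lambda_{i+2}$. So the $k=2$ branch is excluded not by the eigenvalue arithmetic but by the nonvanishing of the two subquotients. You should therefore strike the claim that dominance of $\lambda\pm\varepsilon$ is automatic and instead invoke it explicitly to eliminate $k=2$; the remaining cases $k\geq 3$ and $i\geq j$ are handled correctly as you wrote them.
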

\begin{proof} By \eqref{formula1} and \eqref{formula2} we must have $\overline{\lambda}_i=\overline{\lambda}_{i+1}+2$ which is equivalent to $\lambda_i=\lambda_{i+1}+1$.
\end{proof}

\begin{lemma}\label{lem:big} Let $\lambda$ be a dominant weight. Fix $k \in \mathbb Z$.
\begin{enumerate}[i.)]
\item If $\bar{\lambda}_j\neq k,k-2$ for all $j\leq n$, then $\Theta'_k(\Delta(\lambda))=0$.
\item If $\bar{\lambda}_j=k$ and $\lambda_{j+1}\neq \lambda_j-1$, then $\Theta'_k(\Delta(\lambda))=\Delta(\lambda-\varepsilon_j)$.
\item If $\bar{\lambda}_j=k-2$ and $\lambda_{j-1}\neq \lambda_j+1$, then $\Theta'_k(\Delta(\lambda))=\Delta(\lambda+\varepsilon_j)$.
\item If $\bar{\lambda}_j=k$ and $\lambda_{j+1}=\lambda_j-1$, then $\Theta'_k(\Delta(\lambda))$ fits into an exact sequence
$$0\to \Pi\Delta(\lambda-\varepsilon_j)\to \Theta'_k(\Delta(\lambda))\to  \Delta(\lambda+\varepsilon_{j+1})\to 0.$$
\end{enumerate}
\end{lemma}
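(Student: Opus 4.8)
The plan is to leverage the filtration of $\Delta(\lambda)\otimes V$ from Lemma~\ref{lem:filtration} together with the eigenvalue computations in \eqref{formula1} and \eqref{formula2}, and then project onto the generalized $k$-eigenspace of $\Omega$. Recall that $\Theta'_k(\Delta(\lambda))$ is, by definition \eqref{thetak}, the generalized $k$-eigenspace of $\Omega$ acting on $\Delta(\lambda)\otimes V$, and since $\Omega$ commutes with the $\gg$-action (Lemma~\ref{lem:commute}), this generalized eigenspace is a $\gg$-submodule and a direct summand; moreover taking generalized eigenspaces is exact, so it is compatible with the filtration. The subquotients of $\Delta(\lambda)\otimes V$ are $\Pi\Delta(\lambda-\varepsilon_i)$ (for $1\le i\le n$) and $\Delta(\lambda+\varepsilon_i)$ (for $1\le i\le n$), with the convention that a term is zero when the weight is not dominant. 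By \eqref{formula1} the term $\Pi\Delta(\lambda-\varepsilon_i)$ carries $\Omega$-eigenvalue $\bar\lambda_i$, and by \eqref{formula2} the term $\Delta(\lambda+\varepsilon_i)$ carries $\Omega$-eigenvalue $\bar\lambda_i+2$.

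First I would observe that since $\lambda$ is dominant, $\bar\lambda=(\bar\lambda_1,\dots,\bar\lambda_n)$ is strictly decreasing, so there is at most one $j$ with $\bar\lambda_j=k$ and at most one $j'$ with $\bar\lambda_{j'}=k-2$. Hence the only subquotients of $\Delta(\lambda)\otimes V$ that can contribute to $\Theta'_k(\Delta(\lambda))$ are $\Pi\Delta(\lambda-\varepsilon_j)$ with $\bar\lambda_j=k$ and $\Delta(\lambda+\varepsilon_{j'})$ with $\bar\lambda_{j'}=k-2$. This immediately gives part i.): if no such $j$ or $j'$ exists, the generalized $k$-eigenspace of $\Omega$ on every subquotient vanishes, hence $\Theta'_k(\Delta(\lambda))=0$ by exactness. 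For parts ii.) and iii.) I would note that if only one of the two candidate subquotients is present — and one must check carefully the relevant dominance conditions — then $\Theta'_k(\Delta(\lambda))$ is an extension of a single nonzero Kac module (possibly by zero terms), hence equals that Kac module. Concretely, in case ii.) we have $\bar\lambda_j=k$; the condition $\lambda_{j+1}\ne\lambda_j-1$ says precisely (by the previous lemma) that no index $j'$ with $\bar\lambda_{j'}=k-2$ has $\lambda+\varepsilon_{j'}$ dominant, i.e. the would-be partner $\Delta(\lambda+\varepsilon_{j+1})$ either has a non-strictly-decreasing $\bar\cdot$ or is already accounted for — so only $\Delta(\lambda-\varepsilon_j)$ survives; one must also confirm $\lambda-\varepsilon_j$ is itself dominant, which needs $\lambda_j>\lambda_{j+1}$, i.e. again $\bar\lambda_j>\bar\lambda_{j+1}+1$, automatic unless $\lambda_j=\lambda_{j+1}$ in which case $\Delta(\lambda-\varepsilon_j)=0$ and the eigenvalue $\bar\lambda_j$ was never actually attained. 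The analogous bookkeeping handles iii.).

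For part iv.) — the interesting case — we have $\bar\lambda_j=k$ and $\lambda_{j+1}=\lambda_j-1$, equivalently $\bar\lambda_{j+1}=\bar\lambda_j-2=k-2$. Now \emph{both} $\Pi\Delta(\lambda-\varepsilon_j)$ and $\Delta(\lambda+\varepsilon_{j+1})$ appear as subquotients of $\Delta(\lambda)\otimes V$ with the same $\Omega$-eigenvalue $k$, and one checks via the dominance conventions and the strict decrease of $\bar\lambda$ that these are the only contributing subquotients. Tracking their positions in the filtration \eqref{eq:filtr}: $\Pi\Delta(\lambda-\varepsilon_j)$ is the subquotient $M_{n-j+1}/M_{n-j}$ (a ``lower'' term, $i\le n$), and $\Delta(\lambda+\varepsilon_{j+1})$ is $M_{n+j+1}/M_{n+j}$ (a ``higher'' term, $i>n$); the lower term appears first. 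Since $\Theta'_k(\Delta(\lambda))$ is the generalized $k$-eigenspace, it is filtered by exactly those two subquotients in that order, yielding a short exact sequence
$$0\to \Pi\Delta(\lambda-\varepsilon_j)\to \Theta'_k(\Delta(\lambda))\to  \Delta(\lambda+\varepsilon_{j+1})\to 0,$$
as claimed. I do not expect this sequence to split in general (it encodes nontrivial information about the category), but the statement asserts only its existence, which follows from the filtration.

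The main obstacle, such as it is, is the careful case analysis of dominance: one must verify in each of ii.)–iv.) that the apparent ``partner'' subquotient is genuinely absent or genuinely present, ruling out the degenerate situations where $\lambda\pm\varepsilon_i$ fails to be dominant (so the corresponding Kac module is zero and the eigenvalue is not actually realized). This is where the hypotheses $\lambda_{j+1}\ne\lambda_j-1$ etc. and the strict monotonicity of $\bar\lambda$ must be used precisely; the eigenvalue arithmetic \eqref{formula1}–\eqref{formula2} and the previous lemma do most of the work, but the translation between ``$\lambda\pm\varepsilon_i$ dominant'', ``$\bar\lambda$ strictly decreasing'', and ``eigenvalue equal to $k$'' needs to be spelled out cleanly. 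Everything else is a direct consequence of the exactness of generalized-eigenspace projection applied to the filtration of Lemma~\ref{lem:filtration}.
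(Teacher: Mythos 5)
Your proof is correct and takes the same route as the paper: project the filtration of Lemma~\ref{lem:filtration} onto the generalized $k$-eigenspace of $\Omega$, using the eigenvalue formulas \eqref{formula1}–\eqref{formula2} and the unlabeled lemma just before \ref{lem:big} (which tells you exactly when eigenvalues collide, namely $j=i+1$ with $\lambda_i=\lambda_{i+1}+1$). The dominance bookkeeping you flag in parts ii.)–iii.) is indeed the only point needing care, and your treatment of it — including the degenerate cases where a candidate weight fails to be dominant so the corresponding subquotient is zero and the eigenvalue is never actually attained — matches what the paper leaves implicit.
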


We finish this section with the proof of the statement $\Theta'_k = 0$ for $k \not\in \mathbb{Z}$:
\begin{proof}[Proof of Proposition \ref{prop:theta_integer}]\label{proof:theta_integer}
 By Lemma \ref{lem:big}, $\Theta'_k(\Delta(\lambda)) =0$ for any $k \not\in \mathbb{Z}$ and $\lambda \in \Lambda_n$. Since $\Theta'_k$ is exact, this implies that $\Theta'_k(L(\lambda)) =0$ for any $k \not\in \mathbb{Z}$, i.e. $\Theta'_k$ is zero on all simple modules. Using exactness of $\Theta'_k$ once again, we conclude that $\Theta'_k = 0$ for $k \not\in \mathbb{Z}$.
\end{proof}

\subsection{Adjunctions and Temperley-Lieb algebra relations}\label{ssec:act}
We establish now basic properties of the translation functors. Thanks to Proposition~\ref{prop:theta_integer} we can assume from now on that $k \in \mathbb{Z}$ whenever we consider $\Theta'_k $ or $\Theta_k$.
\begin{proposition}
\label{itm:TL1} The functor $\Theta_k$ is left adjoint to $ \Theta_{k-1}$ and right adjoint to $\Theta_{k+1}$.
\end{proposition}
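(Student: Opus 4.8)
The plan is to exhibit explicit adjunction (unit/counit) morphisms built from the adjunction between $\Theta'$ and $\Pi\Theta'$ and the eigenspace projections for $\Omega$. Recall that $\Theta' = {}_-\otimes V$ has both left and right adjoint isomorphic to $\Pi\Theta'$, because $V \cong \Pi V^*$ via \eqref{eta}. Concretely, the (co)evaluation maps $\mathbb{C}\to V\otimes V^*$ and $V^*\otimes V\to\mathbb{C}$, combined with the isomorphism $\eta$, give natural transformations $\triv \to \Pi(\Theta')^2$ and $\Pi(\Theta')^2\to\triv$ (up to the parity shifts $\Pi$ appearing because $V$ is odd-self-dual), and these satisfy the triangle identities making $(\Pi\Theta',\Theta')$ and $(\Theta',\Pi\Theta')$ adjoint pairs. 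First I would write these down explicitly on $M\otimes V$.

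Next I would analyze how $\Omega$ interacts with these transformations. The key point is that on $M\otimes V\otimes V$ the two commuting operators are $y_1 = \Omega_{0\,1}$ (acting on the $M$ and first $V$ factor) and $\Omega_{0\,2}+\Omega_{1\,2}$ (this is $y_2$ in the notation of \eqref{yse}), and the total operator $y_1+y_2$ on the outer two copies plus the ${y_1}_V = s+e$ on the inner $V\otimes V$. Using Lemma~\ref{lem:usprop}\eqref{itm:usprop3}, \eqref{itm:usprop4} one knows $2\sum_i X_iX^i$ acts on $V$ by $-\id$, and more relevantly the operator $\Omega_{1\,2}+\Omega_{0\,1}+\Omega_{0\,2}$ restricted appropriately relates to a Casimir-type scalar. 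The upshot I expect is: on the image of the coevaluation $M\to M\otimes V\otimes V$ (tensoring in the invariant vector $c$, suitably normalized by $\eta$), the operator $\Omega_{0\,1}+\Omega_{0\,2}$ acts as $-\id$ (coming from $2\sum X_iX^i$ acting on the copy of $V$, using that $\beta$ is $\gg$-invariant), while $\Omega_{1\,2}$ acts by $s+e$ scalars on $V\otimes V$; combining, the unit $\triv\to\Pi(\Theta')^2$ lands in the generalized $\Omega$-eigenspace that shifts the $\Omega$-grading by $\pm 1$. More precisely I would show that the unit of adjunction, followed by the projection onto the appropriate generalized eigenspace, gives a nonzero natural map $\id \to \Theta'_{k\pm 1}\Theta'_k$ (with the appropriate parity shift to turn $\Theta'$ into $\Theta$), and dually for the counit $\Theta'_k\Theta'_{k\pm 1}\to\id$. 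Then one checks the triangle identities restricted to the eigenspace summands, which follow from the triangle identities for $(\Theta',\Pi\Theta')$ together with the fact that the various eigenspace projections are mutually orthogonal idempotents commuting with everything in sight (Lemma~\ref{lem:commute}, Proposition~\ref{prop:jm}).

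The parity bookkeeping is what forces the shift $\Theta_k = \Pi^k\Theta'_k$: since $V$ is odd-self-dual, each evaluation/coevaluation carries a $\Pi$, and one $\Omega$-step changes $k$ by $1$, so the total parity shift across $\Theta_k\Theta_{k-1}$ and across $\Theta_{k-1}\Theta_k$ matches the parity of the (co)unit maps, which is precisely why the definition $\Theta_k=\Pi^k\Theta'_k$ is the right normalization to make $\Theta_k$ (honestly, not just up to parity) left adjoint to $\Theta_{k-1}$ and right adjoint to $\Theta_{k+1}$; I would verify this by comparing the parity statement in Remark~\ref{par}/Remark~\ref{dualbasis} with the parities of $\eta$ and $c$.

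The main obstacle I anticipate is the precise eigenvalue computation showing that the unit $\triv\to\Pi(\Theta')^2$ (i.e. inserting $c$) has image in exactly the $\Omega$-generalized-eigenspace that raises or lowers $k$ by one — i.e. computing the action of $\Omega_{0\,1}+\Omega_{0\,2}+\Omega_{1\,2}$ on the vector $m\otimes c$ and seeing that relative to the $\Omega$-eigenvalue of the $M\otimes V$ being hit, it differs by $\pm 1$. This is a Casimir-type calculation entirely parallel to Lemma~\ref{lem:usprop}\eqref{itm:usprop2}–\eqref{itm:usprop4} and to the eigenvalue computations in Section~\ref{ssec:acKac} (the shifts by $2$ appearing there for thick Kac modules are the same phenomenon), but getting the normalization of $e = \beta^*\circ\beta$ and the factor of $2$ in the definition \eqref{Omega} of $\Omega$ to conspire correctly requires care. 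Once that scalar is pinned down, the adjunction is formal: it is the standard ``project an adjunction onto matching eigenspace summands'' argument, using that $\Theta' = \bigoplus_k \Theta'_k$ by Proposition~\ref{prop:theta_integer} and that the summands on the two sides are paired up one-to-one by the $k\mapsto k\pm 1$ shift.
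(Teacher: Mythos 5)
Your overall strategy is the same as the paper's: split the ambient adjunction between $\Theta' = {}_-\otimes V$ and $\Pi\Theta' \cong {}_-\otimes V^*$ into eigenspace summands of $\Omega$, and determine which summands pair up. However, there is a genuine error in the key computational step you propose, and there is also a simpler route that avoids the unit/counit/triangle-identity bookkeeping altogether.

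The specific claim that $\Omega_{0\,1}+\Omega_{0\,2}$ acts as $-\id$ on the image of the coevaluation $m\mapsto m\otimes c$ is false. That operator is $2\sum_i X_i^{(0)}\bigl(X^i{}^{(1)}+X^i{}^{(2)}\bigr)$, and since $c$ is only $\gg$-invariant (not $\mathfrak{gl}(n|n)$-invariant), the diagonal action $\Delta(X^i)$ of an element $X^i\in\gg'$ on $c$ does not vanish and is not proportional to $c$. (Already for $n=1$: $X^1=\tfrac12\mathbf{1}$ gives $\Delta(X^1)c=c$, and $X^2=-\tfrac12 E_{1'1}$ gives $\Delta(X^2)c = -\,v_{1'}\otimes v_{1'}$, so $(\Omega_{0\,1}+\Omega_{0\,2})(m\otimes c)$ involves both $X_1 m$ and $X_2 m$ and is not a scalar multiple of $m\otimes c$.) What actually holds on the coevaluation image is a Frobenius-type identity relating $X^i\otimes 1$ and $1\otimes X^i$ across the copairing, which makes $\Omega_{0\,1}$ and $\Omega_{0\,2}$ agree (up to a parity sign); the shift by $1$ is then produced entirely by $\Omega_{1\,2}$ acting as $-\id$ on $c$, since $c\in\Lambda^2 V$ is killed by $e$ and is a $(-1)$-eigenvector of $s$. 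Your attribution of the shift to $\Omega_{0\,1}+\Omega_{0\,2}$ would, if followed through, give the wrong adjacent functor.

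The paper avoids this entirely, and avoids checking the triangle identities: it uses the general fact that the right adjoint of a direct summand of $\Theta'$ is the corresponding direct summand of the right adjoint ${}_-\otimes V^*$, so it suffices to compute the transported operator $\Omega^t$ on $M\otimes V^*$ (defined via the composite \eqref{syl1}). A short direct calculation using Lemma~\ref{lem:usprop}\eqref{itm:usprop4} gives $\Omega^t_M = -2\sum_i X_i\otimes X^i + \id$, and transporting through $\eta:V^*\cong V\otimes\Pi\mathbb{C}$ (which flips the sign of each $X^i\in\gg'$) turns this into $\Omega_M\otimes\id_{\Pi\mathbb{C}}+\id$. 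Hence the $k$-eigenspace of $\Omega^t$ on ${}_-\otimes V^*$ corresponds to the $(k-1)$-eigenspace of $\Omega$ on ${}_-\otimes V$, which is precisely the statement that the right adjoint of $\Theta'_k$ is $\Theta'_{k-1}$ up to parity; the left-adjoint case is parallel. If you want to carry out your unit-based version, you need to replace the incorrect ``$\Omega_{0\,1}+\Omega_{0\,2}=-\id$'' by the Frobenius identity $\Omega_{0\,1}\approx\pm\Omega_{0\,2}$ on $m\otimes c$ together with $\Omega_{1\,2}|_{m\otimes c}=-\id$, and track the parity signs carefully, but the paper's route through $\Omega^t$ is shorter and sidesteps the issue.
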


Before we prove the proposition, we use our previous results to check dimension formulas which would be implied by the adjunctions.
 \begin{example}
 Consider a thick Kac module $\Delta(\lambda)$, and indices $k, j$ such that $\bar{\lambda}_j=k$ and $\lambda_{j+1}=\lambda_j-1$. We established in Lemma~\ref{lem:big} an exact sequence
$$0\to \Pi \Delta(\lambda-\varepsilon_j)\to \Theta'_k(\Delta(\lambda))\to  \Delta(\lambda+\varepsilon_{j+1})\to 0,$$ and similarly, 
there is an injective map $\Pi \Delta(\lambda)\to  \Theta'_{k-1} \Delta(\lambda+\varepsilon_{j+1})$. Hence
$$\dim \Hom_{\mathfrak{g}} (\Theta'_k(\Delta(\lambda)), \Delta(\lambda+\varepsilon_{j+1})) = 1 = \dim \Hom_{\mathfrak{g}} (\Pi \Delta(\lambda), \Theta'_{k-1}\Delta(\lambda+\varepsilon_{j+1}))$$ as predicted by Proposition~ \ref{itm:TL1}.
 \end{example}

For the proof of Proposition~ \ref{itm:TL1} we first discuss the left and right adjoint functors to the functor $\Theta' = _-\otimes V$. The right adjoint is the functor $_-\otimes V^*$. It is isomorphic to the left adjoint functor, although the isomorphism is non-trivial. Indeed, the left adjoint is given by tensoring with the {\it right dual} ${}^*V$; in our setting, there is an isomorphism $\eta:{}^*V\cong V^*$, but the isomorphism is non-trivial, see e.g. \cite[XIV.2.2, XIV]{Kassel} for more details.

Now consider the natural transformation $\Omega$ of $\Theta'$ from Section~\ref{ssec:Omega}. It induces transformations $\Omega^t:\,_-\otimes V^* \to\, _-\otimes V^*$ and ${}^t \Omega:\,_-\otimes{}^*V\to\,_-\otimes{}^*V$ of the functors adjoint to $\Theta'$ on either side; for instance the natural transformation $\Omega^t$ is defined for $M \in \mathcal{F}_n$ as the following composition (where $\op{ev}$ and $\op{coev}$ denote the usual evaluation $f\otimes v \mapsto f(v)$ respectively coevaluation $1 \mapsto \sum v_i\otimes f_i$ maps, where $f_i$ is given by $f_i(v_j)=\delta_{i\,j}$),
\begin{equation}
\label{syl1}
M \otimes V^* \xrightarrow{\id \otimes \op{coev}} M \otimes  V^* \otimes V \otimes V^* \xrightarrow{\Omega_{M \otimes V^*} \otimes \id} M \otimes  V^* \otimes V \otimes V^* \xrightarrow{\id \otimes \op{ev} \otimes \id} M \otimes  V^*.
\end{equation}

Hence the right (resp. left) adjoint functor to $\Theta'_k$ is the direct summand of $_-\otimes V^*$ (resp. $_-\otimes {}^*V$) corresponding to the generalized eigenvalue $k$ of $\Omega^t$ (resp. of ${}^t \Omega$). 

\begin{remark}
 Notice however that the isomorphism $\eta$ between the left and right adjoint functor to $_-\otimes V$ does not need to  intertwine $\Omega^t$ and ${}^t \Omega$. Hence the left and right adjoint functors to $\Theta'_k$ would not necessarily be isomorphic, and in fact they are not!
\end{remark}

\begin{proof}[Proof of Proposition~ \ref{itm:TL1}]
Let us evaluate $\Omega^t_M$ on $m \otimes f$, where $m\in M$, $f \in V^*$ are 
homogeneous. We calculate the first two maps in the composition 
\eqref{syl1}:
\begin{eqnarray}
\label{twomaps}
m \otimes f&\longmapsto&\sum_j m \otimes f  \otimes v_j\otimes v^j
\;\longmapsto\; 2\sum_{i, j} (-1)^{ p(m \otimes f)p(X^i)}  X_i( m \otimes f) \otimes X^i v_j \otimes v^j,\quad\quad
\end{eqnarray}
for $\{ v_j\}$ a homogeneous basis of $V$ with dual basis $\{v^j\}$ of $V^*$.  Then the result \eqref{twomaps} equals
\small
\begin{equation*}
2\sum_{i, j} \left((-1)^{ p(m \otimes f)p(X_i)} X_im \otimes f \otimes X^iv_j 
\otimes v^j+ (-1)^{p(X_i) p(m \otimes f)+p(m) p(X_i)} m \otimes X_if \otimes 
X^iv_j \otimes v^j\right).
\end{equation*}
\normalsize
Applying finally $\id_M \otimes \op{ev} \otimes \id_{V^*}$  we obtain 
\small 
\begin{eqnarray*} 
2\sum_{i, j}  (-1)^{p(X_i) p(m \otimes f)} X_i m \otimes f( X^i v_j) v^j +  2\sum_{i,  j}   m \otimes (-1)^{p(X_i) p(m \otimes f)+p(m) p(X_i)}  \left( X_if\right) (X^iv_j)  v^j.
\end{eqnarray*}
Hence altogether we obtain 
\begin{eqnarray}
\Omega^t_M(m \otimes f)
&=&  -2\sum_{i, j} (-1)^{p(X_i) p(m \otimes f) + p(X^i) p(f)} X_im \otimes (X^if)(v_j) v^j\\
&&\quad\quad+\;2\sum_{i,  j}   m \otimes (-1)^{p(X_i) p(m \otimes f)+p(m) p(X_i)}   \left( X_if \right) (X^iv_j) v^j\nonumber\\
&=& -2\sum_{i} (-1)^{p(X_i) p(m)} X_im \otimes X^if -  2\sum_{ j}   m \otimes  f \left( \sum_i X_i X^i v_j \right) v^j\nonumber\\
&=&-2\left( \sum_{i}X_i \otimes X^i \right)m \otimes f +  m \otimes  f,\label{omegat}
\end{eqnarray}
where for the last equality we used  Lemma~\ref{lem:usprop}\ref{itm:usprop4}.).

On the other hand, recall the isomorphism $\eta: V^* \cong V \otimes \Pi \mathbb{C}$ from \eqref{eta}. 
It is easy to see that the elements $X^i$ of $ \mathfrak{gl}(n|n)$ satisfy  (note the sign appearing): 
$$ \xymatrix{ &{ V^*} \ar[d]_{\eta}  \ar[r]^{X^i} &{V^*} \ar[d]^{\eta} \\  &{ V \otimes \Pi \mathbb{C}} \ar[r]_{-X^i}  &{V \otimes \Pi \mathbb{C}} }.$$

Formula~\eqref{omegat} and the definition of $\Omega$ imply that the following diagram commutes 
$$\xymatrix{ &{M \otimes V^*} \ar[d]_{\id \otimes \eta}  \ar[rrr]^{{\Omega}^t_M} &&&{M \otimes V^*} \ar[d]^{\id \otimes \eta} \\  &{M \otimes V \otimes \Pi \mathbb{C}} \ar[rrr]^{\Omega_M \otimes \id_{\Pi \mathbb{C}} +\id} &&&{M \otimes V \otimes \Pi \mathbb{C}}}$$
Thus $\bar{\Omega}^t_M= \Omega_M \otimes \id_{\Pi \mathbb{C}} +\id$ and therefore the eigenspace corresponding to the eigenvalue $k$ of ${\Omega}^t_M$ coincides with the eigenspace corresponding to the eigenvalue $k-1$ of $\Omega_M$. Similarly one shows that the eigenspace corresponding to the eigenvalue $k+1$ of ${}^t{\Omega}_M$ coincides with the eigenspace corresponding to the eigenvalue $k$ of $\Omega_M$. Hence the proposition follows. 
\end{proof}

Since the functors $\Theta'_k$, for $k\in\mathbb{Z}$, are exact by Lemma~\ref{lem:exact}, they induce $\mathbb Z$-linear operators on the Grothendieck group $\Groth_n$, which we denote by $\theta'_k$. 
Note that the subgroups $\Groth_n(\Delta)$, $\Groth_n(\nabla)$ and $\Groth^\oplus(\mathcal P_n)$ are $\theta'_k$-stable by the Lemmas~\ref{lem:small}, \ref{lem:big} and \ref{lem:tilt}. 

We denote  by $\mathbb{U}=\mathbb C^\mathbb{Z}$ the vector space with fixed basis $\{u_i\,|\,i\in\mathbb Z\}$ indexed by $\mathbb{Z}$. Consider the Lie algebra $\mathfrak{sl}(\infty)$ of all linear operators in $\mathbb C^\mathbb{Z}$ of finite rank. This Lie algebra has
Chevalley generators $e_i=E_{i-1,i},\, f_i=E_{i,i-1}$ for $i \in \mathbb Z$ subject to the defining Serre relations of the $A_{\infty}$-Dynkin diagram. Let $\Lambda^n(\mathbb{U})$ be the $\mathbb{Z}$ lattice in the $n$th exterior vector space spanned by the standard basis of wedges in the $u_i$. 

We define the $\mathbb Z$-linear maps $\Phi$ and $\Phi^\vee$ as follows:
\begin{eqnarray*}
&\Phi:\;\Groth_n(\Delta)\to \Lambda^n(\mathbb{U})\quad\text{and}\quad\Phi^\vee:\;\Groth_n(\nabla)\to\Lambda^n(\mathbb {U})&\\
&\Phi[\Delta(\lambda)]=\Phi^\vee[\nabla(\lambda)]=u_{\bar\lambda_1}\wedge\ldots\wedge u_{\bar\lambda_n}.&
\end{eqnarray*}
Note that $\Phi$ and $\Phi^\vee$  are in fact isomorphisms of abelian groups.

\begin{theorem}\label{thrm:TL_action_Kac} For any $k \in \mathbb Z$, we have the following equalities:
\begin{eqnarray}
\label{PhiPhicheck}
\Phi\circ \theta'_k=(e_{k}+f_{k-1})\circ\Phi,\quad \text{and}\quad\Phi^\vee\circ \theta'_k =(e_{k}+f_{k+1})\circ\Phi^\vee.
\end{eqnarray}
\end{theorem}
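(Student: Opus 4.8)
The plan is to compute the action of $\theta'_k$ directly on the basis $\{[\Delta(\lambda)]\}$ (respectively $\{[\nabla(\lambda)]\}$) of $\Groth_n(\Delta)$ (respectively $\Groth_n(\nabla)$), using the filtration results of Lemmas~\ref{lem:small} and~\ref{lem:big}, and then match the resulting formula with the action of $e_k + f_{k-1}$ (respectively $e_k + f_{k+1}$) on the wedge $u_{\bar\lambda_1}\wedge\cdots\wedge u_{\bar\lambda_n}$ under the identification $\Phi$ (respectively $\Phi^\vee$). Since $\Theta'_k$ is exact, in the Grothendieck group $[\Theta'_k M]$ is simply the sum of the classes of those subquotients in any filtration of $M \otimes V$ whose $\Omega$-eigenvalue equals $k$; so the content of the theorem is purely the bookkeeping of which shifts $\lambda \pm \varepsilon_j$ contribute, and with which sign/parity.

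First I would treat the costandard case. By Lemma~\ref{lem:small}, $\Theta'_k(\nabla(\lambda))$ is nonzero only when $k = \bar\lambda_j$ for (the necessarily unique) index $j$, in which case the two-step filtration gives $[\Theta'_k\nabla(\lambda)] = [\nabla(\lambda+\varepsilon_j)] + [\Pi\nabla(\lambda-\varepsilon_j)] = [\nabla(\lambda+\varepsilon_j)] + [\nabla(\lambda-\varepsilon_j)]$ in the reduced group (recall $[\Pi X] = [X]$), with the convention that a summand is zero if the weight is not dominant. Translating through $\Phi^\vee$: $\lambda \mapsto \lambda + \varepsilon_j$ changes $\bar\lambda_j = k$ to $k+1$, i.e. replaces the entry $u_k$ in the wedge by $u_{k+1}$ — this is exactly $e_{k+1}$ applied to that wedge (which is nonzero precisely when $u_k$ is present and $u_{k+1}$ is absent, matching dominance of $\lambda+\varepsilon_j$). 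Similarly $\lambda \mapsto \lambda - \varepsilon_j$ sends $u_k \mapsto u_{k-1}$, which is $f_k$. Summing over all $\lambda$ (equivalently, over whether $u_k$ appears): on a wedge containing $u_k$ we get $e_{k+1}(\cdot) + f_k(\cdot)$; on a wedge not containing $u_k$ both operators vanish, and indeed $\Theta'_k\nabla(\lambda) = 0$. Hence $\Phi^\vee\circ\theta'_k = (e_{k+1} + f_k)\circ\Phi^\vee$. Rewriting $e_{k+1} = e_{k}\big|_{\text{shift}}$... — more carefully, the claimed identity is $\Phi^\vee\circ\theta'_k = (e_k + f_{k+1})\circ\Phi^\vee$, so I must double-check the indexing of the Chevalley generators ($e_i = E_{i-1,i}$ raises $u_i \mapsto u_{i-1}$, $f_i = E_{i,i-1}$ lowers $u_{i-1}\mapsto u_i$); with that convention "$u_k \mapsto u_{k-1}$" is $e_k$ and "$u_k \mapsto u_{k+1}$" is $f_{k+1}$, which gives precisely $e_k + f_{k+1}$. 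So the point is just to get the arrow direction of $e_i, f_i$ straight and read off the answer.

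Next, the standard case via Lemma~\ref{lem:big}. Here $\Theta'_k(\Delta(\lambda)) \neq 0$ requires $\bar\lambda_j = k$ or $\bar\lambda_j = k-2$ for some $j$; in the Grothendieck group the four cases of the lemma collapse (using $[\Pi X]=[X]$ and the exact sequence in case iv) to the single uniform statement $[\theta'_k\Delta(\lambda)] = [\Delta(\lambda-\varepsilon_{j_0})] + [\Delta(\lambda+\varepsilon_{j_1})]$, where $j_0$ is the index with $\bar\lambda_{j_0} = k$ and $j_1$ the index with $\bar\lambda_{j_1} = k-2$, again dropping non-dominant terms. Translating through $\Phi$: $\lambda \mapsto \lambda - \varepsilon_{j_0}$ turns $\bar\lambda_{j_0}=k$ into $k-1$, i.e. $u_k \mapsto u_{k-1}$, which is $e_k$; and $\lambda \mapsto \lambda + \varepsilon_{j_1}$ turns $\bar\lambda_{j_1} = k-2$ into $k-1$, i.e. $u_{k-2}\mapsto u_{k-1}$, which is $f_{k-1}$. (When both a $u_k$ and a $u_{k-2}$ are present and there is no $u_{k-1}$, both contribute; the case-iv exact sequence is exactly the situation where the "target slot" $u_{k-1}$ conflict is resolved by the shift — I should verify the non-dominance/cancellation bookkeeping matches the fact that $e_k f_{k-1}$ and $f_{k-1} e_k$ on a wedge do not both land in the same slot, so no extra term appears.) Summing over $\lambda$ yields $\Phi \circ \theta'_k = (e_k + f_{k-1})\circ \Phi$.

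The main obstacle — and really the only subtle point — is the careful verification that the reduced Grothendieck group passes the parity shifts harmlessly and, more importantly, that in the degenerate case iv) of Lemma~\ref{lem:big} (and the analogous edge cases where $\lambda \pm \varepsilon_j$ fails to be dominant) the combinatorics of "which subquotients survive with eigenvalue $k$" agrees exactly with the combinatorics of $e_k$ and $f_{k-1}$ acting on wedges — in particular that the exact sequence in case iv) contributes $[\Delta(\lambda-\varepsilon_j)] + [\Delta(\lambda+\varepsilon_{j+1})]$ and not something else, and that this is compatible with $\lambda + \varepsilon_{j+1}$ or $\lambda - \varepsilon_j$ being (non-)dominant in the various sub-cases. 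Once that matching is checked — essentially a finite case analysis on the positions of $k-2, k-1, k$ among the strictly decreasing sequence $\bar\lambda_1 > \cdots > \bar\lambda_n$ — the identities \eqref{PhiPhicheck} follow since both sides are $\mathbb{Z}$-linear and agree on the basis.
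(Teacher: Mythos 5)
Your proposal is correct and takes essentially the same route as the paper, which simply observes that the identity follows immediately from Lemma~\ref{lem:small} and Lemma~\ref{lem:big}; you have just carried out the bookkeeping (matching the shifted weights $\lambda\pm\varepsilon_j$, dominance failure, and the parity collapse in $\Groth_n$ against the action of $E_{k-1,k}$ and $E_{k-1,k-2}$, resp.\ $E_{k-1,k}$ and $E_{k+1,k}$, on ordered wedges) in detail. The mid-paragraph index slip ($e_{k+1}+f_k$ in place of $e_k+f_{k+1}$) is caught and corrected in your own text, so the final identification is right.
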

\begin{proof} This follows immediately from Lemma~\ref{lem:small} and Lemma~\ref{lem:big}.
\end{proof}

\begin{remark} With the identifications  $\Phi$ and $\Phi^\vee$ of $\Groth_n(\Delta)$ respectively $\Groth_n(\nabla)$ with $\Lambda^n(\mathbb U)$ we obtain from \eqref{PhiPhicheck} directly
\begin{eqnarray*}
\langle [\Theta'_k M],[N]\rangle&=&\langle[M],[\Theta'_{k-1} N]\rangle
\end{eqnarray*}
where $\langle \cdot, \cdot \rangle$ is the pairing defined in Section~\ref{ssec:reduced_GR_group}.
This corresponds to the fact that $\Theta'_{k-1}$ is right adjoint to $\Pi \Theta'_k$ by Proposition~ \ref{itm:TL1} below.
\end{remark}

The $\theta'_k$ satisfy the defining relations for the Temperley-Lieb algebra of type $A_\infty$:
\begin{corollary}\label{cor:TL_action_Gr}
The operators $\theta'_k$ on $\Groth_n$ satisfy the following relations for any $k, j \in \mathbb Z$, $|k-j| >1$:
  \begin{equation}\label{eq:TL}
   {\theta'_k}^2 = 0, \;\;
  \theta'_k \theta'_j = \theta'_j \theta'_k, \;\;
   \theta'_k \theta'_{k \pm 1} \theta'_k = \theta'_k.
  \end{equation}

\end{corollary}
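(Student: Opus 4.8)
The plan is to reduce everything to the explicit action of $\theta'_k$ on the two lattices $\Lambda^n(\mathbb U)$ via the isomorphisms $\Phi$ and $\Phi^\vee$, using Theorem~\ref{thrm:TL_action_Kac}. Since $\Phi$ is an isomorphism of abelian groups and $\Groth_n(\Delta)$ is $\theta'_k$-stable, it suffices to verify the three relations \eqref{eq:TL} as identities of operators on $\Lambda^n(\mathbb U)$ of the form $e_k + f_{k-1}$ (the argument for $\Phi^\vee$ with $e_k + f_{k+1}$ is identical, and in fact one only needs one of them since $\Groth_n(\Delta)$ and $\Groth_n(\nabla)$ together span $\Groth_n$ — or one can simply run the computation for the operator $\theta'_k$ directly on basis elements $[\Delta(\lambda)]$, invoking Lemma~\ref{lem:big}, and separately on $[\nabla(\lambda)]$ via Lemma~\ref{lem:small}).

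First I would record the action of $e_k + f_{k-1}$ on a wedge $u_{\bar\lambda_1}\wedge\cdots\wedge u_{\bar\lambda_n}$: the operator $e_k = E_{k-1,k}$ sends $u_k \mapsto u_{k-1}$ and kills all other $u_j$, while $f_{k-1} = E_{k-1,k}$... — more precisely $f_{k-1} = E_{k-1,k-2}$ sends $u_{k-1}\mapsto u_{k-2}$; wait, with the indexing $e_i = E_{i-1,i}$, $f_i = E_{i,i-1}$ of the excerpt, $e_k$ moves a box from position $k$ to $k-1$ and $f_{k-1}$ moves a box from position $k-1$ to... let me instead just say: $e_k+f_{k-1}$ acts on a wedge of distinct basis vectors by, for each factor $u_k$ present, replacing it by $u_{k-1}$ (this is the $e_k$ term, nonzero only if $u_{k-1}$ is absent), and for each factor $u_{k-1}$ present, replacing it by $u_k$... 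I would work out the correct one of these from the conventions and then the three verifications are short: $\theta_k'^2=0$ because applying $e_k+f_{k-1}$ twice to a wedge always reintroduces a repeated basis vector (a box moved into an occupied slot, or bounced back onto itself), hence the wedge vanishes; $\theta'_k\theta'_j=\theta'_j\theta'_k$ for $|k-j|>1$ because the supports $\{k-1,k\}$ and $\{j-1,j\}$ are disjoint, so the two operators act on non-interacting factors; and $\theta'_k\theta'_{k\pm1}\theta'_k=\theta'_k$ by following a box through the three moves and checking that exactly one of the intermediate configurations survives (the others die by repetition), reproducing the original $\theta'_k$-output. The one subtlety is bookkeeping the cases where a box at $k$, a box at $k-1$, both, or neither are present.

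Alternatively, and more cleanly, I would invoke the standard fact that the operators $E_i := e_i + f_{i-1}$ (Temperley--Lieb generators inside $\mathfrak{gl}(\infty)$ acting on $\Lambda^n(\mathbb U)$) are exactly the generators of the $q$-Schur / Temperley--Lieb action on the exterior power at the specialization $\delta = q+q^{-1}=0$; this is classical (it is the $n$-box sector of the Fock-space representation of $U_q(\widehat{\mathfrak{sl}}_\infty)$ at the relevant parameter), so the relations \eqref{eq:TL} hold on $\Lambda^n(\mathbb U)$ and transport back via $\Phi$ (resp.\ $\Phi^\vee$) to relations among the $\theta'_k$ on $\Groth_n(\Delta)$ (resp.\ $\Groth_n(\nabla)$), hence on all of $\Groth_n$ since these subgroups generate it.

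The main obstacle is purely organizational: $\Groth_n$ is not a priori the direct sum of $\Groth_n(\Delta)$ and $\Groth_n(\nabla)$, so one must argue that verifying an operator identity on each of these subgroups is enough. This follows because every class $[L(\lambda)]$ lies in $\Groth_n(\Delta)$ (simple modules have thick-Kac character formulas, as $[\Delta(\mu):L(\lambda)]$ are triangular with $1$'s on the diagonal by Lemma~\ref{lem:typical} and Lemma~\ref{lem:syl2}), so $\Groth_n(\Delta) = \Groth_n$ already; thus checking the relations on the basis $\{[\Delta(\lambda)]\}$ via Lemma~\ref{lem:big} and the combinatorics above completes the proof. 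Everything else is the elementary wedge computation described above, which I would carry out once for $\theta'_k = e_k + f_{k-1}$ on $\Lambda^n(\mathbb U)$.
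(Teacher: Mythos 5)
Your verification that the operators $e_k + f_{k-1}$ (resp.\ $e_k + f_{k+1}$) satisfy the Temperley--Lieb relations on $\Lambda^n(\mathbb U)$, and hence that $\theta'_k$ satisfies \eqref{eq:TL} on $\Groth_n(\Delta)$ and on $\Groth_n(\nabla)$ via $\Phi$ and $\Phi^\vee$, is correct and is exactly the first half of the paper's argument (which cites Theorem~\ref{thrm:TL_action_Kac} for this). The gap is in your passage from $\Groth_n(\Delta)$ to all of $\Groth_n$.

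You claim ``every class $[L(\lambda)]$ lies in $\Groth_n(\Delta)$'' because the decomposition matrix $[\Delta(\mu):L(\lambda)]$ is unitriangular. This is false: unitriangularity over an infinite poset does not give a $\mathbb{Z}$-linear inverse with finitely supported rows. Already for $n=1$ every weight is typical, $\Delta(i)=P(i)$ sits in a non-split extension $0\to L(i+2)\to\Delta(i)\to L(i)\to 0$, so $[\Delta(i)]=[L(i)]+[L(i+2)]$; if $[L(0)]=\sum_i a_i[\Delta(i)]$ with $a$ finitely supported, comparing coefficients gives $a_j+a_{j-2}=\delta_{j,0}$, which forces $a\equiv 0$, a contradiction. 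Thus $\Groth_1(\Delta)\subsetneq \Groth_1$, and your fallback assertion that $\Groth_n(\Delta)$ and $\Groth_n(\nabla)$ together generate $\Groth_n$ is likewise unjustified (it happens to hold for $n=1$ only because $\nabla(i)=L(i)$ there).

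The paper closes this gap with an adjunction-plus-pairing argument that your proposal is missing. Having the relations on $\Groth_n(\Delta)$ and $\Groth_n(\nabla)$ gives them on $\Groth^\oplus(\mathcal P_n)$, which is contained in both. The pairing $\langle[P(\lambda)],[M]\rangle=\dim\Hom_\gg(P(\lambda),M)=[M:L(\lambda)]$ is non-degenerate in the second variable on $\Groth_n$, and by Proposition~\ref{itm:TL1} one has $\langle\theta'_k[P],[M]\rangle=\langle[P],\theta'_{k-1}[M]\rangle$. Given any word $w$ in the $\theta'_k$'s that is one side of a relation in \eqref{eq:TL}, its adjoint $w^*$ (reverse the order and shift each index down by one) is the corresponding side of the same type of relation, so the difference of the two adjoints vanishes on $\Groth^\oplus(\mathcal P_n)$ by the already-established case; then $\langle[P],(w_1-w_2)[M]\rangle=\langle(w_1^*-w_2^*)[P],[M]\rangle=0$ for every $[P]$, and non-degeneracy forces $(w_1-w_2)[M]=0$ on all of $\Groth_n$. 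You need to invoke this transfer (or something equivalent) rather than the false equality $\Groth_n(\Delta)=\Groth_n$.
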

\begin{remark}
 The relations \eqref{eq:TL} are the defining relations for the Temperley-Lieb algebra $\op{TL}_{\infty}(q + q^{-1})$ on infinitely many generators. For a definition of the Temperley-Lieb algebra, see for instance \cite[Section~ 4]{Str}. In contrast to the situation described therein, in our case the parameter $q = \pm i$ is a primitive fourth root of unity.
\end{remark}
\begin{proof} The relations from Theorem~\ref{thrm:TL_action_Kac} still hold on $\Groth_n(\Delta)$ respectively $\Groth_n(\nabla)$ and thus also on $\Groth^\oplus(\mathcal P_n)$.
Using the natural pairing $\Groth^\oplus(\mathcal P_n)\times \Groth_n\to\mathbb Z$ induced from \eqref{bil} and Proposition~\eqref{itm:TL1}
we obtain the same relations on $\Groth_n$.
\end{proof}

\subsection{Categorical action}\label{ssec:categorical_action}
In fact, we will show that the Temperley-Lieb relations \eqref{eq:TL} hold even in a categorical version:
\begin{theorem}\label{thrm:TL_action_categor}
\label{itm:TL2} 
There exists natural isomorphisms of functors (for $k,j\in\mathbb{Z}$)
  \begin{eqnarray}
   \Theta_k^2 &\cong& 0,\label{first}\\  
   \psi_{k, k \pm 1}=\InnaB{{\Theta_k}}\op{adj}:\quad \Theta_k \Theta_{k \pm 1} 
\Theta_k &\stackrel{\cong}\Longrightarrow&   \Theta_k,\label{second}\\
   \psi_{k,j}=(j-i)1\otimes s+\id: \quad\quad\quad\Theta_k \Theta_j &\stackrel{\cong}\Longrightarrow&  \Theta_j \Theta_k \; \text{   if   } \; |k-j| >1,
  \end{eqnarray}
  where $\op{adj}$ denotes the respective adjunction morphism given by Proposition~\ref{itm:TL1}.\label{third}
 \end{theorem}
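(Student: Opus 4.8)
The plan is to verify the three families of relations by exhibiting, for each, an explicit natural transformation and checking it is an isomorphism on a generating class of objects — for instance on thick Kac modules, since every object of $\mathcal{F}_n$ has a $\Delta$-filtration and all functors involved are exact (Lemma~\ref{lem:exact}). The combinatorial shadow of each relation is already established on the Grothendieck group in Corollary~\ref{cor:TL_action_Gr}, so the task is purely to upgrade equalities of classes to isomorphisms of functors. Throughout I would work with $\Theta'_k$ and only at the end reinsert the parity shifts $\Pi^k$ defining $\Theta_k$, checking that the signs match up (Remark~\ref{par} and the parity bookkeeping in Lemma~\ref{lem:filtration}).

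For \eqref{first}: by Lemma~\ref{lem:big}, $\Theta'_k(\Delta(\lambda))$ is either zero, or a single $\Delta(\lambda\pm\varepsilon_j)$, or a non-split extension of $\Delta(\lambda+\varepsilon_{j+1})$ by $\Pi\Delta(\lambda-\varepsilon_j)$. In each nonzero case, the weights $\overline{\lambda\pm\varepsilon_j}$ obtained differ from $\bar\lambda$ by shifting exactly one entry by $\pm1$; applying $\Theta'_k$ again would require two entries of the resulting weight to equal $k$ or $k-2$ appropriately, which never happens — in the extension case the two outputs have the entry that was $k$ moved to $k-1$ and the entry that was $k-2$ moved to $k-1$, so no further $k$-eigenvalue survives. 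Hence $\Theta'^2_k$ kills every $\Delta$, so by exactness it kills every object; the zero natural transformation is the required isomorphism $\Theta_k^2\cong 0$.

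For \eqref{second}: the natural transformation is the composite of unit and counit $\op{adj}_{\Theta_k}\colon \Theta_k\Theta_{k\pm1}\Theta_k\Rightarrow\Theta_k$ coming from the adjunction of Proposition~\ref{itm:TL1} (namely $\Theta_{k\pm1}$ is both left and right adjoint to $\Theta_k$ up to the appropriate shift). To see it is an isomorphism, evaluate on $\Delta(\lambda)$ and trace through Lemma~\ref{lem:big}: the nonzero cases are governed by whether $\bar\lambda$ has an entry equal to $k$ with a neighboring entry forcing the extension. One checks, case by case on the relative positions of the entries $k-1,k,k+1$ (respectively $k\mp2$) in $\bar\lambda$, that source and target have the same composition factors in $\Groth_n$ (this is Corollary~\ref{cor:TL_action_Gr}) and that the adjunction map is nonzero; since each side is a Kac module or an iterated extension thereof with multiplicity-one sections, a nonzero map between them of the correct class is an isomorphism. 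Finally \eqref{third} is the easiest: for $|k-j|>1$ the functors $\Theta'_k$ and $\Theta'_j$ are summands of $_-\otimes V^{\otimes 2}$ cut out by commuting generalized eigenspace projections for $y_1$ and $y_2$ (Proposition~\ref{prop:jm}), so they literally commute; the stated map $(j-i)\,1\otimes s+\id$ restricts from the $\mathfrak g$-endomorphism $1\otimes s$ of $_-\otimes V\otimes V$ — which is invertible since $s^2=\id$ — and its restriction to the relevant eigenspace summand is an isomorphism $\Theta_k\Theta_j\Rightarrow\Theta_j\Theta_k$ after accounting for the swap of tensor factors and the $(j-i)$ rescaling needed to make it compatible with the eigenvalue labeling.

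The main obstacle is \eqref{second}: one must actually produce a \emph{nonzero} natural transformation and verify nonvanishing after evaluation, not merely match Grothendieck classes. The subtlety is that in the ``extension'' case of Lemma~\ref{lem:big} both $\Theta_k\Theta_{k\pm1}\Theta_k(\Delta(\lambda))$ and $\Theta_k(\Delta(\lambda))$ can be non-split self-extensions of Kac modules, and one needs the adjunction morphism to be an isomorphism on these — equivalently, that it is nonzero on both the sub and the quotient, which requires careful tracking of the explicit unit/counit through the induction/coinduction presentations in Section~\ref{ssec:Kac_modules}, together with the sign computations behind Remark~\ref{dualbasis} and \eqref{omegat}. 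The parity shifts $\Pi^k$ hidden in $\Theta_k$ versus $\Theta'_k$ are what make the signs in \eqref{second} and \eqref{third} come out consistently, and getting those exactly right is the delicate bookkeeping part of the argument.
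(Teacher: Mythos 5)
Your overall plan is in the right spirit, but two of the three items have genuine gaps where the paper uses a cleaner or essential idea that you have not identified.

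For the first relation $\Theta_k^2\cong 0$, your argument is fine but redundant: once Corollary~\ref{cor:TL_action_Gr} gives $\theta_k'^2=0$ on $\Groth_n$, the class $[\Theta_k^2 M]$ vanishes for every $M$, and a module whose reduced Grothendieck class is zero is zero. Re-deriving the vanishing by running through Lemma~\ref{lem:big} is correct but retraces the proof of the corollary you already invoked.

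For the second relation $\Theta_k\Theta_{k\pm1}\Theta_k\cong\Theta_k$, you use the right natural transformation (the adjunction map), but your reason for it being an isomorphism -- ``a nonzero map between them of the correct class is an isomorphism,'' verified case-by-case on thick Kac modules -- is exactly the ``main obstacle'' you flag and it is more work than needed. The paper's key observation is that the triangle identities of the adjunctions $(\Theta_{k+1},\Theta_k)$ and $(\Theta_k,\Theta_{k-1})$ make $\op{adj}_{\Theta_k}$ automatically a \emph{split} injection or surjection of functors, on every object simultaneously. Combined with the equality of Grothendieck classes, a split injection or surjection between objects of equal class is an isomorphism, with no filtration analysis, no multiplicity-one bookkeeping, and no case distinction on $\bar\lambda$. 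You should use the triangle identities; without them your argument must also rule out non-isomorphism nonzero maps between iterated extensions, which, while doable, is exactly the delicate issue you correctly worry about.

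For the third relation, the claim that $\Theta'_k$ and $\Theta'_j$ ``literally commute'' because they are cut out by commuting operators $y_1$ and $y_2$ is wrong and hides the real content. The subfunctor $\Theta'_k\Theta'_j$ of $_-\otimes V\otimes V$ is the joint generalized eigenspace for $(y_1,y_2)=(j,k)$, while $\Theta'_j\Theta'_k$ corresponds to $(y_1,y_2)=(k,j)$; these are different subfunctors and must be exchanged by a nontrivial isomorphism. The crux of the paper's proof is precisely to check that $\psi_{kj}=(j-k)(1\otimes s)+\id$ maps one eigenspace into the other, via the commutation relations between $1\otimes s$, $1\otimes e$, $\Omega_{1\,2}$ and $\Omega_{1\,3}+\Omega_{2\,3}$, together with the vanishing of $1\otimes e$ on $\Theta'_k\Theta'_j M$ when $k-1\neq j$. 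Merely observing that $(j-k)(1\otimes s)+\id$ is invertible for $|k-j|>1$ does not tell you where it sends the eigenspaces; that is the missing step. Also note that $(j-k)(1\otimes s)+\id$ is not a ``restriction'' of $1\otimes s$ but a linear combination involving the identity, and its invertibility follows from $((j-k)(1\otimes s)+\id)((j-k)(1\otimes s)-\id)=((j-k)^2-1)\id$, not from $s^2=\id$ alone.
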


\begin{proof}
The first isomorphism \eqref{first} follows directly from Corollary~\ref{cor:TL_action_Gr}. 

By Proposition~\ref{itm:TL1}, there exist adjunction morphisms $ 
\overline{\op{adj}}:\Theta_{k + 1}  \Theta_k \Rightarrow \id $ (that is the counit of the adjunction) and $\op{adj}:\id \Rightarrow \Theta_{k - 1} \Theta_k$ 
(which is the unit of the adjunction). The corresponding natural transformation 
${\Theta_k}\op{adj}$ (respectively $\Theta_k\overline{\op{adj}}$) is given by injective respectively 
surjective maps by the adjunction axioms. Hence by Corollary~\ref{cor:TL_action_Gr} 
these are isomorphisms; thus \eqref{second} holds.

To prove the existence of the isomorphisms $\psi_{k,j}$, let $M\in\mathcal{F}_n $ and define the linear endomorphisms $\Omega_{1\,2}, \Omega_{1\,3}, \Omega_{2\,3}$ of $M\otimes V\otimes V$ by 
$$\Omega_{1\,2}=2\sum_i X_i\otimes X^i\otimes 1,\quad \Omega_{1\,3}=2\sum_i X_i\otimes 1\otimes X^i, \quad \Omega_{2\,3}=2 \sum_i 1\otimes X_i\otimes X^i.$$
By definition, $\Theta'_k\Theta'_j M$ is a submodule in $M\otimes V\otimes V$ which is a generalized eigenspace of $\Omega_{1\,2}$ with eigenvalue $j$ and
of $\Omega_{1\,3}+\Omega_{2\,3}$ with eigenvalue $k$. 

Recall that $\Omega_V=s+e$, see \eqref{yse} and  Lemma~\ref{lem:decoSE}.
Consider the operator $1\otimes e: M\otimes V\otimes V\to M\otimes V\otimes V$. By the proof of Proposition~\ref{itm:TL1} we obtain
$\Theta'_i \Theta'_j M \subset \Ker (1\otimes e)$ unless $i-1=j$: indeed, $1\otimes e: \Theta'_i \Theta'_j M \to M\otimes V\otimes V$ factors 
through $1\otimes e: \Theta'_i \Theta'_j M \to \Pi M$, and by Proposition~\ref{itm:TL1}, this gives a map $\Theta'_j M \to \Theta'_{i-1} M$; such a map is non-zero only if $i-1 = j$. Consider the natural transformation $$\psi_{ij}:=(j-i)1\otimes s+\id:\Theta'_i\Theta'_j 
\Longrightarrow - \otimes V\otimes V.$$
We claim that the image of $\psi_{ij}$ lies in $\Theta'_j \Theta'_i M$, inducing a natural transformation $$\psi_{ij}: \Theta'_i \Theta'_j \Longrightarrow  \Theta'_j \Theta'_i$$ and thus a 
natural transformation $$\psi_{ij}: \Theta_i \Theta_j \Longrightarrow  \Theta_j 
\Theta_i  $$ as well. 

We use the relations:
\begin{eqnarray*}
\Omega_{1\,2} \circ (1 \otimes s)  &=& (1 \otimes s) \circ(\Omega_{1\,3} + \Omega_{2\,3}) - \id - 1\otimes e,\\
(\Omega_{1\,3} + \Omega_{2\,3})\circ (1 \otimes s)&=&(1 \otimes s) \circ \Omega_{1\,2}+\id+1\otimes e,
\end{eqnarray*}
to obtain the identities
\begin{eqnarray*}
\Omega_{1\,2} \psi_{ij} &=&(j-i) (1 \otimes s) \circ(\Omega_{1\,3} + \Omega_{2\,3}) + \Omega_{1\,2} - ( j-i)\id+(j-i)(1\otimes e),\\
(\Omega_{1\,3} + \Omega_{2\,3})\psi_{ij} &=& ( j-i) (1 \otimes s)\circ \Omega_{1\,2}+\Omega_{1\,3}+\Omega_{2\,3}+( j-i)\id+(j-i)(1\otimes e).
\end{eqnarray*}
We rewrite the above relations in the form
\begin{eqnarray*}
(\Omega_{1\,2}-i\id) \psi_{ij} =\psi_{ij}(\Omega_{1\,3} + \Omega_{2\,3}-i\id ) + (\Omega_{1\,2} -j\id)-(\Omega_{1\,3} + \Omega_{2\,3}-i\id )+(j-i)(1\otimes e),\\
(\Omega_{1\,3} + \Omega_{2\,3}-j\id)\psi_{ij} =\psi_{ij}(\Omega_{1\,2} -j\id )-(\Omega_{1\,2} -j\id)+(\Omega_{1\,3} + \Omega_{2\,3}-i\id )+(j-i)(1\otimes e).
\end{eqnarray*}
Since the operators $\Omega_{1\,2} - j \id $ and $\Omega_{1\,3} + \Omega_{2\,3} - i\id$ commute and are nilpotent on $\Theta'_i \Theta'_j M$ and 
$(1\otimes e)|_{\Theta'_i \Theta'_j M}=0$, we conclude that $\Omega_{1,2} -i\id $ and and $\Omega_{1\,3} + \Omega_{2\,3} - j\id$ are nilpotent on $\psi_{ij}\Theta'_i \Theta'_j M$.
Hence the image $\psi_{ij}:\Theta'_i \Theta'_j M$ lies in $\Theta'_j \Theta'_i 
M$.

For $i \neq j \pm 1$, an easy computation shows that the 
map $$(j-i)1\otimes s+\id:M \otimes V\otimes V
\Longrightarrow M \otimes V\otimes V$$ is injective for any $M$. 
Indeed, consider the basis $\{v_i\}_{i \in I}$ of $V$ (see 
Remark~\ref{rmk:basis}), where $I = \{1, \ldots, n\} \sqcup \{1', \ldots, 
n'\}$. The element $\sum_{k, l \in I} m_{k, l} \otimes v_k \otimes v_l \in M 
\otimes V \otimes V$ is sent to $$\sum_{k, l \in I} ((-1)^{p(k)p(l)} (i-j) 
m_{l, k} + m_{k, l}) \otimes v_k \otimes v_l.$$ If this is zero, then for every 
$k, l$ we have $-(-1)^{p(k)p(l)} (i-j) m_{l, k}= m_{k, l}$, and hence 

$(i-j)^2 m_{k, l}= m_{k, l}$ for every $k, l$. If $i-j \neq \pm 1$, we conclude that 
$m_{k, l} =0$ for every $k, l$.

\InnaB{Hence 
$\psi_{ij}$ is an injective natural transformation, and therefore 
an isomorphism by Corollary~\ref{cor:TL_action_Gr}.}
\end{proof}

\section{Combinatorics: Weight diagrams, translations and duality}\label{sec:weight_diagrams}
\subsection{From weights to weight diagrams}\label{ssec:weight_diagrams_def}
For $\lambda$ a dominant weight we define the map 
\begin{eqnarray*}
f_\lambda:\mathbb{Z} \to \{0,1 \}\quad\text{as}\quad
f_\lambda(i)&=&
\begin{cases}1& \text{if $i\in c_{\lambda}$},\\
0& \text{if $i\notin c_{\lambda}$}.
\end{cases}
\end{eqnarray*}
where $c_{\lambda}:=\{\bar{\lambda_i}\,|\,i=1, \ldots, n\}$ is as in \eqref{omegarho}. The corresponding  {\it weight diagram} $d_\lambda$ is the  labeling of the integer line by symbols $\bullet$ (``black ball'') and $\circ$ (``empty'') such that $i$ has label $\bullet$ if $f(i) =1$, and label $\circ$ otherwise. 
\begin{example}
Let $n=4$. Then for $\lambda = 0$, the weight diagram is 
  $$ \xymatrix{  \ldots &\underset{-1}{\circ}  &\underset{0}{\bullet} &\underset{1}{\bullet}  &\underset{2}{\bullet}  &\underset{3}{\bullet}  &\underset{4}{\circ}  &\underset{5}{\circ} &\underset{6}{\circ}  &\underset{7}{\circ} &\ldots} $$
  whereas for  $\lambda = \rho$, the weight diagram is 
  $$\ldots \xymatrix{ &\underset{-1}{\circ}  &\underset{0}{\bullet} &\underset{1}{\circ}  &\underset{2}{\bullet}  &\underset{3}{\circ}  &\underset{4}{\bullet}  &\underset{5}{\circ} &\underset{6}{\bullet}  &\underset{7}{\circ} &\ldots} $$
  (in both diagrams all remaining positions are labeled by $\circ$).
\end{example}

\begin{remark}
The following properties are easy to verify.
\begin{enumerate}[1.)]
\item Typical weights (i.e. when $P(\lambda) = \Delta(\lambda)$) correspond precisely to the weight diagrams without two neighboring black balls.
\item There are two possible partial orders on the weights, corresponding to the choice of either thick Kac modules or thin Kac modules as the standard objects 
in $\mathcal{F}_n$, as mentioned in Remark~\ref{rmk:2_highest_weight_struct}. In both orders, if $\lambda \leq \mu$ then $\lambda_i \geq \mu_i$. In terms of diagrams, this means that the $i$-th black ball in $d_{\lambda}$ 
(counted from left) lies further to the right of the $i$-th black ball of $d_{\mu}$.
\end{enumerate}
\end{remark}
Obviously this induces a bijection between the set of dominant weights of $\gg=\mathfrak{p}(n)$, the set of  maps $f:\mathbb{Z} \to \{0,1 \}$ such that $\sum_i f(i) =n$, and the set of weight diagrams with exactly $n$ black balls.

\subsection{Translation functors in terms of weight diagrams}\label{ssec:translation_fun_diag} 
We have the following description for the action of $\Theta'_i$ on the thick and thin Kac modules.
By convention, any appearing diagram which is not defined, e.g., due to lack of black balls to be moved, corresponds to the zero module. 

Translation of thick Kac modules corresponds to moving black balls to position $k-1$, whereas translation of thin Kac modules corresponds to moving black balls away from $k$:
\begin{proposition}[Translation of thick Kac modules] \label{bigKac} Let $k\in\mathbb{Z}$. Then 
\begin{enumerate}[i.)]
\item $\Theta'_k\Delta(\lambda) \cong \Delta(\mu'')$ if $d_{\lambda}$ looks as follows at positions $k-2, k-1, k$ with $d_{\mu''}$ displayed underneath (all other positons agree in the two weight diagrams):
 \begin{align*} 
  d_\lambda = \xymatrix{  &\underset{k-2}{\bullet}  &\underset{k-1}{\circ}  &\underset{k}{\circ}   } \\
 d_{\mu''}= \xymatrix{  &\underset{k-2}{\circ}  &\underset{k-1}{\bullet}  &\underset{k}{\circ}    }
\end{align*}

\item $\Theta'_k \Delta(\lambda) = \Pi\Delta(\mu')$  if $d_{\lambda}$ looks as follows at positions $k-2, k-1, k$ with $d_{\mu'}$ displayed underneath (all other positions agree in the two weight diagrams): 
 \begin{align*} 
  d_\lambda = \xymatrix{  &\underset{k-2}{\circ}  &\underset{k-1}{\circ}  &\underset{k}{\bullet}   } \\
 d_{\mu'}=\xymatrix{  &\underset{k-2}{\circ}  &\underset{k-1}{\bullet}  &\underset{k}{\circ}   } 
\end{align*}

\item In case $d_\lambda$ looks at positions $k-2,k-1,k$ as below, there is a short exact sequence $$0 \to \Pi\Delta(\mu') \to \Theta'_k \Delta(\lambda) \to \Delta(\mu'') \to 0 $$ where $d_{\mu''}$ and $d_{\mu'}$ are obtained from $d_\lambda$  by moving one black ball to position $k-1$ (from position $k-2$ respectively position  $k$) as follows:
\begin{align*} 
  d_\lambda = \xymatrix{  &\underset{k-2}{\bullet}  &\underset{k-1}{\circ}  &\underset{k}{\bullet}   } \\
d_{\mu'}=\xymatrix{  &\underset{k-2}{\bullet}  &\underset{k-1}{\bullet}  &\underset{k}{\circ}   }\\
  d_{\mu''}= \xymatrix{  &\underset{k-2}{\circ}  &\underset{k-1}{\bullet}  &\underset{k}{\bullet}    } 
\end{align*}
\item  $\Theta'_k\Delta(\lambda) =0$ in all other cases.
\end{enumerate}
\end{proposition}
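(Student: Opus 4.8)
The plan is to translate everything into the combinatorics of the sequence $c_\lambda = \{\bar\lambda_1,\dots,\bar\lambda_n\}$ and apply Lemma~\ref{lem:big} directly. First I would observe that the weight diagram $d_\lambda$ records exactly which integers lie in $c_\lambda$, so the four configurations at positions $k-2,k-1,k$ in the statement correspond precisely to the four cases of Lemma~\ref{lem:big}: case (i) of the proposition (a black ball at $k-2$, empty at $k-1$ and $k$) means $\bar\lambda_j = k-2$ for some $j$ while $\bar\lambda_i \neq k-1$ for all $i$; since $\bar\lambda$ is strictly decreasing, $\bar\lambda_{j-1}\neq \bar\lambda_j + 1 = k-1$, which is exactly the hypothesis of Lemma~\ref{lem:big}(iii). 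Case (ii) (empty at $k-2,k-1$, black at $k$) means $\bar\lambda_j = k$ and $\bar\lambda_{j+1} \neq k-1 = \bar\lambda_j - 1$, which is Lemma~\ref{lem:big}(ii). Case (iii) (black at $k-2$, empty at $k-1$, black at $k$) means there are indices with $\bar\lambda_{j} = k$ and $\bar\lambda_{j+1} = k-2 = \bar\lambda_j - 2$, i.e. $\lambda_{j} = \lambda_{j+1}+1$, which is Lemma~\ref{lem:big}(iv); the other situation $\bar\lambda_j = k-2$ with $\lambda_{j-1} = \lambda_j+1$ occurs simultaneously and gives the same subquotients. Case (iv) collects everything else, matching Lemma~\ref{lem:big}(i) together with the situations already covered.

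Next I would verify, for each case, that the weights appearing in Lemma~\ref{lem:big} are the ones whose diagrams are drawn in the proposition. For this I need the elementary bookkeeping: if $\bar\lambda_j = m$, then $\overline{\lambda - \varepsilon_j} = \bar\lambda - \varepsilon_j$ has $j$-th entry $m-1$, so $d_{\lambda - \varepsilon_j}$ is obtained from $d_\lambda$ by moving the black ball at position $m$ to position $m-1$ — provided $m-1 \notin c_\lambda$, which is guaranteed in cases (i), (ii) since the diagram is empty there, and in case (iii) for the ball coming from $k$ since position $k-1$ is empty. Likewise $\overline{\lambda + \varepsilon_j}$ has $j$-th entry $m+1$, moving the ball at $m$ to $m+1$. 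Applying this: in case (i), $\Theta'_k\Delta(\lambda) = \Delta(\lambda + \varepsilon_j)$ with $\bar\lambda_j = k-2$, so the ball at $k-2$ moves to $k-1$, giving $d_{\mu''}$ as drawn. In case (ii), $\Theta'_k\Delta(\lambda) = \Delta(\lambda-\varepsilon_j)$ with $\bar\lambda_j = k$, so the ball at $k$ moves to $k-1$, giving $d_{\mu'}$ as drawn (the proposition's labelling of $\mu'$ vs $\mu''$ is just a naming convention). In case (iii) the exact sequence from Lemma~\ref{lem:big}(iv) is $0 \to \Pi\Delta(\lambda - \varepsilon_j) \to \Theta'_k\Delta(\lambda) \to \Delta(\lambda + \varepsilon_{j+1}) \to 0$, and here $\bar\lambda_j = k$ so $\lambda - \varepsilon_j$ moves the ball at $k$ to $k-1$ (giving $d_{\mu'}$: balls at $k-2$ and $k-1$), while $\bar\lambda_{j+1} = k-2$ so $\lambda + \varepsilon_{j+1}$ moves the ball at $k-2$ to $k-1$ (giving $d_{\mu''}$: balls at $k-1$ and $k$), matching the two displayed diagrams.

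Finally I would check case (iv): if none of the three special configurations occurs at positions $k-2,k-1,k$, then either $k, k-2 \notin c_\lambda$ (so $\bar\lambda_j \neq k, k-2$ for all $j$ and Lemma~\ref{lem:big}(i) gives $\Theta'_k\Delta(\lambda) = 0$), or the only remaining possibilities are a black ball at $k-1$, which by the strictly-decreasing property of $\bar\lambda$ together with Lemma~\ref{lem:big} still yields $0$ — this needs a short case analysis of the finitely many label patterns on $\{k-2,k-1,k\}$ not already treated, using that $\bar\lambda_j = k-1$ forces $\bar\lambda_{j}$ to contribute nothing to $\Theta'_k$ since $k-1 \neq k, k-2$. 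The only genuine subtlety, and the step I would be most careful about, is making sure that when position $k-1$ is occupied the moves $\lambda \mapsto \lambda - \varepsilon_j$ or $\lambda \mapsto \lambda + \varepsilon_j$ producing a ball landing on an already-occupied site correspond exactly to the non-dominant (hence zero) cases already excluded by the hypotheses $\lambda_{j+1} \neq \lambda_j - 1$ etc. in Lemma~\ref{lem:big}; once that dictionary between "landing on an occupied site" and "the excluded subcase of Lemma~\ref{lem:big}" is pinned down, the proposition is an immediate corollary of Lemma~\ref{lem:big} with no further computation.
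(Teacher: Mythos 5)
Your overall approach — translate the conditions $\bar\lambda_j=k$, $\bar\lambda_j=k-2$, $\lambda_{j\pm1}\neq\lambda_j\pm1$ into statements about the weight diagram and then read off Lemma~\ref{lem:big} — is exactly the paper's; its proof is literally a one-line reference to Lemma~\ref{lem:big}. However, your translation contains a consistent off-by-one error that matters precisely in the place you flag as ``the only genuine subtlety.'' Since $\bar\lambda_{j-1}-\bar\lambda_j=(\lambda_{j-1}-\lambda_j)+1$, the hypothesis $\lambda_{j-1}\neq\lambda_j+1$ of Lemma~\ref{lem:big}(iii) is equivalent to $\bar\lambda_{j-1}\neq\bar\lambda_j+2=k$, i.e.\ to ``no ball at $k$,'' not to $\bar\lambda_{j-1}\neq k-1$ as you write. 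Likewise the hypothesis $\lambda_{j+1}\neq\lambda_j-1$ of Lemma~\ref{lem:big}(ii) is equivalent to ``no ball at $k-2$,'' not to ``no ball at $k-1$.'' In cases (i) and (ii) both positions happen to be empty, so the slip is harmless there, but in case (iv) it leads you to a wrong description of the mechanism: when there is a ball at $k-1$ in addition to one at $k$ (so $\lambda_j=\lambda_{j+1}$), the hypothesis $\lambda_{j+1}\neq\lambda_j-1$ of Lemma~\ref{lem:big}(ii) \emph{is} satisfied, Lemma~\ref{lem:big}(ii) \emph{does} apply, and what kills the output is not any excluded hypothesis but the standing convention that $\Delta(\mu)=0$ for $\mu$ non-dominant, since $\lambda-\varepsilon_j$ has $\lambda_j-1<\lambda_{j+1}$. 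Symmetrically for a ball at $k-1$ next to one at $k-2$. Once you correct the offsets and state the case-(iv) mechanism as ``the resulting weight is non-dominant, hence the Kac module is zero by convention'' rather than ``excluded by the hypotheses of the lemma,'' the argument is exactly the paper's.
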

\begin{proof}
This is just a reformulation of Lemma~\ref{lem:big}.
\end{proof}

\begin{proposition}[Translation of thin Kac modules]  \label{smallKac} Let $k\in\mathbb{Z}$. Then 
\begin{enumerate}[i.)]
 \item $\Theta'_k \nabla(\lambda) = \nabla(\mu'')$ if  $d_{\lambda}$ looks as follows at positions $k-1, k, k+1$ with $d_{\mu''}$ displayed underneath:
 \begin{align*} 
  d_\lambda= \xymatrix{  &\underset{k-1}{\bullet}  &\underset{k}{\bullet}  &\underset{k+1}{\circ}   }  \\
 d_{\mu''}=\xymatrix{  &\underset{k-1}{\bullet}  &\underset{k}{\circ}  &\underset{k+1}{\bullet}   }
\end{align*}
\item  $\Theta'_k \nabla(\lambda)=\Pi\nabla(\mu')$ if  $d_{\lambda}$ looks as follows at positions $k-1, k, k+1$ with $d_{\mu'}$ displayed underneath:
\begin{align*} 
 d_{\lambda}= \xymatrix{  &\underset{k-1}{\circ}  &\underset{k}{\bullet}  &\underset{k+1}{\bullet}   }  \\
  d_{ \mu'}= \xymatrix{  &\underset{k-1}{\bullet}  &\underset{k}{\circ}  &\underset{k+1}{\bullet}   }
\end{align*}
\item In case $d_\lambda$ looks locally at positions $k-1,k,k+1$ as below, there is a short exact sequence
$$0 \to \nabla(\mu'') \to \Theta'_{\InnaC{k}} \nabla(\lambda) \to  \Pi\nabla(\mu') \to 0 $$ where $d_{\mu'}$ and $d_{\mu''}$ are obtained from $d_\lambda$  by moving one black ball away from position $k$ (to position $k-1$ respectively $k+1$) as follow:
\begin{align*} 
  d_\lambda= \xymatrix{  &\underset{k-1}{\circ}  &\underset{k}{\bullet}  &\underset{k+1}{\circ}   }  \\
   d_{\mu'} = \xymatrix{  &\underset{k-1}{\bullet}  &\underset{k}{\circ}  &\underset{k+1}{\circ}  }\\
    d_{\mu''}=\xymatrix{  &\underset{k-1}{\circ}  &\underset{k}{\circ}  &\underset{k+1}{\bullet}    
     }
\end{align*}
\item  $\Theta'_k\nabla(\lambda) =0$ in all other cases.
\end{enumerate}
\end{proposition}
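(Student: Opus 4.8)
The plan is to read off Proposition~\ref{smallKac} from Lemma~\ref{lem:small}, exactly as Proposition~\ref{bigKac} was read off from Lemma~\ref{lem:big}: no new representation theory is needed, and the entire content is the translation of the numerical conditions on $\bar\lambda=\lambda+\rho$ into the pictorial language of weight diagrams. So the first thing I would do is fix the dictionary. By the definition of $d_\lambda$ via $c_\lambda=\{\bar\lambda_1,\dots,\bar\lambda_n\}$, position $k\in\mathbb{Z}$ carries a black ball in $d_\lambda$ exactly when there is an index $j$ with $\bar\lambda_j=k$, and such a $j$ is then unique. Since $\overline{\lambda\pm\varepsilon_j}=\bar\lambda\pm\varepsilon_j$, replacing $\lambda$ by $\lambda+\varepsilon_j$ (respectively $\lambda-\varepsilon_j$) moves precisely that black ball from position $k$ to position $k+1$ (respectively $k-1$) and leaves every other ball fixed. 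Finally, $\lambda+\varepsilon_j$ is dominant --- equivalently $\overline{\lambda+\varepsilon_j}$ is still strictly decreasing --- if and only if $\bar\lambda_{j-1}\neq k+1$, i.e.\ position $k+1$ is empty in $d_\lambda$; symmetrically $\lambda-\varepsilon_j$ is dominant if and only if position $k-1$ is empty in $d_\lambda$. Throughout one uses the convention from Section~\ref{ssec:acKac} that $\nabla(\mu)=0$ when $\mu$ is not dominant.

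With this dictionary in hand I would run through the local shapes of $d_\lambda$ at positions $k-1,k,k+1$. If position $k$ is empty there is no $j$ with $\bar\lambda_j=k$, so the first statement of Lemma~\ref{lem:small} gives $\Theta'_k\nabla(\lambda)=0$; this is case (iv). If position $k$ carries a ball, the second statement of Lemma~\ref{lem:small} supplies a short exact sequence whose sub- and quotient terms are $\nabla(\lambda+\varepsilon_j)$ (the ball moved to $k+1$) and $\Pi\nabla(\lambda-\varepsilon_j)$ (the ball moved to $k-1$), the order of sub and quotient being inherited from the order of the filtration in Lemma~\ref{lem:filtration}. In case (i), position $k-1$ is occupied, so $\nabla(\lambda-\varepsilon_j)=0$ and the sequence degenerates to $\Theta'_k\nabla(\lambda)\cong\nabla(\mu'')$; in case (ii), position $k+1$ is occupied, so $\nabla(\lambda+\varepsilon_j)=0$ and the sequence degenerates to $\Theta'_k\nabla(\lambda)\cong\Pi\nabla(\mu')$; in case (iii), both neighbours of $k$ are empty, both shifted weights are dominant, and the sequence of Lemma~\ref{lem:small} is exactly the displayed short exact sequence once one matches $\mu',\mu''$ with the two diagrams obtained by moving the ball at $k$ to $k-1$ and to $k+1$ and carries along the parity shift $\Pi$.

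This is essentially bookkeeping, and I do not expect a genuine obstacle; the one place to be careful is the dominance criterion ``$\lambda\pm\varepsilon_j$ dominant $\iff$ the corresponding neighbour box of $k$ in $d_\lambda$ is empty,'' since getting this backwards would interchange cases (i) and (ii) and scramble case (iii). The parity shifts are forced by Lemma~\ref{lem:small} and simply have to be transcribed faithfully, and the uniqueness of $j$ with $\bar\lambda_j=k$ guarantees that the three positions $k-1,k,k+1$ really do determine the picture locally.
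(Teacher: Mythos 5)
Your proposal is correct and follows exactly the paper's route: the paper's own proof is the single sentence ``This is just a reformulation of Lemma~\ref{lem:small},'' and you have supplied the dictionary that sentence presupposes (a ball at position $k$ iff $\bar\lambda_j=k$ for a unique $j$; $\lambda\pm\varepsilon_j$ dominant iff position $k\pm1$ is empty, using the convention $\nabla(\mu)=0$ for non-dominant $\mu$) and run the case analysis accordingly. One caution on case (iii): you leave the matching of $\mu',\mu''$ to the two shifted weights unspecified, but if you follow the proposition's own text ($\mu'$ gets the ball moved to $k-1$, $\mu''$ to $k+1$) then the displayed sequence $0\to\nabla(\mu')\to\Theta'_k\nabla(\lambda)\to\Pi\nabla(\mu'')\to0$ does \emph{not} agree with Lemma~\ref{lem:small}, which has $\nabla(\lambda+\varepsilon_j)$ as the submodule and $\Pi\nabla(\lambda-\varepsilon_j)$ as the quotient; this is a typo in the proposition (the diagrams for $\mu'$ and $\mu''$ should be interchanged, consistent with cases (i) and (ii) where $\Pi$ always accompanies the leftward move), and your derivation from Lemma~\ref{lem:small} gives the correct statement.
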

\begin{proof}
This is just a reformulation of Lemma~\ref{lem:small}.
\end{proof}

%
%
%

\subsection{Duality for simple modules}\label{ssec:duality_simple}
The goal of this subsection is to explain the effect of the duality functor on simple modules, more precisely we will show the following. 

\begin{proposition}[Duality formula]\label{prop:duality} There is an isomorphism $L(\lambda^\sharp)\simeq \Pi^m L(\lambda)^*$, where 
$m=\tfrac{|\lambda^\sharp|+|\lambda|}{2}$ in the notation of \eqref{omegarho}.
\end{proposition}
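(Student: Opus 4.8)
The plan is to extract combinatorial data from what is already established about the duality on (co)standard modules and combine it with the embedding of a simple module into its thin Kac module. Recall from Lemma~\ref{lem:dual_Kac} that $\Delta(\lambda)^* = \Delta(\mu-2\omega)$ and $\nabla(\lambda)^* = \nabla(\mu)$ where $\mu+\rho = -w_0(\lambda+\rho)$; in diagram language this says that the weight diagram of $\mu$ is obtained from that of $\lambda$ by the reflection $i \mapsto -i$ of the integer line (sending $\bar\lambda_j$ to $-\bar\lambda_j$). So if I understand how $L(\lambda)$ sits inside the (co)standard modules and how the duality acts there, I can read off $\lambda^\sharp$.

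The first step is to recall that $L(\lambda)$ is the socle of $\nabla(\lambda)$ (stated right after the definition of the thick Kac module, via Frobenius reciprocity) and dually that $L(\lambda)$ is the cosocle of $\Delta(\lambda)$. Dualizing the inclusion $L(\lambda) \hookrightarrow \nabla(\lambda)$ gives a surjection $\nabla(\lambda)^* \twoheadrightarrow L(\lambda)^*$, i.e. $L(\lambda)^*$ (up to parity) is the cosocle of $\nabla(-w_0\lambda+\gamma)$. Now here is the key point: the cosocle of a thin Kac module $\nabla(\tau)$ is in general \emph{not} $L(\tau)$ — rather, by the second highest weight structure (Remark~\ref{rmk:2_highest_weight_struct}), the thin Kac module $\nabla(\tau)$ is the costandard object for some simple $L(\nu)$, and its cosocle is $L(\nu)$ where $\nu$ is obtained from $\tau$ by a combinatorial rule (the rule in Proposition~\ref{prop:socle_thick_kac}, referenced in that remark). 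Composing: $\lambda^\sharp$ is obtained from $\lambda$ by (a) applying the reflection $i\mapsto -i$ to the weight diagram to pass from $\lambda$ to $-w_0\lambda+\gamma$, and then (b) applying the cosocle-of-thin-Kac rule. I would make this composite explicit on weight diagrams and check it is well-defined (lands in $\Lambda_n$).

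The second, and genuinely necessary, step is to pin down the \emph{parity} shift $m$. Since the claim is $L(\lambda^\sharp) \simeq \Pi^m L(\lambda)^*$ with $m = \tfrac{|\lambda^\sharp| + |\lambda|}{2}$, I would track parities carefully through the isomorphisms above: the parity shift in $\nabla(\lambda)^* \cong \nabla(-w_0\lambda+\gamma)$ is computed in Lemma~\ref{lem:dual_Kac} (or follows from its proof, together with Remark~\ref{par}-type bookkeeping), and the socle/cosocle inclusions of Kac modules carry their own parity shifts which can be computed from the $H^0(\gg_{\pm1},-)$ formulas in Section~\ref{ssec:Kac_modules}. The cleanest way to verify $m = \tfrac{|\lambda^\sharp|+|\lambda|}{2}$ is probably to observe that $L(\nu)$ and $L(\nu)^*$ have the same total dimension and opposite ``degree-$\bar1$ versus degree-$\bar0$'' content shifted according to $|\nu|$ modulo $2$; more concretely, one checks that the natural representation $V$ has $V \simeq \Pi L(-\varepsilon_n)$ (the Example in Section~\ref{ssec:Kac_modules}) and that $L(\lambda)$ always occurs as a subquotient of $V^{\otimes |\lambda|}$-type tensor spaces, so its intrinsic parity is governed by $|\lambda| \bmod 2$; then $\Pi^m$ with $m \equiv \tfrac{|\lambda^\sharp|+|\lambda|}{2}$ is forced, and checking $m$ has the right value (not just parity) pins down which of $L(\lambda^\sharp)$ or $\Pi L(\lambda^\sharp)$ we get.

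The main obstacle I expect is step two: making the parity bookkeeping airtight. All the individual isomorphisms ($L(\lambda)\hookrightarrow\nabla(\lambda)$, the duality on $\nabla$, the identification of the cosocle of a thin Kac via the second h.w.\ structure) are available, but each comes with a parity twist and the twists depend on $n$ and on $|\lambda|$; assembling them into the clean closed form $m = \tfrac{|\lambda^\sharp|+|\lambda|}{2}$ requires knowing that $|\lambda^\sharp| \equiv |\lambda| \pmod 2$ (so that $m$ is an integer) and then matching constants. A secondary, purely combinatorial obstacle is verifying that the composite diagram operation (reflect, then apply the thin-Kac-cosocle rule) is an involution — which it must be, since $(L(\lambda)^*)^* \simeq L(\lambda)$ — and identifying it with a recognizable operation on diagrams; this is the kind of check best done by first settling Proposition~\ref{prop:socle_thick_kac} and then composing.
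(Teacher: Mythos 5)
Your route is genuinely different from the paper's. The paper proves the proposition directly by odd reflections: it shows $H^0(\gg_1,L(\lambda))=V(\lambda^\dagger)$ by passing through a chain of Borel subalgebras $\bb^0=\bb_0\oplus\gg_{-1},\dots,\bb^{n(n+1)/2}=\bb_0\oplus\gg_1$ and tracking how the highest weight changes at each odd reflection; the $m_{a,b}$-procedure is exactly the diagram translation of this recursion. Then $L(\lambda)^*$ has $\bb$-highest weight $-w_0(\lambda^\dagger)$, which after the $\rho$-shift is the reflection of $d_{\lambda^\dagger}$ at $\tfrac{n-1}{2}$, giving $\lambda^\sharp$. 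This argument is self-contained within Section~\ref{sec:weight_diagrams} and needs nothing beyond the $\gg_0$-module structure. Your plan instead dualizes $L(\lambda)\hookrightarrow\nabla(\lambda)$ via Lemma~\ref{lem:dual_Kac} and then invokes Proposition~\ref{prop:socle_thick_kac} to identify the cosocle of $\nabla(-w_0\lambda+\gamma)$. That is not circular (Proposition~\ref{prop:socle_thick_kac} rests on Theorem~\ref{thrm:main-mult2}, the translation-functor machinery and the $\op{End}(P(\tau))$ computation, none of which reference Proposition~\ref{prop:duality}), but it imports essentially all of Sections~\ref{sec:KL}--\ref{ssec:translation_proj_coroll} to prove a Section~\ref{sec:weight_diagrams} statement, and the paper in fact goes the other way: Remark~\ref{interprsharp} uses the present proposition together with Proposition~\ref{prop:socle_thick_kac} to interpret $(\cdot)^\sharp$, not vice versa.

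The substantive gap in your proposal, which you flag but do not close, is the combinatorial identification. Your route produces $\lambda^\sharp$ as ``reflect $d_\lambda$ at $0$, then invert the maximal-solid-arrow transfers of Proposition~\ref{prop:socle_thick_kac}.'' The proposition as stated asserts $\lambda^\sharp$ equals the output of the $m_{a,b}$-procedure followed by reflection at $\tfrac{n-1}{2}$, i.e.\ the definition given just before the statement. Matching these two descriptions is precisely the nontrivial content, and it is comparable in difficulty to the odd-reflection analysis the paper carries out; your plan treats it as a ``check it is well-defined'' step. Without it, you have proved a version of the proposition with a different (implicit) definition of $\lambda^\sharp$, not the formula the paper asserts. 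On the parity shift $m$: you are right to be cautious, but note that the paper's own proof does not explicitly chase parities either; its argument pins down the $\bb$-highest weight of $L(\lambda)^*$ and leaves the parity exponent $m$ as stated. Also, Lemma~\ref{lem:dual_Kac} does not record the parity twist in $\nabla(\lambda)^*\cong\nabla(-w_0\lambda+\gamma)$, so that ingredient of your bookkeeping would have to be re-derived from the $\op{Coind}$/$\op{Ind}$ identities of Section~\ref{ssec:Kac_modules} rather than simply cited.
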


To explain the notation used we fix the lexicographic ordering on the set $(i,j)$, $1\leq i<j\leq n$, that means 
\begin{equation}
\label{pairs}
(1,2)<(1,3)<\dots<(1,n)<(2,3)<\dots<(n-1,n).
\end{equation}

For a given dominant weight $\lambda$ define $\lambda^\sharp$ by the following rule. 

Set $\nu=\lambda$ and $(a,b)=(1,2)$ and enumerate the black balls in $d_\nu$ by $1,\ldots, n$
from right to left and let $p_i$ be the position of the $i$th black ball. Then let $d_{m_{a,b}(\nu)}$ be the diagram obtained from $\nu$, by moving the $a$th and $b$th black ball one position to the right if possible; that is in formulas (with necessarily $f(p_a+1)=f(p_b+1)=0$)
\begin{eqnarray}
f_{m_{a,b}(\nu)}(i)=
\begin{cases}
0&\text{ if $i=p_a,p_b$},\\
1& \text{ if $i=p_a+1,p_b+1$},\\
f_\nu(i)&\text{otherwise}.
\end{cases}
&\quad\text{and}&f_{m_{a,b}(\nu)}=f_{\nu}\quad\text{otherwise}.
\end{eqnarray}
Repeat this procedure with the resulting weight (now $m_{1,2}(\lambda)$) and the next pair $(a,b)$ (now $(1,3)$) from \eqref{pairs} until there is now such pair left. Let $\lambda^\dagger$ be the resulting weight. Define finally $\lambda^\sharp$ be the weight obtained from $\lambda^\dagger$ by reflecting $d_{\lambda^\sharp}$ at $\tfrac{n-1}{2}$. 

\begin{example}
\mbox{}
 \begin{enumerate}[1.)]
  \item For the natural $\gg$-module $V = L(- \varepsilon_n)$, we have $V^* \cong \Pi L(- \varepsilon_n)$ by \eqref{eta}. Let us illustrate Proposition~\ref{prop:duality} for $n=4$. We have $\lambda =  -\varepsilon_4$, and we obtain 
 \begin{eqnarray*}
\quad d_\lambda& =& \xymatrix{   \ldots&\underset{-2}{\circ}    &\underset{-1}{\bullet}  &\underset{0}{\circ}  &\underset{1}{\bullet}  &\underset{2}{\bullet} &\underset{3}{\bullet} &\underset{4}{\circ} &\underset{5}{\circ} &\ldots} \\
 d_{\lambda^\dagger}&=& \xymatrix{     \ldots&\underset{-2}{\circ}     &\underset{-1}{\circ}  &\underset{0}{\bullet}  &\underset{1}{\bullet}  &\underset{2}{\bullet} &\underset{3}{\circ} &\underset{4}{\bullet} &\underset{5}{\circ}&\ldots} 
 \end{eqnarray*}
 Note that our rule is only nontrivial when we apply it the first time, i.e. for $(a,b)=(1,4)$.
By reflecting this at $\tfrac{n-1}{2}=\tfrac{3}{2}$ we obtain 
 \begin{eqnarray*}
 \quad d_{\lambda^\sharp}&=&\xymatrix{     \ldots&\underset{-2}{\circ}  &\underset{-1}{\bullet}  &\underset{0}{\circ}  &\underset{1}{\bullet}  &\underset{2}{\bullet} &\underset{3}{\bullet} &\underset{4}{\circ} &\underset{5}{\circ} &\ldots} 
 \end{eqnarray*}
  \item If $\lambda$ is typical (i.e. $d_\lambda$ has no adjacent black balls), then $m_{{a,b}}$ is nontrivial for all pairs from \eqref{pairs} and $\lambda^\sharp=-w_0(\lambda)+(1-n)\omega$. Indeed, 
first we move all $\bullet$'s $n-1$ positions to the right to obtain $d_{\lambda^\dagger}$, and then reflect with respect to $\tfrac{n-1}{2}$. The diagram of $\lambda^\sharp$ is then the mirror to that
of $\lambda$ with respect to the reflection at $0$.
\item For instance $\lambda=(10,8,4,3,1)$ gives the values  $\lambda^\dagger=(14,12,8,7,5)$ and $\lambda^\sharp=(-10, -8, -4, -3,-1)$. 
  \item On the other hand, if $\lambda=k\omega$, see \eqref{omegarho} for some $k\geq 0$, then all $m_{a,b}$ are trivial and therefore $\lambda^\dagger=\lambda$ and $\lambda^\sharp=-\lambda$. 
 \end{enumerate}
\end{example}

\begin{proof}[Proof of Proposition~\ref{prop:duality}] 
We use the isomorphism of $\gg_0$-modules:
\begin{equation}\label{fordual}
H^0(\gg_{-1},L(\lambda)^*)\simeq (H^0(\gg_1, L(\lambda)))^*.
\end{equation}

First, we will prove that 
\begin{equation}\label{dual2}
H^0(\gg_{1},L(\lambda))=V(\lambda^\dagger).
\end{equation} 
Consider the two following Borel subalgebras of $\gg$, the standard one 
$\bb=\bb_0\oplus\gg_{-1}$ and its opposite $\bb'=\bb_0\oplus\gg_{1}$. Then obviously $\lambda$ is the highest weight of $L(\lambda)$ with respect to $\bb$ and we have to show that
$\lambda^\dagger$  is the highest weight of $L(\lambda)$ with respect to $\bb'$. 
We use the odd reflection methods, introduced in \cite{PS94} for non-contragredient superalgebras. 
Observe that odd roots of $\bb'$ are of form $\varepsilon_i+\varepsilon_j$ for all $i\leq j\leq n$. Order these roots
by setting $\varepsilon_i+\varepsilon_j<\varepsilon_{i'}+\varepsilon_{j'}$ if $i<i'$ or $i=i'$ and $j<j'$ and 
enumerate them  according to this order $\alpha_1<\dots<\alpha_{n(n+1)/2}$.
Define the sequence of Borel subalgebra $\bb^0,\ldots,\bb^{n(n+1)/2}$ by setting
$\bb^0:=\bb$ and defining $\bb^k$ by adding the root $\alpha_k$ to $\bb^{k-1}$ and removing the root
$-\alpha_k$ if $\alpha_k$ is invertible. Note that $\bb^{n(n+1)/2}=\bb'$.
Let $\lambda^k$ denote the highest weight of $L(\lambda)$ with respect to $\bb^k$, in particular, $\lambda^0=\lambda$.
Then we have the following recursive formula:
\begin{enumerate}[i.)]
\item If $\alpha_k=2\varepsilon_i$ is not invertible, then $\lambda^k=\lambda^{k-1}$;
\item If $\alpha_k=\varepsilon_i+\varepsilon_j$, $i\neq j$ is invertible we have
$$\lambda^k=\begin{cases} \lambda^{k-1}&\text{if  }\lambda^{k-1}_i=\lambda^{k-1}_j,\\ \lambda^{k-1}+\alpha_{k}&\text{if  }\lambda^{k-1}_i\neq\lambda^{k-1}_j.\end{cases}$$
\end{enumerate}
Translating this condition to the language of weight diagrams, we obtain $\lambda^{n(n+1)/2}=\lambda^\dagger$. The proof of (\ref{dual2}) is complete.

To finish the proof recall that if $w_0$ is the longest element of the Weyl group then
$V(\mu)^*=V(-w_0(\mu))$. Hence using (\ref{fordual}) we obtain that the $\bb$-highest weight of $L(\lambda)^*$
equals $-w_0(\lambda^\dagger)$. Since $-w_0(\rho)=\rho-(n-1)\omega$, 
we have $-w_0(\mu)+\rho=-w_0(\mu+\rho)-(n-1)\omega$ for all dominant $\mu$. 
In the language of diagrams this means that the diagram of the highest weight of $L(\lambda)^*$  is obtained from the diagram of $\lambda^\dagger$
by the symmetry with respect to $\tfrac{n-1}{2}$. Hence it equals $\lambda^\sharp$ and the statement is proven.
\end{proof}

\section{Computation of decomposition numbers and multiplicity formulas}\label{sec:KL}
For simplicity, we disregard in this section the parity switch; this means that we will not distinguish between $\Pi M$ and $M$ for $M \in \mathcal{F}_n $.

\subsection{Multiplicity formulas for \texorpdfstring{$P(0)$}{P(0)}}\label{ssec:KL_P_0}


%

Consider the projective cover $P(0)$ of the trivial module. We compute the multiplicities $(P(0):\Delta(\lambda))$ 
of thick Kac modules in $P(0)$. With the notation from \eqref{omegarho} let $U$ and $U^{-1}$ be the $1$-dimensional $\gg$-modules with highest weight $\omega$ and $-\omega$ respectively. Before we state the general result, we give some examples.

\begin{example}
 Let $n=2$. We claim that 
  \begin{eqnarray}
  \label{syl5}
   [P(0)]  = [\Delta(0)] +[\Delta(-\omega)].
   \end{eqnarray}
 Consider the weight diagram for the trivial module:
 \begin{eqnarray*}
  d_0&=&
    \xymatrix{ \dots&\underset{-2}{\circ}  &\underset{-1}{\circ}  &\underset{0}{\bullet}  &\underset{1}{\bullet}  &\underset{2}{\circ} &\underset{3}{\circ} &\ldots  } 
 \end{eqnarray*}
 Write $[P(0)] = [\Delta(0)] + \sum_{\mu > 0} c_{0, \mu} [\Delta(\mu)]$ in $\Groth_n$, where $c_{0, \mu}=(P(0):\Delta(\mu))$ are the multiplicities.
 We apply $\Theta_2 \Theta_3$ and obtain (ignoring parity)  from Proposition~\ref{bigKac}$$[\Theta_2 \Theta_3 \Delta(0)] = [\Delta(\omega)] +[\Delta(0)],$$ and for $\mu >0$ we have: $[\Theta_2 \Theta_3 \Delta(\mu)] = 0$ if $f_{\mu}(1) = 0$:
 $$ \xymatrix{  &\underset{-1}{?}  &\underset{0}{?}  &\underset{1}{\circ}  &\underset{2}{\circ} &\underset{3}{\circ}   } $$
 and $[\Theta_2 \Theta_3 \Delta(\mu)] = \Delta(\mu)$ if $f_{\mu}(1) = 1$:
 $$ \xymatrix{  &\underset{-1}{?}  &\underset{0}{\circ}  &\underset{1}{\bullet}  &\underset{2}{\circ} &\underset{3}{\circ}   } $$
 
 Thus $[\Theta_2 \Theta_3 P(0)] = [\Delta(\omega)] +[\Delta(0)] + \sum_{\mu > 0, f_{\mu}(1) = 1} c_{0, \mu} [\Delta(\mu)]. $ This is a projective module, and $\omega$ is minimal among the weights $'\mu$ such that $\Delta(\mu)$ appears in a $\Delta$-filtration of $\Theta_2 \Theta_3P(0)$, so $P(\omega)$ must be a  summand  by Lemma~\ref{lem:syl2}. On the other hand, 
 \begin{eqnarray}
 \label{syl3}
 [P(\omega)] = [P(0) \otimes U] = [\Delta(\omega)] + \sum_{\mu > 0} c_{0, \mu} [\Delta(\mu+\omega)].
\end{eqnarray}
Note that for any $\mu > 0$ such that $f_{\mu}(1) = 1$, $f_{\mu+\omega}(2) = 1$, so $[\Delta(\mu+\omega)]$ does not appear in $\Theta_2 \Theta_3 [P(0)]$. Thus $c_{0, \mu} = 0$ in this case, and therefore 
 \begin{eqnarray}
  \label{syl4}
[\Theta_2 \Theta_3 P(0)] = [\Delta(\omega)] +[\Delta(0)] = [P(\omega)].
\end{eqnarray}
Tensoring with $U^{-1}$, we obtain from \eqref{syl3} the desired formula \eqref{syl5}.
 
\end{example}

\begin{example}
 A similar computation shows that for $n=3$, applying $\Theta_2 \Theta_3 \Theta_4$ to $P(0)$ and comparing with $P(\omega)$ gives $$[P(0)] = [\Delta(0)] +[\Delta(-2\omega)] + [\Delta(\mu_1)] + [\Delta(\mu_2)]$$
 where $\mu_1$ has the diagram 
  $$ \xymatrix{  &\underset{-3}{\circ} &\underset{-2}{\circ}   &\underset{-1}{\bullet}  &\underset{0}{\bullet}  &\underset{1}{\circ}  &\underset{2}{\bullet} &\underset{3}{\circ}   } $$
   and $\mu_2$ has the diagram
   $$ \xymatrix{    &\underset{-3}{\circ}  &\underset{-2}{\bullet}  &\underset{-1}{\circ}  &\underset{0}{\bullet}  &\underset{1}{\bullet}  &\underset{2}{\circ} &\underset{3}{\circ}   } $$

 \end{example}
 More generally we have the following multiplicity formulas:
\begin{theorem}[Decomposition numbers for $P(0)$]\label{thrm:KL_trivial_weight}\hfill

 \begin{enumerate}[1.)]
 \item The thick Kac modules appearing in the (thick) standard filtration of the projective module $P(0)$ are precisely the $\Delta(\lambda)$ where $\lambda$ satisfies:
 $f_{\lambda}(0)=1$ and $f_{\lambda}(i) +f_{\lambda}(-i) = 1$ for any $i = 1,2,\ldots,n-1$.
 \item Similarly, the thin Kac modules appearing in the (thin) costandard filtration of the projective module $P(0)$ are precisely $\nabla(\lambda)$ where $\lambda$ satisfies:
 $f_{\lambda}(1)=0$ and $f_{\lambda}(1+i) +f_{\lambda}(1-i) = 1$ for any $i = 1,2,\ldots,n$.
  \end{enumerate}
In particular, the multiplicities of the above standard and costandard modules are $1$. 
\end{theorem}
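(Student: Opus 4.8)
Everything takes place in the reduced Grothendieck group $\Groth_n$, using the combinatorial description of the translation functors on Kac modules from Propositions~\ref{bigKac} and~\ref{smallKac}. Two auxiliary facts will be used repeatedly. First, tensoring with the one--dimensional module $U$ (resp. $U^{-1}$) of highest weight $\omega$ (resp. $-\omega$) is an exact auto-equivalence of $\mathcal{F}_n$ sending $P(\mu)$ to $P(\mu+\omega)$ (resp. $P(\mu-\omega)$) and, on weight diagrams, shifting every black ball one step to the right (resp. left); in particular $P(\omega)\cong P(0)\otimes U$ and $(P(\omega):\Delta(\mu+\omega))=(P(0):\Delta(\mu))$. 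Second, if $Q$ is projective and $\nu$ is the unique minimal weight (for the order of Section~\ref{ssec:order_weights}) occurring in a $\Delta$--filtration of $Q$, then $P(\nu)$ is a direct summand of $Q$: writing $Q=\bigoplus_j P(\mu_j)$ and choosing $j$ with $(P(\mu_j):\Delta(\nu))\neq 0$, Lemma~\ref{lem:syl2} gives $\nu\geq\mu_j$, while $\mu_j$ itself occurs in the filtration, forcing $\mu_j=\nu$. The analogous statement holds for $\nabla$--filtrations and injective hulls $I(\nu)\cong\Pi^nP(\nu-2\omega)$ by Proposition~\ref{lem:injective_hull_simple}. Finally, from $\Delta(\mu)^*\cong\Delta(\mu')$ (Lemma~\ref{lem:dual_Kac}) and $P(0)^*\cong\Pi^? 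P(-2\omega)$ (which follows since the duals of the $\Delta$--factors of $P(0)$ are, up to the shift by $U^{-2}$, again a set of the same shape) one gets a symmetry of the multiplicities $(P(0):\Delta(\mu))$ under the reflection $c_\mu\mapsto -c_\mu$ of weight diagrams.

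\textbf{Part 1.} I would argue by induction on $n$, the case $n=1,2$ being the Examples above, generalising precisely the computation carried out there. Write $[P(0)]=[\Delta(0)]+\sum_{\mu>0}c_\mu[\Delta(\mu)]$ in $\Groth_n$ with $c_\mu\geq 0$; by Lemma~\ref{lem:syl2} every occurring $\mu$ satisfies $\mu\geq 0$, i.e. $\max c_\mu\le n-1$. Apply the composite $\Xi:=\Theta_2\Theta_3\cdots\Theta_{n+1}$ (rightmost functor first). Using Proposition~\ref{bigKac} one checks step by step that $\Xi[\Delta(\mu)]=0$ whenever $f_\mu(n-1)=0$ (already $\Theta_{n+1}$ kills such a $\Delta(\mu)$, positions $n-1,n,n+1$ of $d_\mu$ being then all empty), while $\Xi[\Delta(0)]$ and the surviving $\Xi[\Delta(\mu)]$ are explicit sums of thick Kac modules all of whose weights are $\geq\omega$, with $\omega$ occurring exactly once and being the unique minimal weight. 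Hence $P(\omega)$ is a direct summand of the projective module $\Xi P(0)$. Comparing $\Xi[P(0)]$ with $[P(\omega)]=[P(0)\otimes U]$, and using that the error $\Xi[P(0)]-[P(\omega)]$ must again be the character of a projective together with the fact that the individual thick Kac modules appearing are themselves not projective (their diagrams have adjacent black balls, cf. the Remark after Section~\ref{ssec:weight_diagrams_def}), forces $\Xi[P(0)]=[P(\omega)]$ and pins down $c_\mu$ for all $\mu$ with $f_\mu(n-1)=1$. The reflection symmetry $c_\mu\mapsto -c_\mu$ then yields $c_\mu$ for all $\mu$ with $f_\mu(-(n-1))=1$, and the remaining multiplicities, for $\mu$ with $\pm(n-1)\notin c_\mu$, are disposed of by the inductive hypothesis (such $\mu$ effectively live in a smaller diagram). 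The outcome is $c_\mu=1$ exactly when $f_\mu(0)=1$ and $f_\mu(i)+f_\mu(-i)=1$ for $1\le i\le n-1$, and $c_\mu=0$ otherwise.

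\textbf{Part 2 and conclusion.} The second statement is proved by the same method applied to the $\nabla$--filtration of the (tilting, hence both projective and injective) module $P(0)$: one uses Proposition~\ref{smallKac} in place of Proposition~\ref{bigKac}, the injective-hull criterion $I(\nu)\cong\Pi^nP(\nu-2\omega)$ in place of the projective-summand criterion, and again translates $P(0)$ by suitable composites of the $\Theta_k$ and compares with $P(0)\otimes U^{\pm 1}$. Since every step of all these computations is governed by the short exact sequences of Propositions~\ref{bigKac} and~\ref{smallKac}, each of which has multiplicity one, every standard (resp. costandard) module occurs at most once, which gives the final multiplicity-free assertion at no extra cost. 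The step I expect to be the main obstacle is the combinatorial bookkeeping: choosing the composites of translation functors that suffice, verifying at each stage that the minimal weight occurring is unique so that the summand criterion applies, and — most delicately — ruling out the contributions of those Kac modules that are annihilated by the chosen composite, which is exactly what makes the reflection symmetry and the induction on $n$ necessary.
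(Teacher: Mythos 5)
Your approach is genuinely different from the paper's and worth comparing. The paper proves the theorem directly: it first establishes the isomorphism $P(0)\cong\operatorname{Ind}^{\gg}_{\gg_0}V(0)$ (by checking that $V(0)$ is the unique common constituent of $\Lambda(\Lambda^2 V_{\overline 0})$ and $\Lambda(S^2 V_{\overline 0})$, appearing with multiplicity one), so that $(P(0):\Delta(\lambda))=[\Lambda(\Lambda^2 V_{\overline 0}^*):V(\lambda)]$, and then invokes the classical plethysm description of $\Lambda(\Lambda^2)$ together with Lemma~\ref{armsandlegs} to translate the condition $\bar\lambda_i+\bar\lambda_j\neq 0$ into the weight-diagram statement. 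This is short and self-contained. Your route -- translate by $\Xi=\Theta_2\cdots\Theta_{n+1}$, compare $\Xi[P(0)]$ with $[P(\omega)]=[P(0)\otimes U]$, use the minimal-weight criterion of Lemma~\ref{lem:syl2} and the duality symmetry $P(0)^*\cong\Pi^n P(-2\omega)$ -- is instead a scaled-up version of the paper's worked examples for $n=2,3$, and it is an interesting alternative that stays entirely within the translation-functor calculus rather than appealing to plethysm. What it would buy, if carried through, is a proof that uses only the same toolbox as the proof of Theorem~\ref{thrm:main-mult2}.

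However, as written the argument has a genuine gap, and you partly flag it yourself. The crucial unverified claims are (a) that after applying all of $\Theta_2,\dots,\Theta_{n+1}$ the resulting projective has $\omega$ as its \emph{unique} minimal weight so that the summand criterion applies, (b) that the comparison with $[P(\omega)]$ actually determines every $c_\mu$ with $f_\mu(n-1)=1$ (for general $n$ the surviving terms no longer satisfy the simple ``one position above the range'' dichotomy that makes the $n=2$ argument immediate), and, most seriously, (c) the ``induction on $n$'' for the $\mu$ with $\pm(n-1)\notin c_\mu$. Such $\mu$ still index $n$ black balls in $\mathcal F_n$; there is no given functor or Grothendieck-group identification that lets you read off $(P_{\fp(n)}(0):\Delta(\mu))$ from $(P_{\fp(n-1)}(0):\Delta(?))$, so ``effectively live in a smaller diagram'' is not yet a reduction. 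Without closing (c), the argument does not rule out, e.g., $c_{-\omega}\neq 0$ for $n=3$. The paper avoids all of this by computing $(P(0):\Delta(\lambda))$ once and for all from the $\gg_0$-module structure of $\Lambda\gg_{-1}$.
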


\begin{proof}
Consider the module $\Ind^\gg_{\gg_0}V(0)$. We first claim that  $P(0) \cong \Ind^\gg_{\gg_0}V(0)$. It is a projective module, and there is a canonical map $\Ind^\gg_{\gg_0}V(0) \to L(0)$.
To prove the claim it suffices to verify $\dim \Hom_{\gg} (\Ind^\gg_{\gg_0}V(0), L(\lambda) ) = \delta_{\lambda, 0}$. Recall that $$ \dim \Hom_{\gg} (\Ind^\gg_{\gg_0}V(0), L(\lambda) ) = \dim \Hom_{\gg_0} (V(0), \op{Res}^{\gg}_{\gg_0} L(\lambda) ).$$
If this dimension is positive, then  
\begin{eqnarray*}
\dim \Hom_{\gg_0} (V(0), \op{Res}^{\gg}_{\gg_0} \Delta(\lambda) )&\text{and}& \dim \Hom_{\gg_0} (V(0), \op{Res}^{\gg}_{\gg_0} \nabla(\lambda) )
\end{eqnarray*}
  are positive, which means that both 
\begin{eqnarray*}
[\Lambda (\Lambda^2 V_{\overline{0}}) \otimes V(\lambda): V(0)] , \quad\text{and}\quad&  [\Lambda (S^2 V_{\overline{0}}) \otimes V(\lambda): V(0)]
\end{eqnarray*}
  are positive. Yet the $\gg_0$-modules $\Lambda (\Lambda^2 V_{\overline{0}}), \Lambda (S^2 V_{\overline{0}})$ have only the common summand $V(0)$, appearing with multiplicity one, (see for example \cite[Chapter 2]{W}). This implies that $\lambda = 0$, and $P(0) \cong \Ind^\gg_{\gg_0}V(0)$. This proves our claim. Now, $$(P(0):\Delta (\lambda)) = ( \Ind^\gg_{\gg_0}V(0): \Delta (\lambda)) = [\Lambda(\gg_{-1}):V(\lambda)] = [\Lambda (\Lambda^2 V^*_0) : V(\lambda)].$$
This multiplicity is always equal to $0$ or $1$, and the latter happens 
precisely when $\lambda^{*}$ (the highest weight of $V(\lambda)^*$) 
satisfies 
the conditions in \ref{itm:KL_triv_weight_2}.) of Lemma \ref{armsandlegs} below (this is proved, for instance, 
in \cite[Chapter 2]{W}, and in 
\cite[Chapter I, Appendix A, Par. 7]{Macd}). By Lemma \ref{armsandlegs}, this 
is equivalent to the required condition for $\lambda$.
A similar argument works for thin Kac modules.
\end{proof}

Assume $\mu=(\mu_1,\ldots,\mu_n)$ is a dominant weight, and $\mu_i \geq 
0$ for $i = 1, \ldots, n$.
Then we define for $1\leq i\leq n$
\begin{eqnarray}
\op{arm}_i(\mu)=\mu_i - i+1&\text{and}& 
\op{leg}_i=\op{leg}_i(\mu)=\mu^{\vee}_i - i+1
\end{eqnarray}
 as long as this number is positive and call it the $i$-th {\it arm length} 
respectively {\it leg length}. 
 Here {$\mu$ is identified with a Young diagram, and $\mu^\vee$ denotes 
the transposed weight (that means the transposed Young diagram) defined as 
$\mu^\vee_i=|\{j\mid \mu_j\geq i\}|$. 
Clearly $\mu$ is uniquely determined by the two strictly decreasing sequences 
$\op{arm}_1, \op{arm}_2, \ldots, \op{arm}_r$ and $\op{leg}_1, \op{leg}_2, 
\ldots, \op{leg}_r$ observing that the length of these sequences agree. (In 
terms of Young diagrams this means that we describe the diagram by listing the 
number of boxes on and to the right of the diagonal in each row respectively 
strictly below the diagonal in each column.)

\begin{lemma} 
\label{armsandlegs}
Let $\mu$ be a dominant weight. Then the following are equivalent
\begin{enumerate}[\rm{(}I\rm{)}]
\item 
The set $B_{\mu}=\{ \mu_i 
-i +1 \mid 1\leq i\leq n\}$ contains $0$ and precisely one element from each 
pair $\pm j$, where $1\leq j\leq n-1$.
\label{itm:KL_triv_weight_1}
\item $\mu_i \geq 0$ for $i=1, 
\ldots, n$ and $\op{arm}_i + 1 = \op{leg}_i$ whenever $\op{arm}_i=\op{arm}_i(\mu)$ and 
$\op{leg}_i=\op{leg}(\mu)_i$ are defined (equivalently, $\mu_i+1 = 
\mu^{\vee}_i$). \label{itm:KL_triv_weight_2}
\item 
The set $\{ \mu^*_i+n-i \mid 1\leq i\leq n\}$ contains $0$ and 
precisely one element from each pair $\pm j$, where $1\leq j\leq n-1$.
\label{itm:KL_triv_weight_3}
\end{enumerate}
\end{lemma}
\begin{proof}
\InnaB{The equivalence \eqref{itm:KL_triv_weight_1} $\Leftrightarrow$ 
\eqref{itm:KL_triv_weight_3} follows directly from the equality $\mu_i = 
-\mu^*_{n+1-i}$ for $1 \leq i \leq n$.} We will now show \eqref{itm:KL_triv_weight_2} $\Rightarrow$ 
\eqref{itm:KL_triv_weight_1}. Since the sequence $(\mu_i -i +1)_i$ is strictly decreasing, and $\mu_i \geq 0$ for $i=1, 
\ldots, n$, we have $-n+1 
\leq \mu_n-n+1 \leq \mu_i - i+1 \leq \mu_1 \leq n-1.$
Hence $B_{\mu} \subset \{-n+1, -n+2, \ldots, n-1\}$. 
It remains to show that $\mu_i - i+1 + \mu_j -j +1 \neq 0$ for any $i \neq j$.

Let $i \neq j$. 
In case $ \mu_i <i$, $\mu_j <j$ we have $\mu_i -i + 1, \mu_j -j+1 \leq 0$, and 
 since these numbers are distinct, their sum is clearly not zero. 
Otherwise (without loss of generality) $\mu_i \geq i$. Then $\op{arm}_i$, 
$\op{leg}_i$ are defined, and the assumption $\op{arm}_i + 1 = \op{leg}_i$ 
implies $\mu^{\vee}_i = \mu_i +1$. 
Thus, 
\begin{eqnarray*}
\mu_i - i +1 + \mu_j - j+1 = \mu^{\vee}_i -j +\mu_j -i +1&\not=&0,
\end{eqnarray*}
since $\mu^{\vee}_i -j$, $\mu_j -i$ are integers, both non-negative if 
$\mu_j\geq i$, and both negative if $\mu_j<i$.

 For the converse, we compare the cardinality of the set of weights $\mu$ 
satisfying \eqref{itm:KL_triv_weight_1} respectively 
\eqref{itm:KL_triv_weight_2}. Indeed, the first set is in bijection with 
the collection $\mathcal{B}$ of 
sets $B_{\mu} \subset \{-n+1, -n+2, \ldots, n-1\}$ 
having $n$ elements, and such that if $j\not=0$ lies inside, then $-j$ does 
not (the bijection is via $\mu \mapsto B_{\mu} = \{ \mu_i - i+1\}_{i=1, \ldots, n}$). In particular, $0$ necessarily lies inside all such sets. 
 
 The second set is in bijection with the collection \InnaB{$\mathcal A$} of 
strictly decreasing sequences $(\op{arm}_1, 
\op{arm}_2, \ldots, \op{arm}_k)$ with  entries in the range $1, 2, \ldots, n-1$ 
and $0\leq k\leq n$ (denoting the number of boxes on the diagonal in the 
corresponding Young diagram). The bijection is via $\mu \mapsto A_{\mu} = (\op{arm}_1(\mu), 
\op{arm}_2(\mu), \ldots, \op{arm}_k(\mu))$ where $k$ is the number of boxes on the diagonal of the Young diagram of $\mu$ (that is, the largest value for which $\op{arm}_k(\mu)$ is defined). Both sets have cardinality $2^{n-1}$, and we have already proved that 
\eqref{itm:KL_triv_weight_2} $\Rightarrow$ 
\eqref{itm:KL_triv_weight_1}. Hence the statement of the Lemma follows.
\end{proof}

  \subsection{Arrow diagrams}\label{ssec:arc_diagrams}
 For a dominant weight $\lambda$ define the function $g_\lambda:\mathbb Z\to\{-1,1\}$
by setting $g_\lambda(i)=(-1)^{f_\lambda(i) +1}$; so $g_\lambda(i)=1$ if $i\in c_\lambda$, that means $d_\lambda$ has a black ball at the $i$-th position and $g_\lambda(i)=-1$ if $i\notin c_\lambda$.
For any $j<i$ set 
\begin{eqnarray*}
r^+(i,j)\;=\;\sum_{s=j}^{i-1}g_{\InnaC{\lambda}}(s)&{ and }& r^-(i,j)\;=\;-\sum_{s=j+1}^{i}g_{\InnaC{\lambda}}(s).
\end{eqnarray*}

For every $i\in c_\lambda$ define
$$\overset{\leftarrow i}{\blacktriangle}(\lambda)=\left\{j< i\,|\,r^+(i,j)=0,\,r^+(i,s)\geq 0\,\,\text{for all}\,j<s<i\right\}.$$
Also for every $j\notin c_\lambda$ define
$$\underset{j\dashrightarrow}{\blacktriangledown}(\lambda) = \left\{i> j\,|\, r^-(i, j)=0,\,r^-(s,j)\geq 0\,\,\text{for all}\,j<s<i \right\}.$$

To obtain the {\it arrow diagram} for $\lambda$ equip $d_\lambda$  with solid and dashed arrows, as follows: 
\begin{itemize}
 \item For every $i \in c_{\lambda}$ we draw a solid arrow from $i$ to every $j \in \overset{\leftarrow i}{\blacktriangle}(\lambda)$.
\item For every $j \notin c_{\lambda}$ we draw a dashed arrow from $j$ to every $i \in \underset{j\dashrightarrow}{\blacktriangledown}(\lambda)$.
\end{itemize}

Observe that $\underset{j\dashrightarrow}{\blacktriangledown}(\lambda)\subset c_\lambda$ and $\overset{\leftarrow i}{\blacktriangle}(\lambda)\cap c_\lambda=\emptyset.$
(To see the latter assume $k\in c_\lambda$ and $k\in \InnaC{\overset{\leftarrow i}{\blacktriangle}(\lambda)}$. Then $0=r^+(i,k)=1+r^+(i,k+1)>0$, which is a contradiction. Argue similarly for the first.) That means solid arrows always start at black balls and end at empty places, whereas dashed arrows start at empty places and end at black balls.

\begin{example}
 Let $n = 4$, $\lambda = (1,1,0,0)$. Below is the diagram of $\lambda$: each element of $c_{\lambda}$ is marked 
with a black ball, and for each $i \in c_{\lambda}$, the positions $j \in \overset{\leftarrow i}{\blacktriangle}(\lambda)$ are connected with $i$ by solid arrows. 
We also connect by dashed arrows all  $j\notin c_\lambda$ with $i \in \underset{j\dashrightarrow}{\blacktriangledown}(\lambda)$.
  $$ \xymatrix{&{} &{} &{} &{}&{}  &{}&{}&{}&{}&{}  &{} \\ &\underset{-5}{\circ} &\underset{-4}{\circ} \ar@{-->}@/_3.5pc/[rrrrurrrrd] &\underset{-3}{\circ} \ar@{-->}@/_2.1pc/[rrurrd] \ar@{-->}@/_2.9pc/[rrrurrrd] &\underset{-2}{\circ} \ar@{-->}@/_1.6pc/[rurd] &\underset{-1}{\circ}  &\underset{0}{\bullet}  &\underset{1}{\bullet} \ar@/_0.8pc/[luld]  &\underset{2}{\circ}  &\underset{3}{\bullet} &\underset{4}{\bullet} \ar@/_0.8pc/[luld] \ar@/_1.8pc/[lllullld] &\underset{5}{\circ} \\  &{} &{} &{} &{}&{}  &{}&{}&{}&{}&{}  &{} &{}&{}&{} } $$
 \vspace{1pt}
 For instance, $\overset{\leftarrow 4}{\blacktriangle}(\lambda)=\{-2,2\}$, whereas $\overset{\leftarrow 0}{\blacktriangle}(\lambda)=\emptyset$ and $\underset{\dashrightarrow -3}{\blacktriangle}(\lambda)=\{1,3\}$ 
\end{example}

\begin{lemma}\label{lem:comb_arc_diag1} Let $\lambda$ be a dominant weight.
\begin{enumerate}[1.)]
 \item\label{itm:comb_arc_diag1_2} Let $i_1, i_2\in  c_\lambda$, $i_1<i_2$ and $j_1 \in \overset{\leftarrow i_1}{\blacktriangle}(\lambda)$, $j_2 \in \overset{\leftarrow i_2}{\blacktriangle}(\lambda)$, then either $j_2<j_1$ or $j_2 > i_1$. In other words, two solid arrows can only intersect at a common source. In particular, ${\overset{\leftarrow i_1}{\blacktriangle}(\lambda)} \cap {\overset{\leftarrow i_2}{\blacktriangle}(\lambda)}=\emptyset$.
 \item\label{itm:comb_arc_diag1_4} Let $j_1, j_2 \notin  c_\lambda$, $j_1 < j_2$ and $i_1 \in \underset{j_1\dashrightarrow}{\blacktriangledown}(\lambda) $, $i_2 \in\underset{j_2\dashrightarrow}{\blacktriangledown}(\lambda) $, then either $i_2 < i_1$ or $i_1 < j_2$. In other words, two dashed arrows can only intersect at a common source.
 In particular, ${\underset{j_1\dashrightarrow}{\blacktriangledown}(\lambda)} \cap {\underset{j_2\dashrightarrow}{\blacktriangledown}(\lambda) }=\emptyset$.
\end{enumerate}
\end{lemma}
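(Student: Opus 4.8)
The plan is to prove part (1) and then obtain part (2) by a duality/symmetry argument. For part (1), I would work directly with the combinatorial definition of $\overset{\leftarrow i}{\blacktriangle}(\lambda)$ in terms of the partial sums $r^+(i,j)=\sum_{s=j}^{i-1}g(s)$, where $g=g_\lambda$. The key elementary observation is the recursion $r^+(i,j)=g(j)+r^+(i,j+1)$, together with the fact that $r^+(i,i)=0$ (empty sum). Fix $i_1<i_2$ in $c_\lambda$, and suppose $j_1\in\overset{\leftarrow i_1}{\blacktriangle}(\lambda)$, $j_2\in\overset{\leftarrow i_2}{\blacktriangle}(\lambda)$ with $j_1\le j_2\le i_1$; I want to derive a contradiction unless $j_1=j_2$ (and in fact even $j_1=j_2$ is impossible here, since then the two arrows share a source, giving disjointness of the sets).

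First I would record the basic monotonicity facts implied by membership. If $j\in\overset{\leftarrow i}{\blacktriangle}(\lambda)$ then $r^+(i,j)=0$ and $r^+(i,s)\ge 0$ for all $j<s<i$; combining with the recursion, this forces $g(j)=-1$ (so $j\notin c_\lambda$, as already noted in the text) and, crucially, $r^+(i,j')\ge 0$ for \emph{all} $j\le j'\le i$ — with equality at $j'=j$ and $j'=i$. Now for the argument: since $j_2\le i_1<i_2$, split $r^+(i_2,j_2)=r^+(i_1,j_2)+r^+(i_2,i_1)$. We have $r^+(i_2,j_2)=0$ by hypothesis, and $r^+(i_2,i_1)\ge 0$ because $i_1$ lies strictly between $j_2$ and $i_2$ (using $j_2\le i_1$; the boundary case $j_2=i_1$ needs $r^+(i_2,i_1)\ge 0$ which holds as $i_1\in c_\lambda$ gives $r^+(i_2,i_1)=1+r^+(i_2,i_1+1)\ge 1>0$, or is handled by the defining inequality if $j_2<i_1$). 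Hence $r^+(i_1,j_2)=r^+(i_2,j_2)-r^+(i_2,i_1)\le 0$. On the other hand $j_1\le j_2\le i_1$ with $j_1\in\overset{\leftarrow i_1}{\blacktriangle}(\lambda)$ gives $r^+(i_1,j_2)\ge 0$. So $r^+(i_1,j_2)=0$, which in turn forces $r^+(i_2,i_1)=0$ — but $i_1\in c_\lambda$ makes $r^+(i_2,i_1)=1+r^+(i_2,i_1+1)$, and since $i_1+1\le i_2$ (as $i_1<i_2$) we have $r^+(i_2,i_1+1)\ge 0$ if $i_1+1<i_2$ (defining inequality for $j_2$, noting $i_1+1>j_2$ would need care) or $r^+(i_2,i_2)=0$ if $i_1+1=i_2$; either way $r^+(i_2,i_1)\ge 1>0$, a contradiction. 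This kills the case $j_1\le j_2\le i_1$, which is exactly the negation of "$j_2<j_1$ or $j_2>i_1$." The disjointness statement ${\overset{\leftarrow i_1}{\blacktriangle}(\lambda)}\cap{\overset{\leftarrow i_2}{\blacktriangle}(\lambda)}=\emptyset$ is the special case $j_1=j_2$, which lies in the forbidden range $j_1\le j_2\le i_1$ (note $j_2<i_2$ and if $j_2\ge i_1$ we could also run a symmetric argument, but $j_2=j_1<i_1$ suffices), so it is covered.

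For part (2), I would observe that the definition of $\underset{j\dashrightarrow}{\blacktriangledown}(\lambda)$ via $r^-(i,j)=-\sum_{s=j+1}^{i}g(s)$ is the exact mirror image of the definition of $\overset{\leftarrow i}{\blacktriangle}(\lambda)$: reversing the integer line $s\mapsto -s$ and flipping $g\mapsto -g$ (equivalently swapping black balls and empty places) turns solid-arrow data into dashed-arrow data and vice versa, with $r^+$ and $r^-$ interchanged. Concretely, $r^-(i,j)$ for the diagram $d_\lambda$ equals $r^+$-type partial sums for the reflected-and-complemented diagram, so part (2) follows formally from part (1) applied to that transformed diagram. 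Alternatively, if one prefers not to invoke a formal symmetry, the same two-term splitting argument $r^-(i_2,j_2)=r^-(i_1,j_2)+(\text{tail})$ runs verbatim with signs adjusted.

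\medskip

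\noindent\textbf{Expected main obstacle.} The computations are all elementary manipulations of partial sums, so the real danger is not difficulty but \emph{boundary bookkeeping}: the defining inequalities for $\overset{\leftarrow i}{\blacktriangle}(\lambda)$ only assert $r^+(i,s)\ge 0$ for $j<s<i$ (strict), so whenever an index lands exactly on an endpoint ($s=i_1$, $s=j_2$, $s=i_1+1$, etc.) one must separately invoke either $r^+(i,i)=0$ or the fact that $g=+1$ at black balls to restore the needed inequality. Getting every one of these edge cases right — especially distinguishing $j_2<i_1$ from $j_2=i_1$, and $i_1+1<i_2$ from $i_1+1=i_2$ — is where care is required; I would handle it by first proving the clean auxiliary statement "$j\in\overset{\leftarrow i}{\blacktriangle}(\lambda)\implies r^+(i,j')\ge 0$ for all $j\le j'\le i$ and $g(j)=-1$" so that the strict-vs-nonstrict issue is dealt with once and for all.
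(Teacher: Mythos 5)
Your argument for part (1) is essentially the paper's: you split $r^+(i_2,j_2)=r^+(i_1,j_2)+r^+(i_2,i_1)$ at the position $i_1$, bound each summand from below using the two membership conditions, and use $g(i_1)=1$ to make the total strictly positive, contradicting $r^+(i_2,j_2)=0$. Recording up front that membership of $j$ forces $r^+(i,j')\geq 0$ for all $j\leq j'\leq i$ and $g(j)=-1$ is a sensible way to dispose of the strict-versus-nonstrict endpoint bookkeeping once and for all.

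For part (2) the paper just says ``similar.'' Your reflection-and-complement symmetry ($s\mapsto -s$, $g\mapsto -g$) is a legitimate way to make this precise: it intertwines $r^+$ and $r^-$ and turns solid-arrow membership into dashed-arrow membership. The one technicality is that the transformed $g$ has infinitely many black balls and so is not the sign sequence of any dominant $\mathfrak{p}(n)$-weight; to apply part (1) to it you must note that the statement and proof of (1) are purely about an arbitrary $\pm 1$-valued function $g$ and never use finiteness of $c_\lambda$. You gesture at this, but it should be said explicitly.

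Your ``verbatim with signs adjusted'' fallback for part (2), however, does not go through as stated. Splitting $r^-(i_2,j_2)=r^-(i_1,j_2)+r^-(i_2,i_1)$ at $i_1$ is a dead end: in the contradiction case $j_1<j_2\leq i_1\leq i_2$ one gets $r^-(i_1,j_2)\geq 0$ from membership of $i_2$, hence $r^-(i_2,i_1)\leq 0$, and neither membership condition bounds $r^-(i_2,i_1)$ from below (the condition for $i_1$ concerns $r^-(s,j_1)$ with $s\leq i_1$, i.e.\ partial sums over positions $\leq i_1$, whereas $r^-(i_2,i_1)$ involves only positions $>i_1$). The correct mirror of your part (1) argument is to split the zero coming from the arrow with the \emph{smaller tail} at the \emph{source} of the other arrow: $0=r^-(i_1,j_1)=r^-(j_2,j_1)+r^-(i_1,j_2)$, where $r^-(i_1,j_2)\geq 0$ by membership of $i_2$, and $r^-(j_2,j_1)=1+r^-(j_2-1,j_1)\geq 1$ using $g(j_2)=-1$ and membership of $i_1$. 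So under the reflection the roles of the two arrows and of ``head'' and ``tail'' are genuinely transposed; it is not the same splitting with adjusted signs.
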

\begin{proof}  

We only show the first part, since the second is similar.  Assume that the two solid arrow intersect, that is we have $j_2<j_1<i_2$ such that 
\begin{equation*}
\xymatrix{&{}&{} &{} &{} &{} &{} &{}\\&\underset{j_1}{\circ} &{\ldots} &\underset{j_2}{\circ} &{\ldots}  &\underset{i_1}{\bullet} \ar@/_1.3pc/[llulld] &{\ldots}   &\underset{i_2}{\bullet} \ar@/_1.3pc/[llulld]  } 
\end{equation*}
Then $r^+(i_1, j_2) \geq 0$, since $j_1 \leq j_2 < i_1$, $r^+(i_2, i_1+1) \geq 0$, hence  $r^+(i_2, j_2) = r^+(i_1, j_2) +1 + r^+(i_2, i_1+1) >1 \neq 0$. This contradicts $j_2 \in \overset{\leftarrow i_2}{\blacktriangle}(\lambda)$.
\end{proof}

We note that for every black ball in $d_{\lambda}$ there exists at least one arrow ending there:
\begin{lemma}\label{lem:comb_arc_diag2}
 For any $i \in c_{\lambda}$, there exists $j_i \notin c_{\lambda}$ such that $i \in  \underset{j_i\dashrightarrow}{\blacktriangledown}(\lambda)$.
\end{lemma}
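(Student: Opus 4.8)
The plan is to deduce this from an elementary lattice-path argument about the step sequence one reads off $d_\lambda$ when moving leftward from $i$.

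First I would fix $i\in c_\lambda$ and, for $m\geq 0$, set
$$S_m:=\sum_{t=i-m+1}^{i}g_\lambda(t),$$
so that $S_0=0$, the increments $S_m-S_{m-1}=g_\lambda(i-m+1)$ are all $\pm 1$, and $S_1=g_\lambda(i)=1$ since $i\in c_\lambda$. The first observation is that $S_m\to-\infty$: there are only $n$ black balls in $d_\lambda$ altogether, so at most $n$ of the positions $i-m+1,\dots,i$ lie in $c_\lambda$, whence $S_m\leq n-(m-n)=2n-m$. Therefore the set of integers $K\geq 1$ with $S_1,\dots,S_K\geq 0$ is nonempty (it contains $1$) and bounded above; let $K$ be its largest element. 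Since $S_K\geq 0$, $S_{K+1}<0$, and the increments are $\pm1$, we must have $S_K=0$ and $g_\lambda(i-K)=S_{K+1}-S_K=-1$; moreover $K\geq 2$ because $S_1=1\neq 0$.

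Next I would set $j:=i-K$. Then $j<i$ and, by the identity just obtained, $g_\lambda(j)=-1$, i.e. $j\notin c_\lambda$. It remains to check the two conditions in the definition of $\underset{j\dashrightarrow}{\blacktriangledown}(\lambda)$. The range $\{j+1,\dots,i\}$ consists of exactly $K$ positions and $\sum_{t=j+1}^{i}g_\lambda(t)=S_K=0$, so $r^-(i,j)=0$. For $j<s<i$ write $s=i-m$ with $1\leq m\leq K-1$; then $\{j+1,\dots,s\}$ consists of $K-m$ positions, $\sum_{t=j+1}^{s}g_\lambda(t)=S_K-S_m=-S_m$, hence $r^-(s,j)=S_m\geq 0$ by the choice of $K$. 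Thus $i\in\underset{j\dashrightarrow}{\blacktriangledown}(\lambda)$, which is exactly the claim, with $j_i:=j=i-K$.

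The only subtle point, and the step I would be most careful about, is the choice of $K$: one must take the \emph{largest} index up to which the partial sums $S_1,\dots,S_K$ stay $\geq 0$ — equivalently, the last time the path $S$ visits $0$ before it first dips below $0$ — rather than, say, the first return of $S$ to $0$. Taking the first return only shows that position $i-K+1$ is empty, whereas the definition of $\underset{j\dashrightarrow}{\blacktriangledown}(\lambda)$ requires the source $j=i-K$ itself to be empty, and it is precisely the maximality of $K$ that forces $g_\lambda(i-K)=-1$. Everything else is routine bookkeeping with the sums $r^-$, and no further input about $\lambda$ is needed (in particular Lemma~\ref{lem:comb_arc_diag1} is not used here).
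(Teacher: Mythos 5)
Your proof is correct and uses essentially the same idea as the paper: a lattice-path argument that finds the rightmost empty position to the left of $i$ at which the running count of ``$(\text{empties})-(\text{blacks})$'' returns to zero without having gone negative. In fact the $j_i=i-K$ you construct coincides with the paper's $j_i=\max(J_i\setminus c_\lambda)$, where $J_i=\{j\le i\mid r^-(i,j)=0\}$: if some $j'\in J_i\setminus c_\lambda$ satisfied $j'>i-K$, then setting $m=i-j'<K$ you would get $S_m=0$ and $g_\lambda(i-m)=-1$, hence $S_{m+1}=-1<0$, contradicting maximality of $K$. The paper reaches the same position by a two-stage argument (first showing $J_i\setminus\{i\}\neq\emptyset$, then $J_i\setminus c_\lambda\neq\emptyset$ via passing to $i':=\min(J_i\cap c_\lambda)$, and finally verifying the inequality by contradiction), whereas you go directly to the right $j_i$; yours is the cleaner execution of the same underlying argument. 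Your closing remark about taking the \emph{last} return to $0$ rather than the first is exactly the right point to emphasize, since the first return would put the black/empty parity at the wrong endpoint.
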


\begin{remark}
 The index $j_i$ (hence the above arrow) is unique due to Lemma~\ref{lem:comb_arc_diag1}.
\end{remark}

\begin{proof}
 Given $i \in c_{\lambda}$, consider the set $J_i=\{j \leq  i\mid \, r^-(i, j)=0\}\subset \mathbb{Z}$. Clearly $i \in J_i$, but $J_i$ has at least one other element: indeed, for $j \ll 0$, $r^-(i, j-1) < 0$, while $r^-(i, i-1) = 1$, so $J_i \setminus \{i\} \neq \emptyset$. 
 
 In fact, we claim that the set $J_i \setminus c_{\lambda}$ is not empty either. Indeed, for any $i' \in J_i \cap c_{\lambda}$ we have $J_{i'} \subset J_{i}$ (since $r^-(i, j) = r^-(i, i') + r^-(i', j)$ for any $j \leq i'$). Taking $i':= \min (J_i \cap c_{\lambda})$, we obtain $J_{i'} \subset J_{i}$, and $J_{i'}\setminus c_{\lambda} \neq \emptyset$, since $J_{i'} \cap c_{\lambda} = \{i'\}$. Thus $J_i \setminus c_{\lambda} \neq \emptyset$.  Let $j_i = \max (J_i \setminus c_{\lambda})$. We claim that $i \in\underset{j_i\dashrightarrow}{\blacktriangledown}(\lambda)$, i.e., that $r^-(s,j_i)\leq 0$ for all $j_i<s<i$. 
 
 Assume not, then there exists some $s$ such that $r^-(s,j_i)> 0$ and $j_i<s<i$. Since $r^-(i-1,j_i)< 0$, there exists $s'$ such that $ j_i <s< s'<i-1$ with $r^-(s', j_i+1) =0$, and thus $r^-(i, s') =0$. This implies $s' \in J_i $ and $s' > j_i$, which contradicts the choice of $j_i$.
\end{proof}

The following is an important tool for induction arguments. 
 \begin{lemma}
 \label{lem:change}
 Consider a dominant weight $\nu$ and $i$ such that $f_{\nu}(i) =1$, $f_{\nu}(i+1) = 0$. Let $\lambda$ be obtained from $\nu$ by moving a black ball from position $i$ to position $i+1$:
 \begin{eqnarray*}
d_\nu  = \xymatrix{&\underset{i}{\bullet} &\underset{i+1}{\circ}}&\quad\quad{\text and}\quad\quad&d_\lambda  = \xymatrix{&\underset{i}{\circ} &\underset{i+1}{\bullet}}
\end{eqnarray*}
Next, let $i_1$ be such that $i+1 \in \overset{\leftarrow i_1}{\blacktriangle}(\nu)$, $f_{\nu}(i_1)=1$ and $i_2$ be such that $i+2 \in \overset{\leftarrow i_2} {\blacktriangle}(\nu) {\sqcup \{i_2\}}$, $f_{\nu}(i_2)=1$  (if $i_1$ or $i_2$ do not exist, we set the corresponding value to be $\infty$). Then for $j \in c_{\lambda}$, we have
 \begin{eqnarray*}
  \overset{\leftarrow j}{\blacktriangle}( \lambda)& =& \begin{cases}
                        \overset{\leftarrow j}{\blacktriangle}(\nu) &\text{ if } j \notin \{i+1, i_1, i_2\}, \\
                       \emptyset &\text{ if } j= i+1, \\
                       \overset{\leftarrow i_2}{\blacktriangle}(\nu) \cup\{i\} \cup  \overset{\leftarrow i}{\blacktriangle}(\nu)& \text{ if } j = i_2 ,\\
                      \overset{\leftarrow i_1}{\blacktriangle}(\nu) \setminus \{i+1\} &\text{ if } j  = i_1. 
                     \end{cases}
\end{eqnarray*}
\end{lemma}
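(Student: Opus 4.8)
The plan is to reduce the statement to a comparison of two lattice paths. Throughout, write $r^+_\nu$ and $r^+_\lambda$ for the function $r^+$ from Section~\ref{ssec:arc_diagrams} computed with $g_\nu$ respectively $g_\lambda$, and recall that for a black ball $j$ the set $\overset{\leftarrow j}{\blacktriangle}$ consists of those $b<j$ at which the leftward walk $b\mapsto r^+(j,b)$ returns to $0$ without having gone strictly negative in between.

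\textbf{Step 1: the local change.} The functions $g_\nu$ and $g_\lambda$ agree outside $\{i,i+1\}$, and there $g_\nu(i)=1=-g_\lambda(i)$ and $g_\nu(i+1)=-1=-g_\lambda(i+1)$. Since $g_\nu(i)+g_\nu(i+1)=0=g_\lambda(i)+g_\lambda(i+1)$, an interval $[b,a-1]$ contributes the same sum to $r^+_\nu$ and $r^+_\lambda$ unless it contains exactly one of $i,i+1$, which gives, for $b<a$,
\[
r^+_\lambda(a,b)-r^+_\nu(a,b)=
\begin{cases}
+2,& b=i+1\ \text{and}\ a>i+1,\\
-2,& a=i+1\ \text{and}\ b\le i,\\
0,& \text{otherwise.}
\end{cases}
\]

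\textbf{Step 2: the easy values of $j$.} Since $i\notin c_\lambda$, any $j\in c_\lambda$ is either $<i$, equal to $i+1$, or $\ge i+2$; moreover $j<i$ automatically excludes $j\in\{i+1,i_1,i_2\}$, because $i_1>i+1$ and $i_2\ge i+2$ whenever they are defined. If $j<i$, then for every $b<j$ the interval $[b,j-1]$ contains neither $i$ nor $i+1$, so $r^+_\lambda(j,\cdot)=r^+_\nu(j,\cdot)$ on the whole range used to compute $\overset{\leftarrow j}{\blacktriangle}$, hence $\overset{\leftarrow j}{\blacktriangle}(\lambda)=\overset{\leftarrow j}{\blacktriangle}(\nu)$. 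If $j=i+1$, then $r^+_\lambda(i+1,i)=g_\lambda(i)=-1<0$, which already violates the positivity requirement for any candidate $b\le i$; hence $\overset{\leftarrow i+1}{\blacktriangle}(\lambda)=\emptyset$, as claimed.

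\textbf{Step 3: the case $j\ge i+2$.} Here $j\in c_\lambda$ forces $j\in c_\nu$ and $j\ne i,i+1$. By Step~1, $r^+_\lambda(j,b)=r^+_\nu(j,b)$ for all $b<j$ with $b\ne i+1$, while $r^+_\lambda(j,i+1)=r^+_\nu(j,i+1)+2$; so the walks from $j$ differ in exactly one spot. Put $A:=r^+_\nu(j,i+2)$ (with the convention $A=0$ if $j=i+2$); then $r^+_\nu(j,i+1)=A-1$, $r^+_\nu(j,i)=A$, and $r^+_\nu(j,b)=A+r^+_\nu(i,b)$ for $b\le i$. I distinguish three subcases. \emph{(i)} If $A\ge 2$, or the $\nu$-walk drops below $0$ somewhere in $[i+2,j-1]$: the extra $+2$ at $i+1$ changes no sign condition (the value there stays $\ge 0$, or the walk has already failed strictly to the right of $i+1$), so $\overset{\leftarrow j}{\blacktriangle}(\lambda)=\overset{\leftarrow j}{\blacktriangle}(\nu)$; one checks in this case $i+1\notin\overset{\leftarrow j}{\blacktriangle}(\nu)$ and $i+2\notin\overset{\leftarrow j}{\blacktriangle}(\nu)\cup\{j\}$, i.e. $j\ne i_1,i_2$, matching the ``no change'' line of the lemma. \emph{(ii)} If $A=1$ and the $\nu$-walk is $\ge 0$ on $[i+2,j-1]$: then $r^+_\nu(j,i+1)=0$ with no earlier negative value, i.e. $i+1\in\overset{\leftarrow j}{\blacktriangle}(\nu)$, which means precisely $j=i_1$; in $\lambda$ the value at $i+1$ is $2>0$, so $i+1$ drops out while the walk stays unobstructed there, and a direct check shows all other return-points persist and none are created, giving $\overset{\leftarrow j}{\blacktriangle}(\lambda)=\overset{\leftarrow i_1}{\blacktriangle}(\nu)\setminus\{i+1\}$. \emph{(iii)} If $A=0$ and the $\nu$-walk is $\ge 0$ on $[i+2,j-1]$: then $i+2\in\overset{\leftarrow j}{\blacktriangle}(\nu)\cup\{j\}$, i.e. $j=i_2$; here the $\nu$-walk dips to $-1$ at $i+1$, so $\overset{\leftarrow j}{\blacktriangle}(\nu)$ sees only positions $\ge i+2$, whereas in $\lambda$ the value rises to $+1$ at $i+1$ and is back to $A=0$ at $i$, and since $r^+_\nu(j,b)=r^+_\nu(i,b)$ for $b\le i$ the return-points that become visible among $b\le i$ are exactly $i$ together with $\overset{\leftarrow i}{\blacktriangle}(\nu)$, giving $\overset{\leftarrow j}{\blacktriangle}(\lambda)=\overset{\leftarrow i_2}{\blacktriangle}(\nu)\cup\{i\}\cup\overset{\leftarrow i}{\blacktriangle}(\nu)$.

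Subcases (ii) and (iii) force $A=1$ and $A=0$, so they are mutually exclusive, and together with (i) and Step~2 they exhaust all $j\in c_\lambda$; the uniqueness of $i_1$ and $i_2$ comes from Lemma~\ref{lem:comb_arc_diag1}. \textbf{Main obstacle.} The genuine work is in subcases (ii) and (iii): one must verify that a one-coordinate modification of the walk creates and destroys no return-point other than the listed ones, while keeping track simultaneously of the ``value $=0$'' and ``no earlier negative value'' conditions at the boundary position $i+1$. Everything else is a routine computation with $r^+$ via Step~1.
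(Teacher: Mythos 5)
Your proof is correct, and since the paper's own proof of this lemma is the single line ``This follows directly from the definitions,'' your careful case analysis of how $r^+_\nu$ and $r^+_\lambda$ differ (and of the return-point condition) is exactly the direct-from-definitions verification the paper is alluding to. The only blemish is a typo in subcase (ii), where $\overset{\leftarrow j}{\blacktriangle}(\tau)$ should read $\overset{\leftarrow j}{\blacktriangle}(\nu)$.
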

\begin{proof}
This follows directly from the definitions.
\end{proof}

We denote by $\blacktriangle(\lambda)$ the set of weight diagrams which are obtained from $d_\lambda$ by sliding some black balls along solid arrows in the arrow diagram for $\lambda$, and by $\blacktriangledown(\lambda)$ the set of  weight diagrams obtained by sliding some black balls (backwards) along dashed arrows. In formulas
\begin{eqnarray}
\label{Deftriang}
{\blacktriangle}(\lambda)&=&\left\{ \mu \in \Lambda_n \;\left|\; \forall i \in c_{\lambda}:\; f_{\mu} (i) + \sum_{j \in \overset{\leftarrow i}{\blacktriangle}(\lambda)} f_{\mu}(j) = 1\right.  \right \}\\
\label{Deftriangdown}
{\blacktriangledown}(\lambda)&=&\left\{ \mu \in \Lambda_n \;\left|\; \forall j \notin c_{\lambda}:\; 1 - f_{\mu}(i) + \sum_{i \in \underset{j\dashrightarrow}{\blacktriangledown}(\lambda)} (1-f_{\mu}(i)) = 1 \right.\right  \}
\end{eqnarray}

\begin{proposition}\label{prop:comb_arc_diag}
For any dominant weight $\lambda$ we have ${\blacktriangle}(\lambda)\cap {\blacktriangledown}(\lambda)
= \{\lambda\}$.
\end{proposition}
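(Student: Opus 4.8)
The plan is as follows. That $\lambda\in\blacktriangle(\lambda)\cap\blacktriangledown(\lambda)$ is immediate from \eqref{Deftriang} and \eqref{Deftriangdown}: for $i\in c_\lambda$ one has $f_\lambda(i)=1$ and $f_\lambda(j)=0$ for every $j\in\overset{\leftarrow i}{\blacktriangle}(\lambda)$ (as $\overset{\leftarrow i}{\blacktriangle}(\lambda)\cap c_\lambda=\emptyset$), and dually for $\blacktriangledown(\lambda)$. For the reverse inclusion I would fix $\mu\in\blacktriangle(\lambda)\cap\blacktriangledown(\lambda)$ and introduce the two \emph{discrepancy sets} $E:=c_\lambda\setminus c_\mu$ and $B:=c_\mu\setminus c_\lambda$; the proposition then amounts to $B=E=\emptyset$, which forces $c_\mu=c_\lambda$, i.e.\ $\mu=\lambda$.

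The first step is to read a matching off each membership condition. Summing the constraint $f_\mu(i)+\sum_{j\in\overset{\leftarrow i}{\blacktriangle}(\lambda)}f_\mu(j)=1$ of \eqref{Deftriang} over all $i\in c_\lambda$ gives $n$; by the pairwise disjointness of the sets $\{i\}\cup\overset{\leftarrow i}{\blacktriangle}(\lambda)$ (first part of Lemma~\ref{lem:comb_arc_diag1}) the left-hand side is the sum of $f_\mu(x)$ over $x\in c_\lambda\cup\bigcup_{i\in c_\lambda}\overset{\leftarrow i}{\blacktriangle}(\lambda)$, so comparing with $\sum_{x\in\mathbb Z}f_\mu(x)=n$ yields $c_\mu\subseteq c_\lambda\cup\bigcup_{i\in c_\lambda}\overset{\leftarrow i}{\blacktriangle}(\lambda)$. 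Plugging this back into the constraint shows that for $i\in E$ exactly one $\psi(i)\in\overset{\leftarrow i}{\blacktriangle}(\lambda)$ has $f_\mu(\psi(i))=1$ (necessarily $\psi(i)\in B$), while $\overset{\leftarrow i}{\blacktriangle}(\lambda)\cap B=\emptyset$ for $i\in c_\lambda\cap c_\mu$; with the disjointness this makes $\psi\colon E\to B$ a bijection with $\psi(i)<i$. Symmetrically, using the analogous constraint defining $\blacktriangledown(\lambda)$ in \eqref{Deftriangdown}, Lemma~\ref{lem:comb_arc_diag2} (every element of $c_\lambda$ lies in some $\underset{j\dashrightarrow}{\blacktriangledown}(\lambda)$) and the second part of Lemma~\ref{lem:comb_arc_diag1}, one builds a bijection $\phi\colon B\to E$ with $\phi(j)\in\underset{j\dashrightarrow}{\blacktriangledown}(\lambda)$, so $\phi(j)>j$. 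Since the sources are pairwise distinct, Lemma~\ref{lem:comb_arc_diag1} (arrows of one kind meet only at a common source) shows that $\bigl\{\{\psi(i),i\}:i\in E\bigr\}$ and $\bigl\{\{j,\phi(j)\}:j\in B\bigr\}$ are \emph{non-crossing} perfect matchings of the finite totally ordered set $\Sigma:=B\sqcup E$, each pairing every element of $B$ with a strictly larger element of $E$.

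The crux is that these two matchings coincide. Viewing $\Sigma$ as a word with an opening bracket at each position in $B$ and a closing bracket at each position in $E$, a non-crossing perfect matching that pairs every opening bracket with a later closing one is precisely the bracket pairing of the word; such a pairing exists only for a balanced word and is then unique, so $\psi^{-1}=\phi$ (alternatively one compares innermost arcs of the two matchings). Now suppose $B\neq\emptyset$ and pick $b\in B$, $e:=\phi(b)=\psi^{-1}(b)\in E$; then $\psi(e)=b$, so $b\in\overset{\leftarrow e}{\blacktriangle}(\lambda)$ and $e\in\underset{b\dashrightarrow}{\blacktriangledown}(\lambda)$ simultaneously, whence $r^+(e,b)=r^-(e,b)=0$. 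But telescoping the defining sums gives $r^+(e,b)+r^-(e,b)=g_\lambda(b)-g_\lambda(e)$, and $g_\lambda(b)=-1$, $g_\lambda(e)=1$ since $b\notin c_\lambda$ and $e\in c_\lambda$; hence this sum is $-2\neq 0$, a contradiction. Therefore $B=\emptyset$, so $E=\emptyset$ as well, and $\mu=\lambda$. The only point requiring an actual (and short) argument is the uniqueness of the bracket matching together with the resulting identity $\psi(e)=b$; everything else is routine bookkeeping with the definitions and with Lemma~\ref{lem:comb_arc_diag1}.
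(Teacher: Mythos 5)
Your proof is correct, and it reaches the contradiction from the same arithmetic fact as the paper — namely that if $b<e$ with $b\notin c_\lambda$, $e\in c_\lambda$ satisfy both $r^+(e,b)=0$ and $r^-(e,b)=0$, then $0=r^+(e,b)+r^-(e,b)=g_\lambda(b)-g_\lambda(e)=-2$, which is absurd — but you organize the combinatorics leading to that pair $(b,e)$ differently. The paper argues locally: it picks a dashed arrow $j_0\dashrightarrow i_0$ of \emph{minimal length} among those along which a ball was slid when passing from $d_\lambda$ to $d_\mu$, observes that $d_\lambda$ and $d_\mu$ must agree strictly between $j_0$ and $i_0$, and then runs the $\blacktriangle$-constraint at $i_0$ through the non-crossing property of Lemma~\ref{lem:comb_arc_diag1} to force the corresponding solid arrow to be exactly $i_0\to j_0$. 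You instead argue globally: you package the $\blacktriangle$- and $\blacktriangledown$-constraints into two bijections $\psi\colon E\to B$ and $\phi\colon B\to E$ between the discrepancy sets (using Lemma~\ref{lem:comb_arc_diag1} for well-definedness/injectivity and Lemma~\ref{lem:comb_arc_diag2} for surjectivity of $\phi$), check that each induces a non-crossing perfect matching of $B\sqcup E$ pairing every $B$-element with a strictly larger $E$-element, and then appeal to the uniqueness of such a bracket matching to conclude $\psi^{-1}=\phi$, after which \emph{any} discrepancy pair yields the contradiction. The underlying non-crossing structure exploited is the same; the trade-off is that you import the (standard, but external) uniqueness of bracket pairings, whereas the paper substitutes the hands-on ``take the innermost arc'' device together with a short local case analysis (which, as printed, is somewhat terse at the point where $i=i_0$ is forced). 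Your version has the mild advantage of making explicit the global statement that the solid and dashed arrows pair up $E$ with $B$ in the same way.
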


\begin{proof}
Clearly $\lambda\in {\blacktriangle}(\lambda)\cap {\blacktriangledown}(\lambda)$.  Let $\mu \in {\blacktriangle}(\lambda)\cap {\blacktriangledown}(\lambda)$. Assume $\mu \neq \lambda$.
 Then the weight diagram $d_\mu$  of $\mu$ is obtained from $d_\lambda$ by sliding some (at least one!) black balls along dashed arrows (since $\mu \in  {\blacktriangledown}(\lambda)$). 
 Consider such a dashed arrow $j_0 \dashrightarrow i_0$ of minimal length. That is, $f_{\lambda}(j_0) = 0$, $f_{\mu}(j_0) = 1$, $f_{\lambda}(i_0) = 1$, $f_{\mu}(i_0) = 0$, and $f_{\lambda}(s) = f_{\mu}(s)$ for any $j_0 < s <  i_0$.
That is, the arrow diagram of $\lambda$ and the weight diagram of $\mu$ are locally of the form
 \begin{eqnarray*}
 \xymatrix{&\underset{j_0}{\circ} \ar@{-->}@/_1.3pc/[rurd] &{\ldots}  &\underset{i_0}{\bullet} \\&{}&{} &{} } &\quad\text{  resp.  }\quad&\xymatrix{&\underset{j_0}{\bullet} &{\ldots}  &\underset{i_0}{\circ} }
 \end{eqnarray*}
 On the other hand, $\mu \in  {\blacktriangle}(\lambda)$, which means that $d_\mu$ was obtained by sliding some black balls via solid arrows in the diagram for $\lambda$. In particular,  
 \begin{eqnarray*}
\sum_{j \in  {\overset{\leftarrow i_0}{\blacktriangle}(\lambda)}} 
f_{\mu}(j) &=& f_{\mu}(i_0) + \sum_{j \in {\overset{\leftarrow 
\InnaB{i_0}}{\blacktriangle}(\lambda)}}f_{\mu}(j) = 1
\end{eqnarray*}
  (the first equality follows from $f_{\mu}(i_0) = 0$); hence there exists exactly one $j_0 \in  {\overset{\leftarrow i_0}{\blacktriangle}(\lambda)}$ such that $f_{\mu}(j) = 1$, while $f_{\lambda}(j) = 0$. 
 
 Also, since $\mu \in\blacktriangle(\lambda)$, the black ball at position $j_0$ in $d_\mu$ has been slid through a solid arrow in the diagram of $\lambda$. We denote the source of this arrow by $i$ (thus $f_{\mu}(i) =0$, $f_{\lambda}(i) = 1$). Recall that $f_{\lambda}(s) = f_{\mu}(s)$ for any $j_0 < s <  i_0$, so $j \leq j_0$, and $i \geq i_0$. If $i_0\not=i$ then the arrow diagram for $\lambda$ is locally of the form
 $$\xymatrix{&{}&{} &{} &{} &{}&{} &{} \\ &\underset{j}{\circ} &{\ldots}  &\underset{j_0}{\circ}  \ar@{-->}@/_1.2pc/[rurd] &{\ldots}  &\underset{i_0}{\bullet} \ar@/_2.3pc/[llulld] &{\ldots}  &\underset{i}{\bullet} \ar@/_2.3pc/[llulld] \\&{}&{} &{} &{} &{}&{} &{}  } $$
 and we obtain contradiction to Lemma~\ref{lem:comb_arc_diag1}. Hence $i_0=i$, and $d_\mu$ is of the form $$\xymatrix{&\underset{j}{\bullet} &{\ldots}  &\underset{j_0}{\bullet}  &{\ldots}  &\underset{i_0=i}{\circ}  &{\ldots}  &  } $$
 and $j_0, i_0$ are connected by both a solid and a dashed arrow in the arrow diagram for $\lambda$. Thus $r^-(i_0, j_0) =0 = r^+(i_0, j_0)$, which leads to the contradiction
 $-1 =r^-(i_0, j_0) -1 = r^+(i_0, j_0+1) = r^+(i_0, j_0) +1 = 1$
\end{proof}

\subsection{Multiplicity formulas and decomposition numbers}\label{ssec:KL_main}
\begin{theorem}\label{thrm:main-mult2} For $\lambda$ a dominant weight, the (thick and thin) Kac filtrations of the projective module $P(\lambda)$ give equalities in the reduced Grothendieck group $\mathcal{F}_n$ of the form
\begin{eqnarray}
\label{dec}
[P(\lambda)]=\sum_{\mu \in {\blacktriangle}(\lambda)} [\Delta(\mu)]&\quad\text{and}\quad& [P(\lambda)] =\sum_{\mu \in{\blacktriangledown}(\lambda)} [ \nabla(\mu +2\omega)].
\end{eqnarray}
\end{theorem}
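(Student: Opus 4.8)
\emph{Overall strategy.} The plan is to prove both identities simultaneously by induction. By BGG reciprocity (Corollary~\ref{cor:BGG}) the two formulas say precisely that $(P(\lambda):\Delta(\mu))=1$ for $\mu\in\blacktriangle(\lambda)$ and $0$ otherwise, and $(P(\lambda):\nabla(\mu+2\omega))=1$ for $\mu\in\blacktriangledown(\lambda)$ and $0$ otherwise. First I would reduce to a normal form: tensoring with the one‑dimensional module $U$ of highest weight $\omega$ is a self‑equivalence of $\mathcal{F}_n$ with $P(\lambda)\otimes U\cong P(\lambda+\omega)$, $\Delta(\lambda)\otimes U\cong\Delta(\lambda+\omega)$, $\nabla(\lambda)\otimes U\cong\nabla(\lambda+\omega)$, and it translates every weight diagram (hence every arrow diagram and the sets $\blacktriangle(\cdot)$, $\blacktriangledown(\cdot)$) rigidly one step to the right; so it suffices to treat $\lambda$ with $\lambda_n=0$, i.e.\ with the leftmost black ball of $d_\lambda$ at position $0$. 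For such $\lambda$ one has $|\lambda|=\bigl(\sum_{c\in c_\lambda}c\bigr)-\binom n2\ge 0$, with equality exactly when $\lambda=0$, and I would induct on $|\lambda|$.

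\emph{Base case.} For $\lambda=0$, Theorem~\ref{thrm:KL_trivial_weight} already lists the thick and thin Kac modules occurring in $P(0)$, each with multiplicity one; it remains to identify these lists with $\blacktriangle(0)$ and $\{\mu+2\omega:\mu\in\blacktriangledown(0)\}$. A direct computation with the arrow diagram of $d_0$ (black balls exactly at $0,1,\dots,n-1$) gives $\overset{\leftarrow 0}{\blacktriangle}(0)=\emptyset$ and $\overset{\leftarrow i}{\blacktriangle}(0)=\{-i\}$ for $1\le i\le n-1$, so that \eqref{Deftriang} describes $\blacktriangle(0)$ as precisely the set of Theorem~\ref{thrm:KL_trivial_weight}(1); the dashed arrows of $d_0$ together with Theorem~\ref{thrm:KL_trivial_weight}(2) settle $\blacktriangledown(0)$ in the same way.

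\emph{Inductive step.} Let $\lambda$ be normalized with $|\lambda|>0$; then there is a smallest $q\ge 1$ with $f_\lambda(q)=0$, and a first black ball at some position $p>q$, with $f_\lambda(p-1)=0$. Let $\nu$ be obtained from $\lambda$ by sliding that ball from $p$ to $p-1$. Then $\nu$ is again normalized with $|\nu|=|\lambda|-1$, so the induction hypothesis applies to $\nu$. I would then apply the exact translation functor $\Theta_{p+1}$, which by Proposition~\ref{bigKac} moves black balls into position $p$, and expand $[\Theta_{p+1}P(\nu)]=\sum_{\tau\in\blacktriangle(\nu)}[\Theta_{p+1}\Delta(\tau)]$ term by term using Proposition~\ref{bigKac} (and the thin analogue using Proposition~\ref{smallKac}). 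Since every $\tau\in\blacktriangle(\nu)$ satisfies $\tau\ge\nu$ (Lemma~\ref{lem:syl2}), hence $|\tau|\le|\nu|=|\lambda|-1$, every thick Kac module $\Delta(\tau')$ appearing in $\Theta_{p+1}\Delta(\tau)$ has $|\tau'|\le|\lambda|$, with equality forced only by $\tau=\nu$ and $\tau'=\lambda$. Therefore $\lambda$ is the unique $\le$‑minimal weight in the thick Kac filtration of the projective module $\Theta_{p+1}P(\nu)$, occurring with multiplicity one, so $P(\lambda)$ is a direct summand and the remaining summands are projectives $P(\kappa)$ with $|\kappa|<|\lambda|$ (their normalizations again satisfying $|\cdot|<|\lambda|$, since no black ball at a negative position is created). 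Their characters being known by induction, subtracting pins down $[P(\lambda)]$, and one verifies that the answer is $\sum_{\mu\in\blacktriangle(\lambda)}[\Delta(\mu)]$ by matching arrow diagrams through the single right‑move from $\nu$ to $\lambda$ via Lemma~\ref{lem:change}; the parallel thin‑side computation gives $\sum_{\mu\in\blacktriangledown(\lambda)}[\nabla(\mu+2\omega)]$.

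\emph{Main obstacle.} The routine parts are the reduction by $-\otimes U$, the base case, and the exactness and adjunction properties of the $\Theta_k$. The genuine difficulty is the combinatorics of the last step: one must check that $\Theta_{p+1}$ carries the thick Kac filtration of $P(\nu)$ indexed by $\blacktriangle(\nu)$ onto the disjoint union of the thick Kac filtrations indexed by $\blacktriangle(\lambda)$ and by the $\blacktriangle(\kappa)$ of the remaining summands, all with multiplicity one, and simultaneously the corresponding statement on the thin side with the $\blacktriangledown$'s — the two pictures being matched summand by summand by Proposition~\ref{prop:comb_arc_diag}, i.e.\ $\blacktriangle(\lambda)\cap\blacktriangledown(\lambda)=\{\lambda\}$, which pins down the tops of all the projective summands. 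This is exactly the bookkeeping that Lemma~\ref{lem:change} and the non‑crossing properties of arrow diagrams (Lemmas~\ref{lem:comb_arc_diag1} and~\ref{lem:comb_arc_diag2}) are designed to handle.
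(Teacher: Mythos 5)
Your overall scaffolding is the same as the paper's: normalize by tensoring with $U$, base case from Theorem~\ref{thrm:KL_trivial_weight}, and induct on $|\lambda|$ by applying one translation functor $\Theta_{p+1}$ to $P(\nu)$ where $d_\nu$ is $d_\lambda$ with one ball slid left. The supporting lemmas you cite (Lemma~\ref{lem:change}, Lemma~\ref{lem:syl2}, Propositions~\ref{bigKac}, \ref{smallKac}, \ref{prop:comb_arc_diag}) are exactly the ones the paper's proof rests on, with the term-by-term expansion of $\theta_{p+1}[\blacktriangle(\nu)]=[\blacktriangle(\lambda)]$ being precisely the content of Proposition~\ref{prop:KL_mult_aux2}.

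However, the inductive step as you have written it is internally inconsistent and contains a gap. You first argue (via $|\cdot|$-maximality) that $P(\lambda)$ is a summand of $\Theta_{p+1}P(\nu)$ with multiplicity one, and that any other summand $P(\kappa)$ satisfies $|\kappa|<|\lambda|$; you then say "their characters being known by induction, subtracting pins down $[P(\lambda)]$." This is circular: to subtract you must know \emph{which} $\kappa$ occur and with what multiplicities, and nothing in the $|\cdot|$-argument tells you this. Your closing sentence in the "main obstacle" paragraph contains the fix but you have not integrated it into the logic: once one has $[\Theta_{p+1}P(\nu)]=[\blacktriangle(\lambda)]$ \emph{and simultaneously} $[\Theta_{p+1}P(\nu)]=[\blacktriangledown(\lambda+2\omega)]$ (the thin-side computation), any direct summand $P(\gamma)$ contributes $\Delta(\gamma)$ to the thick filtration and $\nabla(\gamma+2\omega)$ to the thin filtration (the latter because $P(\gamma)\cong I(\gamma+2\omega)$ by Proposition~\ref{lem:injective_hull_simple}), hence $\gamma\in\blacktriangle(\lambda)\cap\blacktriangledown(\lambda)=\{\lambda\}$ by Proposition~\ref{prop:comb_arc_diag}. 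This forces $\Theta_{p+1}P(\nu)\cong P(\lambda)$ exactly, with no remaining summands and nothing to subtract. That is how the paper closes the induction, and it makes your $|\cdot|$-maximality step redundant.
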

\label{start}
\begin{remark} Note that Theorem~\ref{thrm:KL_trivial_weight} is Theorem~\ref{thrm:main-mult2} in the special case $\lambda=0$. 
\end{remark}
Before we proceed to prove the theorem, we state the most important direct consequence of Theorem~\ref{thrm:main-mult2} and Corollary~\ref{cor:BGG}.

\begin{theorem}[Decomposition numbers]\label{cor:KL_coeff}
  Let $\mu$ be a dominant weight. The following hold in the Grothendieck group of $\mathcal{F}_n$:
  \begin{eqnarray*}
  [\Delta (\mu)] = \sum_{\lambda\text{ s.t. }\mu \in {\blacktriangledown}(\lambda)} [ L(\lambda)], &\quad\text{and}\quad& [\nabla(\mu)] = \sum_{\lambda\text{ s.t. } \mu  \in{\blacktriangle}(\lambda)} [ L(\lambda)]
  \end{eqnarray*}
\end{theorem}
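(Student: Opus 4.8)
The plan is to deduce this from Theorem~\ref{thrm:main-mult2} by a standard unitriangularity argument in the Grothendieck group, using the BGG reciprocity of Corollary~\ref{cor:BGG} to convert multiplicities of Kac modules in projectives into composition multiplicities of simples in Kac modules. First I would recall that Corollary~\ref{cor:BGG} gives $(P(\lambda):\Delta(\mu))=[\nabla(\mu):L(\lambda)]$ and $(P(\lambda):\nabla(\mu+2\omega))=[\Delta(\mu):L(\lambda)]$. Combining with the two filtration formulas in \eqref{dec}, the first equation in Theorem~\ref{thrm:main-mult2} says $[\nabla(\mu):L(\lambda)]=1$ if $\mu\in\blacktriangle(\lambda)$ and $0$ otherwise, and the second says $[\Delta(\mu):L(\lambda)]=1$ if $\mu\in\blacktriangledown(\lambda)$ and $0$ otherwise. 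This is almost immediately the statement we want: in the Grothendieck group,
\begin{eqnarray*}
[\nabla(\mu)]=\sum_\lambda [\nabla(\mu):L(\lambda)]\,[L(\lambda)]=\sum_{\lambda\,:\,\mu\in\blacktriangle(\lambda)}[L(\lambda)],
\end{eqnarray*}
and similarly $[\Delta(\mu)]=\sum_{\lambda\,:\,\mu\in\blacktriangledown(\lambda)}[L(\lambda)]$.

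The only genuine content beyond a one-line bookkeeping is the justification that the formulas in \eqref{dec} really compute the multiplicities $(P(\lambda):\Delta(\mu))$ and $(P(\lambda):\nabla(\mu+2\omega))$ termwise — i.e. that the coefficient of $[\Delta(\mu)]$ in $[P(\lambda)]$ is exactly the indicator of $\mu\in\blacktriangle(\lambda)$, with multiplicity at most one. This is exactly what Theorem~\ref{thrm:main-mult2} asserts, since $\{[\Delta(\mu)]\}_\mu$ is a basis of $\Groth_n(\Delta)$ and $\{[\nabla(\mu)]\}_\mu$ is a basis of $\Groth_n(\nabla)$ (Section~\ref{ssec:reduced_GR_group}), so the expansions in \eqref{dec} are unique and the coefficients are genuine multiplicities. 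Hence nothing further is needed here: the proof consists of substituting Corollary~\ref{cor:BGG} into Theorem~\ref{thrm:main-mult2}.

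I should also double-check the index shift: the second formula in \eqref{dec} is phrased in terms of $\nabla(\mu+2\omega)$, matching the form $(P(\lambda):\nabla(\mu+2\omega))=[\Delta(\mu):L(\lambda)]$ in Corollary~\ref{cor:BGG}, so the reparametrization is already built in and the final statement reads cleanly as $[\Delta(\mu)]=\sum_{\lambda\,:\,\mu\in\blacktriangledown(\lambda)}[L(\lambda)]$ without extra shifts. The main (and essentially only) obstacle is therefore not in this corollary at all but in Theorem~\ref{thrm:main-mult2}, whose proof — an induction on the partial order using the translation functors $\Theta_k'$ described via arrow diagrams in Section~\ref{ssec:translation_fun_diag} together with the combinatorial Proposition~\ref{prop:comb_arc_diag} (which guarantees $\blacktriangle(\lambda)\cap\blacktriangledown(\lambda)=\{\lambda\}$, forcing the indecomposable projective summand to be $P(\lambda)$ with multiplicity one) and Lemma~\ref{lem:change} for the inductive step — is where all the work lies; granting that, the present corollary is formal.
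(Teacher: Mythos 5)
Your proposal is correct and follows exactly the paper's intended deduction: the authors state the theorem as the "direct consequence of Theorem~\ref{thrm:main-mult2} and Corollary~\ref{cor:BGG}" without further proof, and your spelling out of the substitution — including the correct handling of the $2\omega$ shift so that the indicator $\mu\in\blacktriangledown(\lambda)$ corresponds to $(P(\lambda):\nabla(\mu+2\omega))=1$ and hence $[\Delta(\mu):L(\lambda)]=1$ — is exactly the argument they have in mind.
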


Together with Proposition~\ref{prop:extension} this implies
\begin{corollary} 
\label{cor:Exts}
If $\Ext^1(L(\lambda),L(\mu))\neq 0$, then either $\lambda\in{\blacktriangledown}(\mu)$ or $\mu\in{\blacktriangle}(\lambda)$. 
\end{corollary}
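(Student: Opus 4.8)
The statement follows by combining Proposition~\ref{prop:extension} with Theorem~\ref{cor:KL_coeff}. The plan is as follows. Suppose $\Ext^1(L(\lambda),L(\mu))\neq 0$, so there is a non-split extension
$$0\to L(\mu)\to X\to L(\lambda)\to 0.$$
By Proposition~\ref{prop:extension} exactly one of two things happens: either $\lambda(h)<\mu(h)$ and $X$ is a quotient of the thick Kac module $\Delta(\lambda)$, or $\mu(h)<\lambda(h)$ and $X$ is a submodule of the thin Kac module $\nabla(\mu)$. (Note that the case $\lambda(h)=\mu(h)$ cannot occur for a non-trivial extension.) I treat the two cases separately and show they give $\mu\in{\blacktriangle}(\lambda)$ and $\lambda\in{\blacktriangledown}(\mu)$ respectively.

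\emph{Case 1: $\lambda(h)<\mu(h)$.} Here $X$ is a quotient of $\Delta(\lambda)$, so in particular $L(\mu)$ is a composition factor of $\Delta(\lambda)$, i.e.\ $[\Delta(\lambda):L(\mu)]\neq 0$. By the first formula in Theorem~\ref{cor:KL_coeff}, $[\Delta(\lambda)]=\sum_{\tau\text{ s.t.\ }\lambda\in{\blacktriangledown}(\tau)}[L(\tau)]$, so $[\Delta(\lambda):L(\mu)]\neq 0$ forces $\lambda\in{\blacktriangledown}(\mu)$. Wait --- I should be careful about which variable plays which role; rereading the statement, the conclusion in this case should be phrased as $\mu\in{\blacktriangle}(\lambda)$. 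To get that, I instead use the \emph{second} formula of Theorem~\ref{cor:KL_coeff}: since $L(\mu)$ is a subquotient of a quotient of $\Delta(\lambda)$, and $L(\mu)$ is in the socle of $X$ while $X$ is a quotient of $\Delta(\lambda)$; but the cleaner route is to dualize. Using Proposition~\ref{lem:injective_hull_simple} and BGG-type reciprocity (Corollary~\ref{cor:BGG}), $[\Delta(\lambda):L(\mu)]\neq 0$ is equivalent to $(P(\mu):\nabla(\lambda+2\omega))\neq 0$, which by Theorem~\ref{thrm:main-mult2} means $\lambda\in{\blacktriangledown}(\mu)$; but the symmetric statement, obtained from the costandard side, is that $[\nabla(\lambda):L(\mu)]\neq 0$ iff $\lambda\in{\blacktriangle}(\mu)$, hence also $\mu\in{\blacktriangle}(\lambda)$ after untangling. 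I will choose whichever of the two equivalent formulations of Theorem~\ref{cor:KL_coeff} directly yields the two alternatives as stated, namely: $[\Delta(\mu):L(\lambda)]\neq 0\iff \mu\in{\blacktriangledown}(\lambda)$ and $[\nabla(\mu):L(\lambda)]\neq 0\iff \mu\in{\blacktriangle}(\lambda)$.

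\emph{Case 2: $\mu(h)<\lambda(h)$.} Here $X$ is a submodule of $\nabla(\mu)$, so $L(\lambda)$ is a composition factor of $\nabla(\mu)$, i.e.\ $[\nabla(\mu):L(\lambda)]\neq 0$; by the second formula of Theorem~\ref{cor:KL_coeff} this gives $\mu\in{\blacktriangle}(\lambda)$. Symmetrically, in Case~1, $X$ being a quotient of $\Delta(\lambda)$ with $L(\mu)$ a composition factor gives $[\Delta(\lambda):L(\mu)]\neq 0$, hence by the first formula of Theorem~\ref{cor:KL_coeff} (with the roles of the weights read off correctly) $\lambda\in{\blacktriangledown}(\mu)$. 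Thus in every case either $\lambda\in{\blacktriangledown}(\mu)$ or $\mu\in{\blacktriangle}(\lambda)$, which is the claim.

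\textbf{Main obstacle.} The only real subtlety is bookkeeping: making sure the two alternatives produced by Proposition~\ref{prop:extension} match, \emph{in the stated order and with the stated weights}, the two formulas of Theorem~\ref{cor:KL_coeff}; in particular one must be careful that the hypothesis of Proposition~\ref{prop:extension} is the \emph{existence} of a non-split extension $0\to L(\mu)\to X\to L(\lambda)\to 0$ rather than the vanishing of $\Ext^1$ in the other order, and that ``composition factor of a quotient of $\Delta(\lambda)$'' indeed implies ``composition factor of $\Delta(\lambda)$''. Once these identifications are made, the proof is immediate: it is a one-line combination of the two previously established results, with no further computation needed.
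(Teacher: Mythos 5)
Your proof is correct and follows exactly the route the paper gestures at (it states the corollary ``together with Proposition~\ref{prop:extension}'' without spelling out the details): split into the two cases of Proposition~\ref{prop:extension}, read off $[\Delta(\lambda):L(\mu)]\neq 0$ or $[\nabla(\mu):L(\lambda)]\neq 0$, and apply the two formulas of Theorem~\ref{cor:KL_coeff}. One remark: your initial reading of Case~1 was already right, and the ``wait, I should be careful'' detour was a false alarm --- $[\Delta(\lambda):L(\mu)]\neq 0$ does give $\lambda\in\blacktriangledown(\mu)$, which is the first of the two stated alternatives, so no dualizing or re-indexing was needed.
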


We introduce abbreviations for the right hand sides of the formulas \eqref{dec}. Denote 
\begin{eqnarray}
\label{abb}
[{\blacktriangle}(\lambda)] := \sum_{\mu \in {\blacktriangle}(\lambda)} [\Delta(\mu)] &\quad\text{  and   }\quad&[{\blacktriangledown}(\lambda)]:= \sum_{\mu \in {\blacktriangledown}(\lambda)} [\nabla(\mu)],
\end{eqnarray}
both considered as elements in the Grothendieck group. For the proof of Theorem~\ref{thrm:main-mult2} we need the following important fact (with $\theta_i$ as in Section~\ref{ssec:act}):

 \begin{prop}\label{prop:KL_mult_aux2}
Let $\nu$ be a dominant weight and $i$ such that $f_{\nu}(i) =1$, $f_{\nu}(i+1) = 0$. Let $d_\lambda$ be obtained from $d_\nu$ by moving a black ball from position $i$ to position $i+1$, which means
\begin{eqnarray*}
d_\nu  = \xymatrix{&\underset{i}{\bullet} &\underset{i+1}{\circ}}&\quad\quad{\text and}\quad\quad&d_\lambda  = \xymatrix{&\underset{i}{\circ} &\underset{i+1}{\bullet}}
\end{eqnarray*}
Then $\theta_{i+2}[{\blacktriangle}(\nu)]= [{\blacktriangle}(\lambda)]$ and $\theta_{i+2}[{\blacktriangledown}(\nu)]= [{\blacktriangledown}(\lambda)]$.

\end{prop}
\begin{proof}

First of all, observe that $c_{\lambda} = c_{\nu} \sqcup \{ i+1 \}\setminus \{i \}$.  As in Lemma~\ref{lem:change}, let $i_1$ be such that $i+1 \in \overset{\leftarrow i_1}{\blacktriangle}(\nu) $ and $i_2$ be such that $i+2 \in \overset{\leftarrow i_2}{\blacktriangle}(\nu) \sqcup \{i_2\}$ (if $i_1$ or $i_2$ do not exist, we set the corresponding value to be $\infty$). 

Then $ \overset{\leftarrow j}{\blacktriangle}( \lambda)= \overset{\leftarrow j}{\blacktriangle}( \nu)$ unless $j\in\{i+1,i_1,i_2\}$. Let now $\zeta\in{\blacktriangle}(\nu)$. We compute $[\Theta_{i+2}\Delta(\zeta)]$ and show that its standard summands lie in $[{\blacktriangle}(\lambda)]$.
\begin{enumerate}[i.)]
\item If neither $i$ nor $i+2$ occurs in $c_{\zeta}$, then $[\Theta_{i+2}\Delta(\zeta)] =0$ by Proposition~\ref{bigKac}. Similarly for the case $i+1 \in c_{\zeta}$.

Otherwise let $(a,b,c)\in\mathbb{Z}_{\geq0}^3$ be the positions of the black balls in $d_\zeta$ obtained from $d_\nu$ by sliding along arrows connected with $i,i_1, i_2$ respectively. In particular $b\not=i+1$.
\item If $i$ occurs in $c_{\zeta}$, but not $i+2$,
$$d_\zeta  = \xymatrix{&\underset{i}{\bullet} &\underset{i+1}{\circ} &\underset{i+2}{\circ} }$$
then $(a,b,c)=(i,b,c)$ with $c\not=i+2$ and then $[\Theta_{i+2}\Delta(\zeta)]=[\Delta(\zeta')]$ where $\zeta'$ corresponds to $(i+1,b,c)$.
\item If $i+2$ occurs in $c_{\zeta}$, but not $i$, 
$$d_\zeta  = \xymatrix{&\underset{i}{\circ} &\underset{i+1}{\circ} &\underset{i+2}{\bullet} }$$
then $(a,b,c)=(a,b,i+2)$ with $a\not=i$ and then $[\Theta_{i+2}\Delta(\zeta)]=[\Delta(\zeta')]$ where $\zeta'$ corresponds to $(i+1,b,a)$.
\item If $i$ and $i+2$ occur in $c_{\zeta}$, 
$$d_\zeta  = \xymatrix{&\underset{i}{\bullet} &\underset{i+1}{\circ} &\underset{i+2}{\bullet} }$$ then $(a,b,c)=(i,b,i+2)$ and then $[\Theta_{i+2}\Delta(\zeta)]=[\Delta(\zeta')]+[\Delta(\zeta'')]$ where $\zeta'$ corresponds to $(i+1,b,i+2)$ and $\zeta''$ corresponds to $(i+1,b,i)$. 
\end{enumerate} 
Note that the resulting triples are $(i+1,b,c)$ (from i) and iv)) and $(i,b,a)$ (from ii) and iv)). Hence, by varying $\zeta$, we obtain thanks to Lemma~\ref{lem:change} precisely the weights in ${\blacktriangle}(\lambda)$.  The first claim follows. 

The proof for the second part is similar, but easier. Let now $\zeta-2\omega \in{\blacktriangledown}(\nu)$, hence  $\zeta \in{\blacktriangledown}(\nu+2\omega)$. By assumption $d_{\nu+2\omega}$ has a black ball at position $i+2$, but not at $i+3$. Assume first that there is no black ball at position $i+1$ and let $i_1$ be the starting point of the dashed arrow $a$ ending in $i+2$:
\begin{eqnarray*}
 d_{\nu+2\omega}  = \xymatrix{&\underset{i_1}{\circ} \ar@{-->}@/_1.2pc/[rdur] &{\ldots} &\underset{i+1}{\circ} &\underset{i+2}{\bullet} &\underset{i+3}{\circ}}.
\end{eqnarray*}
\vspace{1pt}
Then $d_{\lambda+2\omega}$ has no black ball at positions $i+1$ and $i+2$, but at $i+3$ but the dashed arrow $a$ gets replaced by a dashed arrow from $i+1$ to $i+3$. Now if $i+2$ does not occur in $\zeta$ then  
$[\nabla(\zeta)]$ does not give any contribution when applying $\theta_{i+2}$, otherwise it gives exactly the sum $[\nabla(\zeta')]+[\nabla(\zeta'')]$ where the entry $i+2$ in $c_{\zeta}$ is replaced by $i+1$ in $c_{\zeta'}$ and respectively $i+3$ in $c_{\zeta''}$. The claim follows. 

Assume now that there is a black ball at position $i+1$ and let $i_1$ and $i_2$ be the starting points of the dashed arrows ending in $i+1$, respectively $i+2$:
\begin{eqnarray*}
 d_{\nu+2\omega}  = \xymatrix{&\underset{i_2}{\circ} \ar@{-->}@/_2.2pc/[rrdurrr] &{\ldots} &\underset{i_1}{\circ} \ar@{-->}@/_1.3pc/[rdur] &{\ldots} &\underset{i+1}{\bullet} &\underset{i+2}{\bullet} &\underset{i+3}{\circ}}.
\end{eqnarray*} 
\vspace{8pt}

Note that $i_2<i_1$ and that these two arrows get replaced in the arrow diagram for $\lambda+2\omega$ by two dashed arrows starting at $i_1$ and ending at $i+1$ respectively $i+3$. If we focus on the black balls for these two arrows, then $d_\zeta$ can have a black ball at positions $(i+1,i+2)$, $(i_1,i+2)$, $(i+1,i_2)$, and $(i_1, i_2)$. The last two options clearly give $\theta_i[\nabla(\zeta)]=0$ by Proposition~\ref{smallKac}, so we will only consider the first two options. Applying $\theta_{i+2}$ to $[\nabla(\zeta)]$ we obtain the sum of  $[\nabla(\zeta')]$'s where $\zeta'$ has instead black balls at positions $(i+1,i+3)$ in the first case, at positions $(i_1,i+1)$ and $(i_1,i+3)$ in the second case. But this gives exactly the claim. 
\end{proof}

\begin{proof}[Proof of Theorem~\ref{thrm:main-mult2}]
First we restrict ourselves to the case where $\lambda_i \geq 0$ for all $i$ (this is possible since $_-\otimes U: \mathcal P_n \to \mathcal P_n$ is an equivalence of categories, shifting the weights by $\omega$). In particular, $|\lambda|\geq0$ with equality exactly when $\lambda=0$.  Now let $|\lambda|>0$, then we can find some dominant weight $\nu$ and $i\in\mathbb{Z}$ with $\nu_i\geq 0$ satisfying the assumptions of Proposition~\ref{prop:KL_mult_aux2}. By induction on $|\lambda|$ we may assume that the theorem holds for $P(\nu)$. 
Since $\Theta_{i+2}$ sends projectives to projectives (see for instance Lemma~\ref{itm:TL1}), $\Theta_{i+2}P(\nu)$ is projective, hence $\Theta_{i+2}P(\nu)\cong \oplus_\gamma P(\gamma)^{\oplus {n_\gamma}}$ for some finite set of weights $\gamma$ and multiplicities $n_\gamma$. 
Moreover,  $[\Theta_{i+2}P(\nu)]= [{\blacktriangle}(\lambda)]=[{\blacktriangledown}(\lambda+2\omega)]$ by Proposition~\ref{prop:KL_mult_aux2}. If  $P(\gamma)$ occurs as a summand, then $\gamma\in{\blacktriangle}(\lambda)\cap {\blacktriangledown}(\lambda)$. But then Proposition~\ref{prop:comb_arc_diag} forces $\gamma=\lambda$ and $n_\gamma=1$. Hence  $\Theta_{i+2}P(\nu)\cong P(\lambda)$ and the Theorem~\ref{thrm:main-mult2}  follows.
\end{proof}

\section{Action of translation functors on indecomposable projectives}\label{sec:transl_functors_proj}
For simplicity, we disregard the parity in the following section; this means that we will not distinguish between $\Pi M$ and $M$ for $M \in \mathcal{F}_n $.
\subsection{Main result}
Our next goal is to prove the following surprising fact (which should be compared e.g. with \cite[Lemma 2.4]{BS}). 
\begin{theorem}\label{cor:indecomposable} For any dominant $\lambda$ and $i\in\mathbb Z$, $\Theta_i(P(\lambda))$ is either indecomposable projective or zero.
\end{theorem}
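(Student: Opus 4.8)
The plan is to exploit the fact, established in the proof of Theorem~\ref{thrm:main-mult2}, that every indecomposable projective $P(\lambda)$ with $\lambda$ dominant and $\lambda_i \geq 0$ can be obtained from $P(0)$ by a sequence of applications of translation functors $\Theta_{i+2}$ of the type considered in Proposition~\ref{prop:KL_mult_aux2}. Since $_-\otimes U$ is an equivalence of $\mathcal P_n$ commuting with all $\Theta_i$ up to a shift of indices, it suffices to prove the statement for such $\lambda$. I would argue by induction on $|\lambda| \geq 0$. The base case $\lambda = 0$: here $P(0) = \op{Ind}^\gg_{\gg_0} V(0)$ is indecomposable by Theorem~\ref{thrm:KL_trivial_weight} and its proof, and one checks directly from Proposition~\ref{bigKac} (or Lemma~\ref{lem:big}) together with the multiplicity formula that $\Theta_i P(0)$ is either $0$ or equals $P(\mu)$ for a single weight $\mu$ — indeed the computation in the $n=2,3$ examples already illustrates this, and in general $[\Theta_i P(0)] = [\blacktriangle(\mu)]$ for the appropriate $\mu$, which forces $\Theta_i P(0) \cong P(\mu)$ by Proposition~\ref{prop:comb_arc_diag} exactly as in the proof of Theorem~\ref{thrm:main-mult2}.

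For the inductive step, suppose the theorem holds for all dominant weights of smaller size. Given $\lambda$ with $|\lambda| > 0$, pick $\nu$ and $i$ as in Proposition~\ref{prop:KL_mult_aux2}, so that $\Theta_{i+2} P(\nu) \cong P(\lambda)$. Now fix any $k \in \mathbb Z$ and consider $\Theta_k P(\lambda) \cong \Theta_k \Theta_{i+2} P(\nu)$. I would split into cases according to $|k - (i+2)|$. If $|k-(i+2)| > 1$, use the natural isomorphism $\psi_{k,i+2}: \Theta_k \Theta_{i+2} \cong \Theta_{i+2}\Theta_k$ from Theorem~\ref{thrm:TL_action_categor}; then $\Theta_k P(\lambda) \cong \Theta_{i+2}\Theta_k P(\nu)$, and $\Theta_k P(\nu)$ is $0$ or indecomposable projective $P(\gamma)$ by the inductive hypothesis, so we reduce to showing $\Theta_{i+2} P(\gamma)$ is $0$ or indecomposable — but $|\gamma| \le |\nu| < |\lambda|$ is not quite automatic, so instead I would note that $\Theta_{i+2} P(\gamma)$ is projective with $[\Theta_{i+2}P(\gamma)] = [\blacktriangle(\gamma')] = [\blacktriangledown(\gamma'+2\omega)]$ for a single weight $\gamma'$ by Propositions~\ref{bigKac}, \ref{smallKac}, \ref{prop:KL_mult_aux2}; since any $P(\eta)$ occurring as a summand forces $\eta \in \blacktriangle(\gamma') \cap \blacktriangledown(\gamma') = \{\gamma'\}$ with multiplicity one by Proposition~\ref{prop:comb_arc_diag}, we get $\Theta_{i+2}P(\gamma) \cong P(\gamma')$ or $0$. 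If $k = i+2$, then $\Theta_k \Theta_{i+2} = \Theta_{i+2}^2 \cong 0$ by Theorem~\ref{thrm:TL_action_categor}\eqref{first}. If $k = i+1$ or $k = i+3$ (the cases $|k-(i+2)|=1$), use the adjunction isomorphism $\Theta_{i+2}\Theta_k\Theta_{i+2} \cong \Theta_{i+2}$ of Theorem~\ref{thrm:TL_action_categor}\eqref{second}: applying this to $P(\nu)$ gives... hmm, this only helps if $P(\lambda)$ already has $\Theta_{i+2}$ on the outside in the right position.

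In fact the cleanest uniform argument avoids case analysis: I would prove directly that for \emph{any} dominant $\mu$ and any $k$, $\Theta_k P(\mu)$ is projective (which is immediate, translation functors preserve projectives by Proposition~\ref{itm:TL1}/Lemma~\ref{lem:tilt}) and that its class in the Grothendieck group equals $[\blacktriangle(\mu')]$ for at most one dominant weight $\mu'$ — this is the content of Propositions~\ref{bigKac} and \ref{prop:KL_mult_aux2} applied at the level of $\theta_k$ acting on $[\blacktriangle(\mu)] = [P(\mu)]$ (using Theorem~\ref{thrm:main-mult2}), together with the combinatorial fact that sliding black balls along arrows behaves compatibly with the single-ball move. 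Once $[\Theta_k P(\mu)] = [\blacktriangle(\mu')]$ (or $0$) is known, decompose $\Theta_k P(\mu) \cong \bigoplus_\gamma P(\gamma)^{\oplus n_\gamma}$; each $[P(\gamma)] = [\blacktriangle(\gamma)]$ must be a summand of $[\blacktriangle(\mu')]$, and since $[\blacktriangledown(\mu'+2\omega)] = [\blacktriangle(\mu')]$ as well, any such $\gamma$ lies in $\blacktriangle(\mu') \cap \blacktriangledown(\mu') = \{\mu'\}$ by Proposition~\ref{prop:comb_arc_diag}; comparing total multiplicities forces $n_{\mu'} = 1$. Hence $\Theta_k P(\mu) \cong P(\mu')$ or $0$. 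The main obstacle is the purely combinatorial lemma that $\theta_k$ sends $[\blacktriangle(\mu)]$ to $[\blacktriangle(\mu')]$ for a single $\mu'$ (or to $0$) for an \emph{arbitrary} position $k$, not just the special $k = i+2$ tied to a single-ball move; Proposition~\ref{prop:KL_mult_aux2} handles the inductive step but one needs to check, via the arrow-diagram bookkeeping of Lemma~\ref{lem:change} and Lemma~\ref{lem:comb_arc_diag1}, that the same pattern of ``move balls toward $k-1$'' persists under composition, so that the image is again of the form $[\blacktriangle(\,\cdot\,)]$. This is bookkeeping with the arrow diagrams and should go through, but it is where the real work lies.
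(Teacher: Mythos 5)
Your proposal identifies the right target — showing that $[\Theta_k P(\mu)]$ equals $[\blacktriangle(\mu')]$ for a single $\mu'$ (or vanishes), and then applying Proposition~\ref{prop:comb_arc_diag} — but it has a genuine gap exactly where you flag it. Both of your two sub-plans run aground for the same structural reason.

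Your first plan fixes one decomposition $P(\lambda) \cong \Theta_{i+2}P(\nu)$ and then tries to commute $\Theta_k$ past $\Theta_{i+2}$, and you correctly observe that the case $|k-(i+2)|=1$ fails because the isomorphism $\Theta_k\Theta_{k\pm1}\Theta_k\cong\Theta_k$ from Theorem~\ref{thrm:TL_action_categor} needs two copies of $\Theta_k$ bracketing one $\Theta_{k\pm1}$, whereas $\Theta_k\Theta_{i+2}P(\nu)$ has only one copy of each. The paper's fix is not a patch to this plan but a replacement of it: in Lemma~\ref{lem:hardmove}, the auxiliary weight $\nu$ is \emph{chosen as a function of $k$}, not of $\lambda$ alone, precisely so that $P(\lambda)\cong\Theta_{k-1}\Theta_k P(\nu)$ (in the base case of the induction there), and then $\Theta_k P(\lambda)\cong\Theta_k\Theta_{k-1}\Theta_k P(\nu)\cong\Theta_k P(\nu)$ reduces to the easy Lemma~\ref{lem:simplemove}. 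Outside the base case they use commutativity $\Theta_k\Theta_p\cong\Theta_p\Theta_k$ ($|k-p|>1$) to run a double induction on a pair of combinatorial parameters attached to the local shape of $d_\lambda$ around position $k-1$. Your induction on $|\lambda|$ with a single pre-chosen $\nu$ cannot reproduce this, because the decomposition needed depends on which $\Theta_k$ you want to apply.

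Your second, ``cleaner'' plan asserts that $\theta_k[\blacktriangle(\mu)]=[\blacktriangle(\mu')]$ or $0$ for \emph{arbitrary} $k$ and then says this is ``bookkeeping with arrow diagrams.'' This is where the real content of Lemmas~\ref{lem:simplemove} and~\ref{lem:hardmove} sits, and it is not mere bookkeeping: Proposition~\ref{prop:KL_mult_aux2} establishes the claim only for the special index $k$ realizing a single-ball move $\nu\leadsto\lambda$, and the extension to all $k$ is precisely what requires the Temperley--Lieb relations. The answer for a general $k$ is also not what a termwise $\theta_k$ computation on $\sum_{\zeta\in\blacktriangle(\mu)}[\Delta(\zeta)]$ naively suggests: in Lemma~\ref{lem:hardmove}\,\ref{itm:hardmove1}.) the black ball moved to produce $\mu$ jumps from $k-2$ to a potentially \emph{distant} position $j$ determined by the shortest solid arrow ending at $k-1$, and in \ref{itm:hardmove2}.) a ball comes in from a distant $j$ given by the shortest dashed arrow. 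Recovering that global pattern from the local action of $\theta_k$ on each $\Delta(\zeta)$ is the hard step, and the paper's proof of it uses the same double induction. So your ``cleaner'' route is in fact the harder half of the proof left unproved; filling that in would reconstruct Lemma~\ref{lem:hardmove} under a different name.
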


\subsection{Detailed analysis of translation functors applied to projectives}
\begin{lemma}\label{lem:simplemove} Let $\lambda$ be a dominant weight.
\begin{enumerate}[1.)]
\item\label{itm:simplemove1} If $f_\lambda(i-2)=1$ and $f_\lambda(i-1)=0$, that is, we have locally
$$d_\lambda = \xymatrix{  &\underset{i-2}{\bullet}  &\underset{i-1}{\circ}    } $$
 then $\Theta_i(P(\lambda))\cong P(\mu)$, where
$$f_\mu(j)=\begin{cases}f_{\lambda}(j)&\text{if $j\neq i-2,i-1$}\\f_{\lambda}(j)+1 (\mathrm{mod} \: 2)&\text{if $j= i-2,i-1$}\end{cases}.$$
That is, 
$$d_\mu = \xymatrix{  &\underset{i-2}{\circ}  &\underset{i-1}{\bullet}    } $$
\item\label{itm:simplemove2} If $f_\lambda(i-2)=0$ and $f_\lambda(i-1)=1$, that is, we have locally
$$d_\lambda = \xymatrix{  &\underset{i-2}{\circ}  &\underset{i-1}{\bullet}    } $$
then $\Theta_i(P(\lambda))=0$.
\end{enumerate}
\end{lemma}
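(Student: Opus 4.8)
The plan is to derive both parts from identities in the reduced Grothendieck group $\Groth_n$, in which the thick Kac classes $\{[\Delta(\mu)]\}$ form a basis of $\Groth_n(\Delta)$ (Section~\ref{ssec:reduced_GR_group}), the projective classes $\{[P(\gamma)]\}$ lie inside $\Groth_n(\Delta)$ unitriangularly with respect to that basis (by Corollary~\ref{cor:BGG} and Lemma~\ref{lem:syl2}, and hence are linearly independent), and $[P(\lambda)]$ is explicitly known, by Theorem~\ref{thrm:main-mult2}, both as a sum of thick and as a sum of thin Kac classes. To pass back from $\Groth_n$ to modules I will use that $\Theta_i$ is exact (Lemma~\ref{lem:exact}) — so it induces $\theta_i$ on $\Groth_n$ — and has the exact right adjoint $\Theta_{i+1}$ (Proposition~\ref{itm:TL1}), hence sends projectives to projectives; and that an object whose class in $\Groth_n$ vanishes must be zero.

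For Part~\ref{itm:simplemove1} I would set $\ell := i-2$, so that the hypothesis is $f_\lambda(\ell)=1$, $f_\lambda(\ell+1)=0$, and $d_\mu$ is $d_\lambda$ with the black ball at $\ell$ pushed one step to $\ell+1$. This is precisely the situation of Proposition~\ref{prop:KL_mult_aux2} (taking its $\nu$ to be $\lambda$ and its distinguished index to be $\ell$), which together with Theorem~\ref{thrm:main-mult2} gives $[\Theta_i P(\lambda)] = \theta_{\ell+2}[{\blacktriangle}(\lambda)] = [{\blacktriangle}(\mu)] = [P(\mu)]$ in $\Groth_n$. Since $\Theta_i P(\lambda)$ is projective it is a finite direct sum $\bigoplus_\gamma P(\gamma)^{\oplus a_\gamma}$; comparing classes and using the linear independence of the $[P(\gamma)]$ forces $a_\gamma=\delta_{\gamma,\mu}$, i.e. $\Theta_i P(\lambda)\cong P(\mu)$. (This reproduces the argument already made inside the proof of Theorem~\ref{thrm:main-mult2}; the normalization $\lambda_j\geq 0$ used there was only needed to set up the induction and is immaterial here.)

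For Part~\ref{itm:simplemove2} I would switch to the thin filtration: by Theorem~\ref{thrm:main-mult2}, $[P(\lambda)] = [{\blacktriangledown}(\lambda+2\omega)] = \sum_{\kappa\in{\blacktriangledown}(\lambda+2\omega)} [\nabla(\kappa)]$, and by Proposition~\ref{smallKac} one has $\Theta_i\nabla(\kappa)=0$ whenever $f_\kappa(i)=0$. So it suffices to show $f_\kappa(i)=0$ for every $\kappa\in{\blacktriangledown}(\lambda+2\omega)$. The hypotheses $f_\lambda(i-2)=0$, $f_\lambda(i-1)=1$ translate into $f_{\lambda+2\omega}(i)=0$ and $f_{\lambda+2\omega}(i+1)=1$, whence $r^-(i+1,i)=-g_{\lambda+2\omega}(i+1)=-1$; this rules out any $i'>i$ satisfying both $r^-(i',i)=0$ and $r^-(s,i)\geq 0$ for $i<s<i'$ (the case $i'=i+1$ violates the first, $i'\geq i+2$ the second at $s=i+1$), i.e. $\underset{i\dashrightarrow}{\blacktriangledown}(\lambda+2\omega)=\emptyset$. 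Then the defining condition \eqref{Deftriangdown} at the empty position $j=i$ collapses to $1-f_\kappa(i)=1$, giving $f_\kappa(i)=0$ as wanted. Hence $[\Theta_i P(\lambda)]=\theta_i[{\blacktriangledown}(\lambda+2\omega)]=\sum_\kappa [\Theta_i\nabla(\kappa)]=0$, and therefore $\Theta_i P(\lambda)=0$.

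The individual steps are all routine; the main thing requiring attention is the bookkeeping of index shifts — the $k-2$ hard-wired into the thick Kac translation rule of Proposition~\ref{bigKac}, the extra $+2\omega$ inherited from Theorem~\ref{thrm:main-mult2} and Proposition~\ref{lem:injective_hull_simple}, and the alignment of the distinguished index of Proposition~\ref{prop:KL_mult_aux2} with the index $i$ here. The only real choice is to handle Part~\ref{itm:simplemove2} through the thin Kac filtration: working with the thick one would instead require tracking which black balls of $d_\lambda$ can slide left along solid arrows into positions $i-2,i-1,i$, which is noticeably more involved.
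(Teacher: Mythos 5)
Your proposal is correct. For Part~1.) you take the same route as the paper: both invoke the argument already carried out inside the proof of Theorem~\ref{thrm:main-mult2} (via Proposition~\ref{prop:KL_mult_aux2} and the projectivity of $\Theta_i P(\lambda)$), and your remark that the nonnegativity normalization of the weight coordinates is inessential is accurate.

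For Part~2.) you take a genuinely different route. The paper's argument is a one-liner: it writes $P(\lambda)\cong\Theta_i P(\nu)$, where $d_\nu$ is $d_\lambda$ with the black ball at $i-1$ moved to $i-2$ (an instance of Part~1.)), and then concludes $\Theta_i P(\lambda)\cong\Theta_i^2 P(\nu)\cong 0$ from the categorical Temperley--Lieb relation of Theorem~\ref{thrm:TL_action_categor}. You instead work directly with the thin Kac filtration from Theorem~\ref{thrm:main-mult2}, observe that the hypotheses translate to $f_{\lambda+2\omega}(i)=0$, $f_{\lambda+2\omega}(i+1)=1$, compute $r^-(i+1,i)=-1$ to force $\underset{i\dashrightarrow}{\blacktriangledown}(\lambda+2\omega)=\emptyset$, and deduce from \eqref{Deftriangdown} that every $\kappa\in\blacktriangledown(\lambda+2\omega)$ has $f_\kappa(i)=0$, so $\Theta_i\nabla(\kappa)=0$ by Proposition~\ref{smallKac} and hence $[\Theta_i P(\lambda)]=0$, whence $\Theta_i P(\lambda)=0$. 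Your route is longer but entirely self-contained: it uses neither Part~1.) as a stepping stone nor $\Theta_i^2\cong 0$. The paper's route is shorter but conceals the combinatorics in the Temperley--Lieb relation; yours makes explicit the diagrammatic mechanism for why every thin Kac constituent dies. Both are correct proofs, and your index bookkeeping (including the $+2\omega$ shift in the thin filtration) checks out.
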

\begin{proof}  In the proof of Theorem~\ref{thrm:main-mult2}, we established part \ref{itm:simplemove1}.). To prove \ref{itm:simplemove2}.) use that
$P(\lambda)\cong\Theta_i(P(\nu))$, where $d_\nu$ is obtained from $d_\lambda$ by moving the black ball at position $i-1$ to position $i-2$.
Then the statement follows from the relation $\Theta^2_i\cong0$.
\end{proof}

\begin{corollary}\label{cor:almosttrans} Assume $\nu_i\leq\lambda_i$ for $i=1,\ldots, n$, then there exists a sequence $j_1>\ldots>j_k$ of integers
such that $P(\lambda)\cong\Theta_{j_k}\cdots \Theta_{j_1}(P(\nu))$, and if we set $P(\nu^r):=\Theta_{j_r}\cdots \Theta_{j_1}(P(\nu))$, then $d_{\nu^\InnaC{r}}$ is obtained from $d_{\nu^{r-1}}$ by moving one 
black ball one position to the right.
\end{corollary}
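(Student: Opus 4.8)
The plan is to reduce the statement to an iterated application of Lemma~\ref{lem:simplemove}\,\ref{itm:simplemove1}.) together with the combinatorics of weight diagrams. The assumption $\nu_i \leq \lambda_i$ for all $i$ means exactly that, in terms of the weight diagrams, each black ball of $d_\nu$ lies weakly to the left of the corresponding (i.e.\ $i$-th from the left) black ball of $d_\lambda$; equivalently, $d_\lambda$ is obtained from $d_\nu$ by a finite sequence of moves, each of which slides a single black ball one step to the right into an adjacent empty position. Concretely, I would argue that one can always realize the passage from $d_\nu$ to $d_\lambda$ by such single-step moves performed in order of \emph{decreasing} position: at each stage pick the rightmost position $p$ at which the current diagram $d_{\nu^{r-1}}$ still has a black ball strictly to the left of where $d_\lambda$ wants it, with an empty slot immediately to its right (such a $p$ exists as long as $\nu^{r-1}\neq\lambda$, since otherwise every ball would already be in its final place), and slide that ball from $p$ to $p+1$, setting $j_r := p+2$.

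The key point is that Lemma~\ref{lem:simplemove}\,\ref{itm:simplemove1}.) says precisely that if $d_{\nu^{r-1}}$ has a black ball at position $j_r-2 = p$ and an empty slot at $j_r-1 = p+1$, then $\Theta_{j_r}(P(\nu^{r-1}))\cong P(\nu^r)$ where $d_{\nu^r}$ is obtained from $d_{\nu^{r-1}}$ by exchanging the labels at positions $p$ and $p+1$, i.e.\ by moving that one black ball one position to the right. Thus each step of the diagram procedure is realized by exactly one translation functor, and $P(\nu^r)\cong\Theta_{j_r}\cdots\Theta_{j_1}(P(\nu))$ by induction on $r$. After finitely many steps the diagram equals $d_\lambda$, giving $P(\lambda)\cong\Theta_{j_k}\cdots\Theta_{j_1}(P(\nu))$.

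It remains to check that the sequence $j_1 > j_2 > \dots > j_k$ is strictly decreasing, which is where the choice ``rightmost movable ball first'' is essential. Here is the mechanism: once we have moved the ball originally needing attention at position $p$, the only positions whose ``movable'' status can change are near $p$; but because we always took $p$ to be the \emph{rightmost} position with a ball lagging behind its target, after the move there is no ball strictly to the right of $p+1$ that still lags, so the next chosen position $p'$ satisfies $p' \le p$, and in fact $p' < p$ unless the ball now at $p+1$ still lags — but it cannot, since we moved it precisely because $p+1$ was its target or an intermediate position, and a ball is never moved past its target. A short case analysis (the ball at $p+1$ is now either in its final position, or its target lies further right but then some ball to its right had already been dealt with, contradicting maximality) shows $p' < p$, hence $j_{r+1} = p'+2 < p+2 = j_r$. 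This monotonicity argument is the only genuinely delicate part; everything else is a direct translation between the diagram bookkeeping and Lemma~\ref{lem:simplemove}. Finally, by construction $d_{\nu^r}$ is obtained from $d_{\nu^{r-1}}$ by moving exactly one black ball one position to the right, which is the last assertion of the corollary.
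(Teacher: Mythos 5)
Your reduction to iterated applications of Lemma~\ref{lem:simplemove}\;\ref{itm:simplemove1}.) is correct insofar as it establishes that \emph{some} sequence of single-step moves exists: as long as $d_{\nu^{r-1}} \neq d_\lambda$, there is a ball lagging behind its target with an empty slot immediately to its right, and moving it is realized by a translation functor on projectives. That is the substance of the corollary, and your reasoning there is sound.

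The monotonicity argument, however, has a genuine gap. After sliding a ball from $p$ to $p+1$, if its target in $d_\lambda$ lies strictly to the right of $p+1$ and position $p+2$ is empty, then the rightmost lagging movable ball is exactly the one just moved, so the next source is $p'=p+1$ and $j_{r+1}=p+3>p+2=j_r$. Your parenthetical dismissal (``its target lies further right but then some ball to its right had already been dealt with, contradicting maximality'') is false: there need be no ball at all between $p+1$ and the target. Worse, no choice of strategy can rescue strict monotonicity. Take $n=2$, $\nu=(0,0)$, $\lambda=(2,2)$, so $c_\nu=\{0,1\}$ and $c_\lambda=\{2,3\}$. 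Since balls cannot cross, the source positions of any valid sequence of single-step right-moves are, with multiplicity, $\{0,1,1,2\}$, which cannot be arranged in a strictly decreasing order (and the only feasible starting move has source $1$, so even weak decrease is impossible). Thus the claim $j_1>\cdots>j_k$ appears to be an overstatement in the corollary itself, not just in your argument; fortunately, the paper's later uses (e.g.\ the repeated-adjunction step in the proof of Proposition~\ref{cor:dim_hom_indec}) require only the existence of some such sequence, and that part your proof establishes correctly.
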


\begin{lemma}\label{lem:hardmove}
 Let $\lambda$ be a dominant weight.
\begin{enumerate}[1.)]
 \item\label{itm:hardmove1}
 If $f_\lambda(i-2)=f_\lambda(i-1)=1$, that is we have locally
$$d_\lambda = \xymatrix{  &\underset{i-2}{\bullet}  &\underset{i-1}{\bullet}    } $$
then $\Theta_i(P(\lambda))\cong P(\mu)$, where $d_\mu$ is obtained from $d_\lambda$ by moving a black ball from $i-2$ to the (empty) position $j<i-2$ such that $r^+(i-1,j)=0$ and $j$ is maximal with such property. 
\item\label{itm:hardmove2} Let $f_\lambda(i-2)=f_\lambda(i-1)=0$, that is we have locally
$$d_\lambda = \xymatrix{  &\underset{i-2}{\circ}  &\underset{i-1}{\circ}    } $$
If for all $j \geq i$ we have $r^{-}(j,i-2)\neq 0$, then $\Theta_i(P(\lambda))= 0$. Otherwise, pick  $j\geq i$ minimal such that
$r^{-}(j,i-2)=0$. We have $\Theta_i(P(\lambda))\cong P(\mu)$, where $d_\mu$ is obtained from $d_\lambda$ by moving the black ball from $j$ to $i-1$.
\end{enumerate}\end{lemma}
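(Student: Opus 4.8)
The plan is to combine the structural result that $\Theta_i(P(\lambda))$ is indecomposable projective or zero (Theorem~\ref{cor:indecomposable}) with a computation in the reduced Grothendieck group $\Groth_n$. Since $\Theta_i$ preserves projectives and is exact, $\Theta_i(P(\lambda))$ is projective, so by Theorem~\ref{cor:indecomposable} it is either $0$ or $P(\gamma)$ for a unique dominant $\gamma$; moreover, by Theorem~\ref{thrm:main-mult2} together with Lemma~\ref{lem:syl2}, in the latter case $\gamma$ is the unique $\leq$-minimal weight occurring in the thick Kac filtration of $\Theta_i(P(\lambda))$, i.e.\ the one whose black balls sit as far to the right as possible. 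Hence it suffices to compute $[\Theta_i(P(\lambda))]$ from a (co)standard filtration of $P(\lambda)$ using the action of $\Theta_i$ on (co)standard modules (Propositions~\ref{bigKac} and \ref{smallKac}), to decide whether this class is zero, and if not to read off its rightmost weight and identify it with the weight $\mu$ of the statement. Throughout, one small combinatorial fact is used repeatedly: if the partial-sum sequences $r^+(i-1,\cdot)$ and $r^-(\cdot,i-2)$ start at a positive value and change in steps of $\pm1$, then the extremal index at which they first return to $0$ automatically satisfies the ``all intermediate values $\geq0$'' condition in the definitions of the arrow sets $\overset{\leftarrow i}{\blacktriangle}(\lambda)$ and $\underset{j\dashrightarrow}{\blacktriangledown}(\lambda)$.

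For the first part, where $f_\lambda(i-2)=f_\lambda(i-1)=1$, I use the thick Kac filtration: by Theorem~\ref{thrm:main-mult2}, $[\Theta_i(P(\lambda))]=\sum_{\nu\in{\blacktriangle}(\lambda)}[\Theta_i\Delta(\nu)]$, and by Proposition~\ref{bigKac} the term $[\Theta_i\Delta(\nu)]$ is nonzero only when $f_\nu(i-1)=0$. Since $d_\nu$ arises from $d_\lambda$ by sliding black balls leftwards along solid arrows, and since $i-1\in c_\lambda$ can never be the target of a solid arrow, $f_\nu(i-1)=0$ forces the black ball of $d_\lambda$ at position $i-1$ to be slid along a solid arrow; such an arrow exists because $r^+(i-1,i-2)=g(i-2)=+1$ while $r^+(i-1,j)<0$ for $j\ll0$. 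Among all the resulting summands, the one with the rightmost black balls is obtained by sliding only that ball, and only to the rightmost available target $j^\ast:=\max\{\,j<i-2\mid r^+(i-1,j)=0\,\}$ (which by the fact above equals $\max\overset{\leftarrow (i-1)}{\blacktriangle}(\lambda)$), and then applying Proposition~\ref{bigKac}, which moves the remaining black ball at $i-2$ into position $i-1$; the outcome is exactly $d_\mu$. Any other admissible slide, or the other constituent in the short exact sequence of Proposition~\ref{bigKac}(iii) (which occurs when $f_\lambda(i)=1$), moves some black ball strictly further left and so contributes only strictly larger weights. Thus $\Theta_i(P(\lambda))\neq0$, $\mu$ is the $\leq$-minimal weight in its thick Kac filtration, and $\Theta_i(P(\lambda))\cong P(\mu)$.

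For the second part, where $f_\lambda(i-2)=f_\lambda(i-1)=0$, I instead use the thin Kac filtration $[\Theta_i(P(\lambda))]=\sum_{\nu\in{\blacktriangledown}(\lambda)}[\Theta_i\nabla(\nu+2\omega)]$. Put $\lambda':=\lambda+2\omega$, so $d_{\lambda'}$ is $d_\lambda$ shifted two steps to the right and has empty positions at $i$ and $i+1$, and $\nu+2\omega$ ranges over ${\blacktriangledown}(\lambda')$. By Proposition~\ref{smallKac}, $[\Theta_i\nabla(\nu')]$ is nonzero only if $d_{\nu'}$ has a black ball at position $i$; as $f_{\lambda'}(i)=0$ and the diagrams in ${\blacktriangledown}(\lambda')$ are obtained from $d_{\lambda'}$ by sliding black balls \emph{backwards} along dashed arrows (hence leftwards), this can happen only if some ball is slid back along a dashed arrow issuing from $i$, i.e.\ only if $\underset{i\dashrightarrow}{\blacktriangledown}(\lambda')\neq\emptyset$. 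One checks, exactly as in the first part, that $\underset{i\dashrightarrow}{\blacktriangledown}(\lambda')\neq\emptyset$ if and only if there is $j\geq i$ with $r^-(j,i-2)=0$ in $d_\lambda$, with $j_0+2=\min\underset{i\dashrightarrow}{\blacktriangledown}(\lambda')$ for the minimal such $j_0$. If there is no such $j$, then every summand vanishes, so $[\Theta_i(P(\lambda))]=0$ and hence $\Theta_i(P(\lambda))=0$, giving the first alternative. Otherwise the rightmost weight occurring is obtained by sliding back only the ball at position $j_0+2$ into position $i$ and then, via Proposition~\ref{smallKac}, moving it to $i+1$, keeping all other balls fixed; after subtracting $2\omega$ this is precisely the weight $\mu$, i.e.\ $d_\lambda$ with its black ball at $j_0$ moved to $i-1$. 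Every other choice yields a strictly larger weight, so $\mu$ is $\leq$-minimal and $\Theta_i(P(\lambda))\cong P(\mu)$.

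The conceptual part of the argument — reduce to identifying one weight via Theorem~\ref{cor:indecomposable}, then extract it from a (co)standard filtration — is short; the substantive work is the combinatorics hidden in the phrase ``the rightmost weight occurring is obtained by sliding exactly one ball''. Making this precise means showing that, among all the diagrams appearing in the Grothendieck-group expansion, sliding a single ball a minimal amount and then applying the ball-moves-right case of Proposition~\ref{bigKac} or \ref{smallKac} genuinely dominates every competitor in the partial order on $\Lambda_n$, and then matching the resulting diagram with the $r^\pm$-description in the statement. This comparison of ball positions is where I expect the real care to be needed: it relies on the non-crossing properties of solid and dashed arrows (Lemmas~\ref{lem:comb_arc_diag1} and \ref{lem:comb_arc_diag2}) and on the Dyck-path observation noted at the outset; everything else is formal.
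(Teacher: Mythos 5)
The central step of your argument --- ``$\Theta_i(P(\lambda))$ is projective, so by Theorem~\ref{cor:indecomposable} it is either $0$ or $P(\gamma)$'' --- is circular. In the paper, Theorem~\ref{cor:indecomposable} is \emph{derived from} Lemmas~\ref{lem:simplemove} and~\ref{lem:hardmove} (the text after the proof of Lemma~\ref{lem:hardmove} says exactly this); there is no independent argument for indecomposability available before the lemma is established. Consequently, you cannot quote Theorem~\ref{cor:indecomposable} while proving Lemma~\ref{lem:hardmove}. Once the circularity is removed, your minimality argument is no longer enough: knowing that $\mu$ is the $\leq$-smallest weight appearing in a thick Kac filtration of $\Theta_i P(\lambda)$ shows only that $P(\mu)$ is \emph{one} direct summand, since $(P(\gamma):\Delta(\tau))\neq 0$ forces $\tau\geq\gamma$ and $(P(\gamma):\Delta(\gamma))=1$; it does not rule out additional summands $P(\gamma)$ with $\gamma > \mu$.

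To make your Grothendieck-group route work you would have to compute the \emph{entire} thick Kac filtration of $\Theta_i P(\lambda)$ and verify that it coincides, as a multiset, with $\blacktriangle(\mu)$ (with multiplicity one each); then $[\Theta_i P(\lambda)]=[\blacktriangle(\mu)]=[P(\mu)]$ by Theorem~\ref{thrm:main-mult2}, and since the classes $[P(\gamma)]$ are linearly independent in $\Groth_n$ (unitriangularity of $(P(\gamma):\Delta(\tau))$) one concludes $\Theta_i P(\lambda)\cong P(\mu)$. Alternatively, one could compute both filtrations and invoke Proposition~\ref{prop:comb_arc_diag} as in the proof of Theorem~\ref{thrm:main-mult2}. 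Either way, the substantial combinatorial content is exactly the full verification of $\blacktriangle(\mu)$ (an analogue of Proposition~\ref{prop:KL_mult_aux2} for the longer-range move), which your proposal flags as ``the real care needed'' but does not carry out; your write-up only pins down the minimal element.

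The paper's own proof sidesteps this combinatorial verification entirely: it is a double induction on (number of black balls to the left of $i-2$, local run length) that uses the \emph{categorical} Temperley--Lieb relations from Theorem~\ref{thrm:TL_action_categor} --- specifically $\Theta_i\Theta_{i\pm 1}\Theta_i\cong\Theta_i$ for the base case and the commutativity $\Theta_i\Theta_j\cong\Theta_j\Theta_i$ for $|i-j|>1$ to reduce $r$ --- bootstrapping from Lemma~\ref{lem:simplemove}. This expresses $\Theta_i P(\lambda)$ as a composite of elementary one-step moves and avoids having to match full $\blacktriangle$-sets. Your approach is genuinely different in spirit (Grothendieck-class computation rather than categorical induction) and could in principle work, but only after the full filtration verification replaces the appeal to Theorem~\ref{cor:indecomposable}.
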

\begin{remark}
  In part \ref{itm:hardmove1}.) of Lemma~\ref{lem:hardmove}, $d_\mu$ is obtained from $d_\lambda$ by moving a black ball from position $i-2$ to the empty position $j$ which has been connected to position $i-1$ in the arrow diagram of $d_{\lambda}$ by the shortest solid arc:
 \begin{eqnarray*}
&& \xymatrix{&\underset{j}{\circ} &{\ldots}  &\underset{i-2}{\bullet}  &\underset{i-1}{\bullet}  \ar@/_0.8pc/[lulld] &\underset{i}{?}   }\quad\text{ with}\\
 d_\mu= &&\xymatrix{&\underset{j}{\bullet}  &{\ldots}   &\underset{i-2}{\circ}  &\underset{i-1}{\bullet}  &\underset{i}{?}   } 
  \end{eqnarray*}
  In part \ref{itm:hardmove2}.) of the Lemma, $d_\mu$ is obtained from $d_\lambda$ by moving a black ball from position $j$ to the position $i-1$, where $j$ has been connected to position $i-2$ in the arrow diagram of  $d_{\lambda}$ by the shortest dashed arrow. (Notice the symmetry with part \ref{itm:hardmove1}.)).
  If $j$ does not exist then $\Theta_i P(\lambda) = 0$.  
  Otherwise, 
  \begin{eqnarray*}
   d_\mu = &&\xymatrix{  &\underset{i-2}{\circ}  &\underset{i-1}{\bullet}  &\underset{i}{\circ}  &{\ldots} &\underset{j}{\circ}   } \quad\text{and}\\
d_\lambda=&&   \xymatrix{  &\underset{i-2}{\circ} \ar@{-->}@/_1.8pc/[rrurrd] &\underset{i-1}{\circ}  &\underset{i}{\circ}  &{\ldots} &\underset{j}{\bullet}   }\\
 \end{eqnarray*}
 Alternatively, one can check that if $i$ is empty, it is the target of a solid arrow in the  arrow diagram of $d_{\lambda}$ whose source is $j$. 
\end{remark}

\begin{proof}[Proof of Lemma~\ref{lem:hardmove}]
We start with \ref{itm:hardmove1}.). First of all, observe that the requirement on position $j$ to be empty is superfluous: indeed, $j$ is maximal such that $r^+(i-1, j) = 0$, and $r^+(i-1, i-2) = 1$ (since $f_{i-2} =1$), so $r^+(i-1, s) > 0$ for any $j < s<i-1$. Thus $j \in \overset{\leftarrow i-1}{\blacktriangle}(\lambda)$, and so $j \notin c_{\lambda}$. Denote
\begin{eqnarray*}
u_i(\lambda)&:= &\sum_{j < i-2} f_{\lambda}(j),
\end{eqnarray*}
(so $u_i(\lambda)$ is the total number of black balls in $d_{\lambda}$ strictly to the left of position $i-2$).


Assume that $f_\lambda(i-2-r)=\ldots =f_\lambda(i-3)=1$ and $f_\lambda(i-3-r)=0$ (so $u_i(\lambda) \geq r+2$). We prove the statement by double induction on $(u_i(\lambda),r)$.

First assume that $r=0$ (this is the base case: $u_i (\lambda) = 0$ 
implies $r=0$). Then
$$d_\lambda  = \xymatrix{&\underset{i-3}{\circ} &\underset{i-2}{\bullet}  &\underset{i-1}{\bullet}    }$$
Then consider $d_\nu$ obtained from 
$d_\lambda$ by moving
the black balls from positions $i-2,i-1$ to positions $i-3,i-2$ respectively:
$$d_\nu  = \xymatrix{&\underset{i-3}{\bullet} &\underset{i-2}{\bullet}  &\underset{i-1}{\circ}    }$$
Then we have $P(\lambda)\cong\Theta_{i-1}\Theta_i(P(\nu))$
and therefore 
$$\Theta_iP(\lambda)\cong\Theta_i\Theta_{i-1}\Theta_i(P(\nu))\cong\Theta_i(P(\nu))\cong P(\mu)$$
by Lemma~\ref{lem:simplemove}, where $$d_\mu  = \xymatrix{&\underset{i-3}{\bullet} &\underset{i-2}{\circ}  &\underset{i-1}{\bullet}    }$$

We now consider the case $r=1$:
$$d_\lambda  = \xymatrix{&\underset{i-4}{\circ} &\underset{i-3}{\bullet}  &\underset{i-2}{\bullet}  &\underset{i-1}{\bullet}    }$$
Then $P(\lambda)\cong\Theta_{i-2}(P(\kappa))$ due to Lemma~\ref{lem:simplemove} ($d_\kappa$ is obtained from $d_\lambda$ by moving a black ball from $i-3$ to $i-4$):
$$d_\kappa  = \xymatrix{&\underset{i-4}{\bullet} &\underset{i-3}{\circ}  &\underset{i-2}{\bullet}  &\underset{i-1}{\bullet}    }$$
Then $\Theta_i(P(\lambda))\cong\Theta_i \Theta_{i-2}(P(\kappa))\cong\Theta_{i-2}\Theta_i(P(\kappa))$ using that $\Theta_i$ and $\Theta_{i-2}$ commute due to Theorem~\ref{thrm:TL_action_categor}.
Applying the previous case ($r=0$) to $\Theta_i(P(\kappa))$, we obtain $\Theta_i(P(\kappa))\cong P(\tau)$ for 
$$ \tau  = \xymatrix{&\underset{i-4}{\bullet} &\underset{i-3}{\bullet}  &\underset{i-2}{\circ}  &\underset{i-1}{\bullet}    }$$
We can apply to $\Theta_{i-2}(P(\tau))$ the
induction hypothesis, since $u_{i-2}(\tau) = u_i(\lambda)-1 $. Set $\Theta_{i-2}(P(\tau))\cong P(\mu)$. Now $d_\tau$ is obtained from $d_\lambda$ by moving a black ball from $i-2$ to $i-4$,
and $d_\mu$ is obtained from $d_\tau$ by moving a black ball from $i-4$ to $j$ such that $r^+(i-3,j)=0$ in $d_{\tau}$, where $j$ is maximal with such property. Therefore, $\Theta_i(P(\lambda)) \cong P(\mu)$ satisfies the statement of the Lemma.

If $r\geq 2$, the argument is similar but easier. Let $p=i-1-r$. Then let $d_{\kappa}$ be the diagram obtained from $d_{\lambda}$ by moving a black ball from position $p-1$ to $p-2$. By Lemma~\ref{lem:simplemove}, $P(\lambda)\cong\Theta_p(\kappa)$, and  
$$\Theta_i(P(\lambda))\cong\Theta_i\Theta_{p}(P(\kappa))\cong\Theta_{p}\Theta_i(P(\kappa)).$$ Again, $\Theta_i$ and $\Theta_p$ commute by Theorem~\ref{thrm:TL_action_categor}. By induction hypothesis (on $r$), we have $\Theta_i(P(\kappa))\cong P(\tau)$ for some $\tau$. We calculate $$P(\mu)\cong\Theta_{p}(P(\tau)) \cong\Theta_i(P(\lambda))$$ using Lemma~\ref{lem:simplemove}.
To get $d_\mu$ from $d_\lambda$ we move a black ball from $i-2$ to $j$ and this $j$ satisfies: $r^+(i-1,j)=0$. Thus $\mu$ satisfies the statement of the Lemma.

The proof of \ref{itm:hardmove2}.) is almost symmetric, with minor differences. First of all, note that if for all $j \geq i$ we have $r^{-}(j,i-2)\neq 0$, then by Theorem~\ref{thrm:main-mult2}, $f_{\lambda'}(j) =0$ for any thick Kac component $\Delta(\lambda')$ of $P(\lambda)$ and any $j \geq i-2$; this implies $\Theta_i \Delta(\lambda') =0$ and thus $\Theta_i P(\lambda) = 0$. Hence, we will assume that $r^{-}(j,i-2)= 0$ for some $j \geq i$. Set 
$$u^-_i(\lambda) = \sum_{j \geq i} f_{\lambda}(j)$$ (this is the number of black balls at positions to the right of $i$, inclusive). By the assumption above, $u^-_i(\lambda) > 0$. Assume that $f_\lambda(i-1)=\cdots= f_\lambda(i-1+r)=0$ and $f_\lambda(i+r)=1$ (such $r$ exists since $u^-_i(\lambda) > 0$). 

We prove our statement by induction on $(u^-_i(\lambda), r)$. Consider the base case $r=0$:
$$d_\lambda  = \xymatrix{&\underset{i-2}{\circ} &\underset{i-1}{\circ}  &\underset{i}{\bullet}    }$$
Then consider $d_\nu$ obtained from 
$d_\lambda$ by moving
the black ball at position $i$ to position $i-2$:
$$d_\nu  = \xymatrix{&\underset{i-2}{\bullet} &\underset{i-1}{\circ}  &\underset{i}{\circ}    }$$
Then we have
$P(\lambda)\cong\Theta_{i+1}\Theta_i(P(\nu)),$  
and therefore by Lemma~\ref{lem:simplemove}
$$\Theta_i P (\lambda)\cong\Theta_i\Theta_{i+1}\Theta_i(P(\nu))\cong\Theta_i(P(\nu))\cong P(\mu)$$
where $$d_\mu  = \xymatrix{&\underset{i-2}{\circ} &\underset{i-1}{\bullet}  &\underset{i}{\circ}      }$$

Next, consider the case $r=1$, that means
$$d_\lambda  = \xymatrix{&\underset{i-2}{\circ} &\underset{i-1}{\circ} &\underset{i}{\circ}   &\underset{i+1}{\bullet}    }$$
Consider $d_\kappa$ obtained from 
$d_\lambda$ by moving
the black ball at position $i+1$ to position $i$:
$$d_\kappa  = \xymatrix{&\underset{i-2}{\circ} &\underset{i-1}{\circ} &\underset{i}{\bullet}  &\underset{i+1}{\circ}    }$$
Then we have $P(\lambda)\cong\Theta_{i+2}P(\kappa)$ by \ref{itm:hardmove1}.), and by Theorem~\ref{thrm:TL_action_categor}
$$\Theta_i (P (\lambda))\cong\Theta_i\Theta_{i+2}(P(\kappa))\cong\Theta_{i+2}\Theta_i(P(\kappa)).$$
We have already seen that $\Theta_i(P(\kappa)) \cong P(\tau)$ for $\tau$ of the form
$$d_\tau = \xymatrix{&\underset{i-2}{\circ} &\underset{i-1}{\bullet} &\underset{i}{\circ}  &\underset{i+1}{\circ}    } $$
(this is the case $r=0$). Then $\Theta_i (P (\lambda) )\cong \Theta_{i+2} (P(\tau) )$. To compute $\Theta_{i+2} (P(\tau) )$, we can apply the induction hypothesis, since $u^-_{\tau}(i+2) = u^-(\lambda) -1$, and obtain $$\Theta_i (P (\lambda) )\cong \Theta_{i+2} (P(\tau) ) \cong P(\mu)$$ where $d_\mu$ is obtained from $d_\tau$ by moving a black ball from $j$ to $i+1$ such that $r^-(j,i)=0$ in $d_{\tau}$, and $j$ is minimal with such property. Then we obtain $\Theta_i(P(\lambda))\cong P(\mu)$, where $\mu$ satisfies the statement of the Lemma.

Finally, the case $r > 1$ is very similar, but easier. Set $p:= i +r$. Let $d_\kappa$ be the diagram obtained from $d_{\lambda}$ by moving a black ball from position $p$ to $p-1$. By Lemma~\ref{lem:simplemove}, we have $P(\lambda)\cong\Theta_{p+1}(\kappa)$, and  
$$\Theta_i(P(\lambda))\cong\Theta_i\Theta_{p+1}(P(\kappa))\cong\Theta_{p+1}\Theta_i(P(\kappa)).$$ Again, $\Theta_i, \Theta_{p+1}$ here commute by Theorem~\ref{thrm:TL_action_categor}. By induction hypothesis (induction on $r$), we have $\Theta_i(P(\kappa))\cong P(\tau)$ for some $\tau$. Calculating $$P(\mu)\cong\Theta_{p}(P(\tau))\cong\Theta_i(P(\lambda))$$ using Lemma~\ref{lem:simplemove}, we obtain the desired $\mu$.
\end{proof}
Theorem~\ref{cor:indecomposable} follows now directly from Lemmas~\ref{lem:simplemove} and \ref{lem:hardmove}.

\section{Multiplicity-freeness results}\label{ssec:translation_proj_coroll}
As an application, we deduce now several crucial multiplicity-freeness results.  

\subsection{Hom spaces between indecomposable projectives}\label{sssec:proj_mult_free}
\begin{proposition}\label{cor:dim_hom_indec}
 Let $\lambda, \mu$ be two dominant weights. Then 
 $$ \dim \Hom_\gg(P(\lambda), P(\mu)) \leq 1$$
 \end{proposition}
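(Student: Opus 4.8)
The plan is to relate $\dim\Hom_\gg(P(\lambda),P(\mu))$ to the thick Kac filtration of $P(\mu)$ and then to move the combinatorial complexity off $P(\mu)$ by translation functors. Since $P(\lambda)$ is projective, $\Hom_\gg(P(\lambda),-)$ is exact, and $\dim\Hom_\gg(P(\lambda),\Delta(\nu))=[\Delta(\nu):L(\lambda)]$; applying this to a thick Kac filtration of $P(\mu)$ and using Theorems~\ref{thrm:main-mult2} and~\ref{cor:KL_coeff} yields
$$\dim\Hom_\gg(P(\lambda),P(\mu))=\sum_{\nu}(P(\mu):\Delta(\nu))\,[\Delta(\nu):L(\lambda)]=\#\bigl({\blacktriangle}(\mu)\cap{\blacktriangledown}(\lambda)\bigr).$$
Thus the assertion is equivalent to the purely combinatorial statement that ${\blacktriangle}(\mu)\cap{\blacktriangledown}(\lambda)$ has at most one element, which generalizes Proposition~\ref{prop:comb_arc_diag} (the case $\mu=\lambda$). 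Rather than attack this directly, I would deduce the bound from the already-established Theorem~\ref{cor:indecomposable} together with the fact (Theorem~\ref{cor:KL_coeff}) that thick Kac modules are multiplicity-free.

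First I would reduce $P(\mu)$ to a thick Kac module via translation functors. Choose a \emph{typical} weight $\nu$ with $\nu\geq\mu$ in the order of Section~\ref{ssec:order_weights}, i.e. $\nu_i\leq\mu_i$ for all $i$; such $\nu$ exists, for instance the one with $\bar\nu_1=\bar\mu_1$ and $\bar\nu_i=\min(\bar\mu_i,\bar\nu_{i-1}-2)$ for $i>1$, which is a strictly decreasing integer sequence with consecutive gaps at least $2$ — hence typical — and coordinatewise $\leq\bar\mu$. By Corollary~\ref{cor:almosttrans} there is a sequence $j_1>\dots>j_k$ of integers with $P(\mu)\cong\Theta_{j_k}\cdots\Theta_{j_1}(P(\nu))$, and $P(\nu)\cong\Delta(\nu)$ by Lemma~\ref{lem:typical}.

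Next I would transfer these functors onto the other argument. Since $\Theta_m$ is right adjoint to $\Theta_{m+1}$ by Proposition~\ref{itm:TL1}, applying adjunction $k$ times gives
$$\Hom_\gg(P(\lambda),P(\mu))\;\cong\;\Hom_\gg\bigl(\Theta_{j_1+1}\Theta_{j_2+1}\cdots\Theta_{j_k+1}(P(\lambda)),\,\Delta(\nu)\bigr).$$
By Theorem~\ref{cor:indecomposable} applied repeatedly, $Q:=\Theta_{j_1+1}\cdots\Theta_{j_k+1}(P(\lambda))$ is either $0$ or an indecomposable projective $P(\kappa)$. In the first case the Hom space vanishes; in the second,
$$\dim\Hom_\gg(P(\kappa),\Delta(\nu))\;=\;[\Delta(\nu):L(\kappa)]\;\leq\;1$$
by Theorem~\ref{cor:KL_coeff}. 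This proves the proposition.

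The genuinely hard inputs — Theorems~\ref{thrm:main-mult2} and~\ref{cor:indecomposable} — are already in place, so I do not expect a serious obstacle; the remaining work is bookkeeping, namely checking that a typical $\nu$ with $\nu\geq\mu$ exists and that the chain of adjunctions applies exactly as written (this is precisely why Corollary~\ref{cor:almosttrans} is formulated through single-box moves). As a by-product, the combinatorial identity $\#\bigl({\blacktriangle}(\mu)\cap{\blacktriangledown}(\lambda)\bigr)\leq1$ falls out as a corollary instead of needing a direct proof.
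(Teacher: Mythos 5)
Your proof is correct and follows the same strategy as the paper's: rewrite the Hom-space as $\lvert{\blacktriangle}(\mu)\cap{\blacktriangledown}(\lambda)\rvert$, observe the bound is trivial when $\mu$ is typical, and reduce to the typical case by using Corollary~\ref{cor:almosttrans} to write $P(\mu)$ as a composition of translation functors applied to a typical projective $P(\nu)\cong\Delta(\nu)$, then transfer the functors onto $P(\lambda)$ by adjunction and invoke Theorem~\ref{cor:indecomposable}. Your explicit construction of a typical $\nu$ with $\nu_i\le\mu_i$ fills in a small detail the paper leaves implicit; otherwise the arguments coincide.
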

\begin{proof}
Recall that $P(\lambda)$, $P(\mu)$ have filtrations by thick and thin Kac modules. In view of Lemma~\ref{lem:ext}, we have:
\begin{eqnarray*}
 \dim \Hom_\gg(P(\lambda), P(\mu)) &=& \dim \Hom_\gg(P(\mu+2\omega), P(\lambda))\\
 &=& \sum_{\tau} (P(\mu+2\omega): \Delta(\tau)) (P(\lambda): \nabla(\tau))\\
 &=& \lvert\blacktriangle(\mu +2\omega) \cap \blacktriangledown(\lambda +2\omega) \mid = \lvert\blacktriangle(\mu) \cap \blacktriangledown(\lambda) \mid
\end{eqnarray*}
Thus, our claim is that $\lvert\blacktriangle(\mu) \cap \blacktriangledown(\lambda) \mid \leq 1$. In case $\mu$ is typical, i.e., $\blacktriangle(\mu) = \{\mu\}$ this is obvious, since $\lvert\blacktriangle(\mu) \cap \blacktriangledown(\lambda) \mid \leq \lvert\blacktriangle(\mu)  \mid = 1$. We now show how to reduce the problem to the case when $\mu$ is typical.

Namely, by Corollary~\ref{cor:almosttrans}, $P(\mu)$ can be obtained from $P(\mu')$ for some typical weight $\mu'$ by a sequence $\Theta_{i_1}, \Theta_{i_2}, \ldots, \Theta_{i_k}$ of translation functors:
$$ P(\mu) \cong \Theta_{i_1}\circ \Theta_{i_2}\circ \ldots\circ \Theta_{i_k} P(\mu')$$

Then by the adjunctions from Proposition~\ref{itm:TL1} we obtain
\begin{eqnarray*}
 \dim \Hom_\gg(P(\lambda), P(\mu)) &= &\dim \Hom_\gg(P(\lambda), \Theta_{i_1}\circ \Theta_{i_2}\circ \ldots\circ \Theta_{i_k} P(\mu')) \\
 &=&\dim \Hom_\gg(\Theta_{i_k +1}\circ \ldots\circ \Theta_{i_2 +1}\circ \Theta_{i_1 +1} P(\lambda), P(\mu'))
\end{eqnarray*}
Since by Theorem~\ref{cor:indecomposable}, the module $\Theta_{i_k +1}\circ \ldots\circ \Theta_{i_2 +1}\circ \Theta_{i_1 +1} P(\lambda)$ is indecomposable, it is enough to consider the case when $\mu$ is a typical weight. Hence the claim follows.
\end{proof}
This immediately implies the following surprising fact:
\begin{theorem}
 Indecomposable projective objects in $\mathcal{F}_n$ are multiplicity-free, i.e., $$[P(\lambda):L(\mu)] \leq 1$$ for any dominant weights $\lambda, \mu$.
\end{theorem}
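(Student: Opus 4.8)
The statement to prove is that indecomposable projective objects in $\mathcal{F}_n$ are multiplicity-free, i.e., $[P(\lambda):L(\mu)] \leq 1$ for all dominant weights $\lambda, \mu$. My plan is to deduce this directly from the preceding Proposition (that $\dim \Hom_\gg(P(\lambda), P(\mu)) \leq 1$), using the standard fact that projective covers compute composition multiplicities.

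First I would recall the basic representation-theoretic identity: in a category with enough projectives where $P(\mu)$ is the projective cover of $L(\mu)$, we have
$$[P(\lambda) : L(\mu)] = \dim \Hom_\gg(P(\mu), P(\lambda)).$$
This is because $\Hom_\gg(P(\mu), -)$ is exact and sends a module $X$ to a space of dimension $[X:L(\mu)]$ (each composition factor isomorphic to $L(\mu)$ or $\Pi L(\mu)$ contributes exactly one dimension, by the definition of $\Hom_\gg$ as the super-Hom and the convention on multiplicities $[X:L]$ fixed in Section~\ref{sec:notation}); applying this with $X = P(\lambda)$ gives the claim. One must be slightly careful with the parity conventions, but since we are disregarding parity throughout this section (as stated at the start of Section~\ref{sec:transl_functors_proj}, which persists into Section~\ref{ssec:translation_proj_coroll}), the bookkeeping is harmless.

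Then I would simply invoke Proposition~\ref{cor:dim_hom_indec}: since $\dim \Hom_\gg(P(\mu), P(\lambda)) \leq 1$ for all dominant $\lambda, \mu$, we conclude $[P(\lambda):L(\mu)] \leq 1$. That is the entire argument.

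I do not anticipate any real obstacle here; the work has already been done in establishing Theorem~\ref{cor:indecomposable} (indecomposability of $\Theta_i P(\lambda)$) and Proposition~\ref{cor:dim_hom_indec} (the Hom-space bound, via the combinatorial fact $|\blacktriangle(\mu) \cap \blacktriangledown(\lambda)| \leq 1$ from Proposition~\ref{prop:comb_arc_diag} together with the typical-weight reduction). The only thing to get right in writing this up is the precise form of the BGG-type identity $[P(\lambda):L(\mu)] = \dim\Hom_\gg(P(\mu),P(\lambda))$ and making sure it is consistent with how $\Hom_\gg$ and Jordan–Hölder multiplicities were defined earlier in the paper. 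So the proof reads:

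\begin{proof}
 By general theory of highest weight categories (or simply because $\Hom_\gg(P(\mu),-)$ is exact and picks out the multiplicity of $L(\mu)$ among composition factors, as in the definition of $[X:L]$), we have $[P(\lambda):L(\mu)] = \dim \Hom_\gg(P(\mu), P(\lambda))$ for all dominant weights $\lambda, \mu$. By Proposition~\ref{cor:dim_hom_indec}, this dimension is at most $1$. Hence $[P(\lambda):L(\mu)] \leq 1$.
\end{proof}
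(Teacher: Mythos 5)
Your proof is correct and is essentially identical to the paper's: both use the standard identity $[P(\lambda):L(\mu)] = \dim \Hom_\gg(P(\mu), P(\lambda))$ and then invoke Proposition~\ref{cor:dim_hom_indec}.
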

\begin{proof}
We have $[P(\lambda):L(\mu)] = \dim \Hom_\gg(P(\mu), P(\lambda)) \leq 1$ by Proposition~\ref{cor:dim_hom_indec}.
\end{proof}

\subsection{Multiplicities in translations of simples}
In contrast to the case of projectives, the images of simple modules under the action of translation functors is hard to describe.  Below we give some corollaries of Lemma~\ref{lem:hardmove} concerning the translation of simple modules.

Theorem~\ref{cor:indecomposable} however implies that for any $i$, $\Theta_i L(\lambda)$ is zero or  indecomposable: indeed if nonzero, it has a simple socle and a simple cosocle, since $$\Theta_i P(\lambda) \twoheadrightarrow \Theta_i L(\lambda) \hookrightarrow \Theta_i I(\lambda)$$ and we have seen that the image of each indecomposable projective/injective is again a projective/injective, with simple cosocle and socle.

\begin{corollary}
Let $\lambda \neq \lambda'$ be two distinct weights, and let $i$ be an integer.
\begin{enumerate}
 \item The module $\Theta_i L(\lambda)$ is multiplicity-free.
 \item The modules $\Theta_i L(\lambda)$ and $\Theta_i L(\lambda')$ do not have any common simple components.
\end{enumerate}
\end{corollary}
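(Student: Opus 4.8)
The plan is to compute the composition multiplicities $[\Theta_i L(\lambda):L(\mu)]$ outright and read off both statements from the answer; everything reduces to Theorem~\ref{cor:indecomposable} together with the adjunctions of Proposition~\ref{itm:TL1}.

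First, since $\mathcal{F}_n$ has finite length and enough projectives and $P(\mu)$ is the projective cover of $L(\mu)$, the standard d\'evissage argument gives $[M:L(\mu)]=\dim\Hom_\gg(P(\mu),M)$ for every $M\in\mathcal{F}_n$ (this identity was already used in the proof of Corollary~\ref{cor:BGG}). Applying it to $M=\Theta_i L(\lambda)$ and using that $\Theta_i$ is right adjoint to $\Theta_{i+1}$ (Proposition~\ref{itm:TL1}) yields
\[
[\Theta_i L(\lambda):L(\mu)]=\dim\Hom_\gg(P(\mu),\Theta_i L(\lambda))=\dim\Hom_\gg(\Theta_{i+1}P(\mu),L(\lambda)).
\]
Now $\Theta_{i+1}$ preserves projectives (being left adjoint to the exact functor $\Theta_i$, see Lemma~\ref{lem:exact}), so $\Theta_{i+1}P(\mu)$ is projective, and by Theorem~\ref{cor:indecomposable} it is either $0$ or an indecomposable projective $P(\kappa)$ for some dominant weight $\kappa$ (unique up to the parity shift we ignore in this section). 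Hence $\dim\Hom_\gg(\Theta_{i+1}P(\mu),L(\lambda))$ equals $[L(\lambda):L(\kappa)]=\delta_{\kappa\lambda}$ when $\Theta_{i+1}P(\mu)\cong P(\kappa)$, and $0$ when $\Theta_{i+1}P(\mu)=0$. Altogether,
\[
[\Theta_i L(\lambda):L(\mu)]=\begin{cases}1,&\text{if }\Theta_{i+1}P(\mu)\cong P(\lambda),\\ 0,&\text{otherwise.}\end{cases}
\]

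This settles both claims. For (1) the displayed multiplicity is always $0$ or $1$, so $\Theta_i L(\lambda)$ is multiplicity-free. For (2), if $L(\mu)$ occurred as a composition factor of both $\Theta_i L(\lambda)$ and $\Theta_i L(\lambda')$, the formula would force $\Theta_{i+1}P(\mu)\cong P(\lambda)$ and $\Theta_{i+1}P(\mu)\cong P(\lambda')$, hence $P(\lambda)\cong P(\lambda')$; but an indecomposable projective determines its weight via its simple top, so $\lambda=\lambda'$, contradicting the hypothesis.

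The argument is essentially formal, so there is no real obstacle; the one point needing mild care is keeping track of the parity switch $\Pi$ hidden in $\Theta_k=\Pi^k\Theta'_k$ and in the definitions of $[\,\cdot:\cdot\,]$ and $\dim\Hom_\gg$. Since throughout this section we identify $M$ with $\Pi M$, all isomorphisms above may be read up to parity shift, and the conclusion is unaffected.
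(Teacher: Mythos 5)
Your proof is correct and takes essentially the same approach as the paper: express the multiplicity as $\dim\Hom_\gg(P(\mu),\Theta_iL(\lambda))$, use the adjunction of Proposition~\ref{itm:TL1} to rewrite this as $\dim\Hom_\gg(\Theta_{i+1}P(\mu),L(\lambda))$, and then invoke Theorem~\ref{cor:indecomposable} to conclude that $\Theta_{i+1}P(\mu)$ is zero or indecomposable projective. Your write-up is just slightly more explicit about why $\Theta_{i+1}$ preserves projectives and about the parity bookkeeping, but these are details the paper elides in the same spirit.
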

\begin{proof}
 For any $\mu$, $$[\Theta_i(L(\lambda)): L(\mu)] = \dim \Hom_\gg(P(\mu), \Theta_i L(\lambda)) = \dim \Hom_\gg(\Theta_{i+1} P(\mu), L(\lambda)).$$
Now $\Theta_{i+1} P(\mu)$ is either $0$ or isomorphic to $P(\nu)$ for some $\nu$, hence $[\Theta_i(L(\lambda)): L(\mu)] = \delta_{\nu, \lambda} \leq 1$, implying both statements.
\end{proof}

In fact, one can immediately give some sufficient conditions for $\Theta_i L(\lambda)$ to be zero:
\begin{corollary}
 We have $\Theta_i L(\lambda) = 0$ if $\lambda$ does not satisfy $f_\lambda(\InnaC{i-1})=0, f_\lambda(\InnaC{i})=1$. \InnaC{That is, $\Theta_i L(\lambda) \neq 0$ only if $d_\lambda$ looks locally as}
 \begin{eqnarray*}
 \xymatrix{  &\underset{\InnaC{i-1}}{\circ}  &\underset{\InnaC{i}}{\bullet} }
 \end{eqnarray*}
\end{corollary}
\begin{proof}
 Once again, for any $\mu$, it holds $[\Theta_i(L(\lambda)): L(\mu)] = \dim \Hom_\gg(P(\mu), \Theta_i L(\lambda)) = \dim \Hom_\gg(\Theta_{i+1} P(\mu), L(\lambda)) = \delta_{\Theta_{i+1} P(\mu), P(\lambda)}.$ To have $\Theta_i L(\lambda) \neq 0$, positions $\InnaC{i-1, i}$ of $d_{\lambda}$ have to be of the types obtained in Lemma~\ref{lem:hardmove}.
\end{proof}

\subsection{Socles and cosocles of Kac modules}\label{ssec:socles_Kac}

In this subsection we compute explicitly the socle of standard and the cosocle of costandard modules.  As in the previous section we disregard parity.

\begin{proposition}\label{prop:socle_thick_kac}
 Let $\lambda \in \Lambda_n$.
\begin{enumerate}[1.)]
\item The cosocle of $\nabla(\lambda)$ is $L(\tau)$, where $d_{\lambda}$ is obtained from $d_{\tau}$ by transferring each black ball through the longest solid arrow originating in it.
\item The socle of $\Delta(\lambda)$ is $L(\tau')$, where $d_{\lambda}$ is obtained from $d_{\tau'}$ by transferring black balls through the maximal dashed arcs: i.e., for each empty position we choose the dashed arrow of maximal length originating in it, and transfer the corresponding black ball.
\item In addition, we have $\tau'  = \tau + 2\omega$.
\end{enumerate}
\end{proposition}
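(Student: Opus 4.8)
The plan is to reduce all three statements to the combinatorics of arrow diagrams already developed, using the duality functor and the decomposition numbers from Theorem~\ref{cor:KL_coeff} as the only inputs. First I would prove part 1.) Recall from Theorem~\ref{cor:KL_coeff} that $[\nabla(\mu)] = \sum_{\lambda : \mu \in \blacktriangle(\lambda)} [L(\lambda)]$, so the simple modules $L(\lambda)$ occurring in $\nabla(\mu)$ are exactly those with $\mu \in \blacktriangle(\lambda)$, i.e. those $\lambda$ from which $d_\mu$ is obtained by sliding some black balls forward along solid arrows of the arrow diagram of $\lambda$. The cosocle of $\nabla(\mu)$ is a quotient, hence by the highest weight structure (with $\nabla$ as costandard objects for the second structure, or by working with $\Delta$'s via duality) the cosocle must be $L(\tau)$ with $\tau$ extremal among these $\lambda$'s with respect to the relevant order on weights. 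Among all $\lambda$ with $\mu \in \blacktriangle(\lambda)$, the weight $\tau$ whose $d_\mu$ is obtained by moving \emph{every} black ball the maximal possible distance (along the longest solid arrow out of it) is the unique minimal one in the order $\leq$ of Section~\ref{ssec:order_weights} (black balls pushed maximally to the right). Since $\nabla(\mu)$ is indecomposable with simple cosocle (it has simple socle $L(\mu)$ by Section~\ref{ssec:Kac_modules}, and its cosocle is simple because $P(\tau) \twoheadrightarrow \nabla(\mu)$ would have to hit the cosocle; more directly, one uses that $\Hom_\gg(\nabla(\mu), L(\lambda)) \neq 0$ forces $\lambda$ to index a costandard quotient), it remains to identify which $L(\lambda)$ actually sits on top, and Lemma~\ref{lem:comb_arc_diag1} (solid arrows only meet at common sources) together with Proposition~\ref{prop:comb_arc_diag} guarantees the ``move everything maximally'' operation is well-defined and produces a single weight $\tau$. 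The key point making $L(\tau)$ the cosocle rather than merely a composition factor is that $\dim\Hom_\gg(\nabla(\mu),L(\tau))=1$ and $\tau$ is the unique weight with $\mu\in\blacktriangle(\tau)$ admitting a surjection; this I would pin down by a dimension count using $[\nabla(\mu):L(\tau)]=1$ from Theorem~\ref{cor:KL_coeff}.

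For part 2.), I would argue by the symmetry between solid and dashed arrows. From Theorem~\ref{cor:KL_coeff}, $[\Delta(\mu)] = \sum_{\lambda : \mu \in \blacktriangledown(\lambda)} [L(\lambda)]$, so the composition factors of $\Delta(\mu)$ are the $L(\lambda)$ with $\mu \in \blacktriangledown(\lambda)$, i.e. those $\lambda$ from which $d_\mu$ is obtained by sliding black balls backward along dashed arrows of the arrow diagram of $\lambda$. The socle of $\Delta(\mu)$ is a submodule, so the relevant $L(\tau')$ is the extremal such weight in the opposite direction: the one obtained by moving \emph{every} black ball the maximal distance backward along the longest dashed arrow ending in it — equivalently, choosing for each empty position the longest dashed arrow originating there and transferring the corresponding black ball. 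Well-definedness again follows from Lemma~\ref{lem:comb_arc_diag1}.\ref{itm:comb_arc_diag1_4} and Lemma~\ref{lem:comb_arc_diag2} (every black ball is the target of a unique dashed arrow), and Proposition~\ref{prop:comb_arc_diag} ensures $\tau'$ is singled out uniquely. That the socle is exactly $L(\tau')$ (and simple) follows because $\Delta(\mu)$ has simple head $L(\mu)$, hence $\Delta(\mu)^*$ has simple socle; dualizing and using Lemma~\ref{lem:dual_Kac} transports the cosocle statement of part 1.) for $\nabla$ to the socle statement for $\Delta$, provided the arrow-diagram combinatorics is compatible with the duality $d_\mu \mapsto$ its reflection, which I would check directly from the definitions of $r^\pm$.

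For part 3.), the claim $\tau' = \tau + 2\omega$ should drop out of Proposition~\ref{lem:injective_hull_simple}: $\Pi^n P(\tau'-2\omega)$ is the injective hull $I(\tau')$ of $L(\tau')$, and the socle of $\Delta(\mu)$ being $L(\tau')$ means $\Delta(\mu) \hookrightarrow I(\tau') = P(\tau'-2\omega)$. On the other hand, $\nabla(\mu)$ has cosocle $L(\tau)$, so $P(\tau) \twoheadrightarrow \nabla(\mu)$. The relation $\Delta(\lambda)^* \cong \Delta(\mu-2\omega)$-type formulas from Lemma~\ref{lem:dual_Kac} and Remark~\ref{par} (the map $\nabla(\lambda)\to\Delta(\lambda-2\omega)$) tie the highest weight of $\Delta(\mu)$'s socle to that of $\nabla(\mu)$'s cosocle shifted by $2\omega$. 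Concretely, I would compute: since $L(\tau)$ is the cosocle of $\nabla(\mu)$, applying $-\otimes T$ (the $1$-dimensional module with highest weight $2\omega$) gives $L(\tau+2\omega)$ as cosocle of $\nabla(\mu+2\omega)$; and one checks on arrow diagrams that transferring black balls along longest solid arrows in $d_\tau$ to reach $d_\mu$ is the same as transferring along longest dashed arrows in $d_{\tau+2\omega}$ to reach $d_{\mu+2\omega}$ — this is precisely the shift-by-$2\omega$ symmetry visible already in Proposition~\ref{prop:KL_mult_aux2} and the statement $[{\blacktriangle}(\lambda)]=[{\blacktriangledown}(\lambda+2\omega)]$ used in the proof of Theorem~\ref{thrm:main-mult2}. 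Hence $\tau' = \tau+2\omega$.

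\emph{Main obstacle.} The genuine difficulty is not the Grothendieck-group bookkeeping (that is immediate from Theorem~\ref{cor:KL_coeff}) but upgrading ``$L(\tau)$ is the extremal composition factor'' to ``$L(\tau)$ is the cosocle (and it is simple)''. This requires knowing that $\nabla(\mu)$ has a \emph{simple} head, which is not automatic for a costandard-type module in a highest weight category. I expect to establish this via the indecomposability results of Section~\ref{sec:transl_functors_proj}: $\nabla(\mu)$ (and $\Delta(\mu)$) are obtained from typical Kac modules — which are simple or projective-indecomposable — by applying translation functors, and Theorem~\ref{cor:indecomposable} propagates indecomposability and the simple socle/cosocle property. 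Making this precise, and matching the weight of the resulting simple top with the ``longest solid arrow'' recipe rather than just showing it is \emph{some} extremal weight, is the step I expect to require the most care.
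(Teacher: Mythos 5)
Your proposal diverges substantially from the paper's proof and has several genuine gaps.

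\textbf{What the paper actually does.} It takes as starting point the unique (up to scalar) nonzero map $\nabla(\lambda)\to\Delta(\lambda-2\omega)$ from Lemma~\ref{lem:ext}, whose image identifies the cosocle of $\nabla(\lambda)$ with the socle of $\Delta(\lambda-2\omega)$; this gives part~3.) essentially for free, rather than as a consequence of parts~1.)--2.) as in your plan. It then invokes the equality
$1=\dim\End_\gg(P(\tau))=\sum_\mu(P(\tau):\Delta(\mu))(P(\tau):\nabla(\mu))=|\blacktriangle(\tau)\cap\blacktriangledown(\tau+2\omega)|$
(using Proposition~\ref{cor:dim_hom_indec} for the upper bound), so that $\blacktriangle(\tau)\cap\blacktriangledown(\tau+2\omega)=\{\lambda\}$. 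The bulk of the proof is then a careful combinatorial case analysis showing that the diagram $\lambda'$ obtained from $d_\tau$ by maximal solid slides coincides with the diagram $\lambda''$ obtained from $d_{\tau+2\omega}$ by maximal dashed slides; this common diagram then lies in the intersection, hence equals $\lambda$.

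\textbf{Where your plan goes wrong.} First, your proposed mechanism for establishing simplicity of $\mathrm{cosoc}\,\nabla(\lambda)$ via translation functors does not work: $\Theta_i\nabla(\lambda)$ is generically not a thin Kac module (see the short exact sequence in Proposition~\ref{smallKac}.iii), so one cannot reach general $\nabla$'s from typical ones by translation; and Theorem~\ref{cor:indecomposable} applies only to projectives, not to arbitrary modules with simple socle/cosocle. Second, the ``extremality'' argument is unsupported --- the cosocle of a costandard module in a highest weight category without a simple-preserving duality need not be the composition factor with extremal weight, and indeed the paper never appeals to any such minimality in the partial order. Third, you misuse Proposition~\ref{prop:comb_arc_diag}, which says $\blacktriangle(\lambda)\cap\blacktriangledown(\lambda)=\{\lambda\}$; what is needed and nontrivial is the \emph{different} intersection $\blacktriangle(\tau)\cap\blacktriangledown(\tau+2\omega)$, and the paper gets its cardinality from $\dim\End(P(\tau))=1$, not from that proposition. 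Finally, you do not address at all the substantive combinatorial content of the paper's proof --- the verification that maximal-solid-sliding from $d_\tau$ and maximal-dashed-sliding from $d_{\tau+2\omega}$ land on the same weight diagram, which is the step that identifies $\tau$ explicitly rather than just abstractly. The observations you do get right (the role of Proposition~\ref{lem:injective_hull_simple}, the $2\omega$-shift, multiplicity-freeness) are ingredients but do not carry the argument without the structure above.
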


\begin{remark}
\label{interprsharp}
 This result gives another interpretation of the construction $(\cdot)^{\sharp}$ given in Section~\ref{ssec:duality_simple}: let $\mu := -w_0^{-1}(\lambda) - \tilde{\gamma}$. Then 
 $L(\tau+2\omega) = L(\mu^\sharp) = L(\mu)^* \hookrightarrow  \Delta(\mu)^*=  \Delta(-w_0\mu - \tilde{\gamma}) = \Delta(\lambda)$
 and thus $\tau = \left( -w_0^{-1}(\lambda) - \tilde{\gamma} \right)^{\sharp} -2\omega.$
\end{remark}

\begin{example}
\mbox{}
 \begin{enumerate}[1.)]
 \item Let $n = 3$, $\lambda =0$:
 $$d_\lambda = \xymatrix{       &\underset{-1}{\circ}  &\underset{0}{\bullet}  &\underset{1}{\bullet}  &\underset{2}{\bullet} &\underset{3}{\circ}   &\underset{4}{\circ}  &\underset{5}{\circ}&\underset{6}{\circ} } $$
 Then $\tau = 2\omega$: 
 $$d_\tau =  \xymatrix{      &\underset{-1}{\circ}  &\underset{0}{\circ}  &\underset{1}{\circ}  &\underset{2}{\bullet}  &\underset{3}{\bullet}  \ar@/_0.8pc/[luld] &\underset{4}{\bullet}\ar@/_1.4pc/[llulld] &\underset{5}{\circ} &\underset{6}{\circ} } $$
 and $\tau' = 4\omega$:
 $$d_{\tau'} =  \xymatrix{     &\underset{-1}{\circ}  &\underset{0}{\circ} \ar@{-->}@/_2.4pc/[rrrurrrd] &\underset{1}{\circ} \ar@{-->}@/_2pc/[rrurrd] &\underset{2}{\circ} \ar@{-->}@/_1.6pc/[rurd]  &\underset{3}{\circ}   &\underset{4}{\bullet} &\underset{5}{\bullet}  &\underset{6}{\bullet} } $$
 \vspace{1pt}
  \item Let $n=3$, $\lambda = \rho$. 
  $$d_\lambda = \xymatrix{ &\underset{-1}{\circ} &\underset{0}{\bullet}  &\underset{1}{\circ}  &\underset{2}{\bullet}  &\underset{3}{\circ} &\underset{4}{\bullet} &\underset{5}{\circ} &\underset{6}{\circ}  }$$
  This is a typical weight, and $\tau = \lambda = \rho$. On the other hand, $\tau' = \rho + 2\omega$:
  $$d_{\tau'} = \xymatrix{ &\underset{-1}{\circ} &\underset{0}{\circ} \ar@{-->}@/_2.6pc/[rrrurrrd] \ar@{-->}@/_2pc/[rrurrd] \ar@{-->}@/_1.4pc/[rurd]  &\underset{1}{\circ}  &\underset{2}{\bullet}  &\underset{3}{\circ} &\underset{4}{\bullet} &\underset{5}{\circ} &\underset{6}{\bullet}  }$$
 \end{enumerate}
 \vspace{0.7cm}
\end{example}

\begin{proof}[Proof of Proposition~\ref{prop:socle_thick_kac}]
Let us call an arrow (either solid or dashed) maximal, if it is the longest arrow originating from its source.

Let $L(\tau)$ be the cosocle of $\nabla(\lambda)$.  Let $\mu = 
-w_0\lambda +{\gamma}$ and let $\tau$ be the highest weight of the simple 
module $L(\mu)^*$. By Lemma~\ref{lem:dual_Kac}, the dual of the map 
$\Delta(\mu) \to \nabla(\mu)$ (whose image 
is $L(\mu)$) is a 
map $$\nabla(\mu)^* = 
\nabla(\lambda)\to\Delta(\mu)^* = \Delta(\lambda - 2 \omega)  
$$ whose image is the simple module $L(\tau) = L(\mu)^*$. Therefore $L(\tau)$ 
is the socle of $\Delta(\lambda - 2 \omega)$ (and thus $L(\tau + 2\omega)$ is 
the socle of $\Delta(\lambda)$).
This implies that $\lambda \in \blacktriangle(\tau) \cap \blacktriangledown(\tau + 2 \omega)$. We know that 
\begin{equation*}
 1= \dim\;\End_\gg (P(\tau)) = \sum_{\mu} (P(\tau): \Delta(\mu)) (P(\tau): \nabla(\mu))= \\
 = \lvert \blacktriangle(\tau) \cap \blacktriangledown(\tau + 2\omega) \mid.
\end{equation*}

Thus it is enough to check that the only element in $\blacktriangle(\tau) \cap \blacktriangledown(\tau + 2 \omega)$ is obtained by sliding black balls along the maximal solid arrows in $d_{\tau}$ (respectively, the maximal dashed arrows in $d_{\tau +2\omega}$).

Denote by $d_{\lambda'}$ the diagram obtained from $d_{\tau}$ by sliding black balls along the maximal solid arcs, and by $d_{\lambda''}$ the diagram obtained from $d_{\tau+2\omega}$ by sliding black balls along the maximal dashed arcs. We wish to show that $\lambda' = \lambda''$, which would imply that they coincide with $\lambda$.

Indeed, consider a position $i$. Assume that $f_{\lambda'}(i)  \neq f_{\lambda''}(i)$. We start by some observations, which will be the main tools used in our proof, and then arrive at a contradiction case-by-case.

\begin{enumerate}[(A)]
 \item\label{soc_Kac_item:1} By definition, $f_{\tau}(i) \neq f_{\lambda'}(i)$ if and only if $i$ is the source or the target of a maximal solid arrow in $d_{\tau}$.
 \begin{enumerate}
  \item This happens only if $f_{\tau}(i) =f_{\tau}(i-1)$.
  \item If $f_{\tau}(i) = 1$, then $f_{\tau}(i) \neq f_{\lambda'}(i)$ iff  $\overset{\leftarrow i}{\blacktriangle}(\tau) \neq \emptyset$ $\Leftrightarrow$ $f_{\tau}(i-1) = 1$ (a black ball is the source of some maximal solid arrow iff it is the source of any solid arc).
 \end{enumerate}
 \item\label{soc_Kac_item:2} Similarly, $f_{\tau}(i-2) = f_{\tau + 2\omega}(i) \neq f_{\lambda''}(i)$ if and only if $i-2$ is the source or the target of a maximal dashed arrow in $d_{\tau}$.
 \begin{enumerate}
  \item If $0= f_{\tau}(i-2) \neq f_{\lambda''}(i)$, then $f_{\tau}(i-1) =0 $. 
  \item If $f_{\tau}(i-2) = 1$, then $f_{\tau}(i-2) \neq f_{\lambda'}(i)$ if and only if $f_{\tau}(i-1) = 1$. This happens since  every black ball is the target of some dashed arrow (see Lemma~\ref{lem:comb_arc_diag2}), and so it is the target of a maximal dashed arrow if and only if $f_{\tau}(i-1) f_{\tau}(i-2) = 1$. 
  \item If $f_{\tau}(i-2) =0$, and $\underset{i-2\dashrightarrow}{\blacktriangledown}(\tau) \neq \emptyset$, then $f_{\tau}(i-2) \neq f_{\lambda'}(i)$ (an empty position is the source of some maximal dashed arrow iff it is the source of any dashed arc).
 \end{enumerate}
\item\label{soc_Kac_item:3} Assume $f_{\tau}( i-2) = f_{\tau}(i-1) =f_{\tau}(i)=0$.

If $\underset{i-2\dashrightarrow}{\blacktriangledown}(\tau) \neq \emptyset$, then setting $j:= \min (\underset{i-2\dashrightarrow}{\blacktriangledown}(\tau))$, we have: $i \in \overset{\leftarrow j}{\blacktriangle}(\tau)$, and the solid arrow from $j$ to $i$ is maximal. And vice versa: if 
$i \in \overset{\leftarrow j}{\blacktriangle}(\tau)$, then $j \in\underset{i-2\dashrightarrow}{\blacktriangledown}(\tau)$.
\vspace{1pt}
$$d_ \tau =  \xymatrix{   &\underset{i-2}{\circ}  \ar@{-->}@/_1.5pc/[rrurrd] &\underset{i-1}{\circ}  &\underset{i}{\circ} &{\ldots} &\underset{j}{\bullet}  \ar@/_0.8pc/[luld]}$$
\vspace{1pt}
\item\label{soc_Kac_item:4} Similarly, assume $f_{\tau}( i-2) =1$, $ f_{\tau}(i-1) =f_{\tau}(i)=0$. If $i \in \overset{\leftarrow j}{\blacktriangle}(\tau)$ for some $j$, $i-2 \in \underset{j_1\dashrightarrow}{\blacktriangledown}(\tau)$ for some $j_1$, then $j \in \underset{j_1\dashrightarrow}{\blacktriangledown}(\tau)$:
\vspace{1pt}
$$ d_\tau =  \xymatrix{  &\underset{j_1}{\circ}\ar@{-->}@/_1pc/[rurd] \ar@{-->}@/_2.5pc/[rrrurrrd] &{\ldots} &\underset{i-2}{\bullet}  &\underset{i-1}{\circ}  &\underset{i}{\circ} &{\ldots} &\underset{j}{\bullet}  \ar@/_0.8pc/[luld] \\ &{} &{} &{} &{} &{} &{} &{}}$$
And vice versa: if $i-2 \in \underset{j_1\dashrightarrow}{\blacktriangledown}(\tau)$ for some $j_1$ and $j := \min(\underset{j_1\dashrightarrow}{\blacktriangledown}(\tau) \cap \mathbb{Z}_{ > i-2})$, then $i \in \overset{\leftarrow j}{\blacktriangle}(\tau)$ (and the arrow from $j$ to $i$ is maximal).
\end{enumerate}

We now consider the 8 possibilities for the values $$\vec{a}:=\left(f_{\lambda'}(i) - f_{\lambda''}(i), |f_{\lambda'}(i) - f_{\tau}(i)|, |f_{\lambda''}(i) - f_{\tau}(i-2)| \right) \in \{1, -1\} \times \{0, 1\}\times \{0, 1\}.$$

\begin{enumerate}[i.)]
\item Let $\vec{a} = (-1, 1, 1)$. Then we would have $f_{\tau}(i) =1 \neq f_{\lambda'}(i) =0  \neq f_{\lambda''}(i)=1 \neq f_{\tau}(i-2) =0$, which would imply, by \eqref{soc_Kac_item:1}, \eqref{soc_Kac_item:2} above $ f_{\tau}(i) =f_{\tau}(i-1) = f_{\tau}(i-2) =0$, leading to a contradiction.

\item Let $\vec{a} = (1, 1, 1)$. In this case, we have $f_{\tau}(i) =0 \neq f_{\lambda'}(i) =1  \neq f_{\lambda''}(i)=0 \neq f_{\tau}(i-2) =1$ and by the \eqref{soc_Kac_item:1}, $0 =f_{\tau}(i)=f_{\tau}(i-1)$, $ f_{\tau}(i-2) =1$:
$$d_\tau = \xymatrix{ &\underset{i-2}{\bullet}  &\underset{i-1}{\circ}  &\underset{i}{\circ}}$$
Since $f_{\lambda''}(i) \neq f_{\tau}(i-2)$, $i-2$ is the target of a maximal dashed arrow in $d_{\tau}$, originating in a position $j_1$. But \eqref{soc_Kac_item:4} would imply that $j \in \underset{j_1\dashrightarrow}{\blacktriangledown}(\tau)$, which means that the arrow from $j_1$ to $i-2$ is not maximal, leading to a contradiction.

\item Let  $\vec{a} = (-1, 0, 0)$. In this case $f_{\lambda'}(i) = 0 = f_{\tau}( i)$, $f_{\lambda''}(i) =1 =f_{\tau}( i-2)$. By \eqref{soc_Kac_item:2}, we conclude that $f_{\tau}(i-1) = 0$:
$$d_ \tau =  \xymatrix{      &\underset{i-2}{\bullet}  &\underset{i-1}{\circ}  &\underset{i}{\circ} }$$
Let $j_1$ be such that $i-2 \in\underset{j_1\dashrightarrow}{\blacktriangledown}(\tau)$ (such $j_2$ exists by Lemma~\ref{lem:comb_arc_diag2}). Since $f_{\lambda''}(i) =f_{\tau}( i-2)$, the dashed arrow from $j_1$ to $i-2$ is not maximal (see \eqref{soc_Kac_item:2}), so by \eqref{soc_Kac_item:4}, setting $j := \min(\underset{j_1\dashrightarrow}{\blacktriangledown}(\tau) \cap \mathbb{Z}_{ > i-2})$, we have: $i \in \overset{\leftarrow j}{\blacktriangle}(\tau)$ (and the arrow from $j$ to $i$ is maximal). But this would imply $f_{\lambda'}(i) \neq f_{\tau}( i)$ (see \eqref{soc_Kac_item:1}), leading to a contradiction.
\item If $f_{\tau}(i) = f_{\lambda'}(i) = 1$, then $f_{\tau}(i-1) =0 =f_{\lambda''}(i)$. Assume $f_{\lambda''}(i) = f_{\tau}(i-2) = 0$:
$$d_ \tau =  \xymatrix{      &\underset{i-2}{\circ}  &\underset{i-1}{\circ}  &\underset{i}{\bullet} }$$
By \eqref{soc_Kac_item:2}, this would imply that $\underset{i-2\dashrightarrow}{\blacktriangledown}(\tau) = \emptyset$, although $i \in \underset{i-2\dashrightarrow}{\blacktriangledown}(\tau)$, leading to a contradiction. Thus $f_{\lambda''}(i) \neq  f_{\tau}(i-2) = 1$:
$$ d_\tau =  \xymatrix{      &\underset{i-2}{\bullet}  &\underset{i-1}{\circ}  &\underset{i}{\bullet} }$$ 
Then $1 =f_{\tau}(i-2) =f_{\tau}(i-1) =0$ by \eqref{soc_Kac_item:2}, which leads to a contradiction. 
Thus we have eliminated cases $\vec{a} = (1, 0, 0)$ and $\vec{a} = (1, 0, 1)$.
\item If $f_{\lambda'}(i) = 1 \neq f_{\tau}(i)$, and $f_{\lambda''}(i) =0$, then $i$ is the target of some maximal solid arrow starting in $j$, and thus $j \in \underset{i-2\dashrightarrow}{\blacktriangledown}(\tau)$ by \eqref{soc_Kac_item:3},  thus $0= f_{\tau}( i-2) \neq f_{\lambda''}(i) =0$, which is a contradiction.  On the other hand, if $f_{\lambda'}(i) = 0 = f_{\tau}(i)$, and $f_{\lambda''}(i) =1 \neq f_{\tau}( i-2)$, then $\underset{i-2\dashrightarrow}{\blacktriangledown}(\tau) \neq \emptyset$ (again, by \eqref{soc_Kac_item:3}), and so $i$ is the target of a maximal solid arc, implying $f_{\lambda'}(i) \neq f_{\tau}(i)$ which gives a contradiction. We have eliminated the cases $\vec{a} = (1, 1, 0)$ and $\vec{a} = (-1, 0, 1)$.

\item Let  $\vec{a} = (-1, 1, 0)$. In this case $f_{\lambda'}(i) = 0 \neq f_{\tau}( i)$, $f_{\lambda''}(i) =1 =f_{\tau}( i-2)$. By \eqref{soc_Kac_item:1}, the former implies $f_{\tau}( i-1) =f_{\tau}( i)=1$:
$$ d_\tau =  \xymatrix{      &\underset{i-2}{\bullet}  &\underset{i-1}{\bullet}  &\underset{i}{\bullet} }$$ 
But then, by \eqref{soc_Kac_item:2}, $f_{\tau}( i-1)=1$ would imply $f_{\tau}(i-2) \neq f_{\lambda'}(i)$, leading to a contradiction.
\end{enumerate}
This completes the proof of Proposition~\ref{prop:socle_thick_kac}.
\end{proof}

\subsection{Illustrating examples} 
We conclude this section by presenting some examples showing that the length of $\Theta_i L(\lambda)$ can be any number between $0$ and $n+1$.
\begin{example}
 We have already seen that $\Theta_i L(\lambda)$ is frequently zero. Here is an example when it has length $1$ (i.e., it is simple):
 $$d_\lambda =  \xymatrix{  &\underset{\InnaC{i-2}}{\circ} &\underset{i-1}{\circ} &\underset{i}{\bullet} &\underset{i+1}{\bullet}}$$
 Indeed, by Proposition~\ref{itm:TL1}, $[\Theta_i L(\lambda)\::\:L(\mu)]\not=0$  if and only if  $\Theta_{i+1}P(\mu)\cong P(\lambda)$. This is (by the Lemmas~\ref{lem:simplemove} and \ref{lem:hardmove}) only the case for $\mu$ such that $f_\mu(i+1)=f_\mu(i-1)=1$,  $f_\mu(i)=0$ and  $f_\mu(j)=f_\lambda(j)$ otherwise.
 \end{example}
 Next, we give a general example of a $\lambda$ such that $\Theta_i L(\lambda)$ has length greater than $2$. 
 \begin{example}
 Fix $k \in \{0, \ldots, n-1\}$. Let $\lambda$ be a dominant weight with black balls at positions $i, i+2, \ldots, i+2k$, and all other positions $\geq  i-2$ are empty. The rest of the black balls can be arranged arbitrarily at positions $<i-2$:
$$d_\lambda = \xymatrix{  &\underset{i-1}{\circ} &\underset{i}{\bullet} &\underset{i+1}{\circ} &\underset{i+2}{\bullet} &\underset{i+3}{\circ}  &{\ldots}&\underset{i+2k}{\bullet} &\underset{i+2k+1}{\circ} }$$
We claim that $\Theta_i L(\lambda)$ has length $k+2$, i.e., there exist $k+2$ dominant weights $\mu$ such that $\Theta_{i+1} P(\mu)\cong P(\lambda)$. Indeed, by Lemma~\ref{lem:hardmove}, such a weight $\mu$ has the form 
$$\mu_j = \xymatrix{  &\underset{i-1}{\circ} &\underset{i}{\circ} &\underset{i+1}{\circ} &\underset{i+2}{\bullet} &\underset{i+3}{\circ} &{\ldots} &\underset{i+2j}{\bullet} &\underset{i+2j+1}{\bullet} &{\ldots}}$$
for some $j \in \{-1, 0, 1, 2, \ldots, k\}$. Moreover, for any such $j$, $\Theta_{i+1} P(\mu_j)\cong P(\lambda)$.
\end{example}
\begin{example}
\label{weights}
 Let us consider a special case of the above construction. Let $n=3$ and consider the typical weight $\lambda$ with the diagram $$\xymatrix{  &\underset{-1}{\circ} &\underset{0}{\bullet} &\underset{1}{\circ} &\underset{2}{\bullet} &\underset{3}{\circ} &\underset{4}{\bullet} &\underset{5}{\circ}  &\underset{6}{\circ} }$$ there are $4$ possible weights $\mu_j$ as above:
 \begin{align*}
 &d_{\mu_1} = \xymatrix{   &\underset{-1}{\bullet} &\underset{0}{\circ} &\underset{1}{\circ} &\underset{2}{\bullet} &\underset{3}{\circ} &\underset{4}{\bullet} &\underset{5}{\circ} &\underset{6}{\circ}   }\\
 &d_{\mu_2} =\xymatrix{   &\underset{-1}{\circ} &\underset{0}{\circ} &\underset{1}{\bullet} &\underset{2}{\bullet} &\underset{3}{\circ} &\underset{4}{\bullet} &\underset{5}{\circ} &\underset{6}{\circ}  }\\
 &d_{\mu_3} =\xymatrix{   &\underset{-1}{\circ} &\underset{0}{\circ} &\underset{1}{\circ} &\underset{2}{\bullet} &\underset{3}{\bullet} &\underset{4}{\bullet} &\underset{5}{\circ} &\underset{6}{\circ}   }\\
 &d_{\mu_4} =\xymatrix{   &\underset{-1}{\circ} &\underset{0}{\circ} &\underset{1}{\circ} &\underset{2}{\bullet} &\underset{3}{\circ} &\underset{4}{\bullet} &\underset{5}{\bullet} &\underset{6}{\circ}   }
 \end{align*}
 
 In this case the Loewy length of $\Theta_0 L(\lambda)$ is also $4$, and the Loewy filtration has subquotients (from socle to cosocle): $$L(\mu_2); \Pi L(\mu_3); L(\mu_4); \Pi L(\mu_1) $$ To prove this statement, recall that
 \begin{enumerate}
  \item The weight $\lambda$ is typical, so we have an exact sequence
 $$ 0 \to \nabla(\mu_2) \longrightarrow \Theta_0 L(\lambda) = \Theta_0 \nabla(\lambda) \longrightarrow L(\mu_1) = \Pi \nabla(\mu_1) \to 0$$
 \item $\Theta_0 \Delta(\lambda) = \Pi \Delta(\mu_1)$, and has a simple cosocle $\Pi L(\mu_1)$. Thus $$ \Theta_0 \Delta(\lambda) = \Pi \Delta(\mu_1)= \Pi P(\mu_1) \twoheadrightarrow \Theta_0 L(\lambda)$$
 and $\Theta_0 L(\lambda)$ has cosocle $\Pi L(\mu_1)$, with radical $\nabla(\mu_2)$.
 \end{enumerate}
Now, $\nabla(\mu_2)$ has both a simple socle $L(\mu_2)$ and a simple cosocle $L(\mu_4)$, by Lemma~\ref{prop:socle_thick_kac}. 

\end{example}


\begin{remark}
In particular $ \op{Ext}^1_{\mathcal{F}_n}(L(\mu_4), L(\mu_3))\not=0$ and $ \op{Ext}^1_{\mathcal{F}_n}(L(\mu_3), L(\mu_2))\not=0$. Moreover, the surjective map $P(\mu_1) \twoheadrightarrow \Theta_0 L(\lambda)$ shows also that $ \op{Ext}^1_{\mathcal{F}_n}(L(\mu_1), L(\mu_4))\not=0$ . This fits with Corollary~\ref{cor:Exts}, since $\mu_4\in{\blacktriangledown}(\mu_3)$, $\mu_2\in{\blacktriangle}(\mu_3)$, and $\mu_1\in{\blacktriangledown}(\mu_4)$.
\end{remark}

In particular,  $\Theta_i L(\lambda)$  has Loewy length equal to $4$ and its socle filtration agrees with its radical filtration, hence the module is rigid.  

\begin{remark}
 Observe that the socle/radical filtration does not seem to be related to the order on the weights, since we have $\mu_4 \leq \mu_3 \leq \mu_2 \leq \mu_1.$
\end{remark}

\begin{remark}
The appearing large Loewy lengths are very much in contrast to the classical category $\mathcal{O}$ situation and the case of finite dimensional representations of $\op{GL}(m|n)$, where  it is a (nontrivial!) fact that  $\Theta_s L(\lambda)$, that is a through a wall translated simple module, is zero or of Loewy-length $3$. It always has  simple socle and simple cosocle, but with a possibly large semisimple middle layer which can be described via (parabolic) Kazdhan-Lusztig-Vogan polynomials, see e.g. \cite[8.10, 8.16]{Humphreys} and its translation to the $\op{GL}(m|n)$ situation via \cite[Theorem 1.1]{BS}, \cite[Theorem 1.1]{BS3}. In particular, these modules are always rigid. The rigidity can also be deduced via \cite[Proposition 2.4.1]{BGS} from the fact that $\Theta_i L(\lambda)$ has simple socle and simple cosocle invoking the again  a nontrivial fact that these categories are Koszul, see \cite{BGS}, \cite{BS}. In our periplectic situation we expect that $\Theta_i L(\lambda)$  is still always rigid for any $n$, $\lambda$ and $i$, but a proof would require new techniques. 
\end{remark}

\section{Blocks and action of translation functors}\label{sec:blocks}

In this section we finally determine the blocks of $\mathcal{F}_n $, and describe the action of translation functors on these blocks.
\subsection{Classification of blocks}
\begin{example}
In case $n=1$ the integral dominant weights are just given by integers $\lambda$. Since $\rho=0$ in this case, the $\rho$-shift is irrelevant. 
By Lemma~\ref{lem:typical} and Corollary~\ref{cor:BGG} we have $P(i)=\Delta(i)$ and $\nabla(i)=L(i)$, and so by Theorem~\ref{thrm:main-mult2} $P(i)$ fits into a 
non-split short exact sequence of the form 
$$0\to L(i+2)\rightarrow P(i)\rightarrow L(i)\to 0.$$ 
In particular, the category decomposes into four blocks, two of which are connected by a parity switch. 

Ignoring the parity, the two blocks are: one where the simple modules have an odd integer as their highest weight,  
and the one where the simple modules have an even integer as their highest weight.

Moreover, apart from the identity morphisms on indecomposable projectives, we have only the maps $\phi_i: P(i)\to P(i-2)$ for $i\in\mathbb{Z}$ 
(sending the cosocle to the socle) which satisfy $\phi_{i-2}\phi_{i}=0$. Hence, each block is equivalent to the category of 
finite dimensional complexes of vector spaces; in other words, it is equivalent to the category of finite-dimensional representations of the $A_\infty$-quiver
$$\cdots\diamond \rightarrow \diamond\rightarrow\diamond\rightarrow\cdots$$
with vertices $\diamond$ labeled by integers, and with the relation that the composition of two arrows is always zero.
\end{example}
To state the classification of blocks, we need some notation. For every $\lambda\in\Lambda_n$, set
\begin{eqnarray*}
\kappa(\lambda) = \sum_{i\in c_{\lambda}} (-1)^{i},&\quad\text{and}\quad&q(\lambda)=\begin{cases}0&\text{if  }\,\,|\lambda|\,\equiv 0,1\,\mod 4, \\ 1&\text{if  }\,\, |\lambda|\, \equiv 2,3\,\mod 4.\end{cases}
\end{eqnarray*}
with $c_\lambda$ as in \eqref{omegarho}. 

For instance the weights $\mu_1, \mu_2,\mu_3,\mu_4$ from Example~\ref{weights} have all the same $\kappa$-value $1$, and $q$-values $$q(\mu_1) = q(\mu_3) = 1, \;\;\; q(\mu_2)  = q(\mu_4) = 0.$$ 

\begin{theorem}\label{thrm:blocks}
 The category $\mathcal{F}_n$ has $2(n+1)$ blocks. There is a bijection between blocks and 
$\{ -n, -n+2 ,\ldots, n-2, n\}\times\{+,-\}$. 
We have a decomposition 
$$\mathcal{F}_n=\bigoplus_{p\in  \{ -n, -n+2 ,\ldots, n-2, n\}}\left( \mathcal{F}_n \right)^+_p\oplus \bigoplus_{p\in  \{ -n, -n+2 ,\ldots, n-2, n\}}\left( \mathcal{F}_n \right)^-_p,$$
where the block $\left( \mathcal{F}_n \right)_p^+$ (resp. $\left( \mathcal{F}_n \right)_p^-$) contains all simple modules $L(\lambda)$ with $\kappa(\lambda)=p$ and
with parity of the highest weight vector equal to $q(\lambda)$ (resp. $q(\lambda)+1$).
\end{theorem}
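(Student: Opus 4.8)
The plan is to establish Theorem~\ref{thrm:blocks} in three stages: first that $\kappa$ and the parity class are block invariants, second that the $2(n+1)$ possible values are all realized, and third that each such class is actually connected (i.e., forms a single block).

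\textbf{Step 1: $\kappa(\lambda)$ and $q(\lambda)$ are constant on blocks.} Two simple modules $L(\lambda)$, $L(\mu)$ lie in the same block only if there is a chain of weights linking them by nonzero $\Ext^1$'s, or equivalently (since blocks are generated by indecomposable projectives) a chain where consecutive weights appear together in some indecomposable $P(\nu)$. By Corollary~\ref{cor:Exts}, if $\Ext^1(L(\lambda),L(\mu))\neq0$ then $\lambda\in\blacktriangledown(\mu)$ or $\mu\in\blacktriangle(\lambda)$; in either case the weight diagram of one is obtained from the other by sliding black balls along arrows in an arrow diagram. I would check directly from the definition of arrow diagrams in Section~\ref{ssec:arc_diagrams} that sliding a black ball along a solid arrow from $i$ to $j$ (with $r^+(i,j)=0$) changes $c_\lambda$ by removing $i$ and adding $j$, and that the balance condition $r^+(i,j)=0$ forces the number of empty positions strictly between $j$ and $i$ minus the number of black ones to be controlled so that $\sum_{k\in c}(-1)^k$ is unchanged modulo the relevant parity; concretely, $(-1)^i-(-1)^j$ must vanish, which follows because $r^+(i,j)=0$ with alternating $\pm1$ contributions forces $i-j$ to be even. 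Hence $\kappa$ is a block invariant. For $q$: translation functors $\Theta_k$ change $|\lambda|$ by $\pm1$ but also apply a parity shift $\Pi^k$; I would track, using the explicit formulas in Lemma~\ref{lem:small} and Lemma~\ref{lem:big} (weights change by $\pm\varepsilon_j$, hence $|\lambda|$ by $\pm1$) together with the parity bookkeeping in Remark~\ref{par} and the definition $\Theta_k=\Pi^k\Theta'_k$, that the combined quantity (value of $|\lambda|\bmod 4$ together with the parity of the highest weight vector) is preserved under the arrows detected by Corollary~\ref{cor:Exts}. This is the bookkeeping-heavy part but is routine.

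\textbf{Step 2: counting and realization.} For fixed $n$, the value $\kappa(\lambda)=\sum_{i\in c_\lambda}(-1)^i$ ranges over $\{-n,-n+2,\dots,n-2,n\}$ as $\lambda$ runs over $\Lambda_n$: indeed $c_\lambda$ has $n$ elements, so $\kappa(\lambda)\equiv n\bmod 2$, and the extreme values $\pm n$ are attained by putting all black balls on positions of the same parity, with all intermediate values of the same parity clearly attainable. That gives $n+1$ values of $\kappa$; together with the two parity classes we get at most $2(n+1)$ blocks, and I would exhibit, for each $(p,\pm)$, an explicit dominant weight realizing it (e.g.\ a suitable multiple of $\omega$ or a shifted configuration) to see all $2(n+1)$ classes are nonempty. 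Combined with Step~1 this shows there are \emph{at least} $2(n+1)$ blocks.

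\textbf{Step 3: each $(\kappa,\text{parity})$-class is a single block — the main obstacle.} This is the hard direction: I must show any two simples with the same $\kappa$ and parity class are linked. The strategy is to reduce every weight in a given class to a fixed normal form using translation functors and the combinatorial moves of Section~\ref{sec:KL}. Given $L(\lambda)$, I would use the arrow-diagram moves (Lemma~\ref{lem:hardmove}, which shows $\Theta_i P(\lambda)$ is an indecomposable projective obtained by sliding a black ball) to connect $P(\lambda)$, step by step, to $P(\lambda_0)$ for a canonical representative $\lambda_0$ depending only on $\kappa(\lambda)$; since $\Theta_i$ preserves $\kappa$ and the parity class (by Step~1's computation), and since each application of a nonzero $\Theta_i$ to an indecomposable projective connects the two weights in a single block (their simple tops appear in a common indecomposable), this produces the required chain. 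The delicate point is choosing $\lambda_0$ and the sequence of $\Theta_i$'s so that one can always ``push'' an arbitrary black-ball configuration with fixed $\kappa$ to the canonical one without getting stuck (i.e.\ without all relevant $\Theta_i$ vanishing on the current projective): here one exploits that the positions $i$ with $f_\lambda(i-2)=1,f_\lambda(i-1)=0$ (the ``movable'' configurations of Lemma~\ref{lem:simplemove}\eqref{itm:simplemove1}) together with the longer moves of Lemma~\ref{lem:hardmove} suffice to realize any rearrangement of black balls preserving the alternating-sum invariant. I would formalize this as: the graph on $\Lambda_n$ with edges given by the moves of Lemmas~\ref{lem:simplemove} and \ref{lem:hardmove} has connected components exactly the level sets of $\kappa$ (within a fixed parity class), proved by induction on, say, $\sum_i|\lambda_i|$ or on the number of black balls lying ``out of canonical position,'' using that one can always decrease this quantity by an admissible move. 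Once this connectivity statement is in hand, Theorem~\ref{thrm:blocks} follows, and the description of the translation-functor action on blocks is then read off directly from Lemmas~\ref{lem:small} and~\ref{lem:big} (each $\Theta_k$ maps a block to a block, shifting $\kappa$ and the parity class in the manner dictated by $k\bmod 2$).
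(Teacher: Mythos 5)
Your Steps 1 and 2 match the paper: the invariance of $\kappa$ under arrow slides (since $r^\pm(i,j)=0$ forces $i-j$ even), the invariance of the parity class, and the count of $n+1$ values of $\kappa$ are all as in the paper. The paper packages the parity bookkeeping more cleanly by extending $q$ additively to the whole weight lattice and observing $q(\alpha)=0$ for even roots and $q(\alpha)=1$ for odd roots, giving a splitting $\mathcal{F}_n=\mathcal{F}_n^+\oplus\mathcal{F}_n^-$ once and for all; you could adopt that shortcut to avoid the ``bookkeeping-heavy'' case analysis you gesture at.

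The genuine gap is in Step~3, and it is not a minor one: the connectivity of each $\kappa$-level set is the entire content of the theorem beyond the invariance check, and your proposal asserts rather than proves it. You suggest an induction on $\sum_i|\lambda_i|$ or on ``the number of black balls out of canonical position,'' with the claim that one can always decrease this by an admissible move. But you do not verify that the relevant $\Theta_i$ (or the arrow slide) is available at each step, and it is easy to construct configurations where naively pushing toward a canonical diagram is blocked (for instance, when two black balls of different position-parities are adjacent and neither can move without first making room elsewhere). The paper's actual induction is on $n$, not on a size function of $\lambda$: one moves the leftmost black ball arbitrarily far to the left via repeated dashed-arrow slides (each of length $2$), far enough that it no longer interacts with the remaining $n-1$ balls, and then removes it to invoke the inductive hypothesis. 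This works when $\lambda(1)\equiv\mu(1)\pmod2$. The remaining case, where the leftmost positions have opposite parities (so $\kappa(\lambda)\neq\pm n$), requires a separate and genuinely clever argument: locate the first black ball whose position has the ``wrong'' parity, slide the ball just to its left rightwards until adjacent, then slide the wrong-parity ball leftwards \emph{past} it via a solid arrow (so the two balls trade order), and iterate until the leftmost ball has the desired parity. Your proposal does not anticipate this second case at all, and without it the induction cannot get started. Until you supply this (or an equivalent connectivity argument), Step~3 is a sketch, not a proof.
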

\begin{proof} Notice that the function $q:\Lambda_n\to\mathbb Z_2$ extends uniquely to the whole weight lattice so that $q(\lambda+\alpha)=q(\lambda)+q(\alpha)$
for any weight $\lambda$ and any root $\alpha$. Moreover, 
$q(\alpha)=0$ for any even root $\alpha$ and
$q(\alpha)=1$ for any odd root $\alpha$. We have a decomposition $ \mathcal{F}_n= \mathcal{F}_n^+\oplus  \mathcal{F}_n^-$, where
$ \mathcal{F}_n^+$ (resp. $ \mathcal{F}_n^-$) consists of all modules such that all weight vectors of weight $\mu$ have parity $q(\mu)$ (resp. $q(\mu)+1$).
Now we proceed to decomposing $ \mathcal{F}_n^\pm$ into the blocks. In the argument which follows we ignore the parity consideration.

We consider the minimal equivalence relation on the set of dominant weights such that $\lambda\sim\mu$ if $\mu$ is obtained from $\lambda$ by sliding a black ball 
via a solid or dashed arc. Proposition~\ref{prop:extension} and Theorem~\ref{thrm:main-mult2} imply that $L(\lambda)$ and $L(\mu)$ belong to the same block if and only
if $\lambda\sim\mu$. If we move a black ball via a solid to dashed arrow from position $i$ to position $j$, then $i\equiv j\mod 2$. Hence $\kappa$ is constant on 
every equivalence class. It remains to show that if $\kappa(\lambda)=\kappa(\mu)$, then $\lambda\sim\mu$.
We prove that by induction on $n$. The case $n=1$ is clear since moving via dashed arrow amount to moving the only black ball two positions to the left.
Now let $\nu(i)=\InnaB{\nu_{n-i+1}}$ denote the position of the $i$-th black 
ball counting from the left in the diagram of $\nu$.

Assume first that $\lambda(1)\equiv\mu(1) \mod 2$. Then moving the leftmost black ball in the both diagrams two position to the left several times
we can obtain $\lambda'\sim\lambda$ and $\mu'\sim\mu$ such that $\lambda'(1)=\mu'(1)$, $\lambda'(i)=\lambda(i)$ and $\mu'(i)=\mu(i)$ for all $i>1$,
and $\lambda'(2)-\lambda'(1),\mu'(2)-\mu'(1)>n^2$. Let $\tilde{\lambda}$ and $\tilde{\mu}$ be the diagram obtained from $\lambda'$ and $\mu'$
by removing the leftmost black ball. It is easy to see that the last condition implies that $\lambda'\sim\mu'$ if and only if $\tilde{\lambda}\sim\tilde{\mu}$.
Since $\kappa(\tilde{\lambda})=\kappa(\tilde{\mu})$ we have $\tilde{\lambda}\sim\tilde{\mu}$ by the induction hypothesis. Hence $\lambda\sim\mu$.

Now we assume that $\lambda(1)\equiv 1+\mu(1)\mod 2$. Without loss of generality we may assume that $\lambda(1)\equiv 1\mod 2$ and $\mu(1)\equiv 0 \mod 2$.
Note that in this case $\kappa(\lambda)=\kappa(\mu)\neq \pm n$. Let $r$ be the minimal index such that $\lambda(r)\equiv 0\mod 2$. Moving the $r-1$-st
black ball of $\lambda$ to the right (against dashed arrows), we obtain $\lambda'$ in the same equivalence class such that $\lambda'(r-1)+1=\lambda'(r)$. Now we can move
the $r$-th black ball of $\lambda'$ to the left via a solid arrow, so that the $r$-th black ball jumps over the $r-1$-st. In this way we obtain 
$\lambda''\sim\lambda'\sim\lambda$. Let $s$ be the minimal index such
that $\lambda''(s)\equiv 0\mod 2$. Then clearly $s<r$. Repeating this procedure several times, we will obtain a dominant weight $\nu$ which is equivalent to $\lambda$, and such that
$\nu(1)\equiv 0 \mod 2$. This reduces the situation to the previous case.
\end{proof}

\begin{remark} Note that the blocks $\left( \mathcal{F}_n \right)^{\pm}_n$ and $\left( \mathcal{F}_n \right)^{\pm}_{-n}$ are the only blocks in which all simple modules have typical weights (and thus coincide with thin Kac modules).
\end{remark}

\begin{example}
\begin{enumerate}[1.)]
 \item The trivial module $L(0) = \mathbb C$ lies in $\left( \mathcal{F}_n \right)^+_0$ if $n$ is even, and in $\left( \mathcal{F}_n \right)^+_1$ if $n$ is odd.
 \item The simple module $L(\rho) = \nabla(\rho)$ with a highest weight vector $\vec{v}$ such that $p(v) = 0$ lies in $\left( \mathcal{F}_n \right)^{\pm}_n$, where the sign is $+$ if $n \equiv -1, 0, 1, 2 (\mathrm{mod} \; 8)$, and $-$ otherwise.
\end{enumerate}

\end{example}
\subsection{Action of translation functors on blocks}
The following result describes the action of translation functors on blocks.
\begin{corollary} 
\label{cor:blocks}
Let $i \in \mathbb Z$, $p \in \{ -n, -n +2 , \ldots, n-2, n\}$. Then we have 
$$\Theta_i(\left( \mathcal{F}_n \right)^\pm_p)\subset \begin{cases}\left( \mathcal{F}_n \right)_{p+2}^\pm \,\,\text{if}\,\,i\,\, \text{is odd and}\,\,\tfrac{n-p}{2} \,\, \text{is even},\\
\left( \mathcal{F}_n \right)_{p+2}^\mp \,\,\text{if}\,\,i\,\, \text{is odd and}\,\,\tfrac{n-p}{2} \,\, \text{is odd},\\
\left( \mathcal{F}_n \right)_{p-2}^\pm \,\,\text{if}\,\,i\,\, \text{is even and}\,\,\tfrac{n-p}{2} \,\, \text{is even},\\
\left( \mathcal{F}_n \right)_{p-2}^\mp \,\,\text{if}\,\,i\,\, \text{is even and}\,\,\tfrac{n-p}{2} \,\, \text{is odd}.\end{cases}$$
\end{corollary}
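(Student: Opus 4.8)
The plan is to track how the translation functor $\Theta_i$ affects the two block invariants $\kappa$ and $q$, using the combinatorial description of $\Theta_i$ on Kac modules from Lemmas~\ref{lem:small} and \ref{lem:big} (equivalently Propositions~\ref{bigKac} and \ref{smallKac}). Since $\Theta_i$ is exact and sends projectives to projectives, and since by Theorem~\ref{cor:indecomposable} $\Theta_i(P(\lambda))$ is indecomposable or zero, it suffices to follow a single simple module $L(\lambda)$ (or, more conveniently, a single thick Kac module $\Delta(\lambda)$, since the nonzero standard subquotients of $\Theta_i \Delta(\lambda)$ all lie in the same block as $\Theta_i L(\lambda)$ whenever the latter is nonzero) and compute $\kappa$ and $q$ of the weights $\mu$ appearing in $\Theta_i \Delta(\lambda)$.

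First I would handle $\kappa$. By Proposition~\ref{bigKac}, every weight $\mu$ occurring in $\Theta_i \Delta(\lambda)$ is obtained from $\lambda$ by moving a single black ball to position $i-1$, from either position $i-2$ or position $i$. In the first case $c_\mu = c_\lambda \cup \{i-1\} \setminus \{i-2\}$, so $\kappa(\mu) = \kappa(\lambda) + (-1)^{i-1} - (-1)^{i-2} = \kappa(\lambda) + 2(-1)^{i-1}$; in the second case $c_\mu = c_\lambda \cup \{i-1\}\setminus\{i\}$, giving $\kappa(\mu) = \kappa(\lambda) + (-1)^{i-1} - (-1)^{i} = \kappa(\lambda) + 2(-1)^{i-1}$ as well. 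Thus in all cases $\kappa(\mu) = \kappa(\lambda) + 2(-1)^{i-1}$, which equals $\kappa(\lambda)+2$ when $i$ is odd and $\kappa(\lambda)-2$ when $i$ is even. This gives the index shift $p \mapsto p\pm 2$ claimed in the Corollary, and matches the block set bijection from Theorem~\ref{thrm:blocks}.

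Next I would handle the parity (the $+/-$ superscript). Recall from the proof of Theorem~\ref{thrm:blocks} that $q$ extends to the full weight lattice with $q(\lambda+\alpha)=q(\lambda)+q(\alpha)$, with $q(\alpha)=0$ on even roots and $q(\alpha)=1$ on odd roots, and the superscript of the block of $L(\lambda)$ records whether the parity of the highest weight vector agrees with $q(\lambda)$ or not. Moving a black ball by one position changes $\lambda$ by $\pm\varepsilon_j$ for a single $j$, i.e. by a vector with $|{\cdot}|$ odd, hence changes $q$ by $1$. Therefore $q(\mu)=q(\lambda)+1$ for every $\mu$ appearing in $\Theta_i\Delta(\lambda)$. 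On the other hand, tensoring with the natural module $V$ (and then applying the parity shift $\Pi^i$ in the definition $\Theta_i=\Pi^i\Theta'_i$) changes the parity of the highest weight vector in a controlled way: $V$ itself is $\Pi L(-\varepsilon_n)$, so tensoring shifts the ambient parity by one, and then $\Pi^i$ contributes $i\bmod 2$. Combining: the difference between "parity of h.w.\ vector" and $q$ changes by $1 + i \pmod 2$, i.e.\ it is preserved iff $i$ is odd and flipped iff $i$ is even. The remaining point is to reconcile this with the stated dependence on the parity of $\tfrac{n-p}{2}$: since $p=\kappa(\lambda)$ and the new index is $p'=\kappa(\mu)=p+2(-1)^{i-1}$, one checks $\tfrac{n-p'}{2}=\tfrac{n-p}{2}\mp 1$, so the parity of $\tfrac{n-p}{2}$ flips under each application of $\Theta_i$; bookkeeping of this flip against the "$+$ vs $-$" change above yields exactly the four cases in the statement. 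I would verify the base case directly (e.g. on a typical weight, or on the trivial module, using the explicit examples already computed in the text) to pin down the sign convention.

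The main obstacle I expect is the careful sign/parity bookkeeping in the last step: keeping straight (i) the change of $q$ coming from the weight shift, (ii) the parity shift built into $V\cong\Pi L(-\varepsilon_n)$ and the definition $\Theta_i=\Pi^i\Theta'_i$, and (iii) how the label $\tfrac{n-p}{2}$ interacts with the shift $p\mapsto p\pm 2$, so that all three combine into precisely the asserted pattern rather than its negation or a shift thereof. The cleanest way to be safe is probably to not compute the absolute parity at all but only the relative invariant "(parity of highest weight vector) $-\,q(\lambda)$", show it changes by $1+i\bmod 2$ under $\Theta_i$, and then fix the overall normalization by testing against one worked example from Section~\ref{sec:blocks} (for instance $n=1$, where everything is explicit). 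Everything else is a direct translation of Propositions~\ref{bigKac} and \ref{smallKac} together with Theorem~\ref{cor:indecomposable}.
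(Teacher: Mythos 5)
Your $\kappa$-bookkeeping is correct and gives the shift $p\mapsto p+2(-1)^{i-1}$, matching the first index in the statement. The genuine gap is in the $q$-bookkeeping. You assert that moving a black ball by one position (i.e.\ replacing $\lambda$ by $\lambda\pm\varepsilon_j$) "changes $q$ by $1$", but this is false: the additivity $q(\lambda+\alpha)=q(\lambda)+q(\alpha)$ used in the proof of Theorem~\ref{thrm:blocks} holds only for $\alpha$ a \emph{root}, and $\varepsilon_j$ is a weight, not a root. Computing directly from the definition (which depends on $|\lambda|\bmod 4$, not $\bmod 2$) one finds $q(\lambda+\varepsilon_j)-q(\lambda)\equiv|\lambda|\pmod 2$ and $q(\lambda-\varepsilon_j)-q(\lambda)\equiv|\lambda|+1\pmod 2$. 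The same subtlety makes the step "tensoring shifts the ambient parity by one" unreliable as stated. The clean way to keep track of the superscript is via the block invariant $\varepsilon(M):=\big(\text{parity of a weight-}\nu\text{ vector}\big)-q(\nu)\in\mathbb Z/2$, which is well-defined on a block. For a tensor product one has $\varepsilon(M\otimes N)=\varepsilon(M)+\varepsilon(N)+(|\nu_M|\bmod 2)(|\nu_N|\bmod 2)$ because $q(\mu)+q(\nu)-q(\mu+\nu)\equiv(|\mu|\bmod 2)(|\nu|\bmod 2)$; for $N=V$ (with $\varepsilon(V)=0$ and $|\nu_V|$ odd) this yields $\varepsilon(\Theta'_i M)-\varepsilon(M)\equiv|\lambda|\pmod 2$, and then $\Pi^i$ contributes $i$. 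Since $|\lambda|\equiv\tfrac{n-\kappa(\lambda)}{2}+\tfrac{n(n-1)}{2}\pmod 2$, this is precisely where the dependence on $\tfrac{n-p}{2}$ enters.

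In other words, your derived formula for the change of superscript (something depending only on $i$) has \emph{no} $p$-dependence at all, so the "reconciliation with $\tfrac{n-p}{2}$" in your final paragraph cannot be achieved by adjusting a sign convention --- the two candidate formulas are genuinely different functions of $(p,i)$. You should replace the incorrect claim "$q$ changes by $1$" with the $q$-non-additivity correction term above and then recompute; this is a repairable defect, but as written the last step does not go through, and checking your preserved-iff-$i$-odd rule against the explicit $n=1$ block structure in Section~\ref{sec:blocks} (where $i$ and $\lambda_1$ are forced to have the same parity) immediately shows the rule cannot be right in that form.
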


\bibliographystyle{amsalpha}
\bibliography{references}

\newcommand{\etalchar}[1]{$^{#1}$}
\def\cprime{$'$}
\providecommand{\bysame}{\leavevmode\hbox to3em{\hrulefill}\thinspace}
\providecommand{\MR}{\relax\ifhmode\unskip\space\fi MR }
\providecommand{\MRhref}[2]{%
  \href{http://www.ams.org/mathscinet-getitem?mr=#1}{#2}
}
\providecommand{\href}[2]{#2}
\begin{thebibliography}{BDEA{\etalchar{+}}}

\bibitem[AMR06]{AMR}
S.~Ariki, A.~Mathas, and H.~Rui, \emph{Cyclotomic {N}azarov-{W}enzl algebras},
  Nagoya Math. J. \textbf{182} (2006), 47--134.

\bibitem[BDEA{\etalchar{+}}]{second}
M.~Balagovic, Z.~Daugherty, I.~Entova-Aizenbud, I.~Halacheva, J.~Hennig, M.~S.
  Im, G.~Letzter, E.~Norton, V.~Serganova, and C.~Stroppel, \emph{The affine
  {VW} supercategory}, arXiv:1801.04178, to appear in Selecta Math.

\bibitem[BGS96]{BGS}
A.~Beilinson, V.~Ginzburg, and W.~Soergel, \emph{Koszul duality patterns in
  representation theory}, J. Amer. Math. Soc. \textbf{9} (1996), no.~2,
  473--527.

\bibitem[BKN11]{BKN}
B.~D. Boe, J.~R. Kujawa, and D.~K. Nakano, \emph{Complexity and module
  varieties for classical {L}ie superalgebras}, Int. Math. Res. Not. IMRN
  \textbf{3} (2011), 696--724.

\bibitem[Bru03]{B03}
J.~Brundan, \emph{{Kazhdan-{L}usztig polynomials and character formulae for the
  {L}ie superalgebra {$\mathfrak{gl}(m\vert n)$}}}, {J. Amer. Math. Soc.}
  \textbf{{16}} ({2003}), no.~{1}, {185--231}.

\bibitem[Bru04]{B04}
J.~Brundan, \emph{Kazhdan-{L}usztig polynomials and character formulae for the
  {L}ie superalgebra {${\mathfrak q}(n)$}}, Adv. Math. \textbf{182} (2004),
  no.~1, 28--77.

\bibitem[BS11]{BS3}
J.~Brundan and C.~Stroppel, \emph{Highest weight categories arising from
  {K}hovanov's diagram algebra {III}: Category $\mathcal{O}$}, Repr. Theory
  \textbf{15} (2011), 170--243.

\bibitem[BS12]{BS}
\bysame, \emph{Highest weight categories arising from {K}hovanov's diagram
  algebra {IV}: the general linear supergroup}, J. Eur. Math. Soc. (JEMS)
  \textbf{14} (2012), no.~2, 373--419.

\bibitem[CE18]{CE}
K.~Coulembier and M.~Ehrig, \emph{The periplectic {B}rauer algebra {II}:
  {D}ecomposition multiplicities}, J. Comb. Algebra \textbf{2} (2018), no.~1,
  19--46.

\bibitem[Che15]{C}
C.-W. Chen, \emph{Finite-dimensional representations of periplectic {L}ie
  superalgebras}, J. Algebra \textbf{443} (2015), 99--125.

\bibitem[CK13]{CK}
B.~Cooper and V.~Krushkal, \emph{Handle slides and localizations of
  categories}, Int. Math. Res. Not. IMRN (2013), no.~10, 2179--2202.

\bibitem[CLW11]{CLW}
S.-J. Cheng, N.~Lam, and W.~Wang, \emph{Super duality and irreducible
  characters of ortho-symplectic {L}ie superalgebras}, Invent. Math.
  \textbf{183} (2011), no.~1, 189--224.

\bibitem[Cou18]{Col}
K.~Coulembier, \emph{The periplectic {B}rauer algebra}, Proc. Lon. Math. Soc.,
  2018, pp.~441--482.

\bibitem[CP18]{CP}
C.-W. Chen and Y.-N. Peng, \emph{Affine periplectic {B}rauer algebras}, J.
  Algebra \textbf{501} (2018), 345--372.

\bibitem[CPS94]{CPS}
E.~Cline, B.~Parshall, and L.~Scott, \emph{{The homological dual of a highest
  weight category}}, Proc. London Math. Soc. \textbf{68} (1994), 296--316.

\bibitem[Dri86]{Dri}
V.~G. Drinfeld, \emph{Degenerate affine {H}ecke algebras and {Y}angians},
  Funktsional. Anal. i Prilozhen. \textbf{20} (1986), no.~1, 69--70.

\bibitem[DRV13]{DRV}
Z.~Daugherty, A.~Ram, and R.~Virk, \emph{Affine and degenerate affine {BMW}
  algebras: actions on tensor space}, Selecta Math. (N.S.) \textbf{19} (2013),
  no.~2, 611--653.

\bibitem[EQ16]{EQ}
A.~Ellis and Y.~Qi, \emph{The differential graded odd nil{H}ecke algebra},
  Comm. Math. Phys. \textbf{344} (2016), no.~1, 275--331.

\bibitem[ES17]{ES1}
M.~Ehrig and C.~Stroppel, \emph{On the category of finite-dimensional
  representations of {${\rm OSp}(r|2n)$}: {P}art {I}}, Representation
  theory---current trends and perspectives, Eur. Math. Soc., Z\"urich, 2017,
  pp.~109--170.

\bibitem[ES18]{ES}
\bysame, \emph{Nazarov-{W}enzl algebras, coideal subalgebras and categorified
  skew {H}owe duality}, Adv. Math. \textbf{331} (2018), 58--142.

\bibitem[Fuk86]{Fuks}
D.~B. Fuks (ed.), \emph{Cohomology of infinite-dimensional {L}ie algebras},
  translated from the russian by a. b. sosinski\u\i ed., Contemporary Soviet
  Mathematics, Consultants Bureau, New York, 1986.

\bibitem[Gor01]{G}
M.~Gorelik, \emph{The center of a simple {$P$}-type {L}ie superalgebra}, J.
  Algebra \textbf{246} (2001), no.~1, 414--428.

\bibitem[GS10]{GS}
C.~Gruson and V.~Serganova, \emph{Cohomology of generalized supergrassmannians
  and character formulae for basic classical {L}ie superalgebras}, Proc. Lond.
  Math. Soc. (3) \textbf{101} (2010), no.~3, 852--892.

\bibitem[Hum08]{Humphreys}
J.~E. Humphreys, \emph{Representations of semisimple {L}ie algebras in the
  {BGG} category {$\mathcal{O}$}}, Graduate Studies in Mathematics, vol.~94,
  American Mathematical Society, Providence, RI, 2008.

\bibitem[Kac77]{K77}
V.~G. Kac, \emph{Lie superalgebras}, Advances in Math. \textbf{26} (1977),
  no.~1, 8--96.

\bibitem[Kac78]{K78}
\bysame, \emph{Representations of classical {L}ie superalgebras}, Differential
  geometrical methods in mathematical physics, {II} ({P}roc. {C}onf., {U}niv.
  {B}onn, {B}onn, 1977), Lecture Notes in Math., vol. 676, Springer, 1978,
  pp.~597--626.

\bibitem[Kas95]{Kassel}
C.~Kassel, \emph{Quantum groups}, Graduate Texts in Mathematics, vol. 155,
  Springer, 1995.

\bibitem[KT17]{KT}
J.~Kujawa and B.~Tharp, \emph{The marked {B}rauer category}, J. Lond. Math.
  Soc. (2) \textbf{95} (2017), no.~2, 393--413.

\bibitem[Mac95]{Macd}
I.~G. Macdonald, \emph{Symmetric functions and {H}all polynomials}, second ed.,
  Oxford Mathematical Monographs, The Clarendon Press, Oxford University Press,
  1995.

\bibitem[Moo03]{M}
D.~Moon, \emph{Tensor product representations of the {L}ie superalgebra
  {${\mathfrak p}(n)$} and their centralizers}, Comm. Algebra \textbf{31}
  (2003), no.~5, 2095--2140.

\bibitem[Naz96]{Nazarov}
M.~Nazarov, \emph{Young's orthogonal form for {B}rauer's centralizer algebra},
  J. Algebra \textbf{182} (1996), no.~3, 664--693.

\bibitem[PS94]{PS94}
I.~Penkov and V.~Serganova, \emph{Generic irreducible representations of
  finite-dimension {L}ie superalgebras}, Internat. J. Math. \textbf{5} (1994),
  no.~3, 389--419.

\bibitem[PS97]{PS}
\bysame, \emph{Characters of irreducible {$G$}-modules and cohomology of
  {$G/P$} for the {L}ie supergroup {$G=Q(N)$}}, J. Math. Sci. \textbf{84}
  (1997), no.~5, 1382--1412.

\bibitem[Ser96]{Vsel}
V.~Serganova, \emph{Kazhdan-{L}usztig polynomials and character formula for the
  {L}ie superalgebra {${\mathfrak gl}(m|n)$}}, Selecta Math. (N.S.) \textbf{2}
  (1996), no.~4, 607--651.

\bibitem[Ser02]{Ser}
\bysame, \emph{On representations of the {L}ie superalgebra {$p(n)$}}, J.
  Algebra \textbf{258} (2002), no.~2, 615--630.

\bibitem[Ser11]{Serqr}
\bysame, \emph{Quasireductive supergroups}, New developments in {L}ie theory
  and its applications, Contemp. Math., vol. 544, Amer. Math. Soc., Providence,
  RI, 2011, pp.~141--159.

\bibitem[Str05]{Str}
C.~Stroppel, \emph{Categorification of the {T}emperley-{L}ieb category,
  tangles, and cobordisms via projective functors}, Duke Math. J. \textbf{126}
  (2005), no.~3, 547--596.

\bibitem[Wey03]{W}
J.~Weyman, \emph{Cohomology of vector bundles and syzygies}, Cambridge Tracts
  in Mathematics, no. 149, Cambridge University Press, 2003.

\end{thebibliography}
\end{document}